\newcommand{\R}{{\mathcal{R}}}
\newcommand{\LL}{{\mathcal{L}}}
\newcommand{\T}{S^1}
\newcommand{\RR}{\mathbb{R}}
\newcommand{\N}{\mathbb{N}}
\newcommand{\Z}{\mathbb{Z}}
\newcommand{\Q}{\mathbb{Q}}
\newcommand{\x}{{\mathbf x}}
\newcommand{\y}{{\mathbf y}}
\newcommand{\condsC}{{\it C.1--C.4}}
\newcommand{\condsH}{{\it H.1--H.3}}
\newcommand{\condsHp}{{\it h.1--h.3}}
\newcommand{\condsHpp}{{\it h'.1--h'.3}}
\newcommand{\condsHppp}{{\it h''.1--h''.3}}
\newcommand{\condsA}{ {A.1--A.2}}
\newcommand{\tf}{\bar{f}}
\newcommand{\tfL}{\bar{f}_\LL}
\newcommand{\tfR}{\bar{f}_\R}
\newcommand{\bx}{\bar{x}}
\newcommand{\by}{\bar{y}}
\newcommand{\s}{\mathfrak{s}}
\newcommand{\U}{\mathcal{U}}
\newcommand{\SYSTEMWR}{(\ref{eq:reset})-\eqref{eq:forcing}}
\newcommand{\byr}{\by_\R}
\newcommand{\byl}{\by_\LL}
\newtheorem{theorem}{Theorem}[section]
\newtheorem{definition}[theorem]{Definition}
\newtheorem{remark}[theorem]{Remark}
\newtheorem{proposition}[theorem]{Proposition}
\newtheorem{example}[theorem]{Example}
\newtheorem{lemma}[theorem]{Lemma}
\newtheorem{corollary}[theorem]{Corollary}
\numberwithin{equation}{section}
\renewcommand*\thefigure{\thesection.\arabic{figure}}
\renewcommand\p@subfigure{\thefigure}
\author{Albert Granados\footnotemark[2] \and Llu\'is
Alsed\`a\footnotemark[3] \and  Maciej
Krupa\footnotemark[4]}
\begin{document}
\title{The period adding and incrementing bifurcations: from rotation
theory to applications\thanks{This work has been partially supported
by MINECO-FEDER MTM2012-31714 and MINECO MTM2011-26995-C02-01 Spanish
grants.}}

\date{}
\maketitle
\renewcommand{\thefootnote}{\fnsymbol{footnote}}
\footnotetext[2]{algr@dtu,dk, Department of Applied Mathematics and
Computer Science, Technical University of Denmark, Building 303B, 2800
Kgns. Lyngby, Denmark.}
\footnotetext[3]{Department of Mathematics,
Edifici Cc, Universitat Aut\`onoma de Barcelona, 08193 Cerdanyola del
Vall\`es, Spain}
\footnotetext[4]{NeuroMathComp Project-Team,
Inria Sophia-Antipolis Research Center, 2004 route des Lucioles, BP93,
06902 Sophia Antipolis cedex, France.}
\renewcommand{\thefootnote}{\arabic{footnote}}
\begin{abstract}
This survey article is concerned with the study of bifurcations of
piecewise-smooth maps. We review the literature in circle maps and
quasi-contractions and provide paths through this literature to prove
sufficient conditions for the occurrence of two types of bifurcation
scenarios involving rich dynamics.  The first scenario consists of the
appearance of periodic orbits whose symbolic sequences and
``rotation'' numbers follow a Farey tree structure; the periods of the
periodic orbits are given by consecutive addition. This is called the
{\em period adding} bifurcation, and its proof relies on results for
maps on the circle. In the second scenario, symbolic sequences are
obtained by consecutive attachment of a given symbolic block and the
periods of periodic orbits are incremented by a constant term. It is
called the {\em period incrementing} bifurcation, in its proof relies
on results for maps on the interval.\\
We also discuss the expanding cases, as some of the partial results
found in the literature also hold when these maps lose
contractiveness. The higher dimensional case is also discussed by
means of {\em quasi-contractions}.\\
We also provide applied examples in control theory, power electronics
and neuroscience where these results can be applied to obtain precise
descriptions of their dynamics.
\end{abstract}
\noindent \textbf{Keywords:} Piecewise-smooth maps, discontinuous
circle maps, period adding, devil's staircase, Farey Tree, period incrementing
\clearpage
\tableofcontents
\section{Introduction}\label{sec:introduction}
Piecewise-smooth (piecewise-defined or non-smooth) systems are
non-regular or discontinuous systems induced by dynamics associated
with sharp changes in position, velocity, or other magnitudes
undergoing a jump in their value. This type of systems provide more natural
and simpler models in many applications,
such as switching systems in power
electronics~\cite{BanVer01,ZhuMos03,FosHogSea09,AvrFosGraSch11},
sliding-mode techniques in control
theory~\cite{Utk93,BerBudCha98,FosGriBie00,BerKowNor03}, hybrid
systems with resets in
neuroscience~\cite{Bre04,Izh07,MenHugRin12,GraKruCle14} or impact
systems in mechanics~\cite{Hog89,GraHogSea12,GraHogSea14}. Using
non-smooth modeling can reduce the dimension of the system, but may
result in more complicated dynamics.\\
As a consequence of its broad field of application, the interest in
such type of systems has considerably grown in the last decade
(see~\cite{MakLam12} for a recent general survey). In particular,
piecewise-smooth maps have captured the attention of many researchers,
who have studied them from very different perspectives.  One of the
most reported dynamical aspects are the different bifurcations
scenarios that they may exhibit, which turn out to be extraordinarily
rich. Seduced by their graphical beauty and mainly supported by
computations, many authors have recurrently observed and reported
these bifurcation phenomena. However, although they assemble many well
known results on circle maps, they have been considered as new and
exclusive of piecewise-smooth maps.

In this review article we show that many of the well known results for
circle maps developed in the 80's and early 90's can be used to obtain
rigorous proofs of general results that can be systematically applied
to one-dimensional piecewise-smooth contracting maps.  Moreover, we
also show how many of these results scattered in the literature are
also valid not only for piecewise-smooth expanding maps, but also for
higher-dimensional ones.\\
In the general setting for the one-dimensional case we consider a
piecewise-smooth map undergoing a discontinuity at $x=0$, and consider
as parameters the two lateral images at this point. To our knowledge,
such a map was first studied by Leonov~\cite{Leonov59}, and later on
obtained as an approximation of a Poincar\'e map of smooth flow near a
homoclinic bifurcation of the figure eight and butterfly
types~\cite{Spa82,Hom96} (see Sections~\ref{sec:inc-inc_overview}
and~\ref{sec:co-dimension_two} for more references).\\
Depending on the signs and magnitude of the slopes of the map at both
sides of the discontinuity, the bifurcation scenario in this
two-dimensional parameter space may be very different. These signs are
determined by the number of twists  exhibited by the invariant
manifolds (their orientability) involved in the homoclinic bifurcation
(\cite{GhrHol94,Hom96}). When the map is contracting in both sides of
the discontinuity and both slopes have different sign, the so-called
{\em period incrementing} scenario occurs. This bifurcation was
reported in~\cite{Hom96}, and the details of the proof for this case
were given in~\cite{AvrGraSch11}.  However, if both slopes are
positive, then the so-called {\em period adding} scenario occurs.

Although this latter scenario has been widely reported
in the literature (see Section~\ref{sec:inc-inc_overview}) its proof is scattered
throughout the literature in form of partial results using different
approaches. In the late 80's, Gambaudo {\em et al.}
(\cite{GamLanTre84,GamGleTre88,GamTre88}) provided strong rigorous
results in this direction by means of the {\em maximin} approach (see
Section~\ref{sec:maximin_approach}). These provide very precise
information on the symbolic sequences. Moreover, they  were stated for
maps in metric spaces of arbitrary dimension, which permits us to
adapt them to provide results for piecewise-smooth maps in $\RR^n$. As
a counterpart, they do not allow one to distinguish straightforwardly
between the two mentioned bifurcation scenarios.\\
In order to prove the bifurcation scenario for the
increasing-increasing case, we propose to reduce the piecewise-smooth
map to a discontinuous circle map. We then assemble and adapt many
well known results for rotation theory to the discontinuous case in
order to provide the most straight path to prove the period adding
bifurcation scenario. The main advantage with respect to the maximin
approach is the fact that this method does not require the map to be
contracting but it only relies on its invertibility, and hence
requires weaker assumptions. However, the maximin approach, although
it requires a lot of contractiveness, the obtained results are valid
in higher  dimensions.\\
Other approaches suggest to proceed with renormalization
arguments~\cite{GamProThoTre86,ProThoTre87,Gle90,Zak93,Hom96,Homburg00}
to prove the occurrence of the period adding bifurcation scenario.\\

Beyond its relation with homoclinic bifurcations for flows, such type
of bifurcation scenarios have been observed in more applied contexts
modeled by both one and $n$-dimensional piecewise-smooth maps.
Examples of such applications where these bifurcation scenarios appear
are power electronics
(\cite{BerGarGliVas98,BanGre99,RakAprBan10,GiaBanImr12,ZhuMosAndMik13,AmaCasGraOliHur14,Kabeetal07,ZhuMosDeBan08}),
control theory
(\cite{BerGarIanVas02,GalYu11,FosGra13,ZhuMos03,Zhuetal01,FosGra11,FosGra13}),
economics (\cite{TraGarWes11}) or neuroscience
(\cite{FreGal11,GraKruCle14,JimMihBroNieRub13,KeeHopRin81,MenHugRin12,TieFellSej02,SigTouVid15,TouBre08,TouBre09,Ton14}).\\
Typically, these bifurcation scenarios are numerically observed in
two-dimensional parameter spaces near codimension-two bifurcation
points. Such points involve the emergence of an infinite number of
bifurcation curves, and were called {\em big bang} bifurcations by the
non-smooth community~\cite{AvrSch06}.\\
In this survey we also provide illustration on
how rotation theory can be applied to provide a rigorous basis to
analyze bifurcations of piecewise-smooth maps in four
different applied contexts: non-smooth dynamics, control-theory,
mathematical neuroscience and power electronics. We first study
(Section~\ref{sec:co-dimension_two}) bifurcation scenarios around
codimension-two bifurcation points given by the collision of two
periodic orbits with the boundary (big bang bifucations).  In the
second example (Section~\ref{sec:relay}) we consider a nonlinear
system subject to sliding-mode control in order to stabilize it around
a desired ``equilibrium'' point. In the third example
(Section~\ref{sec:IF}), we consider a periodically forced
integrate-and-fire model, a hybrid system widely used in neuroscience.
For these two last examples we show how the period adding bifurcation
scenario explains the dynamics of the systems and how the symbolic
dynamics help to obtain relevant properties from the applied point of
view. We finally provide two more examples leading to
period-adding like bifurcations for planar piecewise-smooth maps: a
higher order system subject to sliding-mode control with relays
(Section~\ref{sec:high-order_relays}) and a DC-DC boost converter
controlled with so-called ZAD Strategy
(Section~\ref{sec:boost_zad}).\\

This work is organized as follows. In Section~\ref{sec:overview} we
provide basic definitions and a detailed statement of the results for
the one-dimensional case. In Section~\ref{sec:adding_proof} we review
and extend results for circle maps to provide a proof of the period
adding bifurcation scenario (increasing-increasing or orientation
preserving case). A detailed summary of the proof is given in
Section~\ref{sec:summary}.  In Section~\ref{sec:remarks_expansive} we
emphasize up to which extend the previous results are also valid in
the presence of expansiveness.. In
Section~\ref{sec:incrementing_proof} we revisit the proof provided
in~\cite{AvrGraSch11} for the period incrementing bifurcation scenario
(increasing-decreasing or non-orientable case).  In
Section~\ref{sec:maximin_approach} we review the maximin approach, and
show how it can be applied to obtain results for piecewise-smooth maps
in $\RR^n$.  Section~\ref{sec:examples} is dedicated to  illustrate
how these results can be used in different applied fields by applying
them five examples.  Finally, we conclude in
Section~\ref{sec:conclusions_future} with some discussions and
proposals for future directions.

\section{Basic definitions and overview of results}\label{sec:overview}
\subsection{System definition and properties}\label{sec:defitions_properties}
Let us consider a piecewise-smooth map
\begin{equation}
f(x)=\left\{
\begin{aligned}
&\mu_\LL+f_\LL(x)&&\text{if }x<0\\
&-\mu_\R+f_\R(x)&&\text{if }x>0,
\end{aligned}\right.
\label{eq:normal_form}
\end{equation}
with $x\in\RR$ and $f_\LL,f_\R$ smooth functions satisfying
\begin{enumerate}[{\it h.1}]
\item $f_\LL(0)=f_\R(0)=0$
\item $0<(f_\LL(x))'<1$, $x\in(-\infty,0)$\label{hip:2}
\item $0<\left|(f_\R(x))'\right|<1$, $x\in(0,\infty)$.
\end{enumerate}
We wish to describe the possible bifurcation structures 
obtained when parameters $\mu_\LL$ and $\mu_\R$ are varied.\\
\begin{remark}\label{rem:contractiveness}
The global contractiveness of the maps $f_\LL$ and $f_\R$ is assumed
for simplicity reasons. This allows to state results on bifurcations
for arbitrarily large values of $\mu_\LL$ and $\mu_\R$. However, if
contractiveness holds only locally at the origin, then all the results
presented here are still valid for values of these parameters close
enough to the origin. Their validity when contractiveness i
lost is discussed in Section~\ref{sec:remarks_expansive}.
\end{remark}
\begin{remark}
For convenience, we do not define at this point the map $f$ at $x=0$.
Roughly speaking, the only difference given by the election of the
value of $f$ at $x=0$ will consist of the existence or not of fixed
points and periodic orbits at their bifurcation values. We remark that
one cannot only consider the value of $f$  at $x=0$ by choosing one
lateral image, but also one can consider both images or non.  We will
focus on this question whenever it becomes relevant.
\end{remark}
Due to condition \emph{h}.1, the map~\eqref{eq:normal_form} is
discontinuous at $x=0$ if $\mu_\LL\neq \mu_\R$. As we will show, this
discontinuity introduces exclusive dynamical phenomena which are not
possible in smooth ($C^1$) one-dimensional systems. As discussed in
the introduction, one observes similar phenomena (the bifurcation
scenarios described below) in smooth flows of dimension three near
homoclinic bifurcations.  They are also observed in smooth maps, when
restricted to the circle instead of $\RR$.\\
This discontinuity represents a boundary in the state space abruptly
separating two different dynamics: the ones given by the maps $f_\LL$
and $f_\R$.  These dynamics will strongly depend on the sign of
$f'_\R(x)$ on $(0,\infty)$ leading to completely different families of
periodic orbits. Note that the cases when $f_\LL(x)$ and $f_\R(x)$
have different slopes in their respective domains are conjugate
through the symmetry $x\longleftrightarrow -x$. Moreover, as it will
be shown below, when both $f_\LL(x)$ and $f_\R(x)$ are decreasing
functions in their respective domains, the possible dynamics will be
easy.  Therefore we can restrict to the case that $f_\LL(x)$ is an
increasing function in $(-\infty,0)$, as stated in \emph{h}.2.

One of the differences between the families of periodic orbits that
one can find depending on the sign of $f'_\R(x)$ in $(0,\infty)$ will
be given by the sequence of steps that periodic orbits perform at each
side of the boundary $x=0$.  Therefore, we will introduce the symbolic
dynamics given by the following symbolic encoding.  Given a point
$x\in\RR$, we associate to its trajectory by $f$,
$(x,f(x),f^2(x),\dots)$, a symbolic sequence given by
\begin{equation}
I_f(x)=\left( a(x),a\left( f(x) \right),a\left( f^2(x) \right),\dots
\right),
\label{eq:symbolic_sequence}
\end{equation}
where
\begin{equation}
a(x)=
\left\{
\begin{aligned}
&\R&&\text{if }x> 0\\
&\LL&&\text{if }x<0.
\end{aligned}
\right.
\label{eq:encoding_LR}
\end{equation}
As $a(x)$ provides a symbol of length one ($\LL$ or $\R$), one can
omit the comas separating the symbols in
Equation~\eqref{eq:symbolic_sequence} without introducing imprecisions.\\
We call this the {\it itinerary} of $x$ by $f$ or the {\it symbolic
sequence} associated with the trajectory of $x$ by $f$.\\
Let us now consider the shift operator acting on symbolic sequences
\begin{equation}
\sigma\left( \x_1\x_2\x_3\dots \right)=(\x_2\x_3\dots),
\label{eq:shift}
\end{equation}
where $\x_i\in\left\{ \LL,\R \right\}$.\\
Clearly, the shift operator satisfies
\begin{equation}
\sigma\left( I_f(x) \right)=I_{f}\left( f\left( x \right) \right).
\label{eq:shift_on_trajectory}
\end{equation}

Of special interest  for us will be the symbolic sequences associated
with periodic orbits. In this case, the symbolic sequences will be
also periodic and we will represent them by the repetition of the
generating symbolic block. For example, let $(x_1,x_2)$ be a periodic
orbit,
\begin{align*}
f(x_1)&=x_2\\
f(x_2)&=x_1,
\end{align*}
and assume $x_1<0$ and $x_2>0$. Then, the symbolic sequences
associated with $x_1$ and $x_2$ are
\begin{align*}
I_f(x_1)&=\left( \LL\R\LL\R\dots \right):=(\LL\R)^\infty\\
I_f(x_2)&=\left( \R\LL\R\LL\dots
\right):=(\R\LL)^\infty,
\end{align*}
where $\infty$ indicates infinite repetition.\\
Due to property~\eqref{eq:shift_on_trajectory}, the shift operator
acts on the generating blocks as a cyclic permutation of offset $1$,
as it moves the first symbol to the last position. More precisely, if
$(x_1,\dots,x_n)$, $x_i\in\RR$, is a periodic orbit of $f$ and
$(\x_1\dots\x_n)^\infty$, $\x_i\in\left\{ \LL,\R \right\}$, is  the
symbolic sequence associated with the periodic trajectory of $x_1$,
then
\begin{equation*}
\sigma(I_f(x_1))=\left( \x_2\x_3\dots\x_n\x_1 \right)^\infty.
\end{equation*}
Hence, a periodic orbit of length $n$ can be represented by $n$ different
symbolic sequences obtained by cyclic permutations one from each other.
\begin{definition}\label{def:order}
Symbolic sequences can be ordered by lexicographical order induced by $\LL<\R$.
That is,
\begin{equation*}
(\x_1\x_2\dots)< (\y_1\y_2\dots)
\end{equation*}
if and only if $\x_1=\LL$ and $\y_1=\R$ or $\x_1=\y_1$ and there
exists some $j\ge 1$ such that
\begin{align*}
\x_i&=\y_i,\,\text{for all}\;i< j\\
\x_j&=\LL\\
\y_j&=\R.
\end{align*}
\end{definition}
\begin{example}
The sequences $(\x_1,\x_2,\x_3,\x_4)^\infty=(\LL^2,\R,\LL)^\infty$ and
$(\y_1,\y_2,\y_3,\y_4)^\infty=(\LL^2,\R^2)^\infty$ satisfy
\begin{equation*}
(\LL^2,\R,\LL)^\infty<(\LL^2,\R^2)^\infty,
\end{equation*}
as $x_4=\LL$ and $\y_4=\R$ and $\x_i=\y_i$ for $1\le i\le 3$.
\end{example}
\begin{definition}
Given a periodic symbolic sequence $\x=(\x_1\dots\x_q)^\infty$, we
will say that it is minimal if
\begin{equation*}
\x=\min_{0\le k< q}\sigma^k(\x),
\end{equation*}
and similarly for a maximal symbolic sequence.
\end{definition}
Note that the $\min$ and $\max$ operators act on sequences
following the order given in Definition~\ref{def:order}, and hence its
output is also a sequence.
\begin{example}
The sequence $(\LL^2,\R)^\infty$ is minimal, whereas
$(\R,\LL^2)^\infty$ is maximal, ase we get
\begin{align*}
(\LL^2,\R)^\infty&=\min \left\{ (\LL^2,\R)^\infty,
(\LL,\R,\LL)^\infty,(\R,\LL^2)^\infty \right\}\\
(\R,\LL^2)^\infty&=\max \left\{ (\LL^2,\R)^\infty,
(\LL,\R,\LL)^\infty,(\R,\LL^2)^\infty \right\}
\end{align*}
\end{example}
\begin{definition}
We will say that a periodic orbit of length $n$ is a $\x_1\dots\x_n$-periodic
orbit, $\x_i\in\left\{ \LL,\R \right\}$, if there exists some point of this
periodic orbit, $x_i\in\RR$ such that
\begin{equation*}
I_f(x_i)=(\x_1\dots\x_n)^\infty.
\end{equation*}
\end{definition}
\begin{example}
The periodic orbit $(-1,-0.5,1,-2)$, satisfying
\begin{equation*}
f(-1)=-0.5\quad f(-0.5)=1\quad f(1)=-2\quad f(-2)=-1,
\end{equation*}
is a $\LL^2\R\LL$, $\LL\R\LL^2$, $\R\LL^3$ and $\LL^3\R$-periodic
orbit, as
\begin{align*}
I_f(-1)&=(\LL^2\R\LL)^\infty\\
I_f(-0.5)&=(\LL\R\LL^2)^\infty\\
I_f(1)&=(\R\LL^3)^\infty\\
I_f(-2)&=(\LL^3\R)^\infty.
\end{align*}
\end{example}
Usually, in order to represent the symbolic sequence of a periodic
orbit we will choose its minimal representative. For example, assume
$(x_1,\dots,x_5)$ is a $5$-periodic orbit such that
$I_f(x_1)=(\LL\R\LL\LL\R)^\infty$. Then, we will say that
$(x_1,\dots,x_5)$ is a $\LL^2\R\LL\R$-periodic orbit or a periodic
orbit of type $\LL^2\R\LL\R$, where the superindex $2$ means that
there two consecutive symbols $\LL$.\\

Given a periodic orbit, besides its period, one important
characteristic associated with its symbolic sequence is the number of
symbols $\R$ and $\LL$ and how are they distributed along the
sequence. The latter will be explained in detail below. For the former
we need the following
\begin{definition}\label{def:Wpq}
We call $W_{p,q}$ the set of periodic symbolic sequences generated
by a symbolic block of length $q$ containing $p$ symbols $\R$:
\begin{equation*}
W_{p,q}=\left\{\y\in\left\{ \LL,\R
\right\}^\mathbb{N}\,|\,\y= \x^\infty,\,\x\in\left\{ \LL,\R \right\}^q\,\text{and $\x$ contains
$p$ symbols $\R$} \right\}
\end{equation*}
\end{definition}
\begin{example}
The sets $W_{2,7}$ and $W_{3,7}$ become
\begin{equation*}
W_{2,7}=\left\{ (\LL^5,\R^2)^\infty,(\LL^4,\R,\LL)^\infty \right\}
\end{equation*}
and
\begin{equation*}
W_{3,7}=\left\{
(\LL^4,\R^3)^\infty,(\LL^3,\R^2,\LL,\R)^\infty,(\LL^2,\R,\LL,\R,\LL,\R)^\infty \right\}
\end{equation*}
\end{example}
Let now $W$ be the set of all periodic symbolic sequences:
\begin{equation*}
W=\bigcup_{(p,q)=1}W_{p,q},
\end{equation*}
where $(\cdot,\cdot)$ refers to the greatest common divisor.\\
We next define the $\eta$-number associated with a periodic symbolic
sequence:
\begin{definition}\label{def:eta-number}
Let $\x=(\x_1\dots\x_q)\in \left\{ \LL,\R \right\}^q$ be a
symbolic sequence, and let $p$ be the number of symbols $\R$ contained
in $\x$. We then define the $\eta$-number of $\x$ as
\begin{equation}
\begin{array}[]{cccc}
\eta:&W&\longrightarrow&\Q\\
&\x^\infty&\longmapsto&\frac{p}{q}
\end{array}
\label{eq:eta_number}
\end{equation}
if $\x\in W_{p,q}$, $(p,q)=1$.
\end{definition}
\begin{example}
The $\eta$-number of the sequences $(\R^3,\LL)^\infty$ and
$(\LL^2,\R,\LL,\R)^\infty$ become
\begin{equation*}
\eta\left((\R^3,\LL)^\infty\right)=\frac{3}{4}
\end{equation*}
and
\begin{equation*}
\eta\left( (\LL^2,\R,\LL,\R)^\infty \right)=\frac{2}{5}.
\end{equation*}
\end{example}
As it will detailed below (Section~\ref{sec:period_adding}, see
Remark~\ref{rem:eta-rotation-number}), under certain conditions, the
piecewise-smooth map~\eqref{eq:normal_form} becomes a circle map with
{\em rotation number} the $\eta$-number. Hence, the $\eta$-number as
defined above is frequently referred to as rotation number in the
context of piecewise-smooth maps, even when these conditions are
not satisfied (see for example~\cite{GamTre88,GamGleTre88}).

We now focus on the question of, for a map $f$ of
type~\eqref{eq:normal_form} satisfying~\condsHp{}, what are the
possible periodic orbits, their symbolic sequences and their
bifurcations in the parameter space given by the offsets,
$\mu_\LL\times\mu_\R$.\\
To this end, we first note that, if $\mu_\LL,\mu_\R<0$, as the maps
$f_\LL$ and $f_\R$ are contracting, then $f$ possesses two attracting
coexisting $\LL$ and $\R$-fixed points $x_\LL<0$ and $x_\R>0$:
\begin{align*}
f_\LL(x_\LL)&=x_\LL\\
f_\R(x_\R)&=x_\R.
\end{align*}
The domains of attraction are separated by the boundary $x=0$.
Indeed, if both $f_\LL$ and $f_\R$ are increasing maps, then
these domains become $(-\infty,0)$ and $(0,\infty)$, respectively.
Note that, although $x=0$ is not an invariant point (an equilibrium),
it acts as a separatrix between these domains of attraction.\\
If one of these two parameters vanishes and becomes positive (for
example $\mu_\LL$), the fixed point $x_\LL$ collides with the boundary
$x=0$ and undergoes a border collision bifurcation. Depending on how
the map $f$ given in Equation~\eqref{eq:normal_form} is defined at
$x=0$, at the moment of the bifurcation this fixed point may still
exist or not.  Just after this bifurcation, $x_\LL$ no longer exists,
and the fixed point $x_\R$ becomes the unique global attractor. As
will be discussed below, this fixed point may coexist with a
two-periodic orbit. A similar situation occurs when the parameter
$\mu_\R$ crosses $0$, replacing in the previous argument $x_\LL$ by
$x_\R$. Hence, the origin of this parameter space consists of a
codimension-two bifurcation point.  But then the question arises: what
does exist when both parameters are positive and both fixed points
disappear in border collision bifurcations?  The answer to this
question (summarized in~Section~\ref{sec:inc-inc_overview}
and Section~\ref{sec:inc-dec_overview}) depends on the
signs of the slopes
of the maps $f_\LL$ and $f_\R$ for $x<0$ and $x>0$, respectively.
Recalling that the increasing decreasing and decreasing-cases are
conjugate, we will only distinguish between two cases:
increasing-increasing and increasing-decreasing. These are also
typically referred as orientation preserving and non-orientable
cases.\\
As will be argued in Section~\ref{sec:summarizing_theorem}, the
decreasing-decreasing case is straightforward under the assumption of
contractiveness.

\subsection{Overview of the orientation preserving case: the period adding}\label{sec:inc-inc_overview}
The bifurcation scenario when both $f_\LL$ and $f_\R$ are both
increasing is shown in Figures~\ref{fig:regions_adding}
and~\ref{fig:1dscann}.
\unitlength=\textwidth
\begin{figure}
\begin{center}
\includegraphics[angle=-90,width=0.9\textwidth]{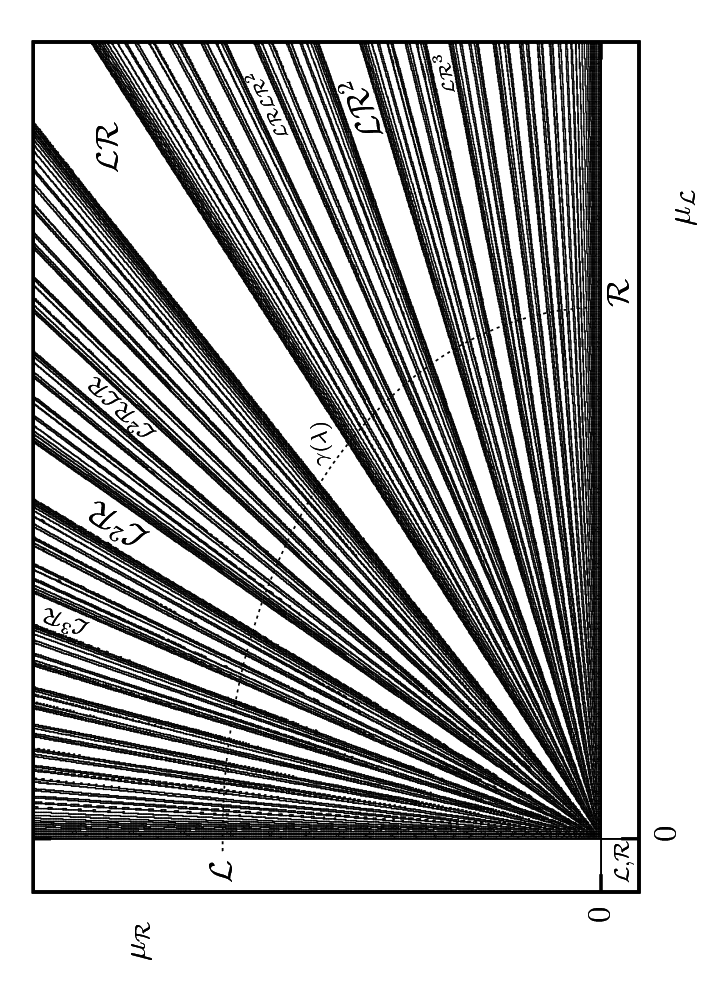}
\end{center}
\caption{Bifurcation curves on the two-dimensional parameter space
$\mu_\LL\times\mu_\R$ for a linear increasing-increasing
piecewise-smooth map. The periods of the periodic orbits found along
the ccurve $\gamma(\lambda)$ are shown in
Figure~\ref{fig:1dscann_periods}.}
\label{fig:regions_adding}
\end{figure}
As shown in Figure~\ref{fig:regions_adding}, there exist an infinite
number of bifurcation curves emerging from the origin of the parameter
space $\mu_\LL\times\mu_\R$. Note that these curves are straight
lines due the fact that the chosen map for the simulations was a
linear one. Obviously, similar non-linear curves are obtained
otherwise. Note also that these curves extend to infinity due to the
fact that the chosen maps are globally contracting. If contractiveness
was lost, other bifurcations would appear.

The bifurcation curves shown in Figure~\ref{fig:regions_adding}
separate regions of existence of periodic orbits. The periods of these
periodic orbits are given by ``successive addition'' of the ones of
``neighbouring regions''\footnote{The term neighbour refers to the
concept of Farey neighbours, see Section~\ref{sec:period_adding}}. We
will make this more precise in Section~\ref{sec:adding_proof}.
However, this can be seen in Figure~\ref{fig:1dscann_periods}, where
we show the periods of the periodic orbits found along the curve shown
in Figure~\ref{fig:regions_adding} parametrized counterclockwise by a
parameter $\lambda$ which will be clarified in
Section~\ref{sec:summarizing_theorem}. As one can also see there, the
symbolic sequences (some of them are labeled) of obtained periodic
orbits are given by successive concatenation of the ``neighbouring''
ones.

Of relevant interest is the evolution of the $\eta$-number (see
Definition~\ref{def:eta-number}) along the mentioned curve in
Figure~\ref{fig:regions_adding}. This is shown in
Figure~\ref{fig:1dscann_eta-number}, and follows a \emph{Devil's
staircase} (a continuous and monotonically increasing function which
is constant locally almost everywhere).

As explained in Section~\ref{sec:introduction}, to our knowledge, this
bifurcation scenario was first described by Leonov in the late 1950s
(\cite{Leonov59,Leonov60a,Leonov60b,Leonov62}), when studying a
piecewise-linear map similar to~\eqref{eq:normal_form} by means of
direct computations. Later on, this phenomenon was studied in more
detail from different perspectives. It was observed when studying
homoclinic bifurcations for flows
(\cite{Spa82,CouGamTre84,GamProThoTre86,TurShi87,ProThoTre87,GamGleTre88,GamTre88,LyuPikZak89,GhrHol94,Hom96})
(see Section~\ref{sec:co-dimension_two} for more details), but also in
electronic circuits given by the Van der Pol oscillator
(\cite{KenChu86,Lev90}) or circle maps~\cite{Kee80,Mira87}.  Later on,
it was rediscovered by the non-smooth community and called
in~\cite{AvrSch06} \emph{period adding}.  This is precisely defined in
Definition~\ref{def:period_adding}.

\begin{figure}
\begin{center}
\begin{picture}(1,0.5)
\put(0,0.35){
\subfigure[\label{fig:1dscann_periods}]{
\includegraphics[angle=-90,width=0.5\textwidth]{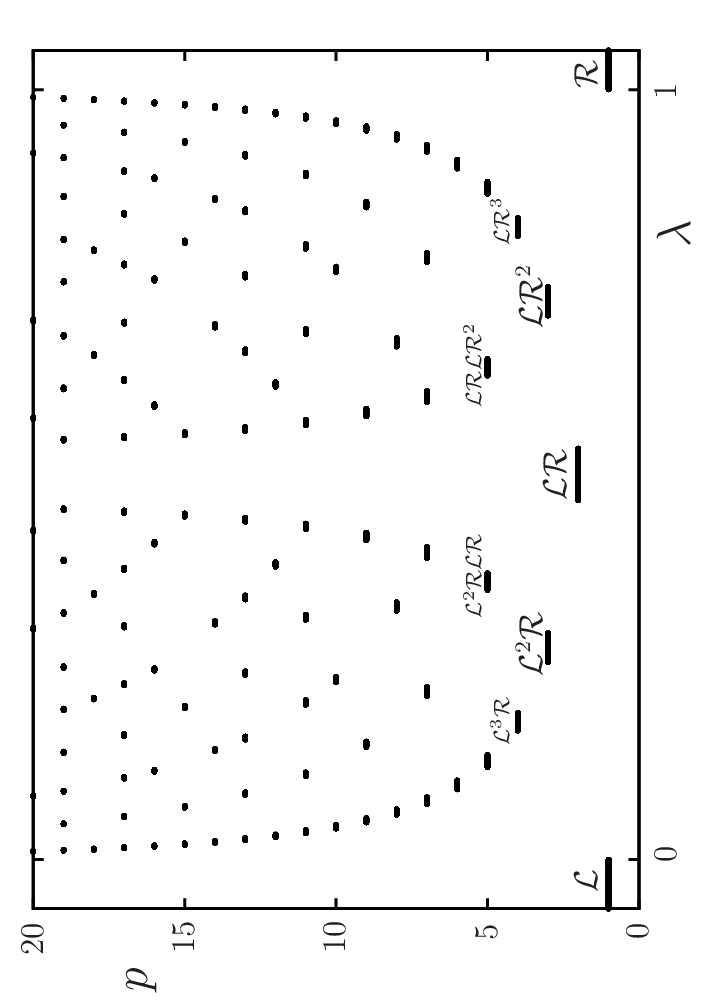}}
}
\put(0.5,0.35){
\subfigure[\label{fig:1dscann_eta-number}]{
\includegraphics[angle=-90,width=0.5\textwidth]{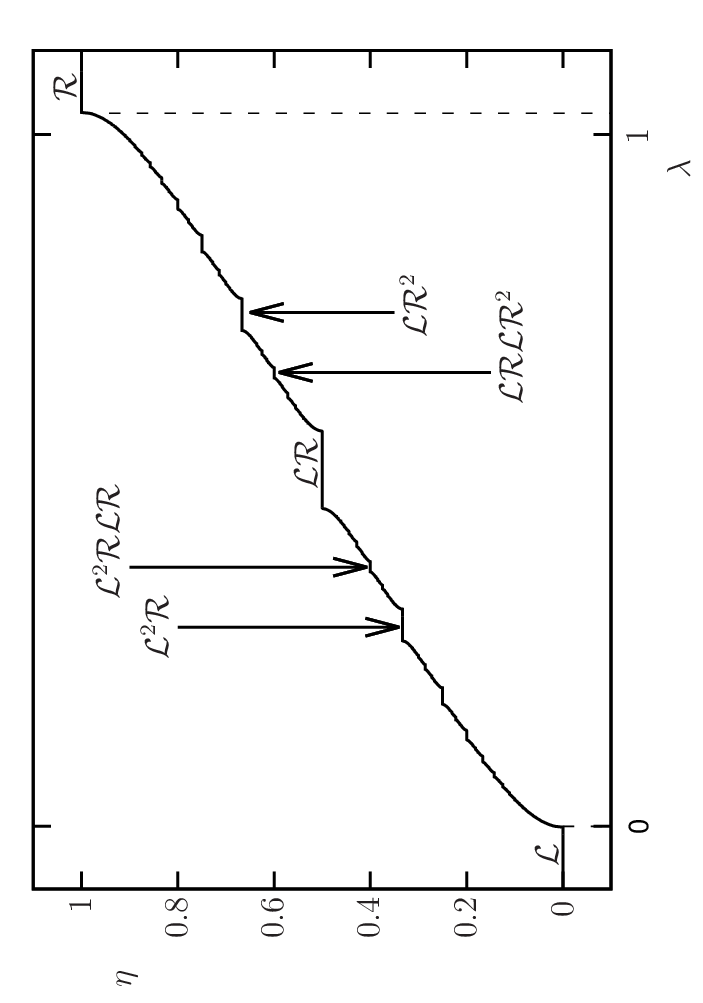}}
}
\end{picture}
\end{center}
\caption{Bifurcation scenario along the curve, $\gamma(\lambda)$,
shown in Figure~\ref{fig:regions_adding} and defined in
Equation~\eqref{eq:1d_scann_curve}. (a) periods of the existent
periodic orbits. (b) $\eta$-number: ratio between number of $\R$'s and
the period.}
\label{fig:1dscann}
\end{figure}

\subsection{Overview of the non-orientable case: the period incrementing}\label{sec:inc-dec_overview}
For the increasing-decreasing case, one finds the bifurcation scenario
shown in Figures~\ref{fig:regions_incrementing}
and~\ref{fig:1dscann_incrementing}. As one can see in
Figure~\ref{fig:regions_incrementing}, as in the previous case, there
exist an infinite number of bifurcation curves emerging from the origin
of the parameter space $\mu_\LL\times\mu_\R$. Also as before, the
chosen map to perform the simulations was linear and globally
contracting. Therefore, the observed bifurcation curves are straight
lines extending to infinity. For a non-linear case these lines would
becomes non-linear curves and, if contractiveness was lost for larger
values of the parameters, new bifurcations would be observed.

Unlike in the orientable case,  only families of periodic orbits of
the form $\LL^n\R$ exist for the non-orientable one.  Moreover, there
exist regions in the parameter space (marked in grey in
Figures~\ref{fig:regions_incrementing}
and~\ref{fig:1dscann_incrementing}) where periodic orbits with
symbolic sequences $\LL^{n}\R$ and $\LL^{n+1}\R$ coexist.
\begin{figure}
\begin{center}
\includegraphics[angle=-90,width=0.9\textwidth]{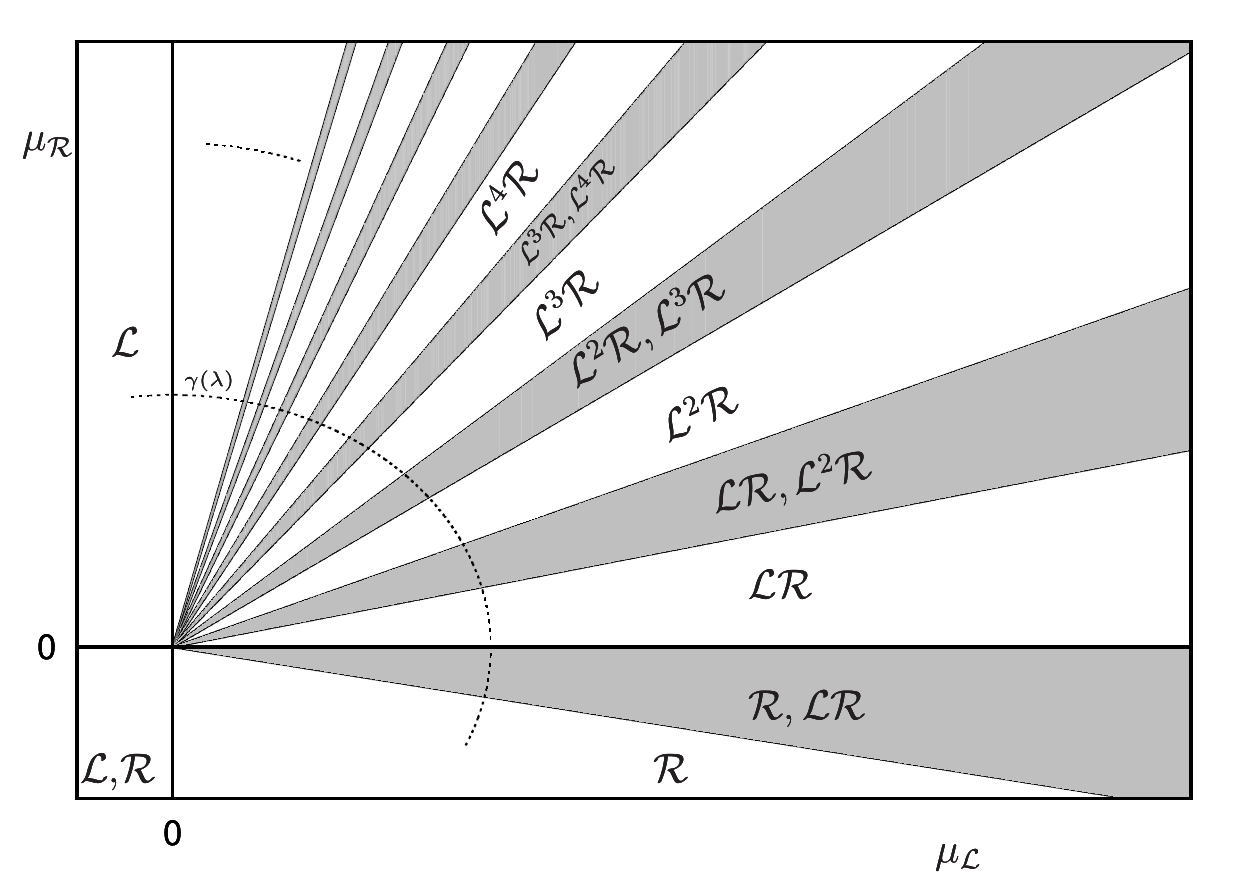}
\end{center}
\caption{Bifurcation curves on the two-dimensional parameter space
$\mu_\LL\times\mu_\R$ for  a linear increasing-decreasing
piecewise-smooth map.  Regions of coexistence are filled in gray. The
periods of the periodic orbits found along the curve $\gamma(\lambda)$
are shown in Figure~\ref{fig:1dscann_periods_incrementing}.}
\label{fig:regions_incrementing}
\end{figure}

\begin{figure}
\begin{center}
\begin{picture}(1,0.5)
\put(0,0.35){
\subfigure[\label{fig:1dscann_periods_incrementing}]{
\includegraphics[angle=-90,width=0.5\textwidth]{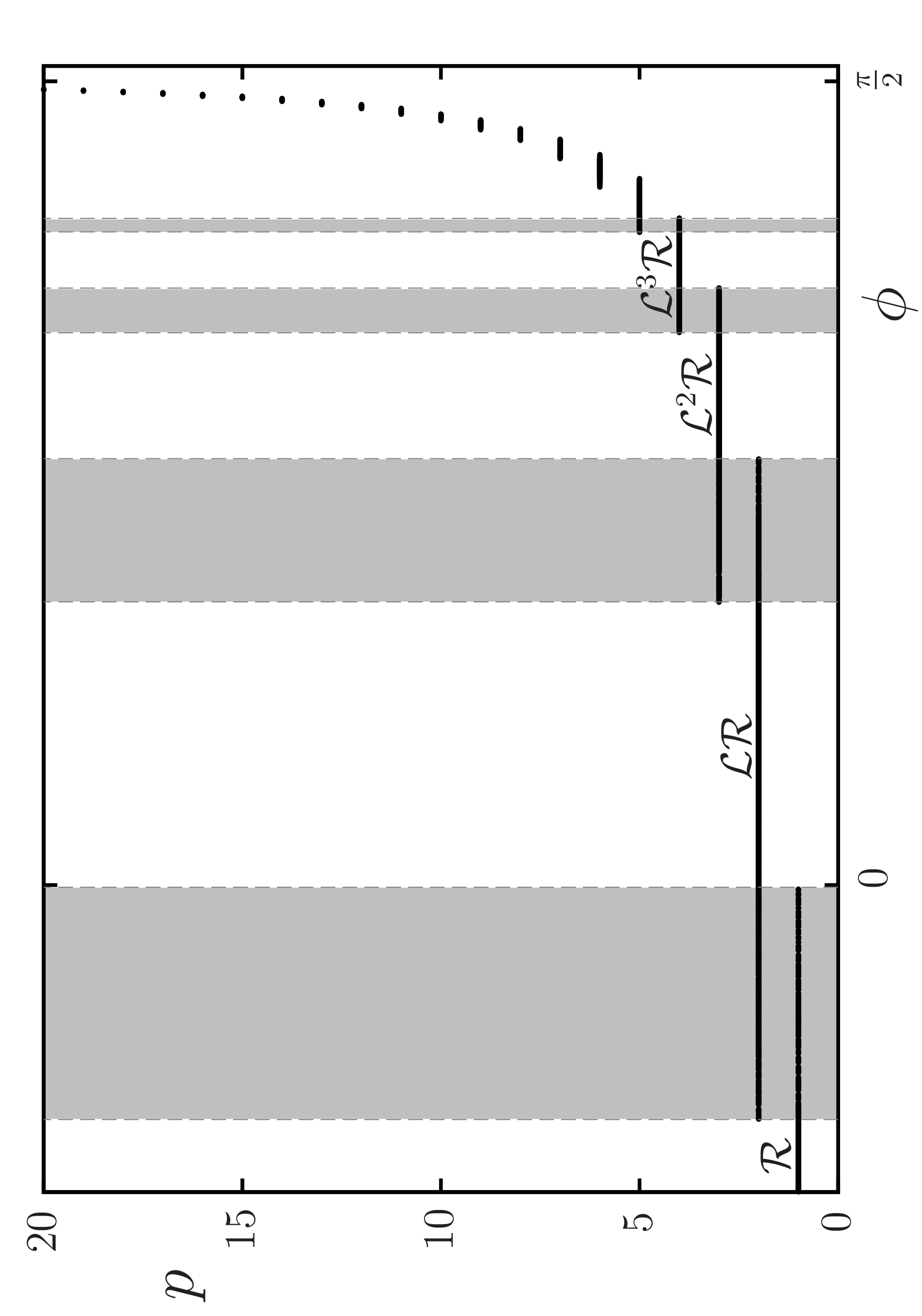}}
}
\put(0.5,0.35){
\subfigure[\label{fig:1dscann_eta-number_incrementing}]{
\includegraphics[angle=-90,width=0.5\textwidth]{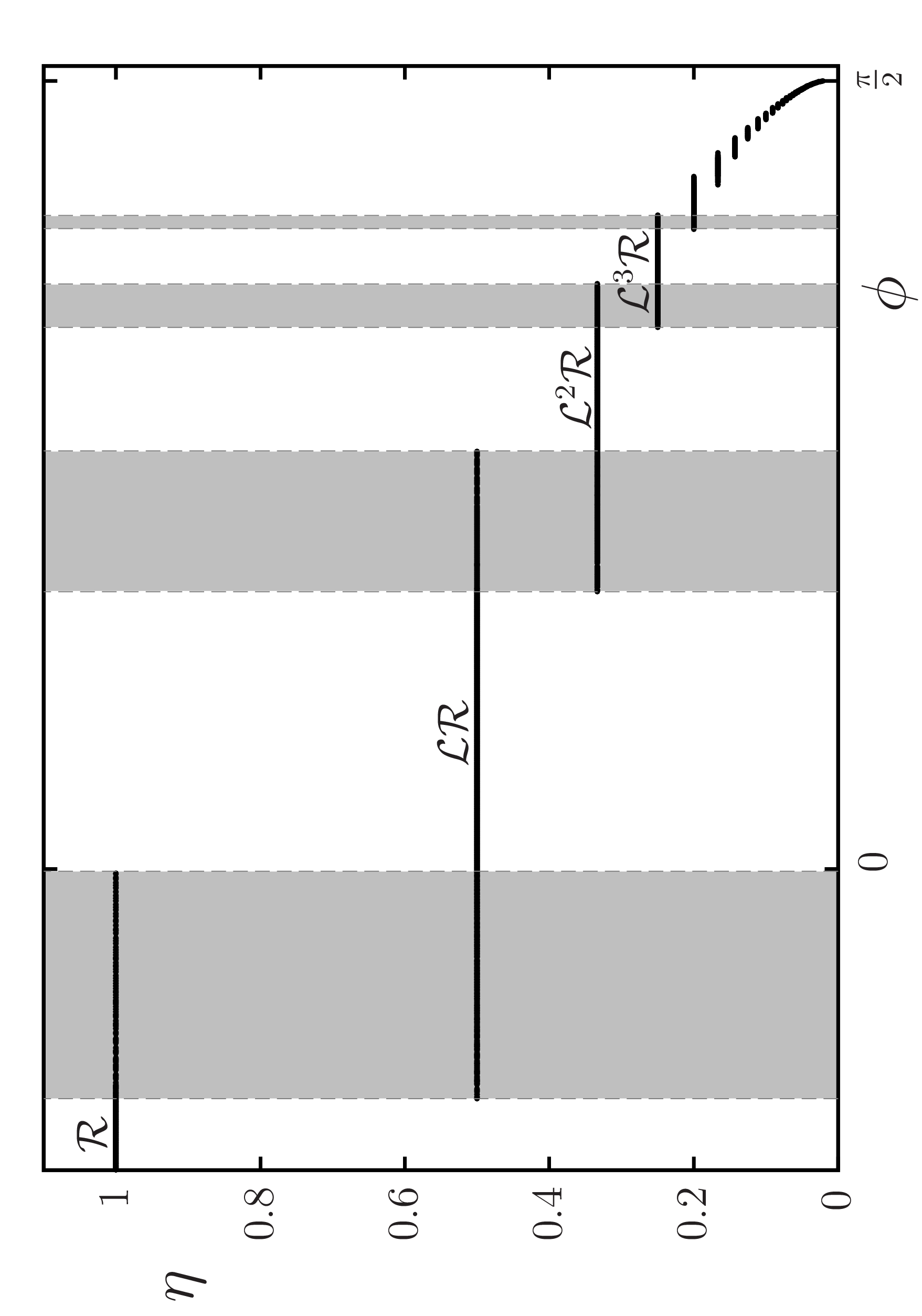}}
}
\end{picture}
\end{center}
\caption{Bifurcation scenario along the curve, $\gamma(\lambda)$,
shown in Figure~\ref{fig:regions_incrementing}. (a) periods of the
existent periodic orbits. Regions where periodic orbits coexist are
filled in gray. (b) $\eta$-number: ratio between number of $\R$'s and
the period.}
\label{fig:1dscann_incrementing}
\end{figure}

As mentioned in Section~\ref{sec:introduction}, this bifurcation
scenario was first described by Leonov
\cite{Leonov59,Leonov60a,Leonov60b,Leonov62}. Later on, it was studied
due to its relevance in homoclinic bifurcations involving
non-orientable homoclinic manifolds~\cite{Hom96,GhrHol94}. It was
rediscovered in~\cite{AvrSch06} when studying a linear
piecewise-smooth map and named \emph{period incrementing}. Full
details proving that the increasing-decreasing case leads to the
occurrence of the period-incrementing bifurcation scenario were given
in~\cite{AvrGraSch11}.

\subsection{Summarizing theorem}\label{sec:summarizing_theorem}
The results presented in the previous sections are summarized in the
following
\begin{theorem}\label{theo:adding_incrementing}
Let $f$ be a map as in Equation~\eqref{eq:normal_form} satisfying
conditions~\condsHp{}.  Let $\gamma$ be a $C^1$ curve in the parameter
space $\mu_\LL\times\mu_\R$ (see Figures~\ref{fig:regions_adding}
and~\ref{fig:regions_incrementing})
\begin{equation}
\begin{array}{cccc}
\gamma:&[0,1]&\longrightarrow&\RR^2\\
&\lambda&\longmapsto &(\mu_\LL(\lambda),\mu_\R(\lambda))
\end{array}
\label{eq:1d_scann_curve}
\end{equation}
satisfying
\begin{enumerate}[H.1]
\item $\mu_\LL(\lambda)> 0$ and $\mu_\R(\lambda)>0$ for $\lambda\in(0,1)$
\item $\left( \mu_\LL(\lambda) \right)'>0$ and $\left( \mu_\R(\lambda) \right)'<0$
for $\lambda\in[0,1]$
\item $\mu_\LL(0)=0$, $\mu_\R(1)=0$
\end{enumerate}
Then, the bifurcation diagram exhibited by the map $f_\lambda$
obtained from Equation~\eqref{eq:normal_form} after performing the
reparametrization given by $\gamma$, follows a
\begin{enumerate}[i)]
\item period adding structure if $0<f'_\R(x)<1,\,x\in(0,\infty)$
\item period incrementing structure if $-1<f'_\R(x)<0,\,x\in(0,\infty)$
\end{enumerate}
for $\lambda\in[0,1]$.\\
\end{theorem}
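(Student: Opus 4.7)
The plan is to decouple the two cases and reduce each to a setting where established machinery applies: for (i), rotation theory for degree-one discontinuous circle maps, and for (ii), the direct analysis of fixed points of the compositions $f_\R\circ f_\LL^n$ as in~\cite{AvrGraSch11}. Before splitting, I would first show that under \condsHp{} with $\mu_\LL,\mu_\R>0$ there is a compact forward-invariant interval $J\subset\RR$ containing $0$ that attracts every orbit; this is immediate from the global contractiveness h.2--h.3 and the form of $f$ near infinity. Hence the asymptotic dynamics reduces to $f|_J$, and in particular every periodic orbit lies in $J$.

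For case (i) the next step is to identify the two endpoints of $J$ to obtain a discontinuous degree-one circle map $\tilde f\colon\T\to\T$ whose lift $F\colon\RR\to\RR$ satisfies $F(x+1)=F(x)+1$ and is non-decreasing on each continuity interval (this uses h.2 and the sign hypothesis $f'_\R>0$). Classical extensions of Poincar\'e's theory to such lifts guarantee a rotation number $\rho(F)=\lim_{n\to\infty}(F^n(x)-x)/n$ which is independent of $x$ and continuous in parameters. Under the strict contractions h.2--h.3 one shows that $\rho=p/q$ with $(p,q)=1$ if and only if there is a unique attracting periodic orbit whose symbolic sequence lies in $W_{p,q}$, and that the set $\{\lambda:\rho(f_{\gamma(\lambda)})=p/q\}$ is a non-degenerate mode-locking plateau. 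The period-adding (Farey) structure then follows by proving that between the plateaus of Farey-neighbour rationals $p_1/q_1<p_2/q_2$ the only admissible rotation number is the mediant $(p_1+p_2)/(q_1+q_2)$ and that its minimal symbolic sequence is the concatenation of the minimal sequences of the neighbours. The endpoint conditions H.3 (at $\lambda=0$ only $\mu_\R>0$ so all orbits converge to the $\LL$-fixed point giving $\rho=0$; symmetrically $\rho=1$ at $\lambda=1$) together with the monotonicity H.2 force $\rho\circ\gamma\colon[0,1]\to[0,1]$ to be a non-decreasing continuous surjection, i.e.\ the Devil's staircase of Figure~\ref{fig:1dscann_eta-number}.

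For case (ii) the analysis is more elementary. Since $f'_\R<0$ and $\mu_\R>0$, one checks $f_\R(J\cap(0,\infty))\subset(-\infty,0)$, so two consecutive $\R$ symbols cannot appear in the itinerary of a point of $J$; consequently every periodic orbit has minimal symbolic sequence of the form $\LL^n\R$ for some $n\geq 1$. Existence of an $\LL^n\R$-periodic orbit is then equivalent to the existence of a fixed point of $f_\R\circ f_\LL^n$ in an appropriate subinterval of $J$; by h.2--h.3 this composition is a strict contraction with a unique fixed point depending smoothly on parameters. Tracking the signs of its iterates produces explicit bifurcation curves bounding the existence regions $R_n\subset\mu_\LL\times\mu_\R$, and one shows as in~\cite{AvrGraSch11} that $R_n\cap R_{n+1}\neq\emptyset$ (the grey coexistence strips of Figure~\ref{fig:regions_incrementing}) whereas $R_n\cap R_m=\emptyset$ for $|n-m|\geq 2$, so that along $\gamma$ the period increments by exactly $1$ at each transition.

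The principal obstacle is the circle-map step in case (i): the classical Poincar\'e--Denjoy theory is written for continuous homeomorphisms, whereas $\tilde f$ is only piecewise monotone with a jump. One must therefore carefully adapt the definition of the rotation number, its continuity in parameters, and the Farey-neighbour characterization of mode-locking to the discontinuous setting (via lifts, upper and lower rotation numbers, and the control provided by the strict contractions). Once this framework is built, combining the analytic information from rotation theory with a symbolic argument --- either via Gambaudo's maximin sequences or via direct renormalization --- pins down the Farey concatenation structure; case (ii) is then considerably easier because the non-orientability drastically restricts the admissible symbolic sequences.
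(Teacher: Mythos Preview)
Your proposal is correct and follows the paper's route in both cases: for (i), reduction to a discontinuous orientation-preserving circle map, extension of Poincar\'e rotation theory (existence, uniqueness, continuity and monotonicity of the rotation number) to that setting via bi-valued lifts, and the Farey concatenation of symbolic sequences; for (ii), the direct interval-map argument of~\cite{AvrGraSch11}. The only substantive step you leave implicit is that the set of $\lambda$ with irrational rotation number has \emph{zero} measure---showing that each rational plateau is non-degenerate does not by itself give this---and the paper obtains it by passing to the inverse circle map (an expanding map with a flat piece) and invoking Boyd's theorem~\cite{Boy85}.
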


For a description of the bifurcation scenarios announced in \emph{i)}
and \emph{ii)}, the period adding and period incrementing, see,
respectively, Sections~\ref{sec:inc-inc_overview}
and~\ref{sec:inc-dec_overview} for an overview, and
Sections~\ref{sec:adding_proof} and~\ref{sec:incrementing_proof} for
more details and proofs.

We now explain briefly the case when condition {\em h.2} is not
satisfied ($f_\LL$ is decreasing):
\begin{remark}\label{rem:conds_iii-iv}
If condition h.2 is not satisfied and $f'_\LL(x)<0$ in $(-\infty,0)$,
then,
\begin{enumerate}[i)]
\setcounter{enumi}{2}
\item if $f'_\R(x)<0$ for $(0,\infty)$, only an $\LL\R$-periodic orbit
can exist for all $\lambda\in(0,1)$,
\item if $f'_\R(x)>0$, for $x\in(-\infty,0)$ the bifurcation scenario
is equivalent to ii) but interchanging $\LL$ and $\R$ in the symbolic
dynamics.
\end{enumerate}
\end{remark}
Clearly, {\it iii)} comes from the fact that, under these
conditions, $f_\LL( (-\infty,0))\subset (\mu_\LL,\infty)$ and $f_\R(
(0,\infty))\subset (-\infty,-\mu_\R)$, and hence, due to the contractiveness
of these maps,  $f$ must possess an $\LL\R$-periodic orbit.\\
The fact that the cases \emph{iv)} and \emph{ii)} are conjugate comes
from applying the symmetries given by the change of variables
$x\longleftrightarrow -x$.

\section{Orientation preserving case}\label{sec:adding_proof}
\subsection{Detailed description}\label{sec:period_adding}
We first provide a detailed description of the period adding
bifurcation structure by stating some results which will be proved in
the rest of this section.\\

The bifurcation structure given by the so-called \emph{period adding}
is strongly linked with the ordering of the rational numbers given by
the Farey tree. In order to explain how this tree is generated, we
first define the Farey neighbours. Recall that a rational
number $p/q$ is irreducible if $(p,q)=1$.

\begin{definition}\label{def:farey_sequence}
We define the Farey sequence of order $n$ ($n\ge 1$),
$\mathcal{F}_n$, as the succession of rational numbers $p/q\in[0,1]$
in ascending order, starting with $0/1$ and ending with $1/1$, such
that $(p,q)=1$ and $q\le n$.
\end{definition}
For example, the Farey sequence of order $6$ becomes
\begin{equation*}
\frac{0}{1},\,\frac{1}{6},\,\frac{1}{5},\,\frac{1}{4},\,\frac{1}{3},\,\frac{2}{5},\,\frac{1}{2},\,\frac{3}{5},\,\frac{2}{3},\,\frac{3}{4},\,\frac{4}{5},\,\frac{5}{6},\,\frac{1}{1}.
\end{equation*}
Note that all rational numbers are contained in $\mathcal{F}_n$ for
some $n>0$. We then have the following well known result
(see~\cite{HarWri60}).
\begin{theorem}\label{the:Farey_properties}
The following holds:
\begin{enumerate}[i)]
\item $p/q<r/s$ are Farey neighbours in $\mathcal{F}_n$ if, and only
if, $rq-ps=1$
\item if $p/q<r/s$ then
\begin{equation*}
\frac{p}{q}<\frac{p+r}{q+s}<\frac{r}{s}.
\end{equation*}
Moreover, if $p/q$ and $r/s$ are Farey neighbours at $\mathcal{F}_n$
then $p/q<(p+r)/(q+s)$ and $(p+r)/(q+s)<\frac{r}{s}$ are Farey
neighbours at $\mathcal{F}_{q+s}$.
\end{enumerate}
\end{theorem}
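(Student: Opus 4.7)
I would prove (i) and the ``moreover'' clause of (ii) together by induction on $n$, with two purely algebraic ingredients doing the heavy lifting: the mediant identity and a determinant inequality. The first step is to dispatch the mediant inequality in (ii) by direct calculation: whenever $p/q < r/s$, equivalently $qr - ps > 0$, one has
\[
\frac{p+r}{q+s} - \frac{p}{q} = \frac{qr - ps}{q(q+s)} > 0, \qquad \frac{r}{s} - \frac{p+r}{q+s} = \frac{qr - ps}{s(q+s)} > 0,
\]
which already gives the first half of (ii) and isolates $qr - ps$ as the relevant determinant.

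The second step handles the ``$\Leftarrow$'' direction of (i). If $qr - ps = 1$, then $\gcd(p,q) = \gcd(r,s) = 1$, so both fractions are automatically reduced. The key lemma is: for any $a/b$ with $p/q < a/b < r/s$, the strict inequalities give $aq - bp \ge 1$ and $br - as \ge 1$; multiplying the first by $s$, the second by $q$, and adding yields $b(qr - ps) \ge q + s$, so $b \ge q + s$. This shows that no fraction with denominator $\le n < q + s$ can sit between $p/q$ and $r/s$, so they are Farey neighbours in every $\mathcal{F}_n$ with $n < q+s$.

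The third step, which is the inductive heart of the argument, treats the ``$\Rightarrow$'' direction of (i) and the moreover clause of (ii) at once. The base $\mathcal{F}_1 = \{0/1, 1/1\}$ is immediate. For the inductive step, I would assume (i) in $\mathcal{F}_n$ and analyse how $\mathcal{F}_{n+1}$ is built from $\mathcal{F}_n$. Any new reduced $a/b$ with $b = n+1$ lies between some consecutive pair $p/q < r/s$ of $\mathcal{F}_n$; the determinant bound gives $b \ge q + s$, while consecutiveness in $\mathcal{F}_n$ forbids the mediant from appearing earlier, forcing $q + s \ge n + 1$, and hence $q + s = n + 1 = b$. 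Solving $aq - bp = 1$ for the only possible integer $a$ (once $b$ and $p,q$ are fixed) then yields $a = p + r$, identifying the new fraction as the mediant. Conversely, for each consecutive pair in $\mathcal{F}_n$ with $q + s = n + 1$, the mediant is reduced because $(p+r)q - (q+s)p = qr - ps = 1$, and the two new adjacencies satisfy the determinant identity by construction; by the already-proved ``$\Leftarrow$'' direction of (i), they are Farey neighbours in $\mathcal{F}_{n+1}$.

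The main obstacle is orchestrating the induction so that (i) and (ii) reinforce one another: the determinant characterization in $\mathcal{F}_n$ is exactly what lets me recognize newly inserted fractions as mediants, and that recognition is what certifies the determinant condition for the new adjacencies in $\mathcal{F}_{n+1}$. Once this interlock is cleanly set up, both statements of the theorem drop out together, and the moreover clause of (ii) is immediate from the fact that $\mathcal{F}_{q+s}$ is the first Farey sequence in which $(p+r)/(q+s)$ appears.
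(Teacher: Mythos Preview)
Your proposal is correct and follows the classical route (mediant computation, the determinant bound $b\ge q+s$ for any interposed $a/b$, and induction on $n$ to identify new entries of $\mathcal{F}_{n+1}$ as mediants of consecutive pairs in $\mathcal{F}_n$). The paper itself does not supply a proof of this theorem at all: it simply cites it as a well-known result from Hardy and Wright, \emph{An Introduction to the Theory of Numbers}, so your argument is in fact the standard one that reference contains, and there is nothing further to compare.
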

\begin{definition}
The fraction $\frac{p+r}{q+s}$ given in
Theorem~\ref{the:Farey_properties} is called the Farey sum or mediant
of the numbers $\frac{p}{q}$ and $\frac{r}{s}$.
\end{definition}

\unitlength=\textwidth
\begin{figure}
\begin{center}
\begin{picture}(1,0.5)
\put(0,0.015){
\subfigure[\label{fig:farey_rho}]{
\begin{picture}(0.5,0.5)
\put(-0.01,0.06){
\scalebox{0.65}{
\begin{tikzpicture}[->,>=stealth',shorten >=1pt,auto,node distance=1cm]
\node[] (a1){};
\node[] (a2) [right of=a1] {};
\node[] (a3) [right of=a2] {};
\node[] (a4) [right of=a3] {$0$};
\node[] (a5) [right of=a4] {};
\node[] (a6) [right of=a5] {$1$};
\node[] (a7) [right of=a6] {};
\node[] (a8) [right of=a7] {};
\node[] (a9) [right of=a8] {};
\node[] (b1) [above of=a1] {};
\node[] (b2) [right of=b1] {};
\node[] (b3) [right of=b2] {};
\node[] (b4) [right of=b3] {};
\node[] (b5) [right of=b4] {$\frac{1}{2}$};
\node[] (b6) [right of=b5] {};
\node[] (b7) [right of=b6] {};
\node[] (b8) [right of=b7] {};
\node[] (b9) [right of=b8] {};
\node[] (c1) [above of=b1] {};
\node[] (c2) [right of=c1] {};
\node[] (c3) [right of=c2] {$\frac{1}{3}$};
\node[] (c4) [right of=c3] {};
\node[] (c5) [right of=c4] {};
\node[] (c6) [right of=c5] {};
\node[] (c7) [right of=c6] {$\frac{2}{3}$};
\node[] (c8) [right of=c7] {};
\node[] (c9) [right of=c8] {};
\node[] (d1) [above of=c1] {};
\node[] (d2) [right of=d1] {$\frac{1}{4}$};
\node[] (d3) [right of=d2] {};
\node[] (d4) [right of=d3] {};
\node[] (d5) [right of=d4] {};
\node[] (d6) [right of=d5] {};
\node[] (d7) [right of=d6] {};
\node[] (d8) [right of=d7] {$\frac{3}{4}$};
\node[] (d9) [right of=d8] {};
\node[] (e1) [above of=d1] {$\frac{1}{5}$};
\node[] (e2) [right of=e1] {};
\node[] (e3) [right of=e2] {};
\node[] (e4) [right of=e3] {$\frac{2}{5}$};
\node[] (e5) [right of=e4] {};
\node[] (e6) [right of=e5] {$\frac{3}{5}$};
\node[] (e7) [right of=e6] {};
\node[] (e8) [right of=e7] {};
\node[] (e9) [right of=e8] {$\frac{4}{5}$};
\path[]
(a4) edge node [left] {} (b5)
(a6) edge node [left] {} (b5)
(b5) edge node [left] {} (c3)
(a4) edge node [left] {} (c3)
(b5) edge node [left] {} (c7)
(a6) edge node [] {} (c7)
(c7) edge node [] {} (d8)
(a6) edge node [] {} (d8)
(a4) edge node [left] {} (d2)
(c3) edge node [left] {} (d2)
(a4) edge node [left] {} (e1)
(d2) edge node [left] {} (e1)
(a6) edge node [] {} (e9)
(d8) edge node [] {} (e9)
(b5) edge node [] {} (e4)
(c3) edge node [] {} (e4)
(b5) edge node [] {} (e6)
(c7) edge node [] {} (e6)
;
\end{tikzpicture}
}
}
\end{picture}
}
}
\put(0.5,0.015){
\subfigure[\label{fig:farey_sequences}]{
\begin{picture}(0.5,0.5)
\put(-0.01,0.06){
\scalebox{0.65}{
\begin{tikzpicture}[->,>=stealth',shorten >=1pt,auto,node distance=1cm]
\node[] (a1){};
\node[] (a2) [right of=a1] {};
\node[] (a3) [right of=a2] {};
\node[] (a4) [right of=a3] {$\LL$};
\node[] (a5) [right of=a4] {};
\node[] (a6) [right of=a5] {$\R$};
\node[] (a7) [right of=a6] {};
\node[] (a8) [right of=a7] {};
\node[] (a9) [right of=a8] {};
\node[] (b1) [above of=a1] {};
\node[] (b2) [right of=b1] {};
\node[] (b3) [right of=b2] {};
\node[] (b4) [right of=b3] {};
\node[] (b5) [right of=b4] {$\LL\R$};
\node[] (b6) [right of=b5] {};
\node[] (b7) [right of=b6] {};
\node[] (b8) [right of=b7] {};
\node[] (b9) [right of=b8] {};
\node[] (c1) [above of=b1] {};
\node[] (c2) [right of=c1] {};
\node[] (c3) [right of=c2] {$\LL^2\R$};
\node[] (c4) [right of=c3] {};
\node[] (c5) [right of=c4] {};
\node[] (c6) [right of=c5] {};
\node[] (c7) [right of=c6] {$\LL\R^2$};
\node[] (c8) [right of=c7] {};
\node[] (c9) [right of=c8] {};
\node[] (d1) [above of=c1] {};
\node[] (d2) [right of=d1] {$\LL^3\R$};
\node[] (d3) [right of=d2] {};
\node[] (d4) [right of=d3] {};
\node[] (d5) [right of=d4] {};
\node[] (d6) [right of=d5] {};
\node[] (d7) [right of=d6] {};
\node[] (d8) [right of=d7] {$\LL\R^3$};
\node[] (d9) [right of=d8] {};
\node[] (e1) [above of=d1] {$\LL^4\R$};
\node[] (e2) [right of=e1] {};
\node[] (e3) [right of=e2] {};
\node[] (e4) [right of=e3] {$\LL^2\R\LL\R$};
\node[] (e5) [right of=e4] {};
\node[] (e6) [right of=e5] {$\LL\R\LL\R^2$};
\node[] (e7) [right of=e6] {};
\node[] (e8) [right of=e7] {};
\node[] (e9) [right of=e8] {$\LL\R^4$};
\path[]
(a4) edge node [left] {} (b5)
(a6) edge node [left] {} (b5)
(b5) edge node [left] {} (c3)
(a4) edge node [left] {} (c3)
(b5) edge node [left] {} (c7)
(a6) edge node [] {} (c7)
(c7) edge node [] {} (d8)
(a6) edge node [] {} (d8)
(a4) edge node [left] {} (d2)
(c3) edge node [left] {} (d2)
(a4) edge node [left] {} (e1)
(d2) edge node [left] {} (e1)
(a6) edge node [] {} (e9)
(d8) edge node [] {} (e9)
(b5) edge node [] {} (e4)
(c3) edge node [] {} (e4)
(b5) edge node [] {} (e6)
(c7) edge node [] {} (e6)
;
\end{tikzpicture}
}
}
\end{picture}
}
}
\end{picture}
\caption{(a) Farey tree of rational numbers. (b) Farey tree of
symbolic sequences. Both trees are isomorphic by construction but also
through dynamical properties of periodic orbits of orientation
preserving circle maps (see Section~\ref{sec:dyna_orient_preserv}.)}
\end{center}
\end{figure}
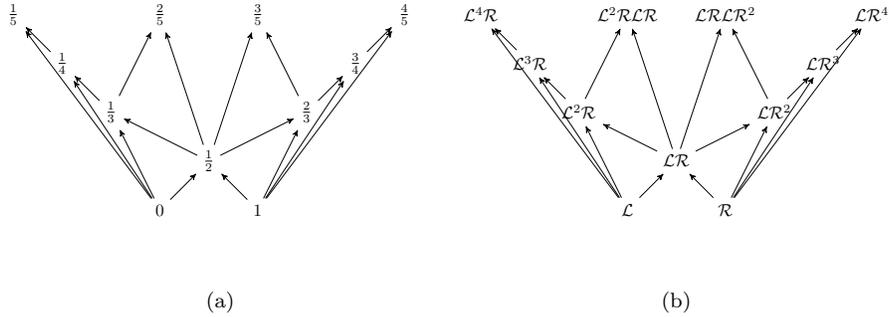

We now construct the Farey tree of rational numbers (see
Figure~\ref{fig:farey_rho}).  Starting with the Farey neighbours $0/1$
and $1/1$, the Farey tree is generated by obtaining rational numbers
by adding their numerators and denominators. That is, given two Farey
neighbours $p/q$ and $p'/q'$, they generate the child given by their
mediant, $(p+p')/(q+q')$, and $p/q$ and $p'/q'$ become the Farey
parents of the rational number $(p+p')/(q'+q')$. Note that this
provides all the Farey sequences $\mathcal{F}_n$ and, hence, all
rational numbers are found in the Farey tree and are uniquely
identified with their Farey parents.

Consder a piecewise-smooth map as defined in
Equation~\eqref{eq:normal_form}, and recall that we are interested in
the simultaneous variation of the parameters $\mu_\LL$ and $\mu_\R$
parametrized by $\lambda\in[0,1]$ through the re-parametrization shown
in Equation~\eqref{eq:1d_scann_curve}. Then we get that, under the
conditions {\em i)} of Theorem~\ref{theo:adding_incrementing}, when
this parameter is varied from $0$ to $1$, the periods of these
periodic are given by the denominators of the rational numbers numbers
given in the Farey tree (see Figure~\ref{fig:farey_rho}).\\
As noted by some authors~\cite{FreGal11}, other trees, as the
Stern-Brocot, can also generate this sequence of periods. However, the
most interesting relation between the Farey tree and the sequence of
periodic orbits given by the period adding regards their associated
symbolic sequences, and not only their periods. To explain this, we
recall that the $\eta$-number (Definition~\ref{def:eta-number}) is
given by the ratio between the number of $\R's$ contained in a
symbolic sequence and its length.
\begin{remark}\label{rem:eta-rotation-number}
As we will show in Section~\ref{sec:dyna_orient_preserv}
(Corollary~\ref{cor:eta-number_rotation-number}), under conditions i)
of Theorem~\ref{theo:adding_incrementing}, the map $f$ given in
Equation~\eqref{eq:normal_form} can be reduced to a circle map and the
number $\eta$ of a symbolic sequence of a periodic orbit becomes the
rotation number of the circle map (see
Remark~\ref{rem:rotation-eta_number}).
\end{remark}

Under conditions {\em i)} of Theorem~\ref{theo:adding_incrementing},
when $\lambda$ in Equation~\eqref{eq:1d_scann_curve} is varied from
$0$ to $1$, one gets periodic orbits with symbolic sequences
$\x^\infty\in W$, with $\x=\x(\lambda)\in W_{p,q}$. If one then
considers the $\eta$-number associated with these symbolic sequences,
$\eta(\lambda)$, it turns out that it is a continuous and
monotonically increasing function of $\lambda$ whose image are all
rational values between $0$ and $1$.  However, the set of values of
$\lambda$ for which $\eta(\lambda)$ is is not defined (it ``becomes
irrational'') forms a Cantor set of zero measure. Hence,
$\eta(\lambda)$ is a {\em devil's staircase}, as it is continuous,
monotonically increasing and locally constant almost everywhere. This
function is shown in Figure~\ref{fig:1dscann_eta-number}, and is the
well known one formed by the rotation numbers of the periodic orbits
of the so-called Arnold circle map
\begin{equation*}
\theta_{n+1}=\theta_n+\Omega -\frac{1}{2\pi}\sin\left( 2\pi\theta_n \right)
\end{equation*}
when $\Omega$ is varied from $0$ to $1$.\\
Note that , for $\eta=0$ and $\eta=1$, the map possesses a fixed
points with symbolic sequence $\LL$ and $\R$, respectively. These
undergo border collision bifurcations for $\lambda=0$ and $\lambda=1$,
respectively.\\

Although each periodic orbit for $\lambda\in(0,1)$ is in one-to-one
correspondence with a rational number in the Farey tree through the
$\eta$-number, their symbolic sequences are, in principle, not
uniquely identified. For example, assume that for a certain value
$\lambda=\lambda_{2/5}$ there exists a periodic orbit with $\eta=2/5$.
Its symbolic sequence could be given by any of the generating
minimal blocks $\LL^2\R\LL\R$ or $\LL^3\R^2$, as they have length $5$
and contain two $\R$ symbols. It turns out that the symbolic sequence
of a periodic orbits with $\eta$-number $p/q$ i uniquely determined.
To explain this correspondance between periodic orbits and symbolic
sequences we construct the Farey tree of symbolic sequences
as follows. Starting with the sequences $\LL$ and
$\R$, one concatenates those sequences whose $\eta$-numbers are Farey
numbers. By construction, this provides a unique correspondence
between rational numbers and symbolic sequences through their
$\eta$-number and the Farey tree. That is, to each rational number
$P/Q$ in the Farey tree one associates a symbolic sequence $\Delta$
given by the concatenation
\begin{equation*}
\Delta=\alpha\beta,
\end{equation*}
where $\alpha$ and $\beta$ are the (minimal) symbolic sequences of the
Farey neighbours rational numbers $p/q$ and $p'/q'$, respectively:
\begin{itemize}
\item $p/q<p'/q'$
\item $p'q-pq'=1$
\item $P/Q=(p+p')/(q+q')$
\end{itemize}
As will be discussed in Section~\ref{sec:maximin_approach}, this
concatenation provides the so-called {\it maximin} sequences
(see Definition~\ref{def:maximin}).\\
We will prove in Section~\ref{sec:dyna_orient_preserv}
(Propositions~\ref{pro:well_ordered_symb-seq}
and~\ref{pro:concatenatetion}) that the following correspondance
holds. The symbolic sequence of the periodic orbit with $\eta$-number
$p/q$ is the one which corresponds to rational number $p/q$ in the
Farey tree of symblic sequences.\\

We are now ready to provide a formal description of the period adding
bifurcation structure.
\begin{definition}\label{def:period_adding}
We say that $f_\lambda$, as in Theorem~\ref{theo:adding_incrementing}
i), undergoes a period adding bifurcation structure if
\begin{enumerate}[i)]
\item for all values of $\lambda$, except for a Cantor set with zero
measure, the map $f_\lambda$ possesses a unique attracting periodic
orbit.
\item the $\eta$-number, $\eta\in\Q$, as a function of $\lambda$
follows a devil's staircase: it is a monotonically increasing
continuous function not defined in a Cantor set of zero measure (it is
hence locally constant for almost all values of $\lambda$)
\item if $f_{\lambda_\Delta}$ possesses a $\Delta$-periodic orbit with
$\eta=P/Q$, $(P,Q)=1$, then there exist values
$\lambda_\alpha<\lambda_\Delta<\lambda_\beta$ such that
$f_{\lambda_\alpha}$ and $f_{\lambda_\beta}$ possess $\alpha$ and
$\beta$-periodic orbits, respectively, whose $\eta$-numbers are Farey
neighbours and their mediant is $P/Q$.  Moreover, $\Delta$ is the
concatenation of $\alpha$ and $\beta$: $\Delta=\alpha\beta$
\end{enumerate}
\end{definition}

\subsection{Summary of the proof}\label{sec:summary}
In order to facilitate the lecture of the proof of the result
described in detail in Section~\ref{sec:period_adding}, we provide a
schematic summary of the necessary steps.
\begin{enumerate}[1)]
\item By performing the change of variables given in
Equation~\eqref{eq:change_variables} we first show that a piecewise-defined
map $f$ as in~\eqref{eq:normal_form} satisfying {\em i)} in
Theorem~\ref{theo:adding_incrementing} is an orientation preserving
circle map (see Definition~\ref{def:orient_preserving}). Although this
circle map will be discontinuous, in
Section~\ref{sec:properties_circle_maps} we will show how classical
results for continuous circle maps also hold. In particular, we will
show:
\begin{enumerate}
\item Given a map as above, its rotation number exists and is unique
(Proposition~\ref{pro:existence_rotation_number}).
\item If a map as above has a rational rotation number, then it
possesses a unique and stable periodic orbit
(Proposition~\ref{pro:po_rational_rho} + contractivness).
\item When a map as above has a rational rotation number, the unique
periodic orbit must be $p,q$-ordered (it is a twist orbit, see
Definition~\ref{def:well-ordered_sequence_of_points})
(Proposition.~\ref{pro:pq-ordred}).
\end{enumerate}
\item In Section~\ref{sec:dyna_orient_preserv} we provide symbolic
properties of the itineraries of periodic orbits of orientation
preserving circle maps. More precisely:
\begin{enumerate}
\item In Proposition~\ref{pro:well_ordered_symb-seq} we show that the
symbolic itinerary of a twist periodic orbit  is a $p,q$-ordered
symbolic sequence. This identifies each periodic orbit of an
orientation preserving map with a unique symbolic itinerary through
its rotation number.
\item In Proposition~\ref{pro:concatenatetion} we show that the
itineraries of $p,q$-ordered periodic orbits are given by
concatenation  of the itineraries of the periodic orbits with rotation
numbers the Farey parents of $p/q$. Hence, they are  in the Farey
tree of symbolic sequences shown in Figure~\ref{fig:farey_sequences}.
\end{enumerate}
\item In Section~\ref{sec:dyna_orient_preserv} we also
study one-parameter
families of orientation preserving (discontinuous) circle maps. This
is equivalent to varying the parameter $\lambda$ under the conditions of
Theorem~\ref{theo:adding_incrementing} {\em i)}. Using the continuity
of the rotation number (Proposition~\ref{pro:continuity_rot_num}) and
a result of Boyd (Theorem~\ref{theo:Boy85}), we show that the rotation
number (and hence the $\eta$-number) follows a devil's staircase
leading to the adding scenario when $\lambda$ is varied from
$0$ to $1$.
\end{enumerate}
We emphasize that the previous steps provide (to our knowledge) the
shortest path to prove {\em i)} of
Theorem~\ref{theo:adding_incrementing}. However, it is not the only
one.  In Section~\ref{sec:maximin_1d} we provide an alternative to
cover some of the steps mentioned above. These involve the concept of
{\em maximin} sequences and {\em quasi-contractions}.
\subsection{Reduction to an orientation preserving circle map and some properties}\label{sec:properties_circle_maps}
In this section we first show that, under condition {\em i)} of
Theorem~\ref{theo:adding_incrementing}, the piecewise-smooth
map~\eqref{eq:normal_form} can be reduced to a class of orientation
preserving (increasing) discontinuous circle maps.\\
Then, we present results on circle maps which are well known in the
continuous case. However, by making little modifications of the
classical proofs we adapt them to the discontinuous case. We will make
this clear in each particular situation.

\begin{figure}
\begin{center}
\begin{picture}(1,0.5)
\put(0,0.35){
\subfigure[\label{fig:circle_reduction_a}]{
\includegraphics[angle=-90,width=0.5\textwidth]{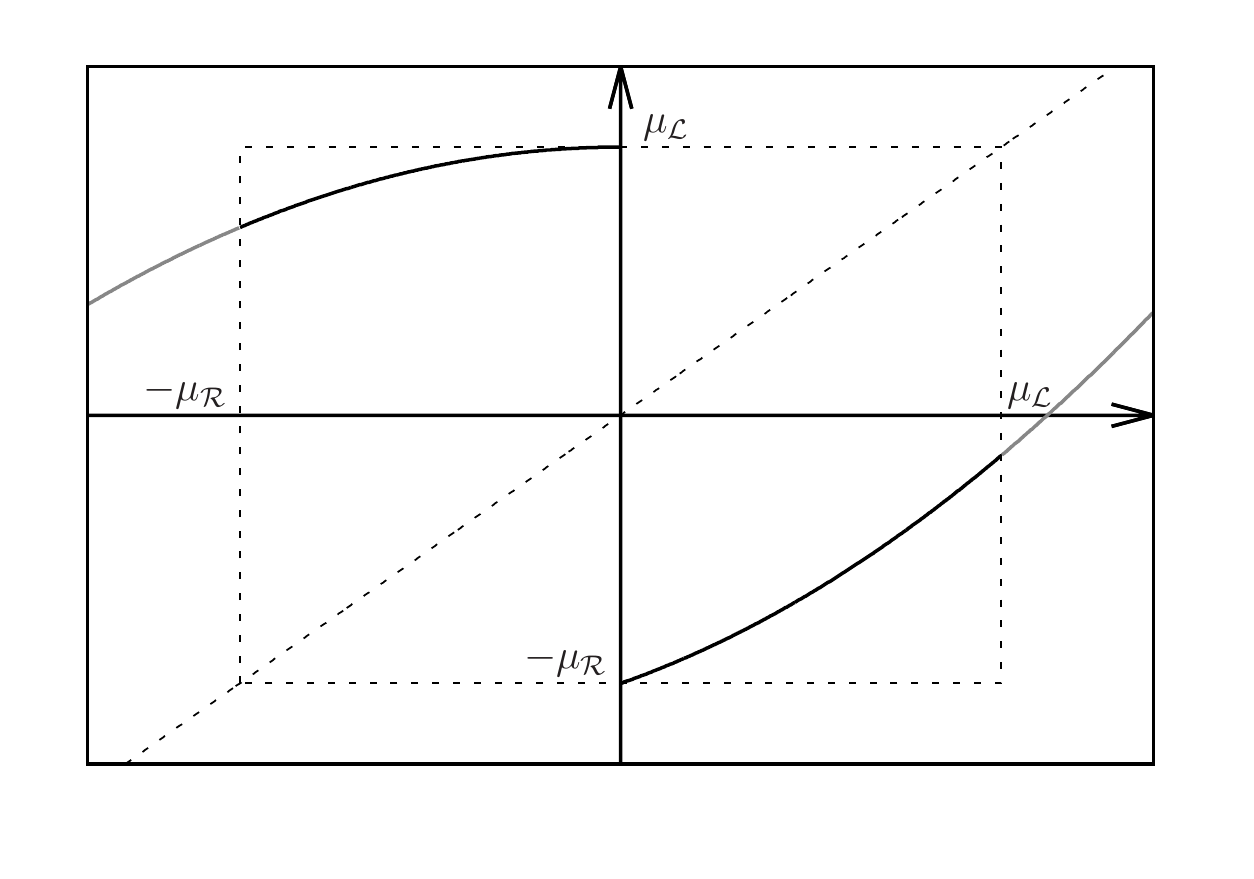}}
}
\put(0.5,0.35){
\subfigure[\label{fig:circle_reduction_b}]{
\includegraphics[angle=-90,width=0.5\textwidth]{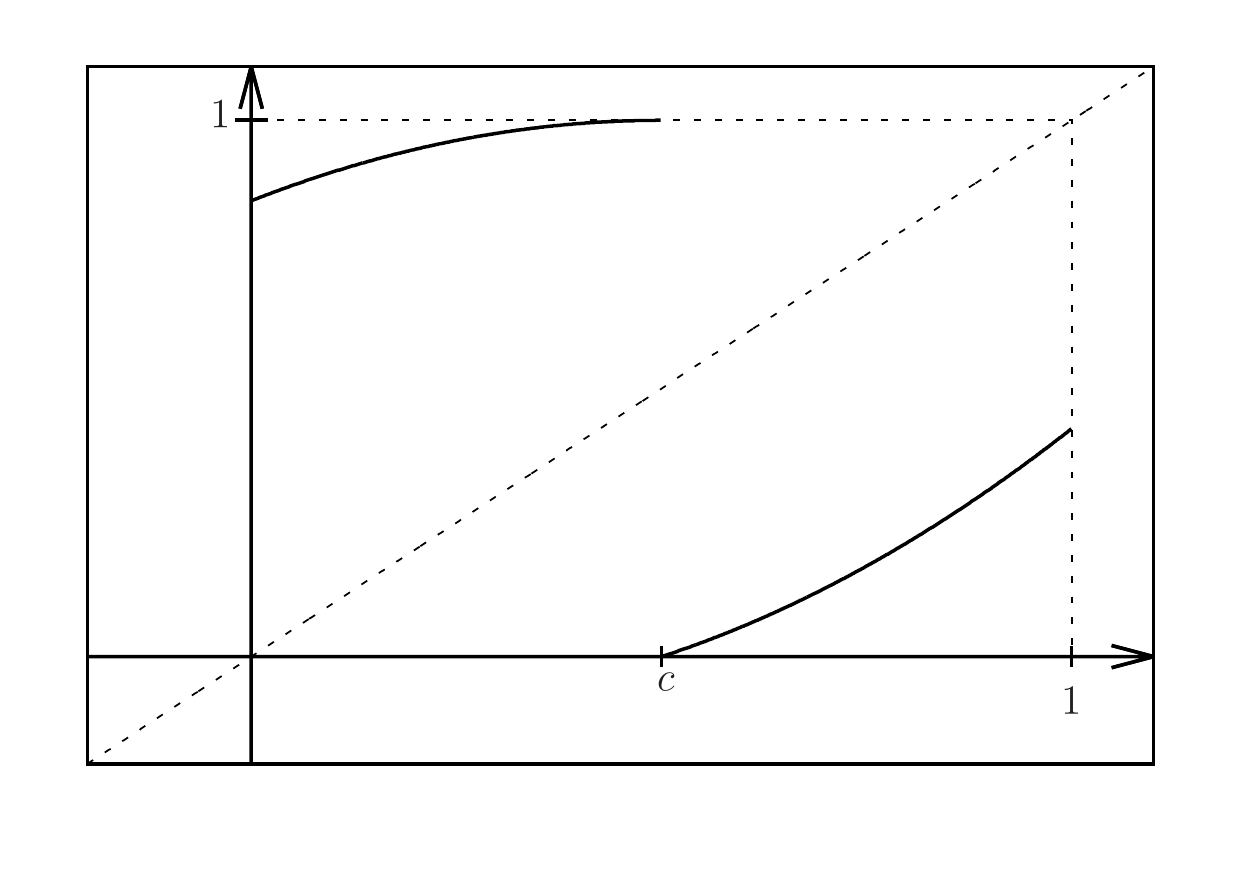}}
}
\end{picture}
\end{center}
\caption{(a) reduction of an increasing-increasing piecewise-smooth
map to an orientation preserving circle map. (b) circle map after the
change of variables given in Equation~\eqref{eq:change_variables}.}
\label{fig:circle_reduction}
\end{figure}

Let us observe that, under condition {\em i)} of
Theorem~\ref{theo:adding_incrementing}, the piecewise-smooth
map~\eqref{eq:normal_form} is increasing on both sides of the
discontinuity $x=0$. Hence, all the dynamics are attracted into the
interval $[f_\LL(0),f_\R(0)]=[-\mu_\R,\mu_\LL]$ (see
Figure~\ref{fig:circle_reduction}). By identifying these two values,
and considering the circle as $\RR/[-\mu_\R,\mu_\LL]$, the map becomes
a circle map which is continuous at $x=0$, but not necessarily at
$x=-\mu_\R\sim\mu_\LL$ (see Figure~\ref{fig:circle_reduction_a}). As
we are interested in varying the parameters $\mu_\R$ and $\mu_\LL$, we
perform the change of variables
\begin{equation}
\begin{array}[]{cccc}
\phi:&[-\mu_R,\mu_\LL]&\longrightarrow&[0,1]\\
&x&\longmapsto&\frac{x+\mu_\R}{\mu_\LL+\mu_R},
\end{array}
\label{eq:change_variables}
\end{equation}
which is a strictly increasing homeomorphism  mapping $-\mu_\R$ to $0$
and $\mu_\LL$ to $1$. Of special interest will be the value
\begin{equation*}
c=\phi(0)=\frac{\mu_\R}{\mu_\R+\mu_\LL},
\end{equation*}
which separates the behaviour given by $\phi\circ f_\LL\circ
\phi^{-1}$ and $\phi\circ f_\R\circ\phi^{-1}$.\\
Hence, by identifying the circle with the interval $[0,1]$,
$S^1=\RR/\Z$, we have reduced the piecewise-smooth map to a class of
circle maps, which consists of increasing (orientation preserving)
circle maps reaching exactly once the value $1$ (degree $1$) and not
necessarily continuous at $x=0\sim1$.  In the following definition we
make precise such class of maps.
\begin{definition}\label{def:orient_preserving}
We say that
\begin{equation}
f:\T\longrightarrow \T,
\label{eq:circle_map}
\end{equation}
$\T=\RR/\Z$, is an orientation preserving circle map (of degree one)
if there exists a unique $c\in[0,1]$ (where $[0,1]$ is identified with
the circle $S^1$) such that
\begin{enumerate}[C.1]
\item $f$ is $C^0$ in $(0,c)$ and $(c,1)$
\item there exists $c\in[0,1]$ such that $f$ is increasing in
$[0,c)$ and $(c,1]$
\item $f(c^-)=1$ and $f(c^+)=0$
\item $f(0^+)\ge f(1^-)$,
\end{enumerate}
\end{definition}
Due to the existence of $c$ fulfilling {\it C.3}, such a circle map is of
degree one, as the image of $\T$ by $f$ twists at most once around
$\T$. When also considering condition {\it C.4} we ensure that such an
orientation preserving map is invertible. However, condition {\it C.4}
allows this class of maps to be not necessary continuous at $x=0$.
Hence, at $x=0$ and $x=1$ one can choose between the image from the
left or right of $x=0$ (or indeed any other value). When convenient,
we will choose both values at $x=0$ and $x=1$ and deal with a bi-valued
function.
\begin{figure}
\begin{center}
\includegraphics[angle=-90,width=0.7\textwidth]{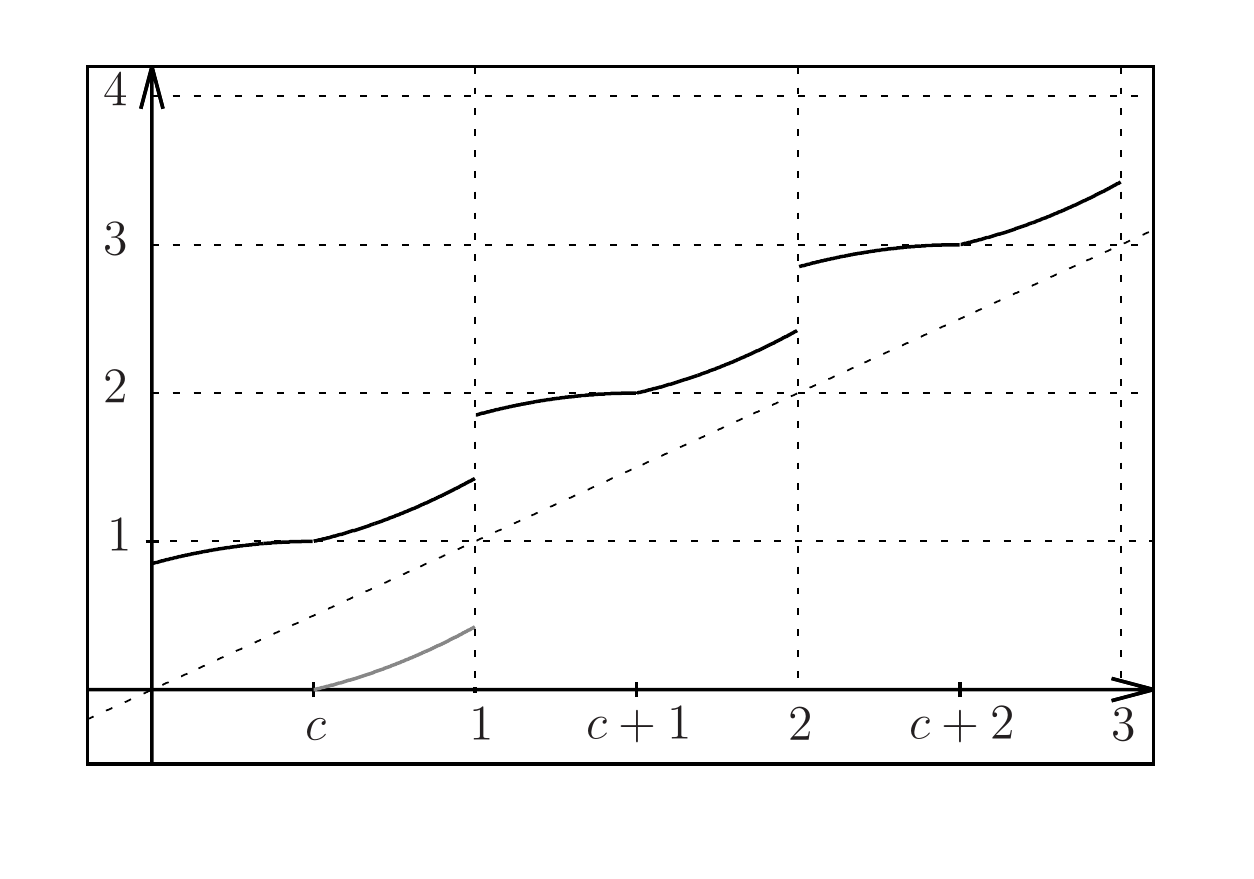}
\end{center}
\caption{Lift of the of the circle map shown in
Figure~\ref{fig:circle_reduction_b}. In gray we show the circle map
for $x\in(c,1)$.}
\label{fig:lift}
\end{figure}

Notice that, at this point, we are not requiring contractiveness, and
all results in this section hold also for expansive maps as long as
conditions~\condsC{} are satisfied.
\begin{definition}\label{def:lift}
Let $f$ be a map satisfying conditions~\condsC{}. We will say that
$F$ is a lift of $f$ of degree $N\ge 0$ if
\begin{equation}
\begin{aligned}
F(x+n)&=f(x)+nN&&\text{if } 0\le x< c\\
F(x+n)&=f(x)+nN+1&&\text{if } c\le x<1\\
\end{aligned}
\label{eq:lift_properties}
\end{equation}
If $N=1$, we will refer to $F$ just as the lift of $f$.
\end{definition}
\begin{remark}
The previous definition of the degree of a circle map
differs from the standard one. When $f$ is a continuous circle map,
the degree of its lift $F$ is the integer number $N$ such that
\begin{equation*}
F(x+1)=F(x)+N,
\end{equation*}
and $F$ is also continuous. In our case, as $f$ is discontinuous at
$x=1$, the lift $F$ cannot be continuous and, hence, its degree can be
somehow chosen.\\
Note that if $f$ is a circle map satisfying~\condsC{} then, if it is
continuous at $x=0\sim 1$ and $F$ is its continuous lift, then
necessarily $N=1$ and $F$ is of degree one in the classical sense.
\end{remark}
From now on, we will restrict to lifts of degree one.
\begin{remark}
Due to condition {\it C.4}, the lift of an orientation preserving map
is an increasing map, possibly discontinuous at integer numbers, where
it undergoes a positive gap.
\end{remark}
\begin{remark}\label{rem:bivalued}
In definition~\ref{def:lift} the value of the lift $F$ at integer
numbers, $n$, is not uniquely defined. When convenient, we will use
the one given by $f(1^-)+n$, $f(0^+)+n$ or both. The latter will lead
to a bi-valued lift.
\end{remark}
The following result is well known and provides the definition of the
rotation number of an orientation preserving circle map
satisfying~\condsC{}. This was introduced by
Poincar\'e~\cite{Poincare1881} for homeomorphisms of the circle of
degree $1$ and later studied and extended to rotation intervals by
many authors (see~\cite{AlsLliMis00} and references therein). For
discontinuous orientation preserving circle maps, this was proven
in~\cite{RhoTho86,Gam87}.  However, if one considers a bi-valued lift
at integer numbers (see Remark~\ref{rem:bivalued}), then the standard
proof holds. 
\begin{proposition}[\cite{Gam87} III.1.1-1,\cite{RhoTho86} Theorem 1]
Let $f$ be a discontinuous orientation preserving map
satisfying~\condsC{}, and let $F$ be its lift. Then, for all
$x\in\RR$, the limit
\begin{equation}
\lim_{x\to\infty}\frac{F^n(x)-x}{n}
\label{eq:rotation_number}
\end{equation}
exists and is independent of $x$.
\label{pro:existence_rotation_number}
\end{proposition}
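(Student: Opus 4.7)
The plan is to adapt the classical Poincaré argument for orientation-preserving circle homeomorphisms, adjusting only the points where the discontinuity at integers intervenes; the bi-valued convention at integers (Remark~\ref{rem:bivalued}) is what lets the standard estimates survive. First I would record the two structural properties of the lift $F$ that drive everything: (a) $F$ is non-decreasing on all of $\RR$, and (b) $F(x+1)=F(x)+1$ for every $x$. Property (a) is immediate on each continuity interval from \emph{C.2}, is preserved at $x=c$ because the definition~\eqref{eq:lift_properties} forces $F(c^-)=f(c^-)=1=f(c^+)+1=F(c^+)$ (so $F$ is actually continuous at $c$), and at integer points the jump is non-negative precisely thanks to \emph{C.4} (using the bi-valued convention this is an honest monotonicity statement about a set-valued map). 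Property (b) is built into~\eqref{eq:lift_properties}.

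Next, define
\begin{equation*}
M_n=\sup_{x\in\RR}\bigl(F^n(x)-x\bigr),\qquad m_n=\inf_{x\in\RR}\bigl(F^n(x)-x\bigr),
\end{equation*}
taking the supremum and infimum over all admissible values at integer points. Since $F-\mathrm{id}$ is $1$-periodic by (b) and $F$ is bounded on $[0,1]$, both $M_n$ and $m_n$ are finite. The fundamental estimate $M_n-m_n\le 1$ follows from (a) and (b): for any $x,y$ with $|x-y|\le 1$ one has $|F^n(x)-F^n(y)|\le 1$, hence $|(F^n(x)-x)-(F^n(y)-y)|\le 1$, and by $1$-periodicity it suffices to compare $x,y\in[0,1]$. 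Subadditivity $M_{n+m}\le M_n+M_m$ and superadditivity $m_{n+m}\ge m_n+m_m$ come from writing $F^{n+m}(x)-x=(F^n(F^m(x))-F^m(x))+(F^m(x)-x)$ and bounding each summand by $M_n,M_m$ respectively (and symmetrically for the infimum).

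By Fekete's lemma, $M_n/n$ converges to $\inf_n M_n/n$, and $m_n/n$ converges to $\sup_n m_n/n$. The pinching $|M_n-m_n|\le 1$ forces both limits to coincide, call it $\rho$. Since $m_n\le F^n(x)-x\le M_n$ for every $x\in\RR$, dividing by $n$ and letting $n\to\infty$ gives
\begin{equation*}
\lim_{n\to\infty}\frac{F^n(x)-x}{n}=\rho,
\end{equation*}
independently of $x$, which is exactly~\eqref{eq:rotation_number}.

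The only genuine obstacle compared to the continuous case is verifying that the monotonicity of $F$ and the uniform bound $M_n-m_n\le 1$ survive at integer discontinuities; this is precisely where hypothesis \emph{C.4} and the bi-valued convention are used. Once monotonicity and the commutation $F(x+1)=F(x)+1$ are in hand, the Fekete subadditivity argument proceeds without any modification from the classical setting.
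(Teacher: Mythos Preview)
Your argument is correct and is the standard Fekete--subadditivity proof; the paper's proof rests on the same two structural facts (monotonicity of $F$ and $F(x+1)=F(x)+1$, both preserved at the discontinuities by \emph{C.4} and the bi-valued convention) but organizes the conclusion differently. The paper first reduces to the single sequence $a_n=F^n(0)$ by sandwiching $F^n(x)/n$ between $F^n(0)/n$ and $(F^n(0)+1)/n$, and then invokes an approximate-additivity lemma from~\cite{RhoTho86} (if $|a_{m+n}-a_m-a_n|\le A$ then $|a_n-n\rho|\le A$ for some $\rho$) to obtain existence of the limit. Your route instead tracks the two sequences $M_n=\sup(F^n-\mathrm{id})$ and $m_n=\inf(F^n-\mathrm{id})$, uses Fekete on each, and squeezes. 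Both approaches are equivalent in strength; yours is slightly more self-contained since Fekete's lemma is textbook material, while the paper's version gives the sharper quantitative bound $|F^n(0)-n\rho|\le 1$ directly. Either way, the only place the discontinuity matters is in checking that $F$ stays non-decreasing across integers, which you correctly isolate.
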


\begin{proof}
We give the standard proof a slight modification to overcome the
discontinuities at integer numbers.\\
As $F$ is increasing, we get that, for $0\le x\le 1$
\begin{equation*}
\frac{F^n(0^-)}{n}\le\frac{F^n(0^+)}{n}\le
\frac{F^n(x)}{n}\le\frac{F^n(1^-)}{n}\le \frac{F^n(1^+)}{n}.
\end{equation*}
Noting that we take $F$ bi-valued at integer values, we can write this
as
\begin{equation*}
\frac{F^n(0)}{n}\le \frac{F^n(x)}{n}\le \frac{F^n(1)}{n},
\end{equation*}
where $F(0)$ and $F(1)$ can be each of the lateral values.\\
Using that $F(x+1)=F(x)+1$ and applying it recursively to $F^n(1)$ we
have that 
\begin{equation*}
\frac{F^n(0^+)}{n}\le\frac{F^n(x)}{n}\le\frac{F^n(1^+)}{n}=\frac{F^n(0^+)+1}{n},
\end{equation*}
which we can write as
\begin{equation*}
\frac{F^n(0)}{n}\le \frac{F^n(x)}{n}\le \frac{F^n(0)+1}{n},
\end{equation*}
where $F(1)=F(0)+1$ means $F(1^\pm)=F(0^\pm)+1$.\\
Hence, taking limits we get
\begin{equation*}
\lim_{n\to\infty}\frac{F^n(x)-x}{n}=\lim_{n\to\infty}\frac{F^n(x)}{n}=\lim_{n\to\infty}\frac{F^n(0)}{n},
\end{equation*}
and the limit does not depend on $x$. We next show that, indeed, this
limit exists. We apply Proposition~1 of~\cite{RhoTho86}, which states
that, if a sequence $a_n$ satisfies
\begin{equation}
\vert a_{m+n}-a_m-a_n\vert \le A,
\label{eq:prop1_RhoTho}
\end{equation}
for all $n,m\ge1$ and some constant $A$, then there exists some
$\rho$ such that
\begin{equation*}
\vert a_n-n\rho\vert \le A.
\end{equation*}
The sequence $F^n(0)$ satisfies~\eqref{eq:prop1_RhoTho} with
$A=1$. To see this, we use that $F(x+n)=F(x)+n$ to obtain, for
any $x\ge 0$,
\begin{equation*}
F^n(x)=F^n(x\pmod 1)+[x]<F^n(1)+x=F^n(0)+x+1,
\end{equation*}
where $[\cdot]$ denotes the integer part.  Then, taking $x=F^m(0)$, we
get
\begin{equation*}
F^{n+m}(0)<F^n(0)+F^m(0)+1,
\end{equation*}
and~\eqref{eq:prop1_RhoTho} is satisfied with $A=1$. Then, there
exists some $\rho$ such that
\begin{equation*}
\lim_{n\to\infty}\frac{F^n(0)}{n}\le
\lim_{n\to\infty}\frac{1+n\rho}{n}=\rho,
\end{equation*}
and the limit exists.
\end{proof}
Note that this proof differs from the one given in~\cite{AlsLliMis00}.
There, it is first proved that, if $f$ possesses a periodic orbit,
then this limit exists and is rational; then it is shown that it does
not depend on $x$ as above. Subsequently, the definition is extended
to all real numbers using monotonicity and the Dedekind cut
construction. In this approach we show the existence of this limit and
then we discuss the dynamics of the map depending on its value.

The previous result permits one to define the rotation number of a
circle map $f$ fulfilling~\condsC{}. Note that, in general, this
depends on the lift of $F$. However, recall that we have restricted to
lifts of degree one.
\begin{definition}[Rotation number]\label{def:rotation_number}
Given a map $f$ satisfying~\condsC{} and $F$ its lift, we define the
rotation number of $f$ as
\begin{equation*}
\rho(f)=\lim_{n\to\infty}\frac{F^n(x)-x}{n},
\end{equation*}
for any $x\in\RR$.
\end{definition}
\begin{remark}\label{rem:interpretation}
The fact that the limit given in Equation~\eqref{eq:rotation_number} exists
implies that $F^n(x)$ grows linearly with $n$:
\begin{equation*}
F^n(x)-x\sim n\rho(f),\,n>>1.
\end{equation*}
Recalling the properties shown in~\eqref{eq:lift_properties}, the
rotation number $\rho(f)$ can be seen as the average number of
times that the lift $F$ crosses an integer number per iteration.
\end{remark}

The next result is also standard for continuous circle maps
(Proposition 3.7.11 of~\cite{AlsLliMis00}), it provides the existence of a
periodic orbit if the rotation number is rational. Below we prove that
it also holds for discontinuous circle maps.
\begin{proposition}
Let $F$ be the lift (of degree one regarding
Definition~\ref{def:lift}) of a circle map satisfying~\condsC{}. Then,
$\rho(F)=p/q$, $(p,q)=1$, if and only if there exists $x_0$ s.t.
$F^q(x_0)=x_0+p$.
\label{pro:po_rational_rho}
\end{proposition}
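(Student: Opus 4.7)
The plan is to prove the two implications separately using the identity $F(x+1)=F(x)+1$ from Definition~\ref{def:lift} with $N=1$, which immediately gives $F^q(x+1)=F^q(x)+1$ for each $q\ge 1$.

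For the easy direction ($\Leftarrow$), I would assume $F^q(x_0) = x_0 + p$ and iterate to obtain $F^{kq}(x_0) = x_0 + kp$ for every $k\ge 1$, so $(F^{kq}(x_0) - x_0)/(kq) = p/q$ along the subsequence $n = kq$. Since the limit in Proposition~\ref{pro:existence_rotation_number} exists and is independent of the starting point, it must equal $p/q$, yielding $\rho(F) = p/q$.

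For the converse, I would set $G := F^q - p$, which is increasing, satisfies $G(x+1)=G(x)+1$, and, treating $F$ as bi-valued at integer discontinuities as in Remark~\ref{rem:bivalued}, has only non-negative jumps (since $F$ is increasing and its jump at each integer equals $f(0^+) - f(1^-) \ge 0$ by {\em C.4}). The goal is a fixed point $G(x_0)=x_0$. Suppose for contradiction no such $x_0$ exists, not even in the bi-valued sense; then the $1$-periodic function $h(x) := G(x) - x$ has only non-negative jumps and therefore cannot change sign without passing through zero, so $h$ has constant strict sign on $\RR$. Assuming $h>0$ (the case $h<0$ is symmetric), $1$-periodicity and compactness of $[0,1]$ produce $\delta > 0$ with $G(x) \ge x + \delta$ for all $x$. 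Iterating, using monotonicity of $G$, yields $G^n(x)\ge x + n\delta$, hence $F^{nq}(x) - x \ge np + n\delta$; dividing by $nq$ and letting $n\to\infty$ gives $\rho(F) \ge p/q + \delta/q > p/q$, contradicting the hypothesis.

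The main obstacle is the lack of continuity of $F^q$, which carries jumps at every preimage of an integer under $F^j$ for $0\le j\le q-1$, so the classical intermediate-value argument for continuous circle maps does not apply verbatim. I expect this to be resolved by the sign-preserving nature of those jumps: on the finitely many continuity arcs of $h$ within $[0,1]$ the uniform bound $\delta$ follows by compactness, and at a discontinuity where $h$ would change sign the bi-valued convention for $F^q$ at that point already supplies the required periodic orbit.
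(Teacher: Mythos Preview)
Your proposal is correct and is essentially the paper's own argument: the paper packages your two key steps as separate lemmas --- Lemma~\ref{lem:rhodes_lemma3} (if $G=F^q-p$ takes values on both sides of the diagonal then it has a fixed point) and Lemma~\ref{lem:Fq-p_ge_x} (if $G(x)-x$ has a strict sign, the degree-zero map $G(x)-x$ is bounded away from zero by periodicity, forcing $\rho(F)\neq p/q$) --- and then combines them exactly as you do.

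One small correction to your last paragraph: the bi-valued convention does \emph{not} by itself supply a periodic point when $h$ changes sign across a positive jump, since $G(x_0^-)<x_0<G(x_0^+)$ gives no fixed value. The actual reason the argument survives (and the content of Lemma~\ref{lem:rhodes_lemma3}) is periodicity: if $h$ ever goes from negative to positive across a jump, then because $h$ is $1$-periodic it must also go from positive to negative somewhere, and that transition cannot occur at a non-negative jump, so it happens on a continuity arc where the intermediate value theorem yields a genuine zero. Your third paragraph already uses this correctly; only the explanation in the closing paragraph needs adjusting.
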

Proposition~\ref{pro:po_rational_rho}, whose proof is given below, is
stated assuming that the map $F$ is bi-valued at integer values (see
Remark~\ref{rem:bivalued}). This ensures that one always finds a
periodic orbit if the rotation number is rational. However, if one
considers only one image at integer numbers, given by $f(0^+)$ or
$f(1^-)$, one can lose the existence of a periodic orbit and get a
$\omega$-limit consisting of $q$ points mimicking a periodic orbit.
That is, one recovers a result given in~\cite{RhoTho86}, which we
repeat below for completeness.

\begin{proposition}[\cite{RhoTho86} Th. 2]
Let $F$ be the lift (of degree one regarding
Definition~\ref{def:lift}) of a circle map satisfying~\condsC{} and
$p,q\in\mathbb{Z}$ with $q>0$. If $\rho(F)=p/q$ then exactly one of
the following holds.
\begin{enumerate}[i)]
\item There exists $x_0\in\RR$ such that $F^q(x_0)=x_0+p$.
\item For all $x\in\RR$, $F^q(x)>x+p$, and there exists $x_0\in\RR$ such that 
\begin{equation*}
\lim_{x\to x_0^-}F^q(x)=x_0+p.
\end{equation*}
\item For all $x\in\RR$, $F^q(x)<x+p$, and there exists $x_0\in\RR$ such that 
\begin{equation*}
\lim_{x\to x_0^+}F^q(x)=x_0+p.
\end{equation*}
\end{enumerate}
Conversely, if either i), ii) or iii) holds then $\rho(F)=p/q$.
\label{pro:po_rhodes_thompson}
\end{proposition}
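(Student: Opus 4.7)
The plan is to reduce the statement to the map $G(x) := F^q(x) - p$ and exploit its one-sided-jump structure. Observe that $G$ is increasing (as a composition of increasing maps), commutes with integer translations (since $F$ does), and inherits from $F$ the property that all of its finitely many discontinuities in one period are upward jumps: a jump of $F$ at an integer is upward by C.4, and if $G_1,G_2$ are increasing with only upward jumps then so is $G_2 \circ G_1$ by monotonicity. The rotation numbers are related by $\rho(G) = q\rho(F) - p$, so $\rho(F) = p/q$ is equivalent to $\rho(G) = 0$, and the three alternatives translate to: (i) $G$ has a fixed point, (ii) $G(x) > x$ for all $x$ with $\lim_{x \to x_0^-} G(x) = x_0$ for some $x_0$, and (iii) $G(x) < x$ for all $x$ with $\lim_{x \to x_0^+} G(x) = x_0$. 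Mutual exclusivity of (i)--(iii) is then immediate.

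For the forward implication I will assume (i) fails and prove that (ii) or (iii) must hold. The first sub-step is to show that $G - \mathrm{id}$ has constant sign on $\RR$: otherwise, picking $x_1 < x_2$ inside one period with $G(x_1) > x_1$ and $G(x_2) < x_2$ and setting $z = \inf\{x \in [x_1, x_2] : G(x) \leq x\}$, a short case analysis of the lateral limits of $G$ at $z$ combined with the upward-jump property forces either $G(z) = z$ (contradicting (i) failing) or a downward jump of $G$ (contradicting the inherited jump structure). Say $G > \mathrm{id}$ everywhere and write $\psi := G - \mathrm{id}$, which is $1$-periodic and strictly positive; because $\rho(G) = 0$ we must have $\inf \psi = 0$, for otherwise $G^n(x) \geq x + n\inf\psi$ would force $\rho(G) > 0$. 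Extracting a subsequence $x_n \to x_0$ with $\psi(x_n) \to 0$, I will verify the approach is from the left: if $x_n \searrow x_0$ then $\psi^+(x_0) = 0$, and combined with $\psi^-(x_0) \leq \psi^+(x_0)$ (upward jumps) and $\psi(x_0) \in [\psi^-(x_0), \psi^+(x_0)]$ this forces $\psi(x_0) \leq 0$, contradicting $\psi > 0$. Hence $x_n \nearrow x_0$ and $\lim_{x \to x_0^-} G(x) = x_0$, which is (ii); alternative (iii) follows symmetrically.

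For the converse, (i) directly yields $F^{nq}(x_0) = x_0 + np$ and hence $\rho(F) = p/q$ from the definition. Assuming (ii), I fix $x$ slightly below $x_0$; strict monotonicity of $G$ on the interval $(x, x_0)$, which can be taken to contain no discontinuities of $G$, gives $G(x) < G(x_0^-) = x_0$, and inductively $G^k(x) < x_0$ for all $k$. The orbit $\{G^k(x)\}$ is therefore monotone increasing and bounded above, and so converges to some $L \le x_0$; the absence of fixed points forces $L$ to sit at a discontinuity of $G$ at which $G(L^-) = L$. In particular $G^k(x)/k \to 0$, so $\rho(G) = 0$ and hence $\rho(F) = p/q$ by independence of the base point established in Proposition~\ref{pro:existence_rotation_number}; case (iii) is again symmetric.

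The main obstacle I anticipate is the one-sided part of (ii) and (iii). A priori the minimising sequence for $\psi$ could accumulate from either side of some point, and only the upward-jump structure of $G$ inherited from C.4 rules out the ``wrong'' side; correctly tracking that all jumps of $F^q$ remain upward, and deploying this fact precisely where it is needed, is the delicate piece of the argument. Everything else reduces to standard monotone-convergence arguments and the basic definition of the rotation number.
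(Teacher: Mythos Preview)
The paper does not actually supply a proof of this proposition: it is quoted from \cite{RhoTho86} (Theorem~2) for completeness, and the surrounding text then proves only the closely related bi-valued statement (Proposition~\ref{pro:po_rational_rho}) via Lemmas~\ref{lem:Fq-p_ge_x} and~\ref{lem:rhodes_lemma3}. So there is no in-paper proof to compare against directly. That said, your argument is correct and tracks the same skeleton as the paper's machinery for the bi-valued version: your ``$\inf\psi=0$, else $\rho(G)>0$'' step is exactly Lemma~\ref{lem:Fq-p_ge_x}, and your ``constant sign'' step is Lemma~\ref{lem:rhodes_lemma3}. What you add, and what the paper defers entirely to \cite{RhoTho86}, is the one-sided analysis showing that the minimising sequence for $\psi$ can only accumulate from the left (respectively right), which is where the upward-jump structure is used decisively. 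That part is handled cleanly.

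One small point to tighten: in the constant-sign step you write ``picking $x_1<x_2$ inside one period with $G(x_1)>x_1$ and $G(x_2)<x_2$''. A priori the two sign witnesses could occur in the opposite order within a single period; you need the $1$-periodicity of $G-\mathrm{id}$ to translate one of them and force the configuration $x_1<x_2$ with $G(x_1)>x_1$, $G(x_2)<x_2$. This is precisely the manoeuvre in the paper's proof of Lemma~\ref{lem:rhodes_lemma3} (the ``$F(1)=F(0)+1$'' line), and once noted your case analysis goes through. Also, in the converse for case~(ii) you do not actually need to restrict to an interval free of discontinuities: strict monotonicity of $G$ alone gives $G(x)<G(x_0^-)=x_0$ for every $x<x_0$, so the induction $G^k(x)<x_0$ is immediate. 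These are cosmetic; the proof is sound.
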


\begin{remark}
The situations ii) or iii) occur when $x_0=c$. That is, when a periodic
orbit bifurcates and the image of $x=0$ by the circle map takes the
value on the ``wrong'' side. More precisely, if $f(0)=f(0^{+})$ then
ii) occurs when a periodic orbits bifurcates when approaching
$x=c$ from the left. Similarly, if $f(0)=f(1^-)$, iii) occurs when
a periodic orbit bifurcates by colliding with $x=c$ from the right.
\end{remark}

For completeness, we will provide a proof of
Proposition~\ref{pro:po_rational_rho} based on~\cite{AlsLliMis00} but
adapting it to the discontinuous case, as in the proof of
Proposition~\ref{pro:po_rational_rho}. It relies on the following two
lemmas.\\
The first lemma is equivalent to Lemma 2 of~\cite{RhoTho86}. We
announce it as in Lemma 3.7.10 of~\cite{AlsLliMis00} but we adapt its
proof to hold also for discontinuous circle maps.
\begin{lemma}
Let $F$ be the lift of a circle map satisfying~\condsC{}, and let
$p\in\mathbb{Z}$. Then,
\begin{enumerate}[i)]
\item If $F^q(x)-p>x$ for all $x\in\RR$, then there exists
$\varepsilon>0$ such that $\rho(F)\ge p/q+\varepsilon$.
\item If $F^q(x)-p<x$ for all $x\in\RR$, then there exists
$\varepsilon>0$ such that $\rho(F)\le p/q-\varepsilon$.
\end{enumerate}
\label{lem:Fq-p_ge_x}
\end{lemma}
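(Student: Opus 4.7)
The plan is to adapt the classical compactness argument for continuous circle maps, handling the countably many discontinuities produced by iteration. I treat only (i); case (ii) is fully symmetric, with inequalities reversed throughout.

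\textbf{Step 1 (setup).} Set $G(x) := F^q(x) - p - x$. Since $F(x+1)=F(x)+1$ the iterate also satisfies $F^q(x+1)=F^q(x)+1$, hence $G$ is $1$-periodic. It therefore suffices to bound $G$ from below by a strictly positive constant on $[0,1]$.

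\textbf{Step 2 (finiteness of discontinuities).} By C.1--C.4 the lift $F$ is strictly increasing and continuous off the integers, with a (possibly zero) upward gap at each integer. Strict monotonicity forces $F^{-1}(n)$ to consist of at most one point, so $F^{-1}(\Z)\cap[0,1]$ is finite. By induction, the discontinuity set of $F^q$ on $[0,1]$ equals $\bigcup_{i=0}^{q-1}F^{-i}(\Z)\cap[0,1]$, which is again finite. Label these points $0=x_0<x_1<\dots<x_k=1$.

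\textbf{Step 3 (positive minimum).} Working with the bi-valued lift (Remark~\ref{rem:bivalued}), the hypothesis $F^q(x)-p>x$ is understood as $F^q(x^\pm)-p>x$ for every $x$, so both one-sided limits of $G$ at each $x_i$ are strictly positive. On each subinterval $[x_{i-1},x_i]$ the function $G$, extended continuously at the endpoints using those one-sided limits, is continuous and strictly positive; compactness yields a minimum $\delta_i>0$. Setting $\delta:=\min_{1\le i\le k}\delta_i>0$ and invoking $1$-periodicity gives
\begin{equation*}
F^q(x)\ge x+p+\delta\qquad\text{for all }x\in\RR.
\end{equation*}

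\textbf{Step 4 (iteration and limit).} A straightforward induction, using monotonicity of $F^q$ at each step and applying the bound of Step~3 to $y=F^{nq}(x)$, yields $F^{nq}(x)\ge x+n(p+\delta)$ for every $n\ge 1$. Dividing by $nq$ and passing to the limit (Definition~\ref{def:rotation_number}) gives
\begin{equation*}
\rho(F)=\lim_{n\to\infty}\frac{F^{nq}(x)-x}{nq}\ge \frac{p+\delta}{q}=\frac{p}{q}+\frac{\delta}{q},
\end{equation*}
so $\varepsilon:=\delta/q>0$ fulfils the claim.

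\textbf{Main obstacle.} In the continuous case Step~3 is immediate from compactness of $S^1$ and continuity of $G$. Here the real work is Step~2--3: one has to check that the discontinuity set of $F^q$ on a fundamental domain is finite (which rests on strict monotonicity of $F$) and that the strict inequality $G>0$ is preserved when passing to one-sided limits at those discontinuities. The bi-valued convention for the lift at integer preimages is precisely what makes the latter true and saves the compactness argument; once these technicalities are in place the iteration in Step~4 is the standard passage from a ``gap estimate'' to a rotation-number estimate.
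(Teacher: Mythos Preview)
Your proof is correct and follows essentially the same approach as the paper: reduce to a $1$-periodic function $G=F^q-p-\mathrm{id}$, extract a uniform positive lower bound $\delta$ via compactness of a fundamental domain, then iterate and divide by $nq$ to bound $\rho(F)$ below by $p/q+\delta/q$. The paper's version is terser---it invokes the bi-valued convention and degree-zero property to assert the existence of $\delta$ in one line and iterates via a telescoping sum rather than induction---whereas you spell out explicitly why the discontinuity set of $F^q$ on $[0,1]$ is finite and why compactness applies piecewise; but the underlying argument is the same.
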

Obviously, the proof provided in~\cite{RhoTho86} also holds. However,
by considering that $F$ is bi-valued at integer numbers, we can
proceed as in the proof for the continuous case~\cite{AlsLliMis00},
which we repeat for completeness.\\
\begin{proof}
We show {\em i)}; {\em ii)} can be
proven analogously.  Note that $G(x)=F^q(x)-p-x$
is of degree of zero; i.e.  $G(x+1)=G(x)$. Hence, recalling that $F$
is bi-valued at its discontinuities and hence $F^q(x)-p-x>0$ holds for
each lateral value, it follows that there exists some $\delta>0$ such
that, for every $x$, $F^q(x)-p-x\ge \delta$. Then, for all $k$ we get
\begin{equation*}
F^{qk}(x)-x=\sum_{i=0}^{k-1}\left( F^q\left( F^{qi}(x)
\right)-F^{qi}(x) \right)\ge k(p+\delta).
\end{equation*}
Hence, as by Proposition~\ref{pro:existence_rotation_number} the rotation
number exists and is unique, we have that
\begin{equation*}
\rho(F)=\lim_{k\to\infty}\frac{F^{qk}(x)-x}{qk}\ge
\frac{k(p+\delta)}{kq}=\frac{p}{q}+\frac{\delta}{q},
\end{equation*}
which proves {\em i)} with $\varepsilon=\delta/q$.
\end{proof}
Note that the proof given in~\cite{AlsLliMis00} is slightly different,
as it does not use the uniqueness of the rotation number.\\
The following lemma is trivial for the continuous case.
\begin{lemma}[\cite{RhoTho86} Lemma 3]
Let 
\begin{equation*}
F:\RR\longrightarrow\RR
\end{equation*}
be a (not necessary continuous) non-decreasing map fulfilling
$F(x+1)=x+1$ for all $x\in\RR$.  Assume that $F(x_1)>x_1$ and
$F(x_2)<x_2$ for some $x_1,x_2\in\RR$. Then there exists some $x_0$
such that $F(x_0)=x_0$, and $F$ is continuous on the left at $x_0$.
\label{lem:rhodes_lemma3}
\end{lemma}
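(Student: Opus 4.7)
The plan is to mimic the classical intermediate value argument that handles the continuous case, but replace evaluation at a single limit point by taking appropriate one-sided limits and exploiting monotonicity to compare them to $F(x_0)$. Set $G(x) = F(x) - x$; the hypothesis $F(x+1) = F(x) + 1$ (which is what makes the statement meaningful, despite the literal ``$F(x+1)=x+1$'' in the lemma) gives that $G$ is $1$-periodic, so by translating $x_1$ by a suitable integer and using $F(x+n)=F(x)+n$ we may assume without loss of generality that $x_1 < x_2$, with $G(x_1)>0$ and $G(x_2)<0$.

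Next, I would define
\begin{equation*}
x_0 = \sup\{ x \in [x_1,x_2] : F(x) \ge x \}.
\end{equation*}
This set is non-empty (it contains $x_1$) and bounded above by $x_2$, so $x_0 \in [x_1,x_2]$ is well defined. Note that $F$ is monotone, so the one-sided limits
\begin{equation*}
F(x_0^-) := \sup_{x < x_0} F(x), \qquad F(x_0^+) := \inf_{y > x_0} F(y)
\end{equation*}
both exist, and $F(x_0^-) \le F(x_0) \le F(x_0^+)$.

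The heart of the argument is then two one-sided estimates. Pick a sequence $x_n \nearrow x_0$ with $F(x_n) \ge x_n$ (possible by the definition of the supremum); monotonicity gives $F(x_0^-) \ge \lim_n F(x_n) \ge \lim_n x_n = x_0$, hence $F(x_0) \ge F(x_0^-) \ge x_0$. Conversely, for every $y \in (x_0,x_2]$ the definition of $x_0$ forces $F(y) < y$, so $F(x_0^+) = \inf_{y > x_0} F(y) \le x_0$, and therefore $F(x_0) \le F(x_0^+) \le x_0$. The two inequalities combine to $F(x_0) = x_0$, and they also yield $F(x_0^-) = x_0 = F(x_0)$, which is exactly left continuity of $F$ at $x_0$.

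I expect no serious obstacle beyond the care needed to separate $F(x_0^-)$, $F(x_0)$, and $F(x_0^+)$ — in the continuous case these coincide and the argument collapses to a one-line IVT. The subtle point is that the supremum definition is tailored to produce the correct one-sided continuity: approaching $x_0$ from the left is controlled by points where $F \ge \mathrm{id}$, forcing $F(x_0^-) \ge x_0$, while approaching from the right is controlled by points where $F < \mathrm{id}$, forcing $F(x_0^+) \le x_0$. If we had used an infimum instead, the same technique would produce a fixed point with right continuity, but not necessarily left continuity, so the asymmetry in the conclusion of the lemma is essential and is baked into the choice of supremum.
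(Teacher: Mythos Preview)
There is a genuine gap in the left-continuity argument. Your claim that one can pick a sequence $x_n \nearrow x_0$ strictly with $F(x_n) \ge x_n$ ``by definition of the supremum'' fails when $x_0$ happens to be an isolated point (from the left) of the set $S = \{x \in [x_1,x_2] : F(x) \ge x\}$. Consider for instance $F$ defined on $[0,1)$ by $F(x) = 0.3$ for $x \in [0,\tfrac12)$ and $F(x) = 0.5$ for $x \in [\tfrac12,1)$, extended by $F(x+1)=F(x)+1$. This map is non-decreasing. With $x_1 = 0.1$ and $x_2 = 0.9$ one computes $S \cap [x_1,x_2] = [0.1,0.3] \cup \{0.5\}$, so your $x_0 = 0.5$. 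This is indeed a fixed point, but $F(0.5^-) = 0.3 \ne 0.5$, so $F$ is \emph{not} left-continuous there. (The lemma is still true in this example: $x=0.3$ is a left-continuous fixed point; your supremum simply does not locate it.) In fact your construction yields \emph{right}-continuity at $x_0$, since $F(x_0^+) \le x_0 = F(x_0) \le F(x_0^+)$; the asymmetry you identify in the final paragraph is real but reversed.

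The repair is simple: take instead $x_0 = \inf\{x \in [x_1,x_2] : F(x) \le x\}$. Then \emph{every} $x \in [x_1,x_0)$ satisfies $F(x) > x$ (not just a sequence), which forces $F(x_0^-) \ge x_0$ directly; the rest of your two-sided squeeze goes through unchanged and yields both $F(x_0)=x_0$ and $F(x_0^-)=x_0$. For comparison, the paper offers only a brief heuristic argument by contradiction (if $F(x) \ne x$ everywhere, the graph of $F$ must either cross the diagonal or make a negative jump, both impossible for a non-decreasing map) and does not address the left-continuity clause at all --- it is never used downstream. With the corrected choice of $x_0$, your argument is actually more complete than what the paper supplies.
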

We provide  more intuitive proof of this lemma.
\begin{proof}
Assume $F(x)\ne x$ for all $x$ and recall that, if $F$ undergoes a
discontinuity, the jump must be positive.\\
Suppose $F(0)>0$. Then the existence of $x_2$ implies that either
$F$ crosses the diagonal or undergoes a negative jump, either of which
gives a contradictions.\\
If $F(0)<0$, then the existence of $x_1$ implies that $F$ cannot stay
below the diagonal, unless it skips over it by a positive jump. In this
case, as $F(1)=F(0)+1<1$, arguing as before we conclude that
$F$ has to cross the diagonal.
\end{proof}
\begin{remark}
The class of maps considered in Lemma~\ref{lem:rhodes_lemma3} is not
necessary restricted to lifts of circle maps satisfying~\condsC{}.
Note that they may undergo discontinuities with positive jumps between
$0$ and $1$, whereas the lifts of circle maps satisfying~\condsC{} are
continuous in $(0,1)$.
\end{remark}
We now prove Proposition~\ref{pro:po_rational_rho} by, as
before, adapting the proof given in~\cite{AlsLliMis00} to the
discontinuous case.
\begin{proof}[Proof of Proposition~\ref{pro:po_rational_rho}]
Assume that $F^{q}-p$ has a fixed point. Then, we get that
$F^{nq}(x_0)-np=x_0$ for all $n>0$, which implies that
$\rho(F)=p/q$.\\
Assume that $\rho(F)=p/q$ and that $F^{q}(x)-p$ does not have a fixed
point. Then, by Lemma~\ref{lem:rhodes_lemma3}, we get
that either $F^{q}(x)-p<x$ or $F^{q}(x)-p>x$. But then, by
Lemma~\ref{lem:Fq-p_ge_x},  there exists $\varepsilon>0$ such that
either $\rho(F)>p/q+\varepsilon$ or $\rho(F)<p/q-\varepsilon$, which
is a contradiction.
\end{proof}
Note that, besides the fact that we deal with a discontinuous lift,
the previous proof differs from the one given in~\cite{AlsLliMis00} by
the fact that we can use the existence and uniqueness of the rotation
number provided by Proposition~\ref{pro:existence_rotation_number}.
\begin{remark}
Proposition~\ref{pro:po_rational_rho} does not provide the uniqueness
nor stability of periodic orbits. However, if, in addition
to~\condsC{}, one adds contractiveness, i.e. $f'(x)<1$ for
$x\in(0,c)\cup (c,1)$, then one gets that such a periodic orbit is
unique and attracting. Note that $f$ might not be differentiable at
$x=0\sim 1$ and $x=c$.
\label{rem:uiqueness_of_po}
\end{remark}

The next result provides the continuity of the rotation number; i.e.,
if two lifts of orientation preserving maps are ``close'' (using the
uniform norm), so are their rotation numbers. Note that, assuming that
these maps are bi-valued at integer values, one can always choose the
proper images to properly compare them. Hence, its proof for the
discontinuous case becomes the standard one but taking into account
this fact and Lemmas~\ref{lem:Fq-p_ge_x}
and~\ref{lem:rhodes_lemma3}.\\
\begin{proposition}[\cite{AlsLliMis00} Lemma 3.7.12]
The function
\begin{equation*}
F\longmapsto \rho(F)
\end{equation*}
considered in the space of lifts of circle maps satisfying~\condsC{}
is continuous with the norm of uniform convergence.
\label{pro:continuity_rot_num}
\end{proposition}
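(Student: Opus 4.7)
The plan is to mimic the classical continuous argument, using the bi-valued convention at integers together with Lemmas \ref{lem:Fq-p_ge_x} and \ref{lem:rhodes_lemma3} to sandwich $\rho(F)$ between rationals that bracket $\rho(F_0)$. Fix a lift $F_0$ with $\rho_0 = \rho(F_0)$ and let $\varepsilon>0$. By density of $\mathbb{Q}$, choose rationals with $\rho_0 - \varepsilon < p_1/q_1 < \rho_0 < p_2/q_2 < \rho_0 + \varepsilon$ (the strict inequalities are the whole point; if $\rho_0 \in \mathbb{Q}$ one simply picks the brackets distinct from $\rho_0$).

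First I would establish the pointwise strict inequality $F_0^{q_1}(x) > x + p_1$ for every $x \in \RR$, and the symmetric $F_0^{q_2}(x) < x + p_2$. Setting $G = F_0^{q_1} - p_1$, the map $G$ is a non-decreasing lift of degree one to which Lemma \ref{lem:rhodes_lemma3} applies. If $G$ had a point with $G(x) < x$ and another with $G(x) > x$, that lemma would furnish a fixed point and hence $\rho_0 = p_1/q_1$, a contradiction. If $G(x) \le x$ for all $x$ then Lemma \ref{lem:Fq-p_ge_x}\,(ii) (with the bi-valued convention so the hypothesis $G(x)<x$ holds on each lateral value) gives $\rho_0 \le p_1/q_1$, again a contradiction. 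So only $G(x) > x$ for all $x$ is possible, and symmetrically for $q_2$.

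Next I would upgrade these pointwise strict inequalities to uniform ones. The function $x \mapsto F_0^{q_1}(x) - x - p_1$ is $1$-periodic, and under the bi-valued convention takes a positive value at every point of $[0,1]$ including both lateral limits at each discontinuity. Since the set of discontinuities of $F_0^{q_1}$ in $[0,1]$ is finite and $F_0^{q_1}$ is continuous on each piece, a straightforward compactness argument yields a constant $c_1 > 0$ with $F_0^{q_1}(x) - p_1 \ge x + c_1$ for all $x$, and similarly a $c_2 > 0$ with $F_0^{q_2}(x) - p_2 \le x - c_2$.

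Finally, I would transfer these uniform gaps to any $F$ close to $F_0$ in the uniform norm. If $\|F - F_0\|_\infty < \delta$, then a routine induction using the monotonicity of the lifts and a uniform bound on the oscillation of $F_0$ on small intervals shows $\|F^{q_i} - F_0^{q_i}\|_\infty \to 0$ as $\delta \to 0$ for $i=1,2$ (this is the only delicate point: on a piece where $F_0$ is continuous one uses standard uniform continuity, while the bi-valued values at discontinuities align by monotonicity, so the iterate cannot ``jump the gap''). Choosing $\delta$ so that $\|F^{q_i} - F_0^{q_i}\|_\infty < c_i/2$ gives $F^{q_1}(x) > x + p_1$ and $F^{q_2}(x) < x + p_2$ for all $x$, whence Lemma \ref{lem:Fq-p_ge_x} yields $p_1/q_1 < \rho(F) < p_2/q_2$ and hence $|\rho(F) - \rho_0| < \varepsilon$. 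The hard part is the uniform control of the iterate $F^{q_i}$ under discontinuous perturbations; the bi-valued convention at integers together with monotonicity is what makes this step work exactly as in the continuous case.
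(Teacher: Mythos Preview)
Your strategy is the same as the paper's: bracket $\rho(F_0)$ by rationals $p_1/q_1<\rho_0<p_2/q_2$, use Lemmas~\ref{lem:Fq-p_ge_x} and~\ref{lem:rhodes_lemma3} to force $F_0^{q_i}(x)-x-p_i$ to have a fixed sign, invoke periodicity (degree~$0$) to make that sign uniform, and then transfer the inequality to any $\tilde F$ uniformly close to $F_0$. The paper does exactly this, only more tersely (it simply asserts that $\tilde G$ close to $G$ keeps the sign and that $\tilde F$ close to $F$ makes $\tilde G$ close to $G$).

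One caveat on your step~4: the claim $\|F^{q_i}-F_0^{q_i}\|_\infty\to 0$ is not literally true, because the discontinuities of $F_0^{q_i}$ sit at preimages of $\mathbb{Z}$ under $F_0,\dots,F_0^{q_i-1}$, and these preimages move when $F_0$ is perturbed; at such points $F^{q_i}(x)$ and $F_0^{q_i}(x)$ can differ by a full jump. What \emph{is} true---and what you are really using---is that for every $x$ the value $F^{q_i}(x)$ is close to \emph{some} bi-value of $F_0^{q_i}$ at a nearby point, and since by hypothesis every bi-value of $F_0^{q_i}(x)-x-p_i$ exceeds $c_i>0$, the sign persists. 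Your parenthetical ``the bi-valued values at discontinuities align by monotonicity'' is the correct intuition for this; just be aware that it is a statement about closeness of the filled-in graphs (Hausdorff sense), not about $\|\cdot\|_\infty$ of single-valued iterates. The paper's proof glosses over the very same point, so your argument is at least as complete as the one given there.
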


\begin{proof}
We proceed as in~\cite{AlsLliMis00} by adapting the proof to the fact
that $F$ is bi-valued in order to to overcome the discontinuities at
integer numbers.

Assume $\rho(F)\ne p/q$. Then, the function $G(x)=F^q(x)-p-x$ is away
from zero. By Lemma~\ref{lem:rhodes_lemma3} (applied to $G(x)+x$), we
get that either $G(x)<0$ or $G(x)>0$ for all $x\in\RR$. Then, by
Lemma~\ref{lem:Fq-p_ge_x}, we have that either $\rho(F)<p/q$ or
$\rho(F)>p/q$, respectively.\\
We now note that $G(x)$ has degree $0$; that is,
$G(x+1)=F^q(x+1)-p-(x+1)=F^q(x)-p-x=G(x)$. Hence, we can ensure that,
if $\tilde{G}$ is in  small enough neighbourhood of $G$, then either
$\tilde{G}<0$
or $\tilde{G}>0$, respectively. This implies that, if
$\tilde{F}$ is in a small
enough neighbourhood of $F$, then either $\rho(\tilde{F})<p/q$ or
$\rho(\tilde{F})>p/q$, respectively.\\
Hence, we have shown that, if there exist $p_1/q_1$ and $p_2/q_2$ such
that
\begin{equation*}
\frac{p_1}{q_1}<\rho(F)<\frac{p_1}{q_1},
\end{equation*}
and $\tilde{F}$ is sufficiently close to $F$, then
\begin{equation*}
\frac{p_1}{q_1}<\rho(\tilde{F})<\frac{p_2}{q_2},
\end{equation*}
and hence $F\longmapsto \rho(F)$ is continuous.
\end{proof}
The next Lemma shows that the rotation number is increasing as a
function of $F$.
\begin{lemma}
Let $f$ and $g$ be two circle maps satisfying~\condsC{}, and let $F$
and $G$ be their lifts, respectively. If $F\ge G$ then $\rho(f)\ge
\rho(g)$.
\label{lem:increasing_rotation_number}
\end{lemma}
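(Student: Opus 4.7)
The plan is to prove monotonicity of the rotation number by a direct induction argument, exploiting the fact that the lifts involved are non-decreasing even though they may be discontinuous at integers.

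First I would record the key monotonicity property: because $f$ satisfies \condsC{}, in particular C.2, and because C.4 forces $f(0^+)\ge f(1^-)$ so that the jump at integer values is non-negative, the lift $F$ is a non-decreasing function on $\RR$ (as already noted in the remark following Definition~\ref{def:lift}). The same holds for $G$. This is the only structural input we need.

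Next I would establish by induction on $n\ge 1$ that $F^n(x)\ge G^n(x)$ for every $x\in\RR$. The base case $n=1$ is the hypothesis. For the inductive step, assume $F^n(x)\ge G^n(x)$. Since $F$ is non-decreasing, applying $F$ to both sides yields $F(F^n(x))\ge F(G^n(x))$; since $F\ge G$ pointwise, $F(G^n(x))\ge G(G^n(x))$. Chaining the two inequalities gives $F^{n+1}(x)\ge G^{n+1}(x)$, closing the induction. If needed at points where one of the lifts is bi-valued at integers (Remark~\ref{rem:bivalued}), one simply picks the same branch on both sides; the non-decreasing property makes every such choice compatible.

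Finally I would pass to the limit. By Proposition~\ref{pro:existence_rotation_number} both limits
\begin{equation*}
\rho(f)=\lim_{n\to\infty}\frac{F^n(x)-x}{n},\qquad \rho(g)=\lim_{n\to\infty}\frac{G^n(x)-x}{n}
\end{equation*}
exist and are independent of the choice of $x$. From $F^n(x)\ge G^n(x)$ we get $(F^n(x)-x)/n\ge (G^n(x)-x)/n$ for every $n$, and taking $n\to\infty$ delivers $\rho(f)\ge \rho(g)$.

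The only mildly delicate point, and the one I would flag as the ``hard part,'' is ensuring that the induction is genuinely valid at discontinuity points of $F$; this is exactly what the non-decreasing property of the lift (and the bi-valued convention at integers) is designed to handle, so the proof is short once that is invoked.
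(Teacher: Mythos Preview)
Your proof is correct and follows essentially the same approach as the paper's: both establish $F^n(x)\ge G^n(x)$ from the monotonicity of the lifts and the pointwise hypothesis $F\ge G$, then divide by $n$ and pass to the limit using Proposition~\ref{pro:existence_rotation_number}. The only difference is cosmetic---you spell out the induction step explicitly, whereas the paper asserts the iterated inequality directly.
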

\begin{proof}
Let $x<c$ and $F$ and $G$ be two lifts of $f$ and $g$ such that $F(x+1)=F(x)+1$
and $G(x+1)=G(x)+1$. As $F$ and $G$ are increasing functions undergoing a
positive gap at $x=k$, $F^n(x)>G^n(x)$, even if, for some $n$, $F^n(x)$ or
$G^n(x)$ reach any of the discontinuities at  $x=k$. Hence,
\begin{equation*}
\frac{F^n(x)-x}{n}\ge \frac{G^n(x)-x}{n}.
\end{equation*}
As the rotation number does not depend on $x$, we get $\rho(f)\ge \rho(g)$.
\end{proof}

We next study properties of the periodic orbits of orientation
preserving circle maps satisfying~\condsC{}. These will be crucial to
show symbolic properties of periodic orbits in
Section~\ref{sec:dyna_orient_preserv}. To this end, we provide the
following definitions.
\begin{definition}[Twist and $p,q$-ordered lifted cycle]\label{def:well-ordered_sequence_of_points}
Let $f$ be a circle map and $x_i\in S^1=\RR/\Z$ such that 
\begin{equation}
0<x_0<x_1<\dots <x_{q-1}<1
\label{eq:seq_qpoints}
\end{equation}
and $f^q(x_i)=x_i$, $0\le i\le q-1$. Consider  the projection
\begin{equation}
\begin{array}{cccc}
\Pi:&\RR&\longrightarrow &\T=\RR/\Z\\
&x&\longmapsto& x\pmod 1
\end{array}
\label{eq:projection_map}
\end{equation}
and the lifted orbit or lifted cycle
\begin{equation*}
S=\left\{ \Pi^{-1}(x_i) \right\},
\end{equation*}
which consists of adding $\Z$ to the orbit~\eqref{eq:seq_qpoints}.
Let $F$ be the lift of $f$. We say that $S$ is a twist lifted cycle
or orbit of $F$ if $F$ restricted to $S$ is increasing.\\
If $S$ is a twist lifted cycle of $f$ and
\begin{equation*}
\lim_{n\to\infty}\frac{F^n(x)-x}{n}=\frac{p}{q}
\end{equation*}
for all $x\in S$, we will say that $S$ is a $p,q$-ordered lifted cycle
of $F$ and $x_0<\dots < x_{q-1}$ is a $p,q$-ordered cycle or periodic
orbit of $f$.
\label{def:twist_lifted_cycle}
\end{definition}
The following result gives us a relation between the spatial and
dynamical ordering of a $p,q$-ordered lifted cycle.
\begin{proposition}\label{pro:pq-ordred}
Let $0<x_0<\dots<x_{q-1}$ be a $p,q$-ordered cycle of the circle map $f$,
let $F$ be its lift and $S$ the corresponding lifted cycle. Assume
that $S$ is given by
\begin{equation*}
S=\left\{ x_0<\dots <x_{q-1}<x_{q}<\dots <x_{2q-1}<\cdots \right\}
\end{equation*}
with $x_j=x_i+n$ if $j=i+nq$, $0\le i\le q-1$. Then, 
\begin{align}
F(x_i)&=x_{i+p}\label{eq:xi_to_xip}\\
F^q(x_i)&=x_i+p.\label{eq:xi_q_to_xip}
\end{align}
\end{proposition}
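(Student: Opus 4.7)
\medskip
\noindent\textbf{Proof plan.} The plan is to establish \eqref{eq:xi_q_to_xip} first by purely dynamical reasoning (using the rotation number), and then deduce \eqref{eq:xi_to_xip} from the twist hypothesis plus the fact that $F$ commutes with integer translation. Throughout, I will use that the cycle points $x_i$ are iterates of one another under $f$ and thus avoid the discontinuity set of $f$, so that $F$ takes a well-defined value at each $x_i$.

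The first step is to show that $F^q(x_i)-x_i$ is the integer $p$, independent of $i$. Since $f^q(x_i)=x_i$ and $\Pi\circ F=f\circ \Pi$, the real number $F^q(x_i)-x_i$ projects to $0$ in $\T$ and hence is an integer $m_i\in\Z$. Using $F(x+1)=F(x)+1$ (Definition~\ref{def:lift} with $N=1$) and iterating, one gets $F^q(x+n)=F^q(x)+n$ for all $n\in\Z$, and then $F^{nq}(x_i)=x_i+n\,m_i$ by induction on $n$. The $p,q$-ordered hypothesis (Definition~\ref{def:well-ordered_sequence_of_points}) gives
\begin{equation*}
\frac{p}{q}=\lim_{n\to\infty}\frac{F^{nq}(x_i)-x_i}{nq}=\frac{m_i}{q},
\end{equation*}
so $m_i=p$, proving \eqref{eq:xi_q_to_xip}. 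Note this already yields $x_i+p=x_{i+pq}$ in the bi-infinite indexing.

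The second step is to identify $F(x_i)$ with $x_{i+p}$. By the twist property, $F$ is increasing on $S=\{x_j:j\in\Z\}$, and $F(S)\subset S$ because each $F(x_i)$ is a cycle point of $f$ lifted to $\RR$. Let $k\in\Z$ be the unique index with $F(x_0)=x_k$. Applying the relation $F(x+1)=F(x)+1$ to $x_q=x_0+1$ yields $F(x_q)=x_k+1=x_{k+q}$. For each $0<i<q$, monotonicity forces $x_k=F(x_0)<F(x_i)<F(x_q)=x_{k+q}$, and between $x_k$ and $x_{k+q}$ the set $S$ contains exactly the $q-1$ interior points $x_{k+1},\dots,x_{k+q-1}$. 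Since $F(x_1)<\dots<F(x_{q-1})$ is a strictly increasing sequence of $q-1$ elements of $S$ in this interval, it must coincide with $x_{k+1}<\dots<x_{k+q-1}$ term by term, so $F(x_i)=x_{i+k}$ for all $i\in\Z$ (the extension to all $i$ follows from the $+1$ equivariance). Iterating gives $F^q(x_i)=x_{i+qk}$, and comparing with step one, $x_{i+qk}=x_i+p=x_{i+pq}$; since distinct indices label distinct points of $S$, this forces $qk=pq$, i.e.\ $k=p$.

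I expect the only subtle point to be the bookkeeping that guarantees $F|_S$ is genuinely a bijection of $S$ onto itself (not merely an injection into it), which is what lets the pigeonhole argument in step two conclude that all of $x_{k+1},\dots,x_{k+q-1}$ are hit. This is granted by step one, which shows that $F^q$ acts on $S$ as the bijection $x_i\mapsto x_i+p$, so $F|_S$ must itself be bijective. Apart from that, everything reduces to combining the $+1$-equivariance of $F$ with monotonicity on $S$, so no further technical difficulty arises.
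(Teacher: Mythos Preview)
Your proof is correct and follows essentially the same strategy as the paper's: establish that $F$ acts on the lifted cycle $S$ as a constant shift $x_i\mapsto x_{i+r}$ via a counting/pigeonhole argument based on monotonicity, and then use the rotation number $p/q$ to pin down $r=p$. You do these two steps in the opposite order from the paper (you first extract $F^q(x_i)=x_i+p$ from the limit defining $\rho$, then identify the shift), and your pigeonhole formulation is a bit more explicit about bijectivity of $F|_S$, but the underlying ideas are identical.
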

The proof of this result is as in the continuous case
(\cite{AlsLliMis00} Lemma 3.7.4) but taking into account that the lift
$F$ is bi-valued at integer numbers. We include it here for
completeness.
\begin{proof}
By properly choosing the image of $F$ at integer values, the lift $F$
restricted to the lifted cycle of the periodic orbit is an order
preserving bijection. That is, by iterating the points
$x_0<\dots<x_{q-1}$ one visits all the points of the lifted cycle
exactly once, and the order is preserved ($F(x_j)<F(x_i)$ iff
$x_j<x_i$). Hence, $F(x_i)=x_{i+r}$ for all $i$ and some integer $r$.
If not, then one gets that $F(x_j)=x_{j+r_1}$ and $F(x_i)=x_{i+r_2}$.
If $r_1\neq r_2$, then the number of points of the lifted orbit in
$[x_{j+r_1},x_{i+r_2}]$ and in $[x_{j},x_i]$ does not coincide, and
hence the order cannot be preserved.\\
As the rotation number is $\rho(F)=p/q$, we have that necessarily
$x_{i}+p=F^q(x_i)=x_{i+qr}$. As $x_i$ belongs to a $q$-periodic cycle,
we get $x_{i+qr}=x_i+r$, which is what we wanted to show.
\end{proof}

The next result is also very well known for continuous circle maps.
As in previous results, we provide a standard proof adapted for the
discontinuous case.
\begin{proposition}\label{pro:irrational_rho}
Let $f$ be a circle map satisfying~\condsC{} and assume that
$\rho(f)\in\RR\backslash\Q$.  Then, if $f$ is contracting ($f'(x)<1$,
$x\in (0,c)\cup(c,1)$) and condition {C.4} is a strict inequality, the
$\omega$-limit set of the circle is a Cantor set.
\end{proposition}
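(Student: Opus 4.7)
The plan is to show that the $\omega$-limit set $\Lambda$ of any point under $f$ is non-empty, compact, perfect, and has empty interior in $S^1$; these four properties characterize a Cantor set. Non-emptiness and compactness are immediate from compactness of $S^1$, and forward-invariance follows by standard arguments once one adopts the bi-valued lift convention of Remark~\ref{rem:bivalued} at the discontinuity $c$. The irrationality of $\rho(f)$, together with Proposition~\ref{pro:po_rational_rho}, rules out periodic points in $\Lambda$.

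For empty interior I would exploit contractiveness together with the strict inequality in C.4. By the contractiveness hypothesis and compactness of the two pieces, there is $\alpha<1$ with $|f'|\le\alpha$ wherever $f$ is differentiable. The iterate $f^n$ is smooth on the finitely many connected components of $S^1$ minus the backward iterates of $c$, and the chain rule gives $|(f^n)'|\le\alpha^n$ on each such piece. Summing lengths yields $|f^n(S^1)|\le\alpha^n\to 0$. Since $\Lambda\subset\overline{f^n(S^1)}$ for every $n$, and closing a finite union of intervals adds only a null set, $|\Lambda|=0$; in particular $\Lambda$ has empty interior in $S^1$.

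For perfectness I would adapt the classical Poincaré argument to the discontinuous case. Using $\rho(f)\in\RR\backslash\Q$, the lifted orbit of any $x$ is cyclically ordered in the same manner as the orbit of the rigid rotation $R_{\rho(f)}$ (the irrational analogue of Proposition~\ref{pro:pq-ordred}), and this combinatorial ordering produces a non-decreasing degree-one semi-conjugacy $h\colon S^1\to S^1$ satisfying $h\circ f=R_{\rho(f)}\circ h$. Since $R_{\rho(f)}$ is minimal, $\Lambda$ is the unique minimal set of $f$ and is independent of the starting point, and because every $f$-orbit in $\Lambda$ is dense in $\Lambda$ yet non-periodic, no point of $\Lambda$ is isolated; hence $\Lambda$ is perfect.

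Combining these facts, $\Lambda$ is compact, perfect, and has empty interior in $S^1$, so it is a Cantor set. The main obstacle will be constructing the semi-conjugacy $h$ across the discontinuity at $c$; the bi-valued lift convention together with the monotonicity-based arguments already developed in Section~\ref{sec:properties_circle_maps} should suffice, since the essential inputs are monotonicity and degree one rather than continuity.
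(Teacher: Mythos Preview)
Your outline is sound and the characterization via ``compact, perfect, empty interior'' is a valid route, but it is genuinely different from what the paper does. The paper works explicitly with the gap $U=(f(1^-),f(0^+))$: it passes to the inverse $g=f^{-1}$, filling the hole with a flat piece so that $g$ is a \emph{continuous} circle map with the same (negated) rotation number; it shows $c\notin f^n(U)$ for all $n$ by observing that otherwise $g$ would acquire a periodic orbit; and it then invokes a Denjoy-type result of Veerman for maps with flat pieces to conclude that $\bigcup_{i\ge 0} f^i(U)$ is dense, whence $\T\setminus\bigcup f^i(U)$ is the Cantor $\omega$-limit.

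The trade-off is this: your measure argument for empty interior is slicker than the paper's interval-removal picture, and your perfectness step via a Poincar\'e semi-conjugacy to $R_{\rho(f)}$ is the classical line. But the semi-conjugacy has to be built for a genuinely discontinuous $f$, and the paper sidesteps exactly that difficulty by inverting: $g$ is continuous, so existing theory (Veerman, Denjoy) applies off the shelf. Your approach is more self-contained but requires you to redo the Poincar\'e construction in the bi-valued setting (which is doable, as you note, since only monotonicity and degree one are needed); the paper's approach is less elementary in that it quotes an outside Denjoy-type theorem, but it gives the Cantor set explicitly as $\T\setminus\bigcup_{i\ge 0} f^i(U)$ and avoids building $h$ at all. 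One small caution on your side: the bound $|f'|\le\alpha<1$ uniformly is not literally implied by $f'(x)<1$ on the open pieces; you need smooth extension to the closed pieces (which the paper implicitly assumes as well).
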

\begin{figure}
\begin{center}
\includegraphics[angle=-90,width=0.7\textwidth]{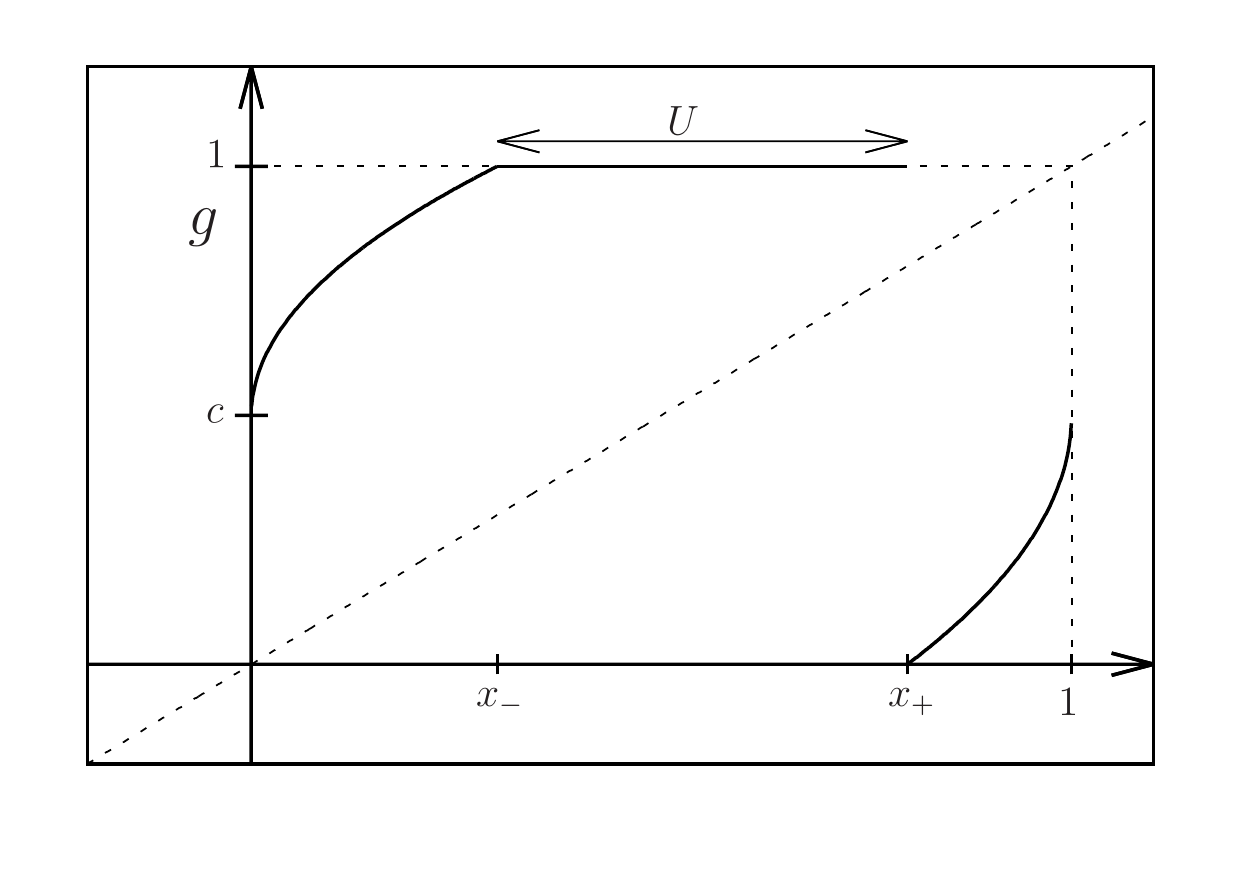}
\end{center}
\caption{Inverse of an orientation preserving circle map
satisfying~\condsC{} with a flat part. See
Equation~\ref{eq:inverse_wflat_part}.}
\label{fig:inverted_circle_map}
\end{figure}
\begin{proof}
Recall that, if $f(0^+)>f(1^-)$, then $f$ is invertible but not
injective. Let $U=(x_-,x_+)$ with $x_-=f(1^-)$ and $x_+=f(0^+)$. We
first show that, if $\rho(f)$ is irrational, then $c\notin f^n(U)$ for
all $n\ge 1$. As
$f$ is invertible in $\T\backslash U$, we consider $f^{-1}$, which is a
function with a ``hole'', as it is not defined in $U$. After filling
this hole with the value $1$,  we obtain the continuous 
function
\begin{equation}
g(y)=\left\{
\begin{aligned}
&f^{-1}(y)&&\text{if }y\in[0,x_-]\\
&1&&\text{if }y\in[x_-,x_+]\\
&f^{-1}(y)&&\text{if }y\in[x_+,1],
\end{aligned}
\right.
\label{eq:inverse_wflat_part}
\end{equation}
(see Figure~\ref{fig:inverted_circle_map}) which has the same rotation
number as $f$ with different sign. Note that
\begin{align*}
g(U)&=1\\
g(1)&=c.
\end{align*}
Hence, if for some $n$, $c\in f^n(U)$, then $g^n(c)\in U$,
$g^{n+1}(c)=1$, $g^{n+2}(c)=c$, and hence $g$ has a periodic orbit of
period $n+2$, which is not compatible with having irrational rotation
number (see Proposition~\ref{pro:po_rational_rho}).\\
Next we show that
\begin{equation*}
\bigcap_{n\ge0}f^n(\T)=\T\backslash \bigcup_{i\ge0}f^i(U)
\end{equation*}
is a Cantor set. As long as $c\notin f^n(U)$, at each iteration,
$f^n(\T)=\T\backslash f^{n-1}(U)$ consists of subtracting a nonempty
interval to the interior of $S^1\backslash f^{n-1}(U)$ (see
Figure~\ref{fig:cantor}). Due to the contraction of $f$, the length of
the subtracted interval tends to $0$.  Moreover, by Corollary~3.3
of~\cite{Vee89}, the total removed amount, $\cup_{i\ge 0}f^i(U)$, is
dense. This comes from the fact that, although the map $g$ is not
differentiable, Denjoy theorem holds and, if the rotation number is
irrational,  $g$ has no ``homtervals''; in particular, $U$ is not a
homterval and the sequence $g^n(U)$ is pairwise disjoint. Therefore,
by construction, $\T\backslash \cup_{i\ge0}f^i(U)$ is a Cantor set.
Moreover, also by construction, the images of $x_-$ and $x_+$ are
dense in this set.  Thus, every point in $\T\backslash
\cup_{i\ge0}f^i(U)$ has a dense orbit and hence this set becomes the
$\omega$-limit of $f$.
\end{proof}
\begin{figure}
\begin{center}
\includegraphics[width=0.5\textwidth]{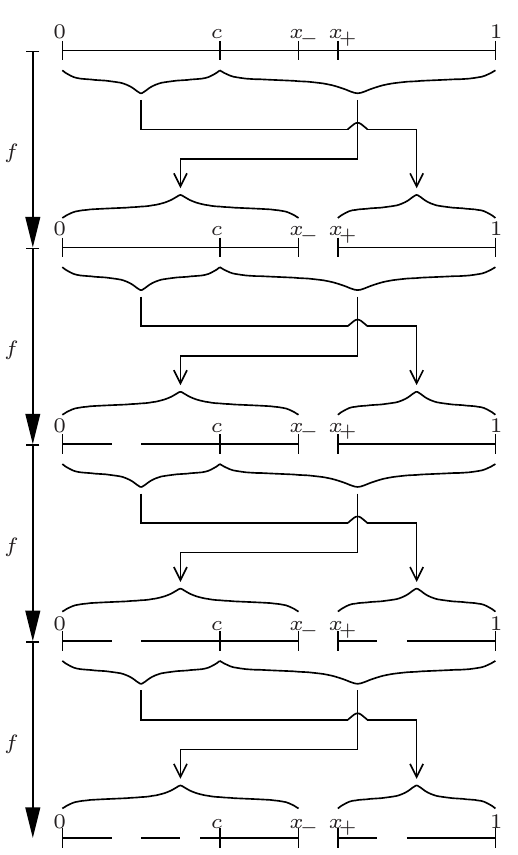}
\end{center}
\caption{Cantor set generated by iterating $U$: new ``holes'' are created
as long as $c\notin f^n(U)$.}
\label{fig:cantor}
\end{figure}

\begin{remark}
If condition {\em C.4} is satisfied by an equality (the map becomes
continuous) and the rotation number is irrational, then the
$\omega$-limit of $f$ may also be a Cantor set or the whole circle. If
$f$ is $C^2$, then Denjoy theorem holds and the latter occurs.
However, if it is $C^1$ or $C^0$, $f$ may become a Denjoy
counterexample (see~\cite{Nit71}), and it's $\omega$-limit may be a
Cantor set.
\end{remark}

To conclude this section, we recover a piecewise-smooth map as defined
in Equation~\eqref{eq:normal_form}. As mentioned above, after applying the
change of variables given in Equation~\eqref{eq:change_variables}, the map
$\tf(x)=\phi\circ f\circ \phi^{-1}(x)$ becomes an
orientation-preserving map satisfying~\condsC{} with
\begin{equation*}
c=\phi(0)=\frac{\mu_\R}{\mu_\R+\mu_\LL}.
\end{equation*}
After applying the reparameterization $\gamma$ given in
Equation~\eqref{eq:1d_scann_curve}, the value
\begin{equation*}
c_\lambda=\frac{\mu_\R(\lambda)}{\mu_\R(\lambda)+\mu_\LL(\lambda)}
\end{equation*}
becomes an strictly decreasing function of $\lambda$ such that
\begin{align*}
\lim_{\lambda\to0^+}c_\lambda&=1\\
\lim_{\lambda\to1^-}c_\lambda&=0.
\end{align*}
Moreover, for $\lambda=0$ and $\lambda=1$, the map $\tf$ possesses
fixed points at $x=1$ and $x=0$, respectively.

Given a piecewise-smooth map $f$ satisfying~\condsHp{}, we will define
its rotation number as the rotation number of the map $\tf=\phi\circ f
\circ\phi^{-1}$ obtained after a reduction to a circle map:
\begin{equation*}
\rho(f)=\rho(\tf).
\end{equation*}
By abusing notation, we will also refer to the lift of $f$ as the lift
of $\tf$.
\begin{remark}\label{rem:rotation-eta_number}
Let $(x_0,\dots,x_{q-1})$ be a periodic orbit of a piecewise-smooth
map whose associated circle map satisfies conditions~\condsC{}, and
let $\x=(\x_0\dots\x_{q-1})\in \left\{ \LL,\R \right\}^q$ be its
associated symbolic sequence regarding the symbolic encoding given
in~\eqref{eq:encoding_LR}:
\begin{equation*}
\x_i=a(f^i(x_i)),\,0\le i\le q-1.
\end{equation*}
Then, recalling Remark~\ref{rem:interpretation} and the fact that the
image of $x$ by the lift of $f$, $F(x)$, crosses an integer number $m$
when $x<c+m<F(x)$, the rotation number of $f$, $\rho(f)$, becomes the
$\eta$ number defined in~\eqref{eq:eta_number}. That is, it becomes
the ratio of the number of symbols $\R$ contained in $\x$ to the length
of the sequence $\x$, $q$.
\end{remark}
\subsection{Symbolic dynamics and families of orientation preserving maps}\label{sec:dyna_orient_preserv}
In this section we will show some dynamical properties of maps
satisfying conditions~\condsC{}, focusing specially on periodic
orbits, their symbolic itineraries and their relation with the
rotation number. In some of the results, we will additionally require
the map to be contractive, however, we emphasize that, when not
specified, the results that we present here do not require
contractiveness.\\
The main result in this section is the following Theorem, which is,
recalling that periodic orbits of orientation preserving circle maps
satisfying~\condsC{} are well ordered, a straightforward consequence
of Proposition~\ref{pro:concatenatetion}.

\begin{theorem}\label{theo:sym_seq_Farey_tree}
The symbolic sequence of the itinerary of a periodic orbit of a circle
map satisfying~\condsC{} with rotation number $P/Q$ is the one in the
Farey tree of symbolic sequences associated with the rational number
$P/Q$.
\end{theorem}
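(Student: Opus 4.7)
The plan is to induct on the depth of $P/Q$ in the Farey tree of rationals. At the root sit $0/1$ and $1/1$; by~\condsC{} the only periodic orbits with rotation numbers $0$ and $1$ are, respectively, the fixed points lying in $[0,c)$ and $(c,1]$, whose itineraries under the $\LL/\R$-encoding are $\LL$ and $\R$ — exactly the root labels of the Farey tree of symbolic sequences shown in Figure~\ref{fig:farey_sequences}.

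For the inductive step, let $P/Q$ be the mediant of Farey neighbours $p/q<p'/q'$ sitting at the previous level, with $(P,Q)=1$. Given any $f$ satisfying~\condsC{} with $\rho(f)=P/Q$, Proposition~\ref{pro:po_rational_rho} produces a periodic orbit of period $Q$ whose lift satisfies $F^Q(x_0)=x_0+P$. Proposition~\ref{pro:pq-ordred} upgrades this to a $P,Q$-ordered (twist) lifted cycle, so that its spatial order is identified with the dynamical order given by the cyclic shift of step $P$. Proposition~\ref{pro:well_ordered_symb-seq} then characterises the itinerary of such a twist cycle as the uniquely determined $P,Q$-ordered symbolic block, and Proposition~\ref{pro:concatenatetion} identifies this block with the concatenation of the itineraries of the $p,q$- and $p',q'$-ordered periodic orbits associated with the Farey parents. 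By the induction hypothesis these parent itineraries coincide with the symbolic sequences labelling $p/q$ and $p'/q'$ in the Farey tree of symbolic sequences, so, by the recursive construction of that tree (concatenation of the parent labels), the itinerary at $P/Q$ is by definition the label of $P/Q$.

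The real work is carried by Propositions~\ref{pro:well_ordered_symb-seq} and~\ref{pro:concatenatetion}, which together must establish that twist cycles of circle maps satisfying~\condsC{} have well-ordered itineraries and that those itineraries obey the Farey concatenation rule. The main obstacle is thus combinatorial and lies in those two propositions rather than in the present theorem. Once they are granted, the only book-keeping needed here is uniqueness of the pair of Farey parents of $P/Q$, which is provided by Theorem~\ref{the:Farey_properties}~\emph{i)}; the induction then closes and yields the announced identification of the itinerary with the label of $P/Q$ in the Farey tree of symbolic sequences.
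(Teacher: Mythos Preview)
Your proposal is correct and takes essentially the same approach as the paper. The paper simply states that the theorem is, recalling that periodic orbits of maps satisfying~\condsC{} are well ordered, ``a straightforward consequence of Proposition~\ref{pro:concatenatetion}''; you have merely made the underlying induction on the Farey depth explicit and correctly identified that the substance lies in Propositions~\ref{pro:well_ordered_symb-seq} and~\ref{pro:concatenatetion}.
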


At the end of this section (Lemma~\ref{lem:cantor_set}) we show that,
for a piecewise-smooth map~\eqref{eq:normal_form} satisfying {\em i)}
of Theorem~\ref{theo:adding_incrementing}, the $\eta$-number defined
in Definition~\ref{def:eta-number} follows a devil's staircase. This is a
consequence of Theorem~\ref{theo:Boy85} proved in~\cite{Boy85}.

To show this, we recall that the set $W_{p,q}$ consists of the
$q$-periodic symbolic sequences with $p$ symbols $\R$ (see
Definition~\ref{def:Wpq}). Of special interest will be the {\em well
ordered} symbolic sequences contained in these sets:
\begin{definition}\label{def:well-ordered_symbolic-sequence}
Let $\x\in W_{p,q}$ be a periodic symbolic sequence.  Consider the
(lexicographically) ordered sequence given by the iterates of $\x$ by
$\sigma$
\begin{equation}
\sigma^{i_0}(\x)<\sigma^{i_1}(\x)<\sigma^{i_2}(\x)<\dots<\sigma^{i_{q-1}}(\x).
\label{eq:sequence_of_sequences}
\end{equation}
We say that the sequence $\x$ is a $p,q$-ordered (symbolic) sequence
if
\begin{equation*}
i_j-i_{j-1}=\text{constant}.
\end{equation*}
In other words, $\sigma$ acts on the
sequence~\eqref{eq:sequence_of_sequences} as a cyclic permutation:
there exists some $k\in\N$, $0< k<q$, such that
\begin{equation*}
i_j=i_{j-1}+k\pmod{q}.
\end{equation*}
\end{definition}
The next example illustrates the previous definition.
\begin{example}
The sequence $\x=(\LL^2\R\LL\R)^\infty\in W_{2,5}$ is
$2,5$-ordered, and the sequence $\y=(\LL^3\R^2)^\infty\in
W_{2,5}$ is not. If we consider the four iterates of $\x$ and
$\y$ by $\sigma$ we obtain
\begin{align*}
\sigma(\x)&=(\LL\R\LL\R\LL)^\infty&\sigma(\y)&=(\LL^2\R^2\LL)^\infty\\
\sigma^2(\x)&=(\R\LL\R\LL^2)^\infty&\sigma^2(\y)&=(\LL\R^2\LL^2)^\infty\\
\sigma^3(\x)&=(\LL\R\LL^2\R)^\infty&\sigma^3(\y)&=(\R^2\LL^3)^\infty\\
\sigma^4(\x)&=(\R\LL^2\R\LL)^\infty&\sigma^4(\y)&=(\R\LL^3\R)^\infty.
\end{align*}
Note that $\sigma^5(\x_i)=\x_i$. When we order the iterates by $\sigma$ we
obtain
\begin{align*}
\x&<\sigma^3(\x)<\sigma(\x)<\sigma^4(\x)<\sigma^2(\x)\\
\y&<\sigma(\y)<\sigma^2(\y)<\sigma^4(\y)<\sigma^3(\y),
\end{align*}
and $\x$ is $2,5$-ordered with $k=3$ while $\y$ is not well ordered.
\end{example}

The following result identifies the symbolic sequences of periodic
orbits whose lifted cycles are twist (see
Definition~\ref{def:twist_lifted_cycle}).
\begin{proposition}[\cite{Gam87} Proposition
III.1.1-2]\label{pro:well_ordered_symb-seq} Under the conditions of
Proposition~\ref{pro:pq-ordred}, if the lifted cycle $S$ is
$p,q$-ordered by $F$ (see
Definition~\ref{def:well-ordered_sequence_of_points}) then the itinerary
$I_f(x_i)\in W_{p,q}$ is a $p,q$-ordered symbolic sequence (see
Definition~\ref{def:well-ordered_symbolic-sequence}).
\end{proposition}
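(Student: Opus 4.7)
The plan is to show that the itinerary map $i \mapsto I_f(x_i)$ sends the spatially ordered cycle $x_0 < x_1 < \dots < x_{q-1}$ to the cyclic shifts of $\x = I_f(x_0)$ arranged in lexicographic order. Once this is established, Proposition \ref{pro:pq-ordred} identifies $\sigma^k(\x)$ with $I_f(x_{kp \bmod q})$, so the shift index at lexicographic rank $j$ equals $j \cdot (p^{-1} \bmod q)$, an arithmetic progression of constant step $k = p^{-1} \bmod q$; this is precisely the condition of Definition \ref{def:well-ordered_symbolic-sequence}. Membership $\x \in W_{p,q}$ is immediate from Remark \ref{rem:rotation-eta_number}, since the number of $\R$-symbols per period equals $q\rho(f) = p$.

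The crux is the order-preserving claim: $x_i < x_j$ implies $I_f(x_i) < I_f(x_j)$ lexicographically. I would prove it by iterating forward and tracking the first position at which the symbolic sequences differ. Suppose that for all $0 \le k < n$ the iterates $f^k(x_i)$ and $f^k(x_j)$ lie on the same side of $c$; then condition {\em C.2} (monotonicity on each side) propagates the spatial order $f^k(x_i) < f^k(x_j)$ by induction, and the $k$-th symbols of both itineraries agree. At the first step $n$ at which the iterates separate, we necessarily have $f^n(x_i) < c < f^n(x_j)$, producing $\LL$ and $\R$ at position $n$ of the respective itineraries and yielding $I_f(x_i) < I_f(x_j)$.

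The main obstacle is showing that this opposite-side configuration must actually occur at some finite $n$. Let $m$ be the spatial index with $x_m < c < x_{m+1}$ (the degenerate situations $c < x_0$ or $c > x_{q-1}$ correspond to the fixed-point cases $p = 0$ or $p = q = 1$, handled trivially). By Proposition \ref{pro:pq-ordred}, $f^n(x_i) = x_{(i + np) \bmod q}$ and likewise for $x_j$, so as $n$ ranges over $\{0, \dots, q-1\}$ the ordered pair $\bigl((i + np) \bmod q,\, (j + np) \bmod q\bigr)$ visits every translate of $(i, j)$ on $\Z / q \Z$, thanks to $(p, q) = 1$. A brief counting argument then produces an $n$ with the first component in $\{0, \dots, m\}$ and the second in $\{m+1, \dots, q-1\}$: pick any $r$ in $[\max(0, m + 1 - (j-i)),\, \min(m, q - 1 - (j-i))]$ (an interval that is nonempty precisely outside the degenerate cases) and take $n$ with $i + np \equiv r \pmod q$. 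With the required separation in hand, the order-preservation claim holds and the proposition follows by the bookkeeping described in the first paragraph.
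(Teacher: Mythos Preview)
Your argument is correct and follows the same skeleton as the paper's: first establish that the assignment $i\mapsto I_f(x_i)$ preserves order (spatial order on the cycle to lexicographic order on itineraries), then invoke Proposition~\ref{pro:pq-ordred} to convert this into a constant-step statement for the shift. The paper merely \emph{asserts} the order-preservation (``We first note that $I_f(x_i)\le I_f(x_j)\Longleftrightarrow i\le j$'') without justification; you supply a genuine proof, and your mechanism---iterate until the two orbits first straddle $c$, using monotonicity of $f$ on each arc to propagate the inequality, together with the counting argument guaranteeing that such a straddling iterate exists---is exactly what is needed. Your identification of the constant step as $k\equiv p^{-1}\pmod q$ is also the right one. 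The paper instead takes $k=\lceil q/p\rceil$ and claims the chain $\x<\sigma^{k}(\x)<\sigma^{2k}(\x)<\cdots$; this fails for $(p,q)=(3,7)$, where $\lceil q/p\rceil=3$ but $p^{-1}\equiv 5\pmod 7$, and with $\x=(\LL^{2}\R\LL\R\LL\R)^{\infty}$ the paper's chain would force $\sigma^{3}(\x)<\sigma^{5}(\x)$ whereas in fact $\sigma^{5}(\x)<\sigma^{3}(\x)$. So your version runs along the same lines as the paper's but is more complete on the key lemma and repairs a slip in the paper's final step.
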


\begin{proof}
Let $k$ be such that
\begin{equation}
(k-1)p<q\le kp.
\label{eq:assumption_kp}
\end{equation}
We first note that, for $0\le i,j\le q-1$ we have
\begin{equation*}
I_f(x_i)\le I_f(x_j) \Longleftrightarrow i\le j.
\end{equation*}
We then write
\begin{equation*}
kp=q+r,\,r\ge0.
\end{equation*}
Then, the result comes from the fact that
\begin{equation*}
x_0<x_r\le x_p,
\end{equation*}
which occurs iff $0<r\le p$. Assume that $r>p$. Then $q+r-p>q$ and
hence $(k-1)p>q$, which contradicts~\eqref{eq:assumption_kp}. Letting
$\x=I_f(x_0)$, this implies
\begin{equation*}
\x<\sigma^{k\pod{q}}(\x)<\sigma^{2k\pod{q}}(\x)<\dots<\sigma^{(N-1)k\pod{q}}(\x)=\sigma^{Nk\pod{q}}(\x),
\end{equation*}
where $N$ is the smallest such that $Nk=0\pmod{q}$.
\end{proof}
The next result formalizes what was stated in
Remark~\ref{rem:rotation-eta_number}: the $\eta$-number associated
with the symbolic sequence of a periodic orbit
(Definition~\ref{def:eta-number}) of a piecewise-smooth map satisfying
{\em i)} of Theorem~\ref{theo:adding_incrementing} becomes the
rotation number (Definition~\ref{def:rotation_number}) of the
orientation preserving circle map obtained after the
change~\eqref{eq:change_variables}.
\begin{corollary}\label{cor:eta-number_rotation-number}
Let $f$ be an orientation preserving map, and let $x$ belong to a
$q$-periodic orbit with symbolic sequence $I_f(x)=\x^\infty\in
W_{p,q}$. Then, the rotation number becomes
\begin{equation*}
\rho(f)=\frac{p}{q}=\eta(\x).
\end{equation*}
That is, it is given by the ratio between the number of $\R$ symbols
contained in $\x$ and the period, $q$, of the sequence.
\end{corollary}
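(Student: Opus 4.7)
The plan is to combine Proposition~\ref{pro:po_rational_rho} with a direct bookkeeping of the integer jumps of the lift, reading off the count from the symbolic itinerary. First, since $x$ is a $q$-periodic point of $f$, I pick a representative $x_0\in[0,1)$ with $\Pi(x_0)=x$. The lift $F$ of $f$ (of degree one) then satisfies $F^q(x_0)=x_0+p^*$ for some nonnegative integer $p^*$. Proposition~\ref{pro:po_rational_rho} forces $\rho(f)=p^*/q$, so the corollary reduces to proving $p^*=p$, the number of $\R$-symbols in $\x$.

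The second step is to unwind Definition~\ref{def:lift} one iteration at a time. Writing $x_i=F^i(x_0)$, $y_i=x_i\pmod 1$, and $n_i=x_i-y_i\in\Z$, the formula~\eqref{eq:lift_properties} (with $N=1$), together with the fact that $f([0,c))$ and $f([c,1))$ both lie in $[0,1)$, gives
\begin{equation*}
x_{i+1}=f(y_i)+n_i+\chi_{[c,1)}(y_i),
\end{equation*}
where $\chi_{[c,1)}$ denotes the indicator of $[c,1)$. Since $f(y_i)=y_{i+1}\in[0,1)$, this reads $n_{i+1}=n_i+\chi_{[c,1)}(y_i)$. Telescoping from $i=0$ to $q-1$ and using $y_q=y_0$ (hence $n_q=x_q-x_0=p^*$), I obtain
\begin{equation*}
p^*=\sum_{i=0}^{q-1}\chi_{[c,1)}(y_i).
\end{equation*}

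Finally, under the change of variables $\phi$ of Equation~\eqref{eq:change_variables}, the discontinuity $x=0$ of the original piecewise-smooth map is sent to $c=\phi(0)$, so in the reduced circle coordinates the condition $y_i\in[c,1)$ is equivalent to the $i$-th symbol of the itinerary $I_f(x)$ being $\R$ under the encoding~\eqref{eq:encoding_LR}. The sum above therefore counts exactly the $\R$-symbols in $\x$, which by hypothesis is $p$. This yields $\rho(f)=p^*/q=p/q=\eta(\x)$.

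No step looks genuinely hard; the only care required is a consistent treatment of the bi-valued convention for $F$ at integers (Remark~\ref{rem:bivalued}). That convention is exactly what makes the one-step identity $n_{i+1}=n_i+\chi_{[c,1)}(y_i)$ close cleanly over a full period without boundary artifacts when some $y_i$ happens to equal $c$, and is why the argument goes through in the discontinuous setting as smoothly as in the continuous one.
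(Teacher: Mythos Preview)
Your proof is correct and follows essentially the same idea the paper uses: the paper does not give a formal proof of this corollary but merely points back to Remark~\ref{rem:rotation-eta_number} (and through it Remark~\ref{rem:interpretation}), where the rotation number is interpreted as the average number of integer crossings of the lift per iterate, each such crossing corresponding to an $\R$ symbol. Your argument makes this counting precise via the telescoping identity $n_{i+1}=n_i+\chi_{[c,1)}(y_i)$ and then invokes Proposition~\ref{pro:po_rational_rho} to turn the total count over one period into the rotation number, which is exactly the content of those remarks made rigorous.
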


Our next step consists of showing that the symbolic sequence
associated with a periodic orbit of an orientation preserving circle map
belongs to the Farey tree of symbolic sequences shown in
Figure~\ref{fig:farey_sequences}. More precisely, we show that such a
symbolic sequence is obtained by the concatenation of the symbolic
sequences associated with the periodic orbits of the Farey parents of
its rotation number. As a consequence of that, one obtains
Theorem~\ref{theo:sym_seq_Farey_tree}, announced above.\\
Note that this result provides an alternative isomorphism between the
Farey tree of rational numbers and the Farey tree of symbolics
sequences (Figures~\ref{fig:farey_rho} and~\ref{fig:farey_sequences},
respectively) by means of the dynamical properties of circle maps.

In Section~\ref{sec:maximin_1d} we will present an alternative
approach using the {\em maximin} properties of these sequences.

\begin{proposition}\label{pro:concatenatetion}
Let $f$ be a circle map satisfying~\condsC{}, and assume it has a
periodic orbit with rotation number $P/Q$, $(P,Q)=1$. Let
$\Delta^\infty\in W_{P,Q}$ be its symbolic sequence, and  assume that
$\Delta$ is minimal:
\begin{equation*}
\Delta=\min_{k\ge0}\sigma^k(\Delta).
\end{equation*}
Let $p$, $q$, $p'$ and $q'$ natural numbers such that
\begin{itemize}
\item $P/Q=(p+p')/(q+q')$
\item $(p,q)=(p',q')=1$
\item $p'q-pq'=1$,
\end{itemize}
that is, $p/q<p'/q'$ are the Farey parents of $P/Q$.\\
Let $\alpha^\infty\in W_{p,q}$ and $\beta^\infty\in W_{p',q'}$ be the symbolic
sequences of the periodic orbits with rotation numbers
$p/q$ and $p'/q'$, respectively. Assume that $\alpha$ and $\beta$ are
minimal. Then $\Delta$ is the concatenation of $\alpha$ and $\beta$:
\begin{equation*}
\Delta=\alpha\beta.
\end{equation*}
\end{proposition}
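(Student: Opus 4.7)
The plan is to reduce the claim to the fact that the symbolic sequence of a $P,Q$-ordered periodic orbit is uniquely determined (up to cyclic permutation) by $P$ and $Q$, and then to verify by a short arithmetic computation that $\alpha\beta$ realizes the same explicit formula as $\Delta$.

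First, I would apply Proposition~\ref{pro:pq-ordred} to the periodic orbit of $f$ with rotation number $P/Q$: spatially ordering it on the circle as $x_0 < x_1 < \cdots < x_{Q-1}$, the lift $F$ acts on the indices by $i \mapsto i + P \bmod Q$. Because $f$ is orientation preserving, two distinct orbit points are spatially ordered in the same way as their itineraries are ordered lexicographically (at the first position where the itineraries differ, the point on the $\LL$-side of $c$ lies spatially below), so the minimal itinerary $\Delta$ is the itinerary of the spatially smallest point $x_0$. Since exactly $P$ of the $Q$ orbit points lie in $\{y > c\}$, these are forced to be the spatially largest ones $x_{Q-P},\dots,x_{Q-1}$, yielding the explicit formula
\begin{equation*}
\Delta_k = \LL \iff kP \bmod Q < Q-P,
\end{equation*}
with analogous formulas for $\alpha_k$ obtained by replacing $(P,Q)$ with $(p,q)$ and for $\beta_k$ by replacing $(P,Q)$ with $(p',q')$.

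With these formulas, the identity $\Delta=\alpha\beta$ reduces to two arithmetic equivalences: for $0\le k<q$, $kP\bmod Q<Q-P$ iff $kp\bmod q<q-p$; and for $0\le k<q'$, $(q+k)P\bmod Q<Q-P$ iff $kp'\bmod q'<q'-p'$. I would derive both from the elementary identities
\begin{equation*}
\frac{kP}{Q}-\frac{kp}{q}=\frac{k}{q(q+q')}, \qquad \frac{1+kP}{Q}-\frac{kp'}{q'}=\frac{q'-k}{q'Q},
\end{equation*}
the second after using the Farey relation $p'q-pq'=1$ to compute $qP=pQ+1$, so $qP\bmod Q=1$. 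Both corrections are nonnegative and bounded by $1/(q+q')$, which combined with $(p,q)=(p',q')=1$ forces $\lfloor jP/Q\rfloor=\lfloor jp/q\rfloor$ for $0\le j\le q$ and $\lfloor (q+j)P/Q\rfloor=p+\lfloor jp'/q'\rfloor$ for $0\le j\le q'$. Comparing consecutive differences of these floors then translates directly into the two required equivalences.

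The main obstacle is the bookkeeping in the second identity, where the shift of the starting index by $q$ must be tracked via the residue $qP\bmod Q=1$ and the Farey relation $p'q-pq'=1$ is used essentially. An alternative route bypassing the arithmetic would be induction on the depth of $P/Q$ in the Farey tree, verifying at each step that the concatenation of two minimal well-ordered sequences with Farey-related rotation numbers is itself $P,Q$-ordered and minimal, and hence equals $\Delta$ by the uniqueness noted above.
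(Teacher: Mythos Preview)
Your argument is correct and takes a genuinely different route from the paper. The paper works geometrically with the lifted cycle $\{z_i\}$: using $qP\equiv 1\pmod Q$, it splits the first $Q$ iterates of $z_0$ into the first $q$ and the last $q'$, relabels each block as a subsequence $(x_j)$, $(y_j)$, and verifies combinatorially that each subsequence inherits the structure of a $p,q$- (resp.\ $p',q'$-) ordered lifted cycle, hence carries the symbolic block $\alpha$ (resp.\ $\beta$). You instead extract the explicit mechanical-word formula $\Delta_k=\R\iff \lfloor (k+1)P/Q\rfloor>\lfloor kP/Q\rfloor$ (equivalently $kP\bmod Q\ge Q-P$) and reduce $\Delta=\alpha\beta$ to matching floor sequences via the two Farey identities $Pq-pQ=1$ and $p'Q-Pq'=1$. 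Your route is arguably cleaner once the formula is in hand, connects the statement to the classical Sturmian/rotation-sequence picture, and makes the role of the Farey relation completely transparent; the paper's route stays closer to the dynamics and avoids introducing the explicit formula, at the cost of a somewhat more delicate ordering argument for the two subsequences. Both rely on the same arithmetic kernel $qP\equiv 1\pmod Q$.
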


\begin{proof}
Let
\begin{equation*}
0<z_0<z_1<\dots<z_{Q-1}<1
\end{equation*}
be a periodic orbit with rotation number $P/Q$. Let us consider the
lifted cycle given by $\Pi^{-1}(z_i)$ given in
Equation~\eqref{eq:projection_map},
\begin{equation*}
\begin{aligned}
0<z_0<z_1<\dots<&z_{Q-1}<1<z_Q<z_{Q+1}<\dots <z_{2Q-1}<2<z_{2Q}<\cdots\\
&\cdots<z_{PQ-1}<P<z_{PQ}<\cdots
\end{aligned}
\end{equation*}
Consequently the following identity holds:
\begin{equation}\label{eq:MK0}
z_{j+mQ}=z_j+m,\; \mbox{ where $j,\; m\in\Z$}.
\end{equation}
Let $F$ be the lift of $f$ as in Definition~\ref{def:lift}. Due to
Proposition~\ref{pro:pq-ordred}, the points $z_i$ are $P,Q$-ordered by
$F$:
\begin{align*}
F(z_i)&=z_{i+P}\\
F^Q(z_i)&=z_i+P.
\end{align*}
We now construct
two subsequences, $(x_i)$ and $(y_i)$, as follows (see
Example~\ref{ex:sebsequences}).

The former will consist of the lifting of the set $\left\{
z_0,f(z_0),\dots,f^{q-1}(z_0) \right\}$ (the first $q$ iterates of
$z_0$), given by
\begin{equation*}
\{z_{nP+iQ}\, | \,0\le n\le q-1,\,i\in\Z\}.
\end{equation*}
Since $p$ and $q$ are relatively prime, every integer $j$ is uniquely represented as $j=np+iq$.
Let $(x_j)_{j\in\Z}$, be the sequence defined by
\begin{equation}
x_{np+iq}=z_{nP+iQ},\,0\le n\le q-1,\,i\in\Z.
\label{eq:sequence_x}
\end{equation}
We first prove that $(x_j)_{j\in\Z}$ satisfies
\begin{align}
0<x_0<x_1<\dots<&x_{q-1}<1<x_q<x_{q+1}<\dots
<x_{2q-1}<2<x_{2q}<\cdots\nonumber\\
&\dots<x_{pq-1}<p<x_{pq}<\cdots\label{eq:ordered_sequence_x}
\end{align}
To see that, we show that the sequences $(z_{kP})_{k\in\Z}$ and
$(x_{kp})_{k\in\Z}$ skip integer numbers at the same iterates; that
is, if
\begin{equation*}
z_{(k-1)P}<i<z_{kP}\;\Big(\Longleftrightarrow (k-1)P<iQ<kP\Big),
\end{equation*}
then
\begin{equation*}
x_{(k-1)p}<i<x_{kp}\;\Big(\Longleftrightarrow (k-1)p<iq<kp\Big).
\end{equation*}
This comes from the fact that, recalling {\em ii)} of
Theorem~\ref{the:Farey_properties}, $p/q$ and $P/Q$ are Farey
neighbours, and hence they are in the path of the Farey neighbours
$i/k$ and $i/(k-1)$. Therefore we have
\begin{equation*}
\frac{i}{k}<\frac{p}{q}<\frac{P}{Q}<\frac{p'}{q'}<\frac{i}{k-1},
\end{equation*}
because $P/Q$ is the Farey median of $p/q$ and $p'/q'$.\\
Let now $l=kP\pmod{Q}$ and let $t$ be such that
\begin{equation*}
z_{(t-1)P}\le z_l<z_{tP}.
\end{equation*}
Then we must have
\begin{equation*}
x_{(t-1)p}\le x_{\tilde{l}}<x_{tp},
\end{equation*}
where $\tilde{l}=kp\pmod{q}$. Otherwise, the sequences $z_{iP}$ and
$x_{ip}$ could not skip integer numbers at the same time. This shows
that necessary the sequence $(x_j)_{j\in\Z}$ must
satisfy~\eqref{eq:ordered_sequence_x}. Further, it follows
from~\eqref{eq:MK0} and~\eqref{eq:sequence_x} that
\begin{equation}\label{eq-MK2}
x_{j+qp}=x_j+p,\;j\in (0,\ldots, q-1\}.
\end{equation}
Moreover, for $j=np+iq$, $n\neq q-1$, we have 
\begin{equation}
F(x_j)=F(z_{nP+iQ})=z_{nP+iQ+P}=x_{(n+1)p+iq}=x_{j+p}.
\label{eq:MKO3}
\end{equation}
Note that, for $n=q-1$, Equation~\eqref{eq:MKO3} does not hold, as
$F(z_{(q-1)P+iQ})=z_{qP+iQ}$ does not belong to the sequence
$(x_i)_{i\in\Z}$.\\
Hence, we have that
\begin{equation}\label{eq-MK3}
F(x_j)=x_{j+p},\;j=np+iq,\,n\neq q-1 \pmod{q}.
\end{equation}
As the lifted sequence $(x_j)_{j\in\Z}$ satisfies~\eqref{eq-MK2}
and~\eqref{eq-MK3}, its symbolic sequence, $\alpha\in W_{p,q}$, is the
one associated with a $p,q$-ordered lifted cycle.

We now obtain another subsequence derived from the last $q'$ points of
the first $Q$ iterates of the point $z_0$. The first point of these
$q'$ points is
\begin{equation*}
F^q(z_0)=z_{qP}.
\end{equation*}
Note that, as $p/q<p'/q'$ are Farey neighbours, we get
\begin{equation*}
1=p'q-pq'=p'q+pq-pq-pq'=(p'+p)q-p(q+q')=qP-pQ.
\end{equation*}
Therefore,
\begin{equation*}
qP=1+pQ=1\pmod Q.
\end{equation*}
This implies that, when projecting the point $z_{qP}$ at the circle
$\RR/\Z$ (identified with $[0,1]$), we obtain $\Pi(z_{qP})=z_1$.
Therefore, when lifting the projection of the last $q'$ points of the
first $Q$ iterates of $z_0$, we obtain the sequence given by
\begin{equation}
y_{np'+iq'}=z_{1+nP+iQ},\,0\le n\le q'-1,\,i\in\Z.
\label{eq:sequence_y}
\end{equation}
Arguing as before, this sequence has the form
\begin{align*}
0<y_0<y_1<\dots<&y_{q'-1}<1<y_{q'}<y_{q'+1}<\dots
<y_{2q'-1}<2<y_{2q'}<\cdots\\
&\cdots<y_{p'q'-1}<p'<y_{p'q'}<\cdots,
\end{align*}
and sastisfies
\begin{align*}
F^i(y_0)&=y_{ip'},\,q\le i\le q+q'-1\\
y_{i+q'p'}&=y_i+p'.
\end{align*}
Hence, its symbolic sequence, $\beta\in W_{p',q'}$ is the one
associated with $p',q'$-ordered lifted cycle.\\
Note that, $\alpha$ and $\beta$ are minimal, as they are associated
with the iterates of the lowest positive point of the sequences
$(x_i)$ and $(y_i)$, $x_0=z_0$ and $y_0=z_1$, respectively.

By construction, the symbolic sequence $\Delta\in W_{P,Q}$ is the
concatenation of $\alpha$ and $\beta$, as we wanted to show.
\end{proof}
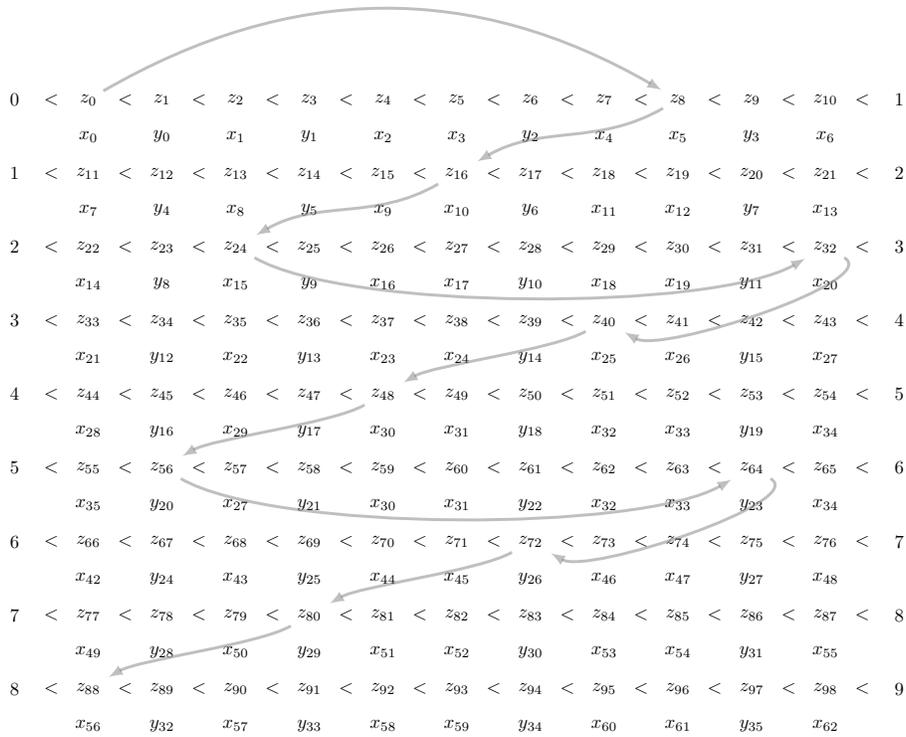
\begin{figure}\centering
\begin{picture}(1,0.7)
\put(-0.01,0.06){
\scalebox{0.7}{
\begin{tikzpicture}[->,>=stealth',shorten >=1pt,auto,node
distance=0.7cm]
\node[] (a0){$0$};
\node[] (a0b)[right of=a0]{$<$};
\node[] (a1)[right of=a0b]{$z_0$};
\node[] (a1b)[right of=a1]{$<$};
\node[] (a2) [right of=a1b] {$z_1$};
\node[] (a2b)[right of=a2]{$<$};
\node[] (a3) [right of=a2b] {$z_2$};
\node[] (a3b)[right of=a3]{$<$};
\node[] (a4) [right of=a3b] {$z_3$};
\node[] (a4b)[right of=a4]{$<$};
\node[] (a5) [right of=a4b] {$z_4$};
\node[] (a5b)[right of=a5]{$<$};
\node[] (a6) [right of=a5b] {$z_5$};
\node[] (a6b)[right of=a6]{$<$};
\node[] (a7) [right of=a6b] {$z_6$};
\node[] (a7b)[right of=a7]{$<$};
\node[] (a8) [right of=a7b] {$z_7$};
\node[] (a8b)[right of=a8]{$<$};
\node[] (a9) [right of=a8b] {$z_8$};
\node[] (a9b)[right of=a9]{$<$};
\node[] (a10) [right of=a9b] {$z_9$};
\node[] (a10b)[right of=a10]{$<$};
\node[] (a11) [right of=a10b] {$z_{10}$};
\node[] (a11b)[right of=a11]{$<$};
\node[] (a12) [right of=a11b] {$1$};
\node[] (ax0)[below of=a0]{};
\node[] (ax0b)[right of=ax0]{};
\node[] (ax1)[right of=ax0b]{$x_0$};
\node[] (ax1b)[right of=ax1]{};
\node[] (ax2) [right of=ax1b] {$y_0$};
\node[] (ax2b)[right of=ax2]{};
\node[] (ax3) [right of=ax2b] {$x_1$};
\node[] (ax3b)[right of=ax3]{};
\node[] (ax4) [right of=ax3b] {$y_1$};
\node[] (ax4b)[right of=ax4]{};
\node[] (ax5) [right of=ax4b] {$x_2$};
\node[] (ax5b)[right of=ax5]{};
\node[] (ax6) [right of=ax5b] {$x_3$};
\node[] (ax6b)[right of=ax6]{};
\node[] (ax7) [right of=ax6b] {$y_2$};
\node[] (ax7b)[right of=ax7]{};
\node[] (ax8) [right of=ax7b] {$x_4$};
\node[] (ax8b)[right of=ax8]{};
\node[] (ax9) [right of=ax8b] {$x_5$};
\node[] (ax9b)[right of=ax9]{};
\node[] (ax10) [right of=ax9b] {$y_{3}$};
\node[] (ax10b)[right of=ax10]{};
\node[] (ax11) [right of=ax10b] {$x_6$};
\node[] (ax11b)[right of=ax11]{};
\node[] (ax12) [right of=ax11b] {};
\node[] (b0)[below of=ax0]{$1$};
\node[] (b0b)[right of=b0]{$<$};
\node[] (b1)[right of=b0b]{$z_{11}$};
\node[] (b1b)[right of=b1]{$<$};
\node[] (b2) [right of=b1b] {$z_{12}$};
\node[] (b2b)[right of=b2]{$<$};
\node[] (b3) [right of=b2b] {$z_{13}$};
\node[] (b3b)[right of=b3]{$<$};
\node[] (b4) [right of=b3b] {$z_{14}$};
\node[] (b4b)[right of=b4]{$<$};
\node[] (b5) [right of=b4b] {$z_{15}$};
\node[] (b5b)[right of=b5]{$<$};
\node[] (b6) [right of=b5b] {$z_{16}$};
\node[] (b6b)[right of=b6]{$<$};
\node[] (b7) [right of=b6b] {$z_{17}$};
\node[] (b7b)[right of=b7]{$<$};
\node[] (b8) [right of=b7b] {$z_{18}$};
\node[] (b8b)[right of=b8]{$<$};
\node[] (b9) [right of=b8b] {$z_{19}$};
\node[] (b9b)[right of=b9]{$<$};
\node[] (b10) [right of=b9b] {$z_{20}$};
\node[] (b10b)[right of=b10]{$<$};
\node[] (b11) [right of=b10b] {$z_{21}$};
\node[] (b11b)[right of=b11]{$<$};
\node[] (b12) [right of=b11b] {$2$};
\node[] (bx0)[below of=b0]{};
\node[] (bx0b)[right of=bx0]{};
\node[] (bx1)[right of=bx0b]{$x_{7}$};
\node[] (bx1b)[right of=bx1]{};
\node[] (bx2) [right of=bx1b] {$y_{4}$};
\node[] (bx2b)[right of=bx2]{};
\node[] (bx3) [right of=bx2b] {$x_{8}$};
\node[] (bx3b)[right of=bx3]{};
\node[] (bx4) [right of=bx3b] {$y_5$};
\node[] (bx4b)[right of=bx4]{};
\node[] (bx5) [right of=bx4b] {$x_9$};
\node[] (bx5b)[right of=bx5]{};
\node[] (bx6) [right of=bx5b] {$x_{10}$};
\node[] (bx6b)[right of=bx6]{};
\node[] (bx7) [right of=bx6b] {$y_{6}$};
\node[] (bx7b)[right of=bx7]{};
\node[] (bx8) [right of=bx7b] {$x_{11}$};
\node[] (bx8b)[right of=bx8]{};
\node[] (bx9) [right of=bx8b] {$x_{12}$};
\node[] (bx9b)[right of=bx9]{};
\node[] (bx10) [right of=bx9b] {$y_{7}$};
\node[] (bx10b)[right of=bx10]{};
\node[] (bx11) [right of=bx10b] {$x_{13}$};
\node[] (bx11b)[right of=bx11]{};
\node[] (bx12) [right of=bx11b] {};
\node[] (c0)[below of=bx0]{$2$};
\node[] (c0b)[right of=c0]{$<$};
\node[] (c1)[right of=c0b]{$z_{22}$};
\node[] (c1b)[right of=c1]{$<$};
\node[] (c2) [right of=c1b] {$z_{23}$};
\node[] (c2b)[right of=c2]{$<$};
\node[] (c3) [right of=c2b] {$z_{24}$};
\node[] (c3b)[right of=c3]{$<$};
\node[] (c4) [right of=c3b] {$z_{25}$};
\node[] (c4b)[right of=c4]{$<$};
\node[] (c5) [right of=c4b] {$z_{26}$};
\node[] (c5b)[right of=c5]{$<$};
\node[] (c6) [right of=c5b] {$z_{27}$};
\node[] (c6b)[right of=c6]{$<$};
\node[] (c7) [right of=c6b] {$z_{28}$};
\node[] (c7b)[right of=c7]{$<$};
\node[] (c8) [right of=c7b] {$z_{29}$};
\node[] (c8b)[right of=c8]{$<$};
\node[] (c9) [right of=c8b] {$z_{30}$};
\node[] (c9b)[right of=c9]{$<$};
\node[] (c10) [right of=c9b] {$z_{31}$};
\node[] (c10b)[right of=c10]{$<$};
\node[] (c11) [right of=c10b] {$z_{32}$};
\node[] (c11b)[right of=c11]{$<$};
\node[] (c12) [right of=c11b] {$3$};
\node[] (cx0)[below of=c0]{};
\node[] (cx0b)[right of=cx0]{};
\node[] (cx1)[right of=cx0b]{$x_{14}$};
\node[] (cx1b)[right of=cx1]{};
\node[] (cx2) [right of=cx1b] {$y_8$};
\node[] (cx2b)[right of=cx2]{};
\node[] (cx3) [right of=cx2b] {$x_{15}$};
\node[] (cx3b)[right of=cx3]{};
\node[] (cx4) [right of=cx3b] {$y_9$};
\node[] (cx4b)[right of=cx4]{};
\node[] (cx5) [right of=cx4b] {$x_{16}$};
\node[] (cx5b)[right of=cx5]{};
\node[] (cx6) [right of=cx5b] {$x_{17}$};
\node[] (cx6b)[right of=cx6]{};
\node[] (cx7) [right of=cx6b] {$y_{10}$};
\node[] (cx7b)[right of=cx7]{};
\node[] (cx8) [right of=cx7b] {$x_{18}$};
\node[] (cx8b)[right of=cx8]{};
\node[] (cx9) [right of=cx8b] {$x_{19}$};
\node[] (cx9b)[right of=cx9]{};
\node[] (cx10) [right of=cx9b] {$y_{11}$};
\node[] (cx10b)[right of=cx10]{};
\node[] (cx11) [right of=cx10b] {$x_{20}$};
\node[] (cx11b)[right of=cx11]{};
\node[] (cx12) [right of=cx11b] {};
\node[] (d0)[below of=cx0]{$3$};
\node[] (d0b)[right of=d0]{$<$};
\node[] (d1)[right of=d0b]{$z_{33}$};
\node[] (d1b)[right of=d1]{$<$};
\node[] (d2) [right of=d1b] {$z_{34}$};
\node[] (d2b)[right of=d2]{$<$};
\node[] (d3) [right of=d2b] {$z_{35}$};
\node[] (d3b)[right of=d3]{$<$};
\node[] (d4) [right of=d3b] {$z_{36}$};
\node[] (d4b)[right of=d4]{$<$};
\node[] (d5) [right of=d4b] {$z_{37}$};
\node[] (d5b)[right of=d5]{$<$};
\node[] (d6) [right of=d5b] {$z_{38}$};
\node[] (d6b)[right of=d6]{$<$};
\node[] (d7) [right of=d6b] {$z_{39}$};
\node[] (d7b)[right of=d7]{$<$};
\node[] (d8) [right of=d7b] {$z_{40}$};
\node[] (d8b)[right of=d8]{$<$};
\node[] (d9) [right of=d8b] {$z_{41}$};
\node[] (d9b)[right of=d9]{$<$};
\node[] (d10) [right of=d9b] {$z_{42}$};
\node[] (d10b)[right of=d10]{$<$};
\node[] (d11) [right of=d10b] {$z_{43}$};
\node[] (d11b)[right of=d11]{$<$};
\node[] (d12) [right of=d11b] {$4$};
\node[] (dx0)[below of=d0]{};
\node[] (dx0b)[right of=dx0]{};
\node[] (dx1)[right of=dx0b]{$x_{21}$};
\node[] (dx1b)[right of=dx1]{};
\node[] (dx2) [right of=dx1b] {$y_{12}$};
\node[] (dx2b)[right of=dx2]{};
\node[] (dx3) [right of=dx2b] {$x_{22}$};
\node[] (dx3b)[right of=dx3]{};
\node[] (dx4) [right of=dx3b] {$y_{13}$};
\node[] (dx4b)[right of=dx4]{};
\node[] (dx5) [right of=dx4b] {$x_{23}$};
\node[] (dx5b)[right of=dx5]{};
\node[] (dx6) [right of=dx5b] {$x_{24}$};
\node[] (dx6b)[right of=dx6]{};
\node[] (dx7) [right of=dx6b] {$y_{14}$};
\node[] (dx7b)[right of=dx7]{};
\node[] (dx8) [right of=dx7b] {$x_{25}$};
\node[] (dx8b)[right of=dx8]{};
\node[] (dx9) [right of=dx8b] {$x_{26}$};
\node[] (dx9b)[right of=dx9]{};
\node[] (dx10) [right of=dx9b] {$y_{15}$};
\node[] (dx10b)[right of=dx10]{};
\node[] (dx11) [right of=dx10b] {$x_{27}$};
\node[] (dx11b)[right of=dx11]{};
\node[] (dx12) [right of=dx11b] {};
\node[] (e0)[below of=dx0]{$4$};
\node[] (e0b)[right of=e0]{$<$};
\node[] (e1)[right of=e0b]{$z_{44}$};
\node[] (e1b)[right of=e1]{$<$};
\node[] (e2) [right of=e1b] {$z_{45}$};
\node[] (e2b)[right of=e2]{$<$};
\node[] (e3) [right of=e2b] {$z_{46}$};
\node[] (e3b)[right of=e3]{$<$};
\node[] (e4) [right of=e3b] {$z_{47}$};
\node[] (e4b)[right of=e4]{$<$};
\node[] (e5) [right of=e4b] {$z_{48}$};
\node[] (e5b)[right of=e5]{$<$};
\node[] (e6) [right of=e5b] {$z_{49}$};
\node[] (e6b)[right of=e6]{$<$};
\node[] (e7) [right of=e6b] {$z_{50}$};
\node[] (e7b)[right of=e7]{$<$};
\node[] (e8) [right of=e7b] {$z_{51}$};
\node[] (e8b)[right of=e8]{$<$};
\node[] (e9) [right of=e8b] {$z_{52}$};
\node[] (e9b)[right of=e9]{$<$};
\node[] (e10) [right of=e9b] {$z_{53}$};
\node[] (e10b)[right of=e10]{$<$};
\node[] (e11) [right of=e10b] {$z_{54}$};
\node[] (e11b)[right of=e11]{$<$};
\node[] (e12) [right of=e11b] {$5$};
\node[] (ex0)[below of=e0]{};
\node[] (ex0b)[right of=ex0]{};
\node[] (ex1)[right of=ex0b]{$x_{28}$};
\node[] (ex1b)[right of=ex1]{};
\node[] (ex2) [right of=ex1b] {$y_{16}$};
\node[] (ex2b)[right of=ex2]{};
\node[] (ex3) [right of=ex2b] {$x_{29}$};
\node[] (ex3b)[right of=ex3]{};
\node[] (ex4) [right of=ex3b] {$y_{17}$};
\node[] (ex4b)[right of=ex4]{};
\node[] (ex5) [right of=ex4b] {$x_{30}$};
\node[] (ex5b)[right of=ex5]{};
\node[] (ex6) [right of=ex5b] {$x_{31}$};
\node[] (ex6b)[right of=ex6]{};
\node[] (ex7) [right of=ex6b] {$y_{18}$};
\node[] (ex7b)[right of=ex7]{};
\node[] (ex8) [right of=ex7b] {$x_{32}$};
\node[] (ex8b)[right of=ex8]{};
\node[] (ex9) [right of=ex8b] {$x_{33}$};
\node[] (ex9b)[right of=ex9]{};
\node[] (ex10) [right of=ex9b] {$y_{19}$};
\node[] (ex10b)[right of=ex10]{};
\node[] (ex11) [right of=ex10b] {$x_{34}$};
\node[] (ex11b)[right of=ex11]{};
\node[] (ex12) [right of=ex11b] {};
\node[] (f0)[below of=ex0]{$5$};
\node[] (f0b)[right of=f0]{$<$};
\node[] (f1)[right of=f0b]{$z_{55}$};
\node[] (f1b)[right of=f1]{$<$};
\node[] (f2) [right of=f1b] {$z_{56}$};
\node[] (f2b)[right of=f2]{$<$};
\node[] (f3) [right of=f2b] {$z_{57}$};
\node[] (f3b)[right of=f3]{$<$};
\node[] (f4) [right of=f3b] {$z_{58}$};
\node[] (f4b)[right of=f4]{$<$};
\node[] (f5) [right of=f4b] {$z_{59}$};
\node[] (f5b)[right of=f5]{$<$};
\node[] (f6) [right of=f5b] {$z_{60}$};
\node[] (f6b)[right of=f6]{$<$};
\node[] (f7) [right of=f6b] {$z_{61}$};
\node[] (f7b)[right of=f7]{$<$};
\node[] (f8) [right of=f7b] {$z_{62}$};
\node[] (f8b)[right of=f8]{$<$};
\node[] (f9) [right of=f8b] {$z_{63}$};
\node[] (f9b)[right of=f9]{$<$};
\node[] (f10) [right of=f9b] {$z_{64}$};
\node[] (f10b)[right of=f10]{$<$};
\node[] (f11) [right of=f10b] {$z_{65}$};
\node[] (f11b)[right of=f11]{$<$};
\node[] (f12) [right of=f11b] {$6$};
\node[] (fx0)[below of=f0]{};
\node[] (fx0b)[right of=fx0]{};
\node[] (fx1)[right of=fx0b]{$x_{35}$};
\node[] (fx1b)[right of=fx1]{};
\node[] (fx2) [right of=fx1b] {$y_{20}$};
\node[] (fx2b)[right of=fx2]{};
\node[] (fx3) [right of=fx2b] {$x_{27}$};
\node[] (fx3b)[right of=fx3]{};
\node[] (fx4) [right of=fx3b] {$y_{21}$};
\node[] (fx4b)[right of=fx4]{};
\node[] (fx5) [right of=fx4b] {$x_{30}$};
\node[] (fx5b)[right of=fx5]{};
\node[] (fx6) [right of=fx5b] {$x_{31}$};
\node[] (fx6b)[right of=fx6]{};
\node[] (fx7) [right of=fx6b] {$y_{22}$};
\node[] (fx7b)[right of=fx7]{};
\node[] (fx8) [right of=fx7b] {$x_{32}$};
\node[] (fx8b)[right of=fx8]{};
\node[] (fx9) [right of=fx8b] {$x_{33}$};
\node[] (fx9b)[right of=fx9]{};
\node[] (fx10) [right of=fx9b] {$y_{23}$};
\node[] (fx10b)[right of=fx10]{};
\node[] (fx11) [right of=fx10b] {$x_{34}$};
\node[] (fx11b)[right of=fx11]{};
\node[] (fx12) [right of=fx11b] {};
\node[] (g0)[below of=fx0]{$6$};
\node[] (g0b)[right of=g0]{$<$};
\node[] (g1)[right of=g0b]{$z_{66}$};
\node[] (g1b)[right of=g1]{$<$};
\node[] (g2) [right of=g1b] {$z_{67}$};
\node[] (g2b)[right of=g2]{$<$};
\node[] (g3) [right of=g2b] {$z_{68}$};
\node[] (g3b)[right of=g3]{$<$};
\node[] (g4) [right of=g3b] {$z_{69}$};
\node[] (g4b)[right of=g4]{$<$};
\node[] (g5) [right of=g4b] {$z_{70}$};
\node[] (g5b)[right of=g5]{$<$};
\node[] (g6) [right of=g5b] {$z_{71}$};
\node[] (g6b)[right of=g6]{$<$};
\node[] (g7) [right of=g6b] {$z_{72}$};
\node[] (g7b)[right of=g7]{$<$};
\node[] (g8) [right of=g7b] {$z_{73}$};
\node[] (g8b)[right of=g8]{$<$};
\node[] (g9) [right of=g8b] {$z_{74}$};
\node[] (g9b)[right of=g9]{$<$};
\node[] (g10) [right of=g9b] {$z_{75}$};
\node[] (g10b)[right of=g10]{$<$};
\node[] (g11) [right of=g10b] {$z_{76}$};
\node[] (g11b)[right of=g11]{$<$};
\node[] (g12) [right of=g11b] {$7$};
\node[] (gx0)[below of=g0]{};
\node[] (gx0b)[right of=gx0]{};
\node[] (gx1)[right of=gx0b]{$x_{42}$};
\node[] (gx1b)[right of=gx1]{};
\node[] (gx2) [right of=gx1b] {$y_{24}$};
\node[] (gx2b)[right of=gx2]{};
\node[] (gx3) [right of=gx2b] {$x_{43}$};
\node[] (gx3b)[right of=gx3]{};
\node[] (gx4) [right of=gx3b] {$y_{25}$};
\node[] (gx4b)[right of=gx4]{};
\node[] (gx5) [right of=gx4b] {$x_{44}$};
\node[] (gx5b)[right of=gx5]{};
\node[] (gx6) [right of=gx5b] {$x_{45}$};
\node[] (gx6b)[right of=gx6]{};
\node[] (gx7) [right of=gx6b] {$y_{26}$};
\node[] (gx7b)[right of=gx7]{};
\node[] (gx8) [right of=gx7b] {$x_{46}$};
\node[] (gx8b)[right of=gx8]{};
\node[] (gx9) [right of=gx8b] {$x_{47}$};
\node[] (gx9b)[right of=gx9]{};
\node[] (gx10) [right of=gx9b] {$y_{27}$};
\node[] (gx10b)[right of=gx10]{};
\node[] (gx11) [right of=gx10b] {$x_{48}$};
\node[] (gx11b)[right of=gx11]{};
\node[] (gx12) [right of=gx11b] {};
\node[] (h0)[below of=gx0]{$7$};
\node[] (h0b)[right of=h0]{$<$};
\node[] (h1)[right of=h0b]{$z_{77}$};
\node[] (h1b)[right of=h1]{$<$};
\node[] (h2) [right of=h1b] {$z_{78}$};
\node[] (h2b)[right of=h2]{$<$};
\node[] (h3) [right of=h2b] {$z_{79}$};
\node[] (h3b)[right of=h3]{$<$};
\node[] (h4) [right of=h3b] {$z_{80}$};
\node[] (h4b)[right of=h4]{$<$};
\node[] (h5) [right of=h4b] {$z_{81}$};
\node[] (h5b)[right of=h5]{$<$};
\node[] (h6) [right of=h5b] {$z_{82}$};
\node[] (h6b)[right of=h6]{$<$};
\node[] (h7) [right of=h6b] {$z_{83}$};
\node[] (h7b)[right of=h7]{$<$};
\node[] (h8) [right of=h7b] {$z_{84}$};
\node[] (h8b)[right of=h8]{$<$};
\node[] (h9) [right of=h8b] {$z_{85}$};
\node[] (h9b)[right of=h9]{$<$};
\node[] (h10) [right of=h9b] {$z_{86}$};
\node[] (h10b)[right of=h10]{$<$};
\node[] (h11) [right of=h10b] {$z_{87}$};
\node[] (h11b)[right of=h11]{$<$};
\node[] (h12) [right of=h11b] {$8$};
\node[] (hx0)[below of=h0]{};
\node[] (hx0b)[right of=hx0]{};
\node[] (hx1)[right of=hx0b]{$x_{49}$};
\node[] (hx1b)[right of=hx1]{};
\node[] (hx2) [right of=hx1b] {$y_{28}$};
\node[] (hx2b)[right of=hx2]{};
\node[] (hx3) [right of=hx2b] {$x_{50}$};
\node[] (hx3b)[right of=hx3]{};
\node[] (hx4) [right of=hx3b] {$y_{29}$};
\node[] (hx4b)[right of=hx4]{};
\node[] (hx5) [right of=hx4b] {$x_{51}$};
\node[] (hx5b)[right of=hx5]{};
\node[] (hx6) [right of=hx5b] {$x_{52}$};
\node[] (hx6b)[right of=hx6]{};
\node[] (hx7) [right of=hx6b] {$y_{30}$};
\node[] (hx7b)[right of=hx7]{};
\node[] (hx8) [right of=hx7b] {$x_{53}$};
\node[] (hx8b)[right of=hx8]{};
\node[] (hx9) [right of=hx8b] {$x_{54}$};
\node[] (hx9b)[right of=hx9]{};
\node[] (hx10) [right of=hx9b] {$y_{31}$};
\node[] (hx10b)[right of=hx10]{};
\node[] (hx11) [right of=hx10b] {$x_{55}$};
\node[] (hx11b)[right of=hx11]{};
\node[] (hx12) [right of=hx11b] {};
\node[] (i0)[below of=hx0]{$8$};
\node[] (i0b)[right of=i0]{$<$};
\node[] (i1)[right of=i0b]{$z_{88}$};
\node[] (i1b)[right of=i1]{$<$};
\node[] (i2) [right of=i1b] {$z_{89}$};
\node[] (i2b)[right of=i2]{$<$};
\node[] (i3) [right of=i2b] {$z_{90}$};
\node[] (i3b)[right of=i3]{$<$};
\node[] (i4) [right of=i3b] {$z_{91}$};
\node[] (i4b)[right of=i4]{$<$};
\node[] (i5) [right of=i4b] {$z_{92}$};
\node[] (i5b)[right of=i5]{$<$};
\node[] (i6) [right of=i5b] {$z_{93}$};
\node[] (i6b)[right of=i6]{$<$};
\node[] (i7) [right of=i6b] {$z_{94}$};
\node[] (i7b)[right of=i7]{$<$};
\node[] (i8) [right of=i7b] {$z_{95}$};
\node[] (i8b)[right of=i8]{$<$};
\node[] (i9) [right of=i8b] {$z_{96}$};
\node[] (i9b)[right of=i9]{$<$};
\node[] (i10) [right of=i9b] {$z_{97}$};
\node[] (i10b)[right of=i10]{$<$};
\node[] (i11) [right of=i10b] {$z_{98}$};
\node[] (i11b)[right of=i11]{$<$};
\node[] (i12) [right of=i11b] {$9$};
\node[] (ix0)[below of=i0]{};
\node[] (ix0b)[right of=ix0]{};
\node[] (ix1)[right of=ix0b]{$x_{56}$};
\node[] (ix1b)[right of=ix1]{};
\node[] (ix2) [right of=ix1b] {$y_{32}$};
\node[] (ix2b)[right of=ix2]{};
\node[] (ix3) [right of=ix2b] {$x_{57}$};
\node[] (ix3b)[right of=ix3]{};
\node[] (ix4) [right of=ix3b] {$y_{33}$};
\node[] (ix4b)[right of=ix4]{};
\node[] (ix5) [right of=ix4b] {$x_{58}$};
\node[] (ix5b)[right of=ix5]{};
\node[] (ix6) [right of=ix5b] {$x_{59}$};
\node[] (ix6b)[right of=ix6]{};
\node[] (ix7) [right of=ix6b] {$y_{34}$};
\node[] (ix7b)[right of=ix7]{};
\node[] (ix8) [right of=ix7b] {$x_{60}$};
\node[] (ix8b)[right of=ix8]{};
\node[] (ix9) [right of=ix8b] {$x_{61}$};
\node[] (ix9b)[right of=ix9]{};
\node[] (ix10) [right of=ix9b] {$y_{35}$};
\node[] (ix10b)[right of=ix10]{};
\node[] (ix11) [right of=ix10b] {$x_{62}$};
\node[] (ix11b)[right of=ix11]{};
\node[] (ix12) [right of=ix11b] {};
\draw[-latex,gray,ultra thick,opacity=0.5](a1) to[out=30,in=150,looseness=1](a9);
\draw[-latex,gray,ultra thick,opacity=0.5](a9) to[out=210,in=30,looseness=1](b6);
\draw[-latex,gray,ultra thick,opacity=0.5](b6) to[out=210,in=30,looseness=1](c3);
\draw[-latex,gray,ultra thick,opacity=0.5](c3) to[out=330,in=210,looseness=0.5](c11);
\draw[-latex,gray,ultra thick,opacity=0.5](c11) to[out=330,in=330,looseness=0.5](d8);
\draw[-latex,gray,ultra thick,opacity=0.5](d8) to[out=210,in=30,looseness=0.5](e5);
\draw[-latex,gray,ultra thick,opacity=0.5](e5) to[out=210,in=30,looseness=0.5](f2);
\draw[-latex,gray,ultra thick,opacity=0.5](f2) to[out=330,in=210,looseness=0.5](f10);
\draw[-latex,gray,ultra thick,opacity=0.5](f10) to[out=330,in=330,looseness=0.5](g7);
\draw[-latex,gray,ultra thick,opacity=0.5](g7) to[out=210,in=30,looseness=0.5](h4);
\draw[-latex,gray,ultra thick,opacity=0.5](h4) to[out=210,in=30,looseness=0.5](i1);
\end{tikzpicture}
}
}
\end{picture}
\caption{Lifted cycle of a $8-11$-periodic orbit ($z_i$) split into
two subsequences, $x_i$ and $y_i$, as in the proof of
Proposition~\ref{pro:concatenatetion} (see
Example~\ref{ex:sebsequences} for text).}
\label{fig:8-11-ordered}
\end{figure}

\begin{example}\label{ex:sebsequences}
Let $P/Q=8/11$, and assume that a circle map $f$ satisfying~\condsC{}
has rotation number $P/Q$. From Propositions~\ref{pro:po_rational_rho}
and~\ref{pro:well_ordered_symb-seq} we know that $f$ has an
$11$-periodic orbit whose lifted cycle is $8,11$-ordered. We wish to
show that, as given by Proposition~\ref{pro:concatenatetion}, its
symbolic sequence is given by the concatenation of the symbolic
sequences associated with $p,q$ and $p',q'$-ordered lifted cycles of
periodic orbits with rotation number $p/q$ and $p'/q'$, respectively,
where $p/q<p'/q'$ are the Farey parents of $8/11$.

In order to find the Farey parents of $8/11$ one can construct the
Farey tree and locate them as the unique Farey neighbours of
$8/11$ at the Farey sequence $\mathcal{F}_{11}$. However, the proof of
Proposition~\ref{pro:concatenatetion} gives us a method to find
$q$ and $q'$: $q$ is the smallest such that
$\Pi\left(F^q(z_0)\right)=z_1$.\\
In Figure~\ref{fig:8-11-ordered} we
show the $8,11$-ordered lifted cycle, $(z_i)$ of the periodic orbit
with rotation number $8/11$. Starting at
$z_0$, the arrows provide the sequence obtained when iterating
$z_0$ by $F$. Note that, at each iterate, one adds $8$ to the
subindex. We obtain that $F^7(z_0)=z_{56}$, and $56=1\pmod{11}$.
Hence, $q=7$ and $q'=11-7=4$. By using that $8=p+p'$ and $p'q-pq'=1$,
we obtain $p=5$ and $p'=3$:
\begin{equation*}
\frac{8}{11}=\frac{5+3}{7+4}.
\end{equation*}
We can now construct the sequence $x_i$. We start with $x_0=z_0$ and
we add $5$ to the subindex of $x$ at each iteration: $x_{5i}=z_{8i}$,
for $0\le i< 7$. This provides the points
$x_0<x_5<x_{10}<\dots<x_{30}$. By lifting these points (adding
multiples of $7$ to their subindex's), we obtain a lifted cycle
defined in Equation~\eqref{eq:sequence_x}, which is $5,3$-ordered.

Next we construct the subsequence $y_i$. Following
Equation~\eqref{eq:sequence_y}, we start with
$y_{pq'}=y_{20}=z_{1+pQ}=z_{56}$. By further iterating, we add $p'$ at
the subindex's of $y_i$: $y_{20+ip'}=z_{56+iP}$, for $0\le i <4$. This
leads to the sequence $y_{20}<y_{23}<\dots<y_{29}$. Finally, by
lifting these points, that is, adding multiples of $4$ to their
subindexes, we obtain the $3,4$-ordered lifted cycle of a periodic
orbit with rotation number $3/4$.

Note that the next iterate of $y_{29}=z_{80}$ becomes $z_{88}$, which
satisfies $\Pi(z_{80})=z_0$. Hence, after following the sequence
$(y_i)$, the lifted cycle $(z_i)$ switches back to the sequence
$(x_i)$, and the symbolic sequence is repeated.
\end{example}

In the rest of this section we study the rotation number for families
of orientation preserving circle maps; that is, under the variation of
the parameter $c$ in condition {\it C.3}, which, by means of the
change of variables given in Equation~\eqref{eq:change_variables}, is
equivalent to the parameter $\lambda$ of parametrizing the curve in
parameter space mentioned in Theorem~\ref{theo:adding_incrementing}.\\

Recalling Proposition~\ref{pro:continuity_rot_num} and
Lemma~\ref{lem:increasing_rotation_number}, we
already have that, when varying $\lambda$ from $0$ to $1$, the
rotation number (and hence the $\eta$-number) is continuous and
monotonically increases from $0$ to $1$.  In order to show that,
moreover, it is a devil's staircase, we need to show that, in
addition, it is constant for all values of $\lambda$ except for a
Cantor set of zero measure. This is will come from the following

\begin{theorem}[\cite{Boy85}, Theorem 1']
Let $f$ be a continuous monotonic non-decreasing map of the circle of degree one
satisfying
\begin{enumerate}[i)]
\item $f$ is constant on an interval $[a,b]$ and of class $C^1$ outside
$[a,b]$
\item $\inf_{y\notin [a,b]}\left( f'(y) \right)>1$
\end{enumerate}
Let $f_t$ be the map defined by $f_t(y)=f(y)+t$, $t\in[0,1]$. Let
\begin{equation*}
E=\left\{ t\,|\, \text{$f_t$ has irrational rotation number} \right\}.
\end{equation*}
Then $m(E)=0$, where $m$ denotes Lebesgue measure, and furthermore $E$ has zero
Hausdorff dimension.
\label{theo:Boy85}
\end{theorem}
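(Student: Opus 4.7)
The plan combines three ingredients: monotonicity and continuity of $t \mapsto \rho(f_t)$, a mode-locking statement at every rational value, and a quantitative covering bound on the ``gap'' set $E$ driven by the expansion hypothesis (ii). First, $t \mapsto \rho(f_t)$ is monotonically non-decreasing by Lemma~\ref{lem:increasing_rotation_number} (since $F_{t_2} \ge F_{t_1}$ on lifts whenever $t_2 \ge t_1$) and continuous by Proposition~\ref{pro:continuity_rot_num}. Hence each level set $I_r = \{t \in [0,1] : \rho(f_t) = r\}$ is a closed (possibly degenerate) interval, and $E$ is the complement in $[0,1]$ of the countable union $\bigcup_{(p,q)=1} I_{p/q}$ of rational plateaus.

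Next I would show that whenever $I_{p/q}$ is nonempty it has positive length. If $\rho(f_{t_0}) = p/q$, Proposition~\ref{pro:po_rational_rho} produces a fixed point $x_0$ of $F_{t_0}^q - p$. Either the orbit of $x_0$ visits the flat region $[a,b]$, in which case $F_{t_0}^q - p$ has a whole interval of fixed points that survives a small perturbation of $t$; or the orbit avoids $[a,b]$, and then condition (ii) gives $(f_{t_0}^q)'(x_0) \ge \Lambda^q$ for some $\Lambda > 1$, so the fixed point is hyperbolic and persists under perturbation by the implicit function theorem. Since a monotone degree-one circle map has a unique rotation number and any persisting $q$-periodic orbit of rotation $p$ forces $\rho = p/q$, we obtain $|I_{p/q}| > 0$.

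The quantitative core, and where I expect the main difficulty, is the measure and dimension estimate on $E$. The idea is to track the forward orbit of the common value $c$ taken by $f$ on the flat part: by the previous step, $t \in E$ is characterized by $f_t^n(c) \notin [a,b]$ for every $n \ge 0$. Using (ii) together with a Denjoy-style bounded-distortion argument for the first-return map to a neighbourhood of $[a,b]$, one shows that
\[
J_n = \{\, t \in [0,1] : f_t^{\,j}(c) \notin [a,b] \text{ for } 0 \le j \le n \,\}
\]
is a union of at most polynomially-in-$n$ many intervals of length $O(\Lambda^{-n})$. Since $E = \bigcap_n J_n$, this immediately yields $|E| = 0$. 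For the Hausdorff dimension statement, the same covering of $E$ at scale $\varepsilon_n = \Lambda^{-n}$ bounds the $s$-dimensional Hausdorff content by $\mathrm{poly}(n)\,\Lambda^{-sn}$, which tends to $0$ as $n \to \infty$ for every $s > 0$, so $\dim_H E = 0$.

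The principal obstacle is obtaining the distortion estimate uniformly in $t$ in spite of the fact that $C^1$ regularity degenerates at the endpoints of $[a,b]$ and that the ``critical'' orbit of $c$ depends on $t$. Controlling this dependence sharply enough to deliver both the measure-zero and dimension-zero conclusions simultaneously is essentially the content of the argument in~\cite{Boy85}, and handling it cleanly would require a careful first-return analysis organising the iterates of $c$ by the number of excursions outside $[a,b]$ between successive returns.
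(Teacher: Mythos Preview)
The paper does not supply its own proof of this statement: Theorem~\ref{theo:Boy85} is simply quoted from~\cite{Boy85}, followed only by a pointer to generalisations in~\cite{Vee89,Swi89}, and then invoked as a black box in Lemma~\ref{lem:cantor_set}. There is therefore no in-paper argument to compare your proposal against.

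That said, your outline is essentially Boyd's strategy and is sound, with two small corrections. In step~2, the case ``orbit visits $[a,b]$'' does \emph{not} give an interval of fixed points: $F_{t_0}^q$ is constant on a neighbourhood of $x_0$, so $F_{t_0}^q(x)-x-p$ has slope $-1$ there and a single transversal zero. The point is rather that $(f_t^q)'$ takes only the values $0$ (orbit meets the plateau) or $\ge\Lambda^q$ (orbit avoids it), so in either case $(f_t^q)'(x_0)\neq 1$ and the implicit function theorem gives persistence; this also shows, via the same dichotomy, that when $\rho(f_t)=p/q$ the periodic function $F_t^q(x)-x-p$ must have a downward crossing, forcing a periodic orbit through $[a,b]$ and hence through its image, so your characterisation $E=\bigcap_n J_n$ is a genuine equivalence.

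In step~3 the difficulty is smaller than you suggest and no distortion control is needed. The map $\phi_n(t):=F_t^{\,n}(c)$ is \emph{globally} strictly increasing in $t$ on $[0,1]$ (its $t$-derivative satisfies $\phi_n'=f'(\phi_{n-1})\,\phi_{n-1}'+1\ge 1$ since $f'\ge 0$), and $\phi_n(1)-\phi_n(0)=n$ because $F_{t+1}=F_t+1$. Hence $\phi_j^{-1}([a,b]+\Z)$ contributes at most $j+1$ intervals, so $J_n$ has at most $O(n^2)$ components; on $J_{n-1}$ the recursion gives $\phi_n'\ge(\Lambda^{n}-1)/(\Lambda-1)$, and each component of $J_n$ maps under $\phi_n$ into a single gap of $\RR\setminus([a,b]+\Z)$, so has length $O(\Lambda^{-n})$. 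Both $m(E)=0$ and $\dim_H E=0$ follow immediately from this covering.
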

Generalizations of the previous theorem can be found
in~\cite{Vee89,Swi89}.

Finally, we show that Theorem~\ref{theo:Boy85} extends to a
piecewise-smooth system of the form~\eqref{eq:normal_form} satisfying
~\condsHp{} and {\em i)} of Theorem~\ref{theo:adding_incrementing} (or
an orientation preserving circle map satisfying~\condsC{}). This will
prove that the $\eta$-number  follows a devil's staircase.
\begin{lemma}
Let $f$ be piecewise-smooth map as in Equation~\eqref{eq:normal_form}
satisfying conditions~\condsHp{} and {i)} of
Theorem~\ref{theo:adding_incrementing}, and let $\tf_\lambda$ be the
orientation preserving circle map obtained after applying the change
of variables~\eqref{eq:change_variables}. Then, the set of values of
$\lambda$ for which the map $\tf_\lambda$ has irrational rotation
number consists of a Cantor set with zero measure.
\label{lem:cantor_set}
\end{lemma}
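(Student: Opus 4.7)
The plan is to reduce the statement to an application of Boyd's Theorem (Theorem~\ref{theo:Boy85}) by passing to the inverse circle maps, exactly in the spirit of the construction used in the proof of Proposition~\ref{pro:irrational_rho}. First, for each $\lambda\in(0,1)$, define $g_\lambda$ by setting $g_\lambda(y)=\tf_\lambda^{-1}(y)$ where the inverse is defined, and filling the gap on the interval $(\tf_\lambda(1^-),\tf_\lambda(0^+))$ by the constant value $c_\lambda$, as in Equation~\eqref{eq:inverse_wflat_part}. Conditions \emph{h}.2, \emph{h}.3 together with \emph{i)} of Theorem~\ref{theo:adding_incrementing} imply that each branch of $\tf_\lambda$ is strictly contracting, so $g_\lambda$ is $C^1$ outside its flat interval with derivative bounded below by a constant strictly greater than $1$. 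Thus $g_\lambda$ is a continuous non-decreasing circle map of degree one satisfying hypotheses \emph{i)} and \emph{ii)} of Boyd's Theorem, and its rotation number satisfies $\rho(g_\lambda)=1-\rho(\tf_\lambda)$ (whenever the latter lies in $(0,1)$); in particular, $\rho(g_\lambda)$ is irrational if and only if $\rho(\tf_\lambda)$ is.

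The main obstacle is that Boyd's Theorem is formulated for the translation family $g_t(y)=g(y)+t$, while our family $\{g_\lambda\}$ varies in a richer way: the location of the flat part, its length, and the profile of the smooth branches all depend simultaneously on $\lambda$. To overcome this, one can either invoke the more general versions due to Veerman and \'Swi\c{a}tek cited just after Theorem~\ref{theo:Boy85}, which apply to broader smooth one-parameter families of circle maps with a plateau and expanding complement; or, alternatively, embed $\{g_\lambda\}$ into the two-parameter family indexed by $(\mu_\LL,\mu_\R)$, apply Boyd's result along each translation subfamily obtained by varying a single coordinate, and transport the conclusion along $\gamma(\lambda)$ using its $C^1$ transversality to the phase-locking tongues, which is guaranteed by condition \emph{H}.2 together with the strict monotonicity of the rotation number along $\gamma$ provided by Lemma~\ref{lem:increasing_rotation_number}.

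Once the exceptional set $E=\{\lambda\in[0,1]:\rho(\tf_\lambda)\notin\Q\}$ is shown to have zero Lebesgue measure, it remains to check that $E$ is a Cantor set, i.e., closed, nowhere dense and perfect. For each $p/q\in\Q\cap[0,1]$, Proposition~\ref{pro:po_rational_rho} combined with the contractiveness (Remark~\ref{rem:uiqueness_of_po}) yields a unique attracting periodic orbit whenever $\rho(\tf_\lambda)=p/q$, and this orbit persists under small $C^1$ perturbations of $\lambda$; combined with the continuity of $\rho$ (Proposition~\ref{pro:continuity_rot_num}) and its monotonicity (Lemma~\ref{lem:increasing_rotation_number}), this shows that each level set $E_{p/q}=\{\lambda:\rho(\tf_\lambda)=p/q\}$ is a (possibly empty) open interval. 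Hence $E$ is closed; it has empty interior because its complement $\bigcup_{p/q}E_{p/q}$ has full measure and contains a dense collection of intervals; and it has no isolated points because strict monotonicity of $\rho$ at irrational values prevents any $\lambda\in E$ from being surrounded by a deleted neighbourhood of the same rotation number. A closed, perfect, nowhere-dense subset of $[0,1]$ is a Cantor set, completing the argument.
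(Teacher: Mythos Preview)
Your approach is essentially the paper's: invert $\tf_\lambda$, fill the gap by a constant to obtain a continuous non-decreasing degree-one circle map with a plateau and expanding complement, then invoke Boyd's theorem. Two points deserve comment.

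First, a slip in your construction: the plateau value must be $1$ (equivalently $0$), not $c_\lambda$. Since $\tf_\lambda^{-1}(y)\to 1$ as $y\uparrow \tf_\lambda(1^-)$ and $\tf_\lambda^{-1}(y)\to 0$ as $y\downarrow \tf_\lambda(0^+)$, filling with $c_\lambda$ would leave $g_\lambda$ discontinuous. This is in fact what Equation~\eqref{eq:inverse_wflat_part}, which you cite, actually does; so this is just a typo on your side, not a structural problem.

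Second, and more interestingly, you are more careful than the paper on the key step. The paper simply asserts that, after the reparametrisation $\gamma$, one has $g_\lambda(y)=g_0(y)+\lambda$, and applies Theorem~\ref{theo:Boy85} directly. You correctly flag that the family $\{g_\lambda\}$ is not obviously a pure translation family (the change of variables $\phi$ itself depends on $\mu_\LL,\mu_\R$, and so do the endpoints of the plateau and the profiles of the branches), and you propose either invoking the Veerman/\'Swi\k{a}tek generalisations or a transversality argument across the two-parameter family. The paper's one-line claim, as written, is at best a shortcut that hides exactly the difficulty you identify; your discussion is the more honest one. Finally, you also supply the argument that $E$ is closed, nowhere dense and perfect, which the paper's proof omits entirely.
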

\begin{figure}
\begin{center}
\begin{picture}(1,0.5)
\put(0,0.35){
\subfigure[\label{fig:invertible_map}]{
\includegraphics[angle=-90,width=0.5\textwidth]{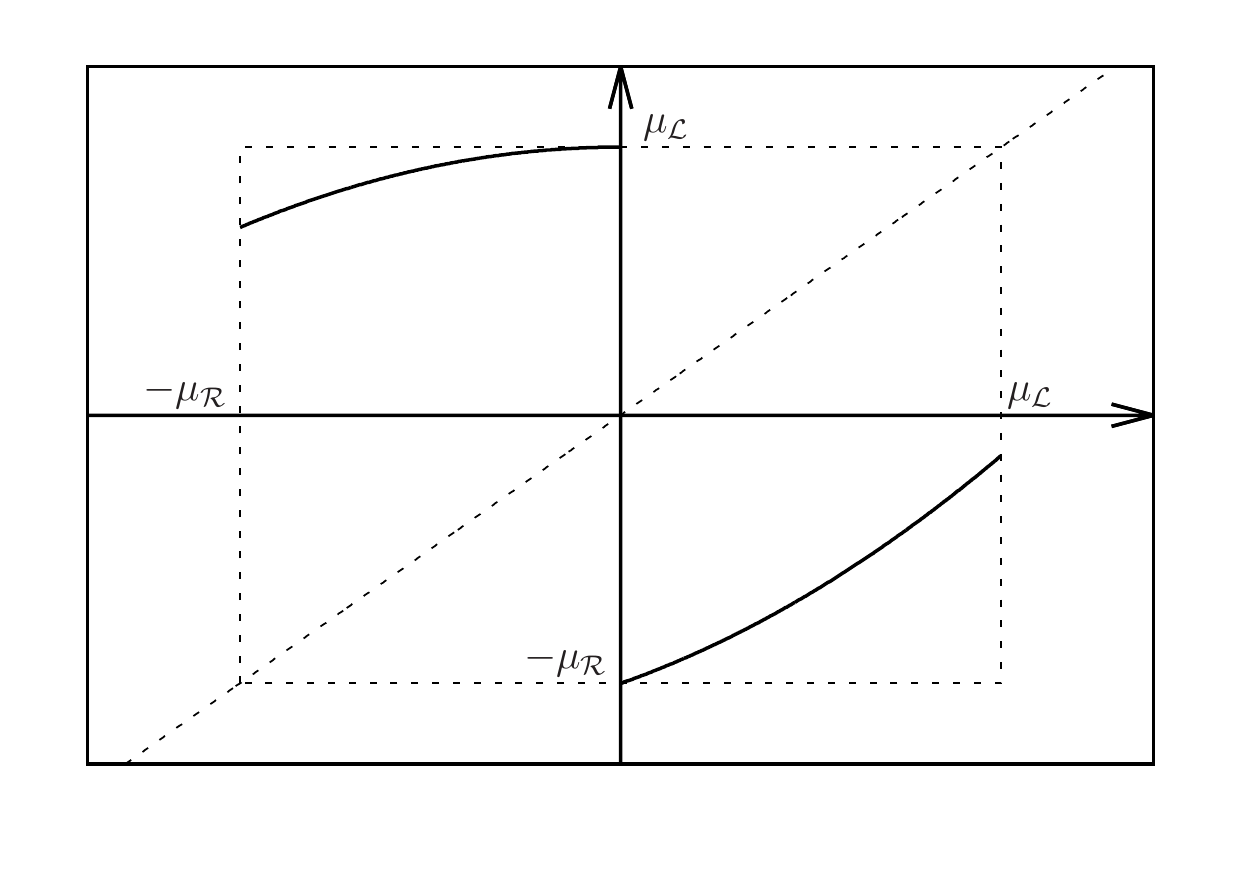}}
}
\put(0.5,0.35){
\subfigure[\label{fig:inverted_map}]{
\includegraphics[angle=-90,width=0.5\textwidth]{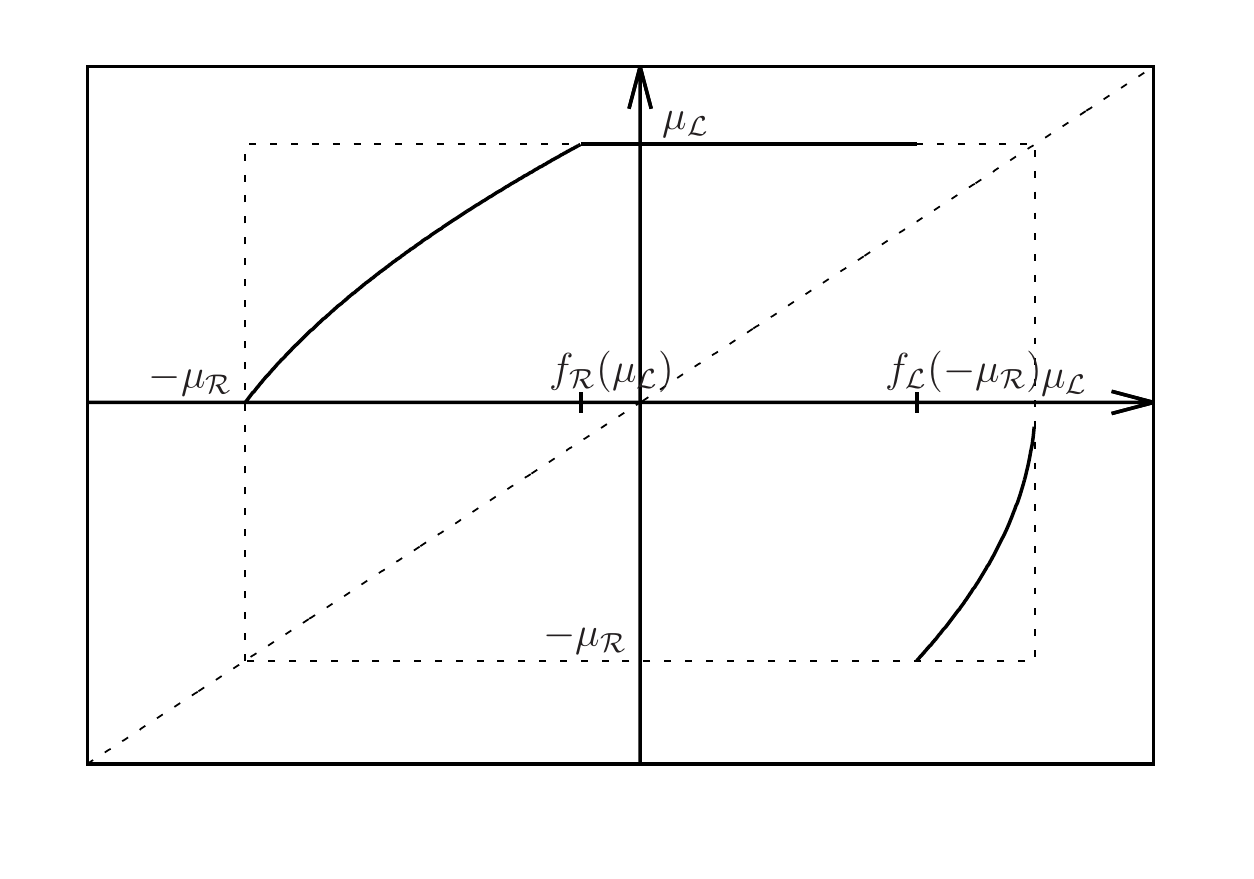}}
}
\end{picture}
\end{center}
\caption{(a) piecewise-smooth map satisfying~\condsC{}. (b) its inverse.}
\label{fig:maps}
\end{figure}
\begin{proof}
We note that the map $\tf_\lambda$ is invertible. Let $\phi$ be as
in~\eqref{eq:change_variables} and define
\begin{align*}
a&:=\tf(1^-)=\phi(f_\R(\mu_\LL))\\
b&:=\tf(0^+)=\phi(f_\LL(-\mu_\R)).
\end{align*}
Then, the inverse $\tf_\lambda^{-1}(y)$ is an increasing expanding map
with a ``hole'' for $y\in[a,b]$ (see Figure~\ref{fig:inverted_map}).  As
the trajectories of any point $x\in[0,1]$ by $\tf$ do not reach the
interval $[a,b]$, we can proceed as the proof of
Proposition~\ref{pro:irrational_rho} and complete the map
$\tf_\lambda^{-1}$ with a horizontal part equal to $1$ for
$y\in[a,b]$. This allows us to consider a map
\begin{equation*}
g(y)=\left\{
\begin{aligned}
&\phi^{-1}\circ f_\R^{-1}\circ \phi(y)&&\text{if }y\in[0,a]\\
&1&&\text{if }y\in[a,b]\\
&\phi^{-1}\circ f_\LL^{-1}\circ \phi(y)&&\text{if }y\in[b,1],
\end{aligned}
\right.
\end{equation*}
which coincides with $\tf_\lambda^{-1}(y)$ for $y\notin(a,b)$.  Let
\begin{equation*}
\begin{array}{cccc}
\gamma:&[0,1]&\longrightarrow&\RR^2\\
&\lambda&\longmapsto &(\mu_\LL(\lambda),\mu_\R(\lambda))
\end{array}
\end{equation*}
be a parameterization satisfying~\condsH{} of
Theorem~\ref{theo:adding_incrementing}. Then, the interval $[a,b]$
smoothly varies from $[0,\phi(f_\LL(-\mu_R(0)))]$ to
$[\phi(f_\R(\mu_\LL(1))),1]$ when $\lambda$ is varied from $\lambda=0$
to $\lambda=1$. Let $g_\lambda(y)$ be $g(y)$ after applying the
reparameterization $\gamma$. Then, we have
\begin{equation*}
g_\lambda(y)=g_0(y)+\lambda,
\end{equation*}
and hence we can apply Theorem~\ref{theo:Boy85} and get the result.
\end{proof}

\section{Non-orientable case}\label{sec:incrementing_proof}
The non-orientable (increasing-decreasing) case occurs under
conditions {\em ii)} of Theorem~\ref{theo:adding_incrementing}. In
this case, the map~\eqref{eq:normal_form} becomes increasing for $x<0$
and decreasing for $x>0$.\\
As mentioned in Section~\ref{sec:introduction}
and Section~\ref{sec:inc-dec_overview}, this situation was discussed
in~\cite{Hom96} Section~3.3 and proven in full detail in~\cite{AvrGraSch11}.
For completeness, we provide in this section an overview of this
proof.

We first observe (see Figure~\ref{fig:abs-inter_preimages}) that a map
$f$ as in Equation~\eqref{eq:normal_form} satisfying conditions~\condsHp{}
and {\em ii)} of Theorem~\ref{theo:adding_incrementing} is a map on
the interval
\begin{equation}
f:[f_\R(\mu_\LL),\mu_\LL]\longrightarrow [f_\R(\mu_\LL),\mu_\LL].
\label{eq:map_restricted_interval}
\end{equation}
\begin{figure}
\begin{center}
\includegraphics[width=0.6\textwidth,angle=-90]{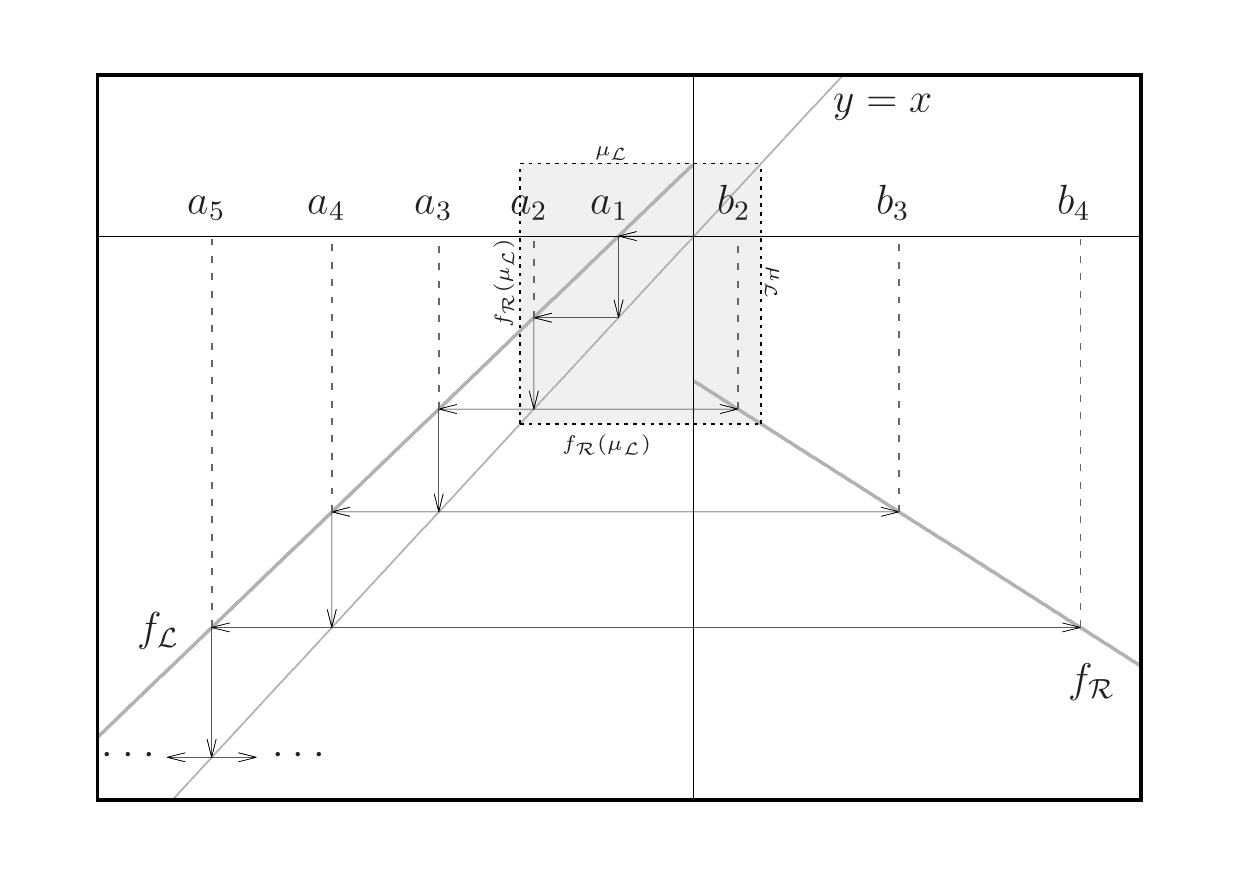}
\end{center}
\caption{Piecewise-smooth map as in Equation~\eqref{eq:normal_form}
satisfying {\em ii)} of Theorem~\ref{theo:adding_incrementing} and the
sequences defined in Equation~\eqref{eq:an}-\eqref{eq:bn} The gray box
represents the absorbing interval~\eqref{eq:map_restricted_interval}.
}
\label{fig:abs-inter_preimages}
\end{figure}

We now consider the symbolic itineraries of periodic orbits for a
map~\eqref{eq:map_restricted_interval} under the  mentioned conditions. Clearly,
when the parameters $\mu_\LL$ and $\mu_\R$ are varied along the
parametrization~\eqref{eq:1d_scann_curve} satisfying~\condsH{}, the
map~\eqref{eq:map_restricted_interval} becomes negative for $x\in
[0,\mu_\LL]$. Hence, periodic orbits cannot have symbolic itineraries
with consecutive $\R$ symbols.\\
In order to show that only possible itineraries for periodic orbits
are of the form $\LL^n\R$, we define the sequences of preimages of
$x=0$ by $f_\LL$ and $f_\R$:
\begin{align}
a_0 = 0, \quad  a_n &= f_\LL^{-1}(a_{n-1})\; 
                 &&\text{if}\,\; n > 0, \label{eq:an}\\
b_n &= f_\R^{-1}(a_n)\; 
                 &&\text{if}\,\; n \ge n_0.\label{eq:bn}
\end{align}
Due to the contractiveness of $f_\R$ and $f_\LL$, we have that
\begin{align}
\frac{m([a_{n+1},a_n])}{m([a_n,a_{n-1}])}>1,\;n>0\label{eq:des1}\\
\frac{m([b_{n},b_{n+1}])}{m([b_{n-1},b_{n}])}>1,\;n> n_0,\nonumber
\end{align}
where $m$ indicates the length of the interval.

Then we have the following
\begin{lemma}\label{lem:unique_aj}
Suppose $f$ is a map of type (\ref{eq:normal_form}) fulfilling
conditions~\condsHp{} and {\em ii)} of
Theorem~\ref{theo:adding_incrementing}.  Then there exists at most one
$a_j$ (equiv. $b_j$) such that $a_j\in f_r\left((0,\mu_\LL]\right)$
(equiv. $b_j\in (0,\mu_\LL]$).
\end{lemma}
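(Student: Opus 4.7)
The plan is to reduce the claim to a direct length comparison. I will establish two quantitative bounds: first, that the target interval $f_\R((0,\mu_\LL])$ has length strictly less than $\mu_\LL$, and second, that any two distinct terms of the sequence $(a_j)$ are separated by more than $\mu_\LL$. Together these two estimates immediately forbid two distinct $a_j$'s from lying inside the interval. The equivalent statement for the $b_j$'s then follows for free, because the right branch of $f$ is a homeomorphism of $(0,\infty)$ onto its image and, by definition, $b_j=f_\R^{-1}(a_j)$, so $b_j\in(0,\mu_\LL]$ if and only if $a_j\in f_\R((0,\mu_\LL])$.

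For the lower bound on the gap I would rewrite the recurrence $a_{j+1}=f_\LL^{-1}(a_j)$ as
\begin{equation*}
a_j=\mu_\LL+f_\LL(a_{j+1}),
\end{equation*}
where $f_\LL^{-1}$ must be understood as the inverse of the full left branch of $f$ (forced by $a_1\ne 0$). Since $a_{j+1}<0$ and, by {\em h.1}--{\em h.2}, $f_\LL$ is an increasing contraction with $f_\LL(0)=0$, one has $a_{j+1}<f_\LL(a_{j+1})<0$, which gives $a_j-a_{j+1}>\mu_\LL$. As $(a_j)$ is strictly decreasing, this yields $a_j-a_k>\mu_\LL$ for every $j<k$ in the range of definition of the sequence.

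For the upper bound on the length of the target interval, case~{\em ii)} of Theorem~\ref{theo:adding_incrementing} together with {\em h.1} and {\em h.3} gives that $f_\R$ is decreasing and contracting with $f_\R(0)=0$, so
\begin{equation*}
f_\R((0,\mu_\LL])=\bigl[\,f_\R(\mu_\LL)-\mu_\R,\,-\mu_\R\,\bigr),
\end{equation*}
an interval of length $|f_\R(\mu_\LL)|$ which is strictly less than $\mu_\LL$. Combining this with the previous step, if $j<k$ were such that both $a_j$ and $a_k$ lay in $f_\R((0,\mu_\LL])$, then $|a_j-a_k|$ would be at most $|f_\R(\mu_\LL)|<\mu_\LL$, contradicting $a_j-a_k>\mu_\LL$. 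This proves that at most one $a_j$ belongs to the interval.

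I do not foresee any real obstacle: the entire argument is an elementary length comparison driven by contractiveness~{\em h.2}, {\em h.3} and the sign conditions of case~{\em ii)}. The only subtle point is the notational convention that the $f_\LL^{-1}$ and $f_\R^{-1}$ appearing in~\eqref{eq:an}--\eqref{eq:bn} denote inverses of the full left and right branches of $f$, not of the smooth factors $f_\LL$ and $f_\R$; without this reading, the sequences would be trivial and the lemma vacuous.
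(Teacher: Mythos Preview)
Your argument is correct and follows essentially the same approach as the paper: both proofs are length comparisons showing that $m\bigl(f_\R((0,\mu_\LL])\bigr)<\mu_\LL$ while every gap $a_j-a_{j+1}$ exceeds $\mu_\LL$, so the interval cannot contain two distinct terms of the sequence. The paper obtains the gap bound via the expansion of $f_\LL^{-1}$ on $[0,\mu_\LL]$ together with the monotone growth of the intervals $[a_{n+1},a_n]$ from~\eqref{eq:des1}, whereas you compute $a_j-a_{j+1}=\mu_\LL+(f_\LL(a_{j+1})-a_{j+1})>\mu_\LL$ directly; these are two ways of saying the same thing.
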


\begin{figure}\centering
\includegraphics[width=0.7\textwidth]{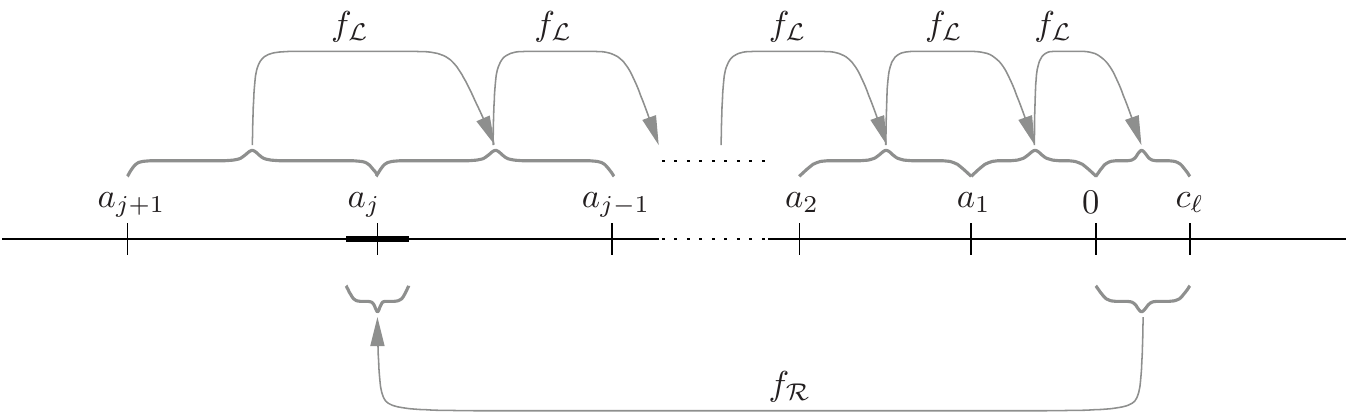}
\caption{Backward and forward iterates of $(0,\mu_\LL]$.
$f_\R((0,\mu_\LL])$ (dark segment) is smaller than
$f_\LL^{-n}((0,\mu_\LL])$ $\forall n$. Therefore, at most one $a_j$
can be reached by $f_\R((0,\mu_\LL])$.}
\label{fig:unique_an}
\end{figure}

\begin{proof}
Recalling that $\mu_\LL=f_\LL(0)$, one has (see
Figure~\ref{fig:unique_an})
\begin{align*}
[a_{n+1},a_n]&=f_\LL^{-1}([a_n,a_{n-1}])\\
\big[a_1,0]&=f_\LL^{-1}([0,\mu_\LL]).
\end{align*}
Using the property shown in Equation~(\ref{eq:des1}) one has
\begin{equation*}
m([0,c_m])<m([a_{n+1},a_n])\;\forall n,
\end{equation*}
and, since $f_\R$ is a contracting one obtains
\begin{equation*}
m (f_r((0,\mu_\LL])<m([a_{n+1},a_n])\;\forall n.
\end{equation*}
Therefore, at most one $a_n$ can be located in $f((0,\mu_\LL])$.
\end{proof}

The next Lemma tells us what symbolic itineraries are possible.
\begin{lemma}[\cite{AvrGraSch11} Lemma~7]\label{lem:lnr_forall_n}
Let $f$ be  a piecewise-smooth map as considered above, and let 
$x_0<\cdots<x_n$, $x_i\in[f_\R(\mu_\LL),\mu_\LL]$ be a periodic orbit. Then,
$I_f(x_0)=(\LL^n\R)^\infty$.
\end{lemma}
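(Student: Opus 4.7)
The strategy is to show that any periodic orbit contains exactly one point in $(0,\mu_\LL]$ and $n$ points in $[f_\R(\mu_\LL),0)$; once this is known, order-preservation of $f|_\LL$ automatically produces the itinerary $(\LL^n\R)^\infty$. Two easy observations handle the extremes. No two consecutive $\R$-symbols can occur, since for $x>0$ we have $f(x)=-\mu_\R+f_\R(x)<-\mu_\R<0$ because $f_\R(0)=0$ and $f_\R$ is decreasing. The orbit cannot be entirely $\LL$, since contractivity of $f_\LL$ with $f_\LL(0)=0$ yields $f_\LL(x)>x$ for $x<0$, so $f(x)=\mu_\LL+f_\LL(x)>x$, forcing the iterates in $\LL$ to strictly increase and ruling out any cycle confined there.

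The central step is to prove that $\R$ is visited exactly once per period. Here Lemma~\ref{lem:unique_aj} is the key: $f((0,\mu_\LL])$ is an interval shorter than $\mu_\LL$ by contractivity of $f_\R$, while each gap $[a_{j+1},a_j]$ strictly exceeds $\mu_\LL$, so $f((0,\mu_\LL])$ contains at most one preimage $a_j$ of $0$. Consequently the first-return time from $(0,\mu_\LL]$ to itself takes at most two consecutive values $T,T+1$, splitting $(0,\mu_\LL]$ into at most two subintervals $U_1,U_2$, and on each $U_i$ the first-return map $R$ is a composition of one decreasing $f|_\R$-step with several increasing $f|_\LL$-steps, all contractions. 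Thus $R|_{U_i}$ is a decreasing contraction, with a unique fixed point and no orbits of higher period in a single piece, since its square is an increasing contraction with the same unique fixed point. To exclude a cycle of $R$ that alternates between $U_1$ and $U_2$, I would verify $R(U_i)\subset U_i$ by evaluating the endpoints $R(y^{*\pm})$ and bounding the image via the strong contractivity of $f|_\R\circ f|_\LL^{T-1}$.

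With only one $\R$-point $x_n$ in the orbit, a short induction using monotonicity of $f|_\LL$ together with $f(x)>x$ on $\LL$ forces $f(x_i)=x_{i+1}$ for $i=0,\ldots,n-1$ and $f(x_n)=x_0$, whence $I_f(x_0)=(\LL^n\R)^\infty$. The main obstacle I anticipate is the invariance check for $U_1$ and $U_2$ in the genuine coexistence regime, where one must ensure that the two coexisting orbits of types $\LL^{T-1}\R$ and $\LL^T\R$ cannot glue into a higher-period cycle under $R$; this is precisely the point where the contractivity assumptions \emph{h}.2 and \emph{h}.3 on $f_\LL$ and $f_\R$ are essential.
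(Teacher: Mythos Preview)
Your approach is essentially the same as the paper's: the paper also splits into cases according to which (at most one) $a_j$ lies in $f_\R((0,\mu_\LL])$ and then asserts that each branch $f_\LL^{k}\circ f_\R$ is a contracting self-map of the corresponding subinterval of $(0,\mu_\LL]$, so each has a unique fixed point. The ``obstacle'' you anticipate dissolves exactly by the endpoint-plus-contraction argument you sketch: since $R(b_{n-1}^-)=f_\LL^{\,n-1}(a_{n-1})=0$ and $R(b_{n-1}^+)=f_\LL^{\,n}(a_{n-1})=\mu_\LL$, contractivity of each branch gives $|R(0^+)-0|<b_{n-1}$ and $|\mu_\LL-R(\mu_\LL)|<\mu_\LL-b_{n-1}$, hence $R(U_i)\subset U_i$ and no alternating cycle can occur.
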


\begin{figure}[!h]\centering
\includegraphics[width=0.8\textwidth]{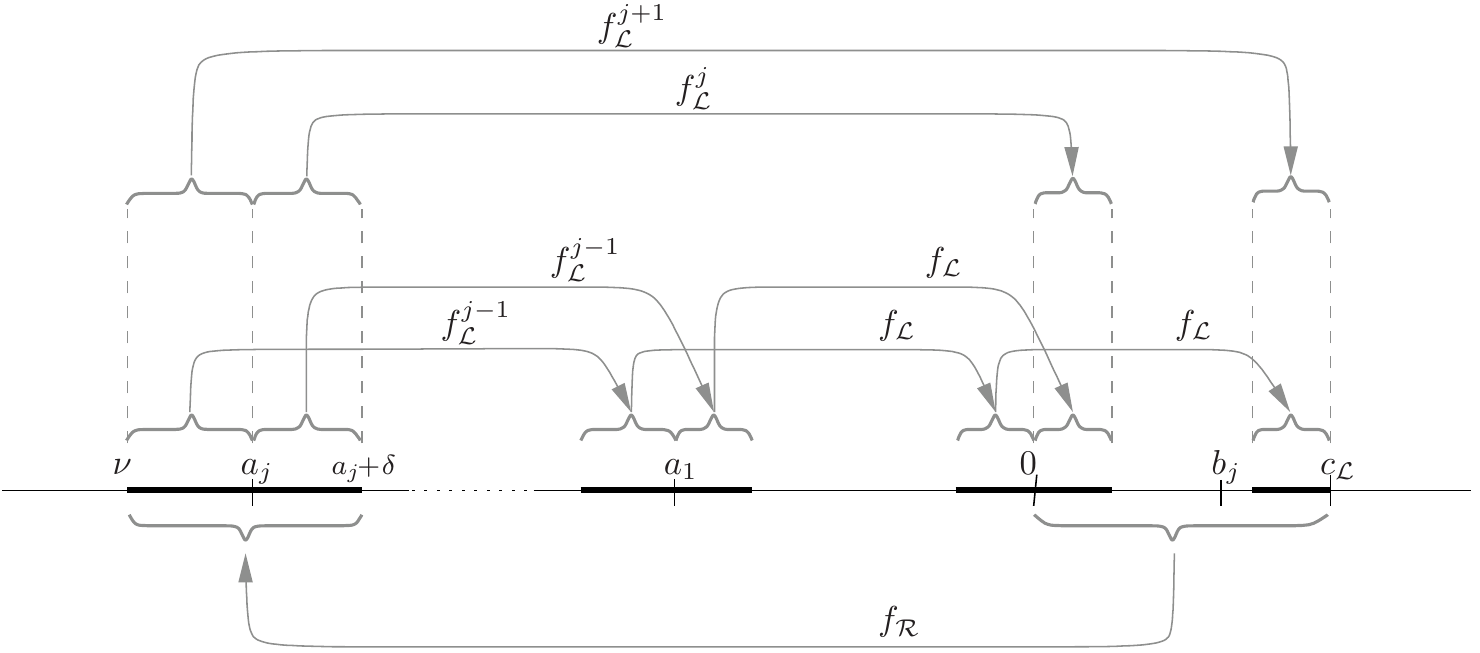}
\caption{The interval $f_\R((0,\mu_\LL])$ is split when it returns to the right
domain.}\label{fig:only_rln}
\end{figure}

The proof of this Lemma is in fact an extension of the arguments presented
in \cite{Hom96} Section~3.3, and was provided in full detail
in~\cite{AvrGraSch11}. We provide it here for completeness.
\begin{proof}
If $f$ has periodic orbit of period $n\ge 2$, we necessary have
\begin{equation*}
f_\R((0,\mu_\LL])\cap [a_{n},a_{n-1}]\ne\emptyset,
\end{equation*}
which can be given due to one of the next three situations (see
Figures~\ref{fig:unique_an} and \ref{fig:only_rln})
\renewcommand{\labelenumi}{S.\arabic{enumi}}
\begin{enumerate}
\item $a_{n-1}\in f_\R((0,\mu_\LL])$
\item $f_\R((0,\mu_\LL])\subset (a_{n},a_{n-1})$
\item $a_{n}\in f_\R((0,\mu_\LL])$
\end{enumerate}
If S.1 holds, $b_{n-1}\in (0,\mu_\LL]$ and
\begin{eqnarray*}
f_\LL^{n}f_\R:\,[b_{n-1},\mu_\LL]\longrightarrow [b_{n-1},\mu_\LL]\\
f_\LL^{n-1}f_\R:\,(0,b_{n-1})\longrightarrow (0,b_{n-1}),
\end{eqnarray*}
are continuous contracting functions which must have a unique (stable) fixed
point. Therefore, two stable periodic orbits of type $\LL^n\R$ and
$\LL^{n-1}\R$ coexist. Note that for $n=2$ this proves also the
existence of a $\LL\R$-periodic orbit.\\
In the second case (S.2), $b_{n-1}\notin (0,\mu_\LL]$
($[0,\mu_\LL]\subset(b_{n-1},b_n)$) and
\begin{equation*}
f_\LL^nf_\R:(0,\mu_\LL]\longrightarrow (0,\mu_\LL]
\end{equation*}
is a continuous contracting function which also must have a unique (stable)
fixed point. In this case, there exists a unique periodic orbit of type
$\LL^n\R$ which is the unique attractor in $(0,\mu_\LL]$.\\
Finally, if S.3 holds, replacing $n$ by $n-1$ and arguing as in S.1, one has
that a stable periodic orbit of type $\LL^n\R$ coexists with a
stable $\LL^{n+1}\R$-periodic one.
\end{proof}

\begin{remark}
By contrast to all periodic orbits of type $\LL^n\R$ with $n\ge2$, the
periodic orbit $\LL\R$ exists not only for $\mu_\R>0$ but
also for $\mu_\R\le0$. In that case, it coexists with the fixed point
$\R$ ($\LL^0\R$) (see Figure~\ref{fig:regions_incrementing}).
\end{remark}

\begin{remark}\label{rem:border_collision}
Note that the transitions between cases S.1, S.2 and S.3 are given by
border collision bifurcations where the respective periodic orbits are
created or destroyed when they collide with the boundary $x=0$. This
defines the border collision bifurcation curves shown in
Figure~\ref{fig:regions_incrementing}.
\end{remark}

\begin{remark}\label{rem:sepparatrix}
As it is known, invariant objects of piecewise-smooth systems do not
necessarily have to be separated by another invariant object. In this
case, the coexistence of stable periodic objects may also be separated
by the discontinuity (and its preimages).
\end{remark}

We focus finally on the bifurcation scenario shown inf
Figures~\ref{fig:regions_incrementing}-\ref{fig:1dscann_incrementing}
when the parameter $\lambda$ of the
parametrization~\eqref{eq:1d_scann_curve} is varied.

Clearly, the successions $a_n$ and $b_n$ are continuous functions of
the parameter $\gamma$. Hence, the transitions between S.1, S.2 and
S.3 described in the proof of Lemma~\ref{lem:lnr_forall_n}.  That is,
assume that for some $\gamma$ S.1 holds; then two periodic orbits of type
$\LL^n\R$ and $\LL^{n-1}\R$ coexists. Due to
conditions~\condsH{}, when decreasing $\gamma$,
$\mu_\LL$ monotonically decreases to zero, whereas $\mu_\R$ increases
towards some bounded value. Hence, $a_{n-1}$ monotonically decreases
to zero and, hence, as the $m([0,\mu_\LL])<m([a_n,a_{n-1}])$, there
exists some value of $\gamma$ for which $f_\R([0,\mu_\LL])\subset
(a_n,a_{n-1})$ and S.2 holds. Hence, the $\LL^{n-1}\R$-periodic orbit
bifurcates and only a $\LL^n\R$-periodic orbit exists. Arguing
similarly, by further decreasing $\gamma$, S.3 holds, a
$\LL^{n+1}\R$-periodic orbit bifurcates and coexists with the
$\LL^n\R$-periodic orbit.\\
As this occurs for all $n$, this argument proves the bifurcation
scenario described in Section~\ref{sec:inc-dec_overview}.

\section{Some remarks on piecewise-smooth expanding maps}\label{sec:remarks_expansive}
Frequently, both in applications and theoretical studies, the
contracting conditions required in the previous sections become too
restrictive and one needs to deal with discontinuous maps exhibiting
expansivenesses. However, such maps are shown to undergo similar
bifurcations as the ones described in Section~\ref{sec:overview}.
Examples are found in the study of homoclinic bifurcations,
such as the Lorenz system
(\cite{GucWil79,Spa82,ProThoTre87,Gle90,GleSpa93,HomKra00})
or in Multiscale dynamics (\cite{Lev90,DesGucKraKueOsiWec12}), but
also in power electronics
(\cite{HamDeaJef92,BanKarYuaYor00,BizStoGar06,Saitoetal07,MatSai08,KapBanPat10,ZhuMosAndMik13,AvrMoseZhuGar15}),
biology \cite{KeeHopRin81,MonDanSelTsiHas11,SigTouVid15} or economy
(\cite{TraWesGar10}), among others. Although the results summarized in
Theorem~\ref{theo:adding_incrementing} cannot be applied when the map
lacks of
contractivness, many of the results presented in
Sections~\ref{sec:adding_proof} and~\ref{sec:incrementing_proof} are
still valid under the assumption of weak enough expansion. In this
section we review possible scenarios led by the loss of
contractiveness, that is, we focus on maps
as in Equation~\eqref{eq:normal_form} satisfying
\begin{enumerate}[{\it h'.1}]
\item $f_\LL(0)=f_\R(0)=0$.
\item $0<(f_\LL(x))'$, $x\in(-\infty,0)$\label{hip:2'}
\item $0<\left|(f_\R(x))'\right|$, $x\in(0,\infty)$,
\end{enumerate}
with $f_\LL$ and $f_\R$ may be expanding in all or part if their
domains.

As before, we now distinguish between the orientable ($(f_\R(x))'>0$)
and non-orientable ($(f_\R(x))'<0$) cases.

\subsection{Orientable case}
Let us assume that conditions~\condsHpp{} hold with $(f_\R(x))'>0$. We
focus on the case $\mu_\LL,\mu_\R>0$.  We first note that, as long as
\begin{equation}
f_\LL(-\mu_\R)\ge f_\R(\mu_\LL),
\label{eq:invertiblity}
\end{equation}
the piecewise-smooth map $f$ is still invertible. Hence, as in the
contractive case, it can 
be reduced to a circle map of type~\eqref{eq:circle_map} satisfying
conditions~\condsC{} by means of the smooth change of
variables~\eqref{eq:change_variables}. Therefore, all the results in
Section~\ref{sec:adding_proof} requiring only that the lift of the map
is strictly increasing (which is given by conditions~\condsC{}) also
hold as long as Equation~\eqref{eq:invertiblity} is fulfilled. Thats
is, recalling that $\lambda$ parametrizes the curves in
Equation~\eqref{eq:1d_scann_curve}, we get that, for any
$\lambda\in[0,1]$ such that~\eqref{eq:invertiblity} is satisfied, we
have that
\begin{enumerate}[\it i)]
\item the rotation number ($\rho$) given in
Definition~\ref{def:rotation_number} is well defined, unique and
increasing as a function of $\lambda$,
\item if $\rho=p/q$, with $p,q$ co-prime, then $f$ possesses a
$p,q$-ordered periodic  orbit whose symbolic sequence belongs to the
Farey tree of symbolic sequences.
\end{enumerate}
Note that {\it ii)} is a consequence of
Proposition~\ref{pro:po_rational_rho}, which requires only
conditions~\condsC{}. However, as noted in
Remark~\ref{rem:uiqueness_of_po}, when the maps $f_\LL$ and $f_\R$
exhibit expansiveness, this periodic may be not only repelling but
also non unique. However, even in this case, all existing periodic
orbits are $p,q$-ordered, as Proposition~\ref{pro:pq-ordred} still
holds, and therefore they all must have the same symbolic dynamics.\\
Regarding the dynamics for $\rho\in\RR\backslash\Q$,
Proposition~\ref{pro:irrational_rho} does not hold if $f_\LL$ or
$f_\R$ are expanding. The main obstacle is given by the fact that
Corollary 3.3 of~\cite{Vee89} cannot be applied to show that
$\omega(f)$ is a Cantor set (see the proof of
Proposition~\ref{pro:irrational_rho}). As a consequence, one needs
additional conditions to state whether $\omega(f)$ is a Cantor set or the
whole circle.

As opposite to the purely contractive case, when $f_\LL$ or $f\R$
are expanding, the rotation number no longer necessary follows a
devil's staircase. Without the contracting assumption
Theorem~\ref{theo:Boy85} does not hold and hence the set of values of
$\lambda$ for which the piecewise-smooth map does not possess any
periodic orbit (the rotation number of the associated circle map is
irrational) does not necessary have zero measure. However,
$\rho(\lambda)$ is still a non-decreasing continuous function.\\

As stated above, the crucial property that keeps most of the results
shown in Section~\ref{sec:adding_proof} valid even when the map is
expansive is the invertibility condition~\eqref{eq:invertiblity}. When
this is lost, then the corresponding lift exhibits negative gaps and
is not an increasing map. As a consequence, the rotation number
becomes non-unique. Instead, when needs to deal with rotation
intervals, coexistence of periodic orbits with different rotation
numbers and symbolic dynamics and positive entropy. Although there
exist many results in the literature (see for example
\cite{AlsLli89,AlsLliMisTre89,AlsLliMis00,AlsFal03,AlsMan90,AlsMan96}),
a precise description of the bifurcation scenarios becomes difficult
under general assumptions might be difficult to state.

\subsection{Non-orientable case}
Let us now consider a class of maps as in~\eqref{eq:normal_form}
satisfying
\begin{enumerate}[{\it h''.1}]
\item $f_\LL(0)=f_\R(0)=0$.
\item $0<(f_\LL(x))'$, $x\in(-\infty,0)$
\item $0>\left|(f_\R(x))'\right|$, $x\in(0,\infty)$,
\end{enumerate}
with $f_\LL$ and $f_\R$ may be expanding in all or part if their
domains.\\
Unlike in the orientable case, such a map fulfilling
conditions~\condsHppp{} is always invertible for $\mu_\LL,\mu_\R>0$.
However, even in the linear case, when the map $f_\LL$ or $f_\R$ is
expanding, the bifurcation scenario may change significantly (see for
example \cite{AvrEckSchSch12}). On one hand, provided that the
existence and pairwise uniqueness of periodic orbits for the
contracting case relies on Brower's fixed point Theorem, it may happen
that no periodic orbits may exist or several unstable ones may
co-exist. On the other hand, whenever they exist, symbolic sequences
associated with periodic orbits may be very different.  Obviously,
provided that, for $\mu_\R>0$, $f_\R(x)\le0$ if $x\ge0$, no symbolic
sequences can contain two consecutive $\R$'s.  Then, symbolic
sequences may contain blocks of the form $\LL^n\R\LL^m$, with $n\ne
m$. Numerical studies have also shown evidence of the existence of
chaotic attractors, whose associated symbolic dynamics are
similar~(\cite{AvrEckSch08a,AvrEckSch08b,AvrEckSch08c}). 

However, if $f_\R$ is expansive but $f_\LL$ is not, it may happen
that, for $\mu_\LL>0$ smaller than a certain quantity (which may
depend on $\mu_\R$), the number of iterations performed in the
negative domain may be large enough to compensate the expanding
dynamics of $f_\R$. This is because the smaller $\mu_\LL>0$ the larger
the number of iterations needed to return to the right domain. Hence,
in such situation, one recovers the period incrementing scenario
described in Section~\ref{sec:inc-dec_overview}, with and attracting
periodic orbits of type $\LL^n\R$ with $n$ larger than a certain
value. As in the attracting case, there will exist parameter values
for which there exists coexistence of periodic orbits of type
$\LL^n\R$ and $\LL^{n+1}\R$.

\section{Maximin itineraries and piecewise-smooth maps in $\RR^n$}\label{sec:maximin_approach}
\subsection{Introduction}\label{sec:maximin_intro}
Often, due to the complexity of real applications, one-dimensional
maps are not enough and one needs to consider maps in higher
dimensions. Examples of two (or higher)-dimensional piecewise-smooth
maps are found as Poincar\'e (or stroboscopic) maps in non-autonomous
mechanical systems (\cite{Bro99,Nordmark01,BerBudChaKow08}), but their
are also found in power electronics
(\cite{BerGarGliVas98,BanGre99,RakAprBan10,GiaBanImr12,ZhuMosAndMik13,AmaCasGraOliHur14}),
control theory (\cite{BerGarIanVas02,ZhuMos03,Zhuetal01,FosGra13}) or
mathematical-neuroscience or biology
(\cite{TouBre08,TouBre09,FreGal11,MenHugRin12,Ton14,SigTouVid15}).
Piecewise-smooth maps in higher dimensions are also obtained when
studying Filippov flows in $\RR^n$ ($n\ge 3$) (\cite{Fil88}); they
appear as half-return Poincar\'e maps, and become discontinuous close to
grazing and Hopf bifurcations
(\cite{FrePonRos05,NorKow06,KBCHHKNP06,PonRosVel13,CarFerFerGarTer12,CarFerFerGarTer14}).\\
As it occurs with homoclinic bifurcations for smooth flows (see
Sections~\ref{sec:introduction},~\ref{sec:inc-inc_overview}
and~\ref{sec:co-dimension_two} for references), symbolic dynamics of
piecewise-smooth discontinuous maps in $\RR^n$ may help to 
understand the number of loops performed by periodic orbits in
different zones of the state space
(\cite{NorKow06,CarFerFerGarTer14}). There exist some general results
for piecewise-smooth continuous maps on the plane
(\cite{SimMei08,Sim09,FoxMei13}) (see~\cite{Sim15} for a recent survey).
Unfortunately, although some efforts have been done by exhaustively
studying particular systems
(\cite{Devaney84,Glen01,BanGre99,Kowalczyk05,DutRouBanAla07,FouChaGar10,FreGal11,AmaCasGraOliHur14})
there exists very few literature providing general results for general
piecewise-smooth discontinuous maps.

\subsection{Maximin/minimax properties of symbolic sequences}
We continue at the symbolic level by defining the {\em
maximin/minimax} properties of symbolic sequences in $W_{p,q}$.
\begin{definition}\label{def:maximin}
We say that a symbolic sequence $\x\in W_{p,q}$ is
\begin{itemize}
\item maximin if
\begin{equation*}
\min_{0\le k\le q}\left( \sigma^k(\x) \right)=\max_{\y\in
W_{p,q}}\left(
\min_{0\le k\le q}\left( \sigma^k(\y) \right)
\right),
\end{equation*}
\item minimax if
\begin{equation*}
\max_{0\le k\le q}\left( \sigma^k(\x) \right)=\min_{\y\in
W_{p,q}}\left(
\max_{0\le k\le q}\left( \sigma^k(\y) \right)
\right).
\end{equation*}
\end{itemize}
\end{definition}
As it was proven in~\cite{Ber82,GamLanTre84} using different
techniques, the maximin and minimax properties are equivalent. That
is, one has the following
\begin{theorem}[\cite{Ber82,GamLanTre84}]
Let $\x\in W_{p,q}$. Then, $\x$ is maximin if and only if it is
minimax.
\label{theo:maximin-minimax}
\end{theorem}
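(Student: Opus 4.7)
The plan is to recognize that since $\min_k \sigma^k(\cdot)$ and $\max_k \sigma^k(\cdot)$ both depend only on the cyclic class of a sequence, the maximin and minimax conditions are properties of the cyclic class of $\x$, not of $\x$ itself. Hence the theorem reduces to a single claim about the quotient $W_{p,q}/\sim$: the class whose minimum representative is lexicographically maximal must coincide with the class whose maximum representative is lexicographically minimal. When $(p,q)=1$ every sequence in $W_{p,q}$ is aperiodic and each class has a unique minimum (Lyndon) and unique maximum (anti-Lyndon) representative, so the statement is really about identifying a single cyclic class; the case $(p,q)>1$ reduces to the coprime one by quotienting through the natural period.

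The heart of the argument is an \emph{order-reversal lemma}: for distinct cyclic classes $C_1, C_2$ in $W_{p,q}/\sim$ with min- and max-representatives $\alpha_i$ and $\beta_i$, one has
\begin{equation*}
\alpha_1 < \alpha_2 \;\Longleftrightarrow\; \beta_1 > \beta_2.
\end{equation*}
The theorem is then immediate: the class maximizing $\alpha$ is forced to be the class minimizing $\beta$, so maximin $\Leftrightarrow$ minimax.

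I would prove the lemma by contradiction. Write $\alpha_1 < \alpha_2$ with common prefix $u$ so that $\alpha_1 = u\LL\dots$ and $\alpha_2 = u\R\dots$, and suppose $\beta_1 \le \beta_2$. Each $\beta_i$ is realized as a specific cyclic shift of $\alpha_i$ beginning at the longest admissible $\R$-run. Tracking how the initial deviation ($\LL$ vs.\ $\R$) at position $|u|+1$ propagates through all cyclic rotations, one shows that either $\beta_1 > \beta_2$ after all (contradicting the assumption) or some cyclic shift of $\alpha_2$ falls below $\alpha_2$ lexicographically (violating the Lyndon property). The order-reversing involution $\tau:\LL\leftrightarrow\R$, which bijects $W_{p,q} \to W_{q-p,q}$ and swaps min- and max-representatives while reversing the lex order, reduces the $\Longrightarrow$ direction to the $\Longleftarrow$ direction and cuts the work in half.

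The main obstacle is the order-reversal lemma itself; small cases such as $W_{2,5}$, $W_{3,7}$, $W_{3,8}$ confirm the pattern readily, but the general combinatorial bookkeeping between the location of the divergence point $|u|+1$ and the (typically non-prefix) position realizing the maximum cyclic shift is delicate. A conceptually cleaner but more technical alternative is to identify the extremal class directly with the unique \emph{balanced} (mechanical, or Christoffel) class for the fraction $p/q$---generated by $s_i = \R$ iff $\lfloor ip/q \rfloor > \lfloor (i-1)p/q \rfloor$---and then derive both the maximin and minimax extremality from the Sturmian balance condition that any two equal-length factors differ in their number of $\R$'s by at most one. This second route ties naturally into the Farey-tree structure developed in Section~\ref{sec:dyna_orient_preserv}, where the same mechanical sequences have already appeared as the itineraries of $p,q$-ordered periodic orbits.
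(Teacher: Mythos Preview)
The order-reversal lemma is false, and a counterexample sits in one of the very cases you claim to have checked. In $W_{3,8}$ consider the two cyclic classes with gap patterns $(1,2,5)$ and $(1,5,2)$ between consecutive $\R$'s (these are distinct: the gap triple is defined only up to cyclic rotation, not reflection). Their minimum and maximum representatives are
\[
\alpha_{(1,2,5)}=\LL^4\R^2\LL\R,\quad \beta_{(1,2,5)}=\R^2\LL\R\LL^4,
\qquad
\alpha_{(1,5,2)}=\LL^4\R\LL\R^2,\quad \beta_{(1,5,2)}=\R^2\LL^4\R\LL.
\]
Here $\alpha_{(1,5,2)}<\alpha_{(1,2,5)}$ (first difference at position $6$) and at the same time $\beta_{(1,5,2)}<\beta_{(1,2,5)}$ (first difference at position $4$): the two orders agree rather than reverse. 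The heuristic you give---tracking the divergence position through cyclic rotations---fails because the shift realising the maximum generally begins far from the divergence point, and the involution $\tau$ only shows that the lemma is equivalent to its own converse, not that either direction holds.

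What you actually need is the far weaker assertion that the class with the \emph{largest} minimum coincides with the class with the \emph{smallest} maximum. Your alternative route through balanced (mechanical/Christoffel) words is sound and is essentially what the cited references do. A tidy way to organise it: prove directly that the mechanical sequence $s_i=\R$ iff $\lfloor ip/q\rfloor>\lfloor (i-1)p/q\rfloor$ is maximin in $W_{p,q}$ (the balance property supplies the comparison with every other class), and then use $\tau:\LL\leftrightarrow\R$, which sends the mechanical sequence for $p/q$ to the mechanical sequence for $(q-p)/q$ while interchanging min/max representatives and reversing the lexicographic order. Maximin for every $p$ therefore yields minimax for every $q-p$, hence for every $p$. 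The paper itself does not supply a proof of this theorem and simply refers to \cite{Ber82,GamLanTre84}, so there is no in-paper argument to compare against.
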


\begin{example}
Let $\eta=2/5$. Up to cyclic permutations, there exist only two
periodic sequences in $W_{2,5}$, which are represented by means of the
minimal and maximal blocks
\begin{align*}
\LL^3\R^2&=\min_{0\le k\le q}\left( \sigma\left( \LL^3\R^2 \right)
\right)\\
\R^2\LL^3=\sigma^2(\LL^3\R^2)&=\max_{0\le k\le q}\left( \sigma\left(\LL^3\R^2 
\right)\right)
\end{align*}
and
\begin{align*}
\LL^2\R\LL\R&=\min_{0\le k\le q}\left( \sigma\left( \LL^2\R\LL\R \right)
\right)\\
\R\LL\R\LL^2=\sigma^2(\LL^2\R\LL\R)&=\max_{0\le k\le q}\left( \sigma\left( \LL^2\R\LL\R
\right)\right).
\end{align*}
Then, as
\begin{align*}
\LL^2\R\LL\R&>\LL^3\R^2\\
\R\LL\R\LL^2&<\R^2\LL^3,
\end{align*}
the sequence $\LL^2\R\LL\R$ is minimax and maximin.
\end{example}

The following result tells us that all the sequences shown in the
Farey tree of symbolic sequences (Figure~\ref{fig:farey_sequences})
(given by consecutive concatenation), are maximin (minimax).

\begin{proposition}[\cite{Gam87} Proposition II.2.4-3]\label{pro:omega-ordered_minimax}
Let $p/q<p'/q'$ be the irreducible form of two Farey neighbours, and let
$\x^\infty\in W_{p,q}$ and $\y^\infty\in W_{p',q'}$ two
maximin sequences, with $\x\in \left\{ \LL,\R \right\}^q$ and
$\y\in\left\{ \LL,\R \right\}^{q'}$ minimal blocks. Then, the
sequences given by the concatenation of these two blocks
$(\x\y)^\infty\in W_{(p+p')/(q+q')}$ is maximin.
\end{proposition}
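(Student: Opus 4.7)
The approach is induction on the Farey depth of $P/Q = (p+p')/(q+q')$, using the dynamical machinery of Sections 3.3 and 3.4 together with a combinatorial comparison step. The base case $p/q = 0/1$, $p'/q' = 1/1$, $\x = \LL$, $\y = \R$ is immediate since $(\LL\R)^\infty$ is the only element of $W_{1,2}$ up to cyclic shift, hence trivially maximin.

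For the induction step I would first use the inductive hypothesis to identify $\x^\infty$ and $\y^\infty$ as the unique maximin representatives for the Farey parents $p/q$ and $p'/q'$. By Proposition \ref{pro:well_ordered_symb-seq} these are also the $p,q$- and $p',q'$-ordered symbolic sequences arising as itineraries of periodic orbits of rotation numbers $p/q$ and $p'/q'$ under an orientation-preserving circle map. Now take any circle map satisfying \condsC{} with rotation number $P/Q$ (for concreteness, a rigid rotation by $P/Q$ with discontinuity marker $c$ chosen so that \condsC{} hold): Proposition \ref{pro:concatenatetion} forces the itinerary of its $P,Q$-ordered periodic orbit to be the concatenation of the Farey-parent itineraries, i.e.\ $(\x\y)^\infty$. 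This already pins down $(\x\y)^\infty$ as the unique $P,Q$-ordered sequence in $W_{P,Q}$ up to cyclic shift.

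It remains to upgrade ``$P,Q$-ordered'' to ``maximin,'' and this is where the combinatorial work sits. Given any competing $\z^\infty \in W_{P,Q}$, its minimal cyclic shift can be written as $\LL^{m_1}\R\LL^{m_2}\R\cdots\LL^{m_P}\R$ with $m_1 = \max_i m_i$. To maximize this minimum lexicographically over $\z$, the exponents $m_i$ should be as balanced as possible. The plan is to show that the $P,Q$-ordered sequence $(\x\y)^\infty$ is precisely the one whose $\LL$-run lengths take only the two consecutive values $\lfloor (Q-P)/P\rfloor$ and $\lceil (Q-P)/P\rceil$, distributed in the order dictated by the rotation; this uses the Farey identity $p'q - pq' = 1$ together with the inductive description of $\x$ and $\y$. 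Any non-$P,Q$-ordered sequence must then possess some $\LL$-run of length strictly greater, producing a lexicographically strictly smaller minimal cyclic shift; hence $(\x\y)^\infty$ beats it in the maximin competition.

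The main obstacle is this final combinatorial comparison: one has to argue cleanly that ``$P,Q$-ordered'' is equivalent to ``$\LL$-runs differ by at most one and match the cyclic pattern prescribed by rotation.'' The cleanest route is probably to exploit the dynamical realization once more, comparing orbit structure with that of a rigid rotation by $P/Q$ via a three-distance-type lemma, rather than attempting a direct counting argument; alternatively, one can invoke Theorem \ref{theo:maximin-minimax} and argue simultaneously on the \emph{maximum} cyclic shift (the minimax side), which often simplifies the bookkeeping because the two inequalities reinforce each other through the involution $\LL \leftrightarrow \R$ mapping $W_{p,q}$ to $W_{q-p,q}$.
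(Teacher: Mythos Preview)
Your approach differs genuinely from the paper's. The paper proceeds by a purely symbolic renormalization: a deflation map $\pi$ sending $\LL^{n+1}\R\mapsto\R$ and $\LL^n\R\mapsto\LL$ reduces the Farey depth by one level, and one checks that maximin-ness is preserved under $\pi$ and $\pi^{-1}$. Your route through Proposition~\ref{pro:concatenatetion} to identify $(\x\y)^\infty$ with the $P,Q$-ordered sequence is valid and cleanly reduces the problem to proving ``$P,Q$-ordered $\Rightarrow$ maximin'' (one direction of Theorem~\ref{theo:pq-ordered_iff_maximin}).

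The gap is in your attempt at this last step. Your claim that any non-$P,Q$-ordered competitor in $W_{P,Q}$ must have an $\LL$-run strictly longer than $\lceil(Q-P)/P\rceil$ is false. In $W_{5,12}$, both $(\LL^2\R\LL^2\R\LL\R\LL\R\LL\R)^\infty$ and $(\LL^2\R\LL\R\LL^2\R\LL\R\LL\R)^\infty$ have all $\LL$-runs of length $1$ or $2$; only the second is $5,12$-ordered (and it, not the first, wins the maximin comparison). What singles out the ordered sequence is not the multiset of run-lengths but their cyclic arrangement --- the correct combinatorial characterization is Sturmian \emph{balance} (in every two factors of equal length the counts of $\R$'s differ by at most one), which is strictly stronger than your runs-differ-by-one condition. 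You seem to sense this when you write ``distributed in the order dictated by the rotation,'' but the very next sentence contradicts it. If you are willing to invoke Theorem~\ref{theo:pq-ordered_iff_maximin} directly, your proof is complete and the induction is superfluous; if not, your three-distance and $\LL\leftrightarrow\R$ suggestions are too undeveloped to close the gap, and you are effectively committed to reproving that theorem from scratch.
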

We provide a sketch of the proof provided in~\cite{Gam87}.
\begin{proof}
One first sees that $\x$ and $\y$ belong to the same domain of some
deflation; i.e, a map which collapses blocks contained in sequences as
follows:
\begin{equation*}
\pi=\left\{
\begin{aligned}
&\LL^{n+1}\R \longrightarrow \R\\
&\LL^n\R\longrightarrow \LL.
\end{aligned}
\right.
\end{equation*}
Then, one proceeds by induction using that $\pi(\x)$ and $\pi(\y)$
are maximin, and also is $\pi^{-1}(\x\y)$. See~\cite{Gam87} for more
details.
\end{proof}

The following result tells us that maximin symbolic sequences are also
well ordered sequenes (see
Definition~\ref{def:well-ordered_symbolic-sequence}).
\begin{theorem}[\cite{GamLanTre84}]
Let $\x\in W_{p,q}$. Then, $\x$ is maximin if and only if it is
$p,q$-ordered.
\label{theo:pq-ordered_iff_maximin}
\end{theorem}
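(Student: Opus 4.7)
The plan is to show that within $W_{p,q}$ both classes---maximin sequences and $p,q$-ordered sequences---form the same single $\sigma$-orbit, by exhibiting one element lying in both and then proving that each class is unique up to cyclic shifts. Invariance under $\sigma$ of both properties is immediate: the min/max over cyclic shifts does not see the choice of starting point, and a cyclic shift of $\x$ only relabels the base index $i_0$ in the sorted list $\sigma^{i_0}(\x)<\cdots<\sigma^{i_{q-1}}(\x)$.

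For the common representative I would use the Farey tree of symbolic sequences. Starting from $\LL\in W_{0,1}$ and $\R\in W_{1,1}$ (trivially both maximin and $p,q$-ordered) and inducting on $Q$ via Farey concatenation $\Delta=\alpha\beta\in W_{P,Q}$, where $P/Q=(p+p')/(q+q')$ and $\alpha,\beta$ are the already-constructed sequences at the parents $p/q,p'/q'$: Proposition~\ref{pro:omega-ordered_minimax} gives that $\Delta$ is maximin. To see it is also $P,Q$-ordered I would realise it as the itinerary of a periodic orbit of the rigid rotation by $P/Q$ on $S^1$, picking a cut $c$ with exactly $P$ of the $Q$ orbit points to its right; this map satisfies \condsC{} with rotation number $P/Q$, so by Propositions~\ref{pro:pq-ordred} and \ref{pro:concatenatetion} the lifted orbit is $P,Q$-ordered and its symbolic itinerary is precisely the Farey concatenation $\Delta$. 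Proposition~\ref{pro:well_ordered_symb-seq} then upgrades this to the statement that $\Delta$ itself is $P,Q$-ordered.

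For uniqueness, the maximin case is nearly automatic: if $\x,\y\in W_{p,q}$ both realise $\max_{W_{p,q}}\min_k\sigma^k$, the two minima coincide; since each minimum is attained as a cyclic shift of the corresponding sequence, we get $\sigma^a(\x)=\sigma^b(\y)$ for some $a,b$, placing $\x$ and $\y$ in the same $\sigma$-orbit. For the $p,q$-ordered case I would take the minimal representative $\x$, so $i_0=0$ and $i_j=jk\pmod q$ for some $k$ coprime to $q$. Matching the action of $\sigma$ on the sequence with the induced shift $j\mapsto j+m$ on sorted positions forces $mk\equiv 1\pmod q$, and counting the number of sorted shifts that begin with the symbol $\R$ (which must equal $p$) pins down $m=p$, hence $k\equiv p^{-1}\pmod q$. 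Since the first $q-p$ entries of the sorted list begin with $\LL$ and the remaining $p$ with $\R$, the rigid rule $\x_m=\LL$ iff $mp\bmod q<q-p$ completely determines $\x$.

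The main obstacle will be the rigidity step in the $p,q$-ordered uniqueness argument, i.e.\ isolating from the bare constant-stride condition the fact that the stride must be $k\equiv p^{-1}\pmod q$ and that the resulting $\LL/\R$ pattern is unique. Once this is secured, the two uniqueness statements combined with the Farey-tree construction show that the unique $\sigma$-orbit of maximin sequences and the unique $\sigma$-orbit of $p,q$-ordered sequences in $W_{p,q}$ are one and the same, proving the theorem.
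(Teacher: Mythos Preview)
The paper does not supply its own proof of this theorem: it is quoted from \cite{GamLanTre84}, with only the remark that the original argument is purely symbolic and that, for itineraries of circle maps, the statement can already be extracted from the Farey-tree machinery together with Proposition~\ref{pro:concatenatetion}. Your proposal is essentially an elaboration of that remark into a self-contained proof, so there is no detailed ``paper's proof'' to compare against beyond that hint; relative to \cite{GamLanTre84} your route is different in that it leans on the circle-map results of Section~\ref{sec:adding_proof} rather than arguing purely at the symbolic level.

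Your overall architecture is sound: both properties are $\sigma$-invariant, the Farey-tree representative is maximin by Proposition~\ref{pro:omega-ordered_minimax} and $p,q$-ordered via a rigid rotation together with Propositions~\ref{pro:pq-ordred}, \ref{pro:well_ordered_symb-seq} and~\ref{pro:concatenatetion}, and your uniqueness argument for maximin orbits is correct.

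The genuine gap is in the uniqueness of $p,q$-ordered sequences. The step ``counting the number of sorted shifts that begin with $\R$ pins down $m=p$'' does not work as stated: for \emph{any} $m$ coprime to $q$ the map $j\mapsto j+m\pmod q$ is a bijection of $\{0,\dots,q-1\}$, so exactly $p$ sorted positions start with $\R$ regardless of $m$. What actually forces $m=p$ is an order constraint one level deeper. Within the $\LL$-block $\{0,\dots,q-p-1\}$ and within the $\R$-block $\{q-p,\dots,q-1\}$, stripping the common first symbol must preserve the lexicographic order, so $j\mapsto j+m\pmod q$ has to be strictly increasing on each block separately. A short case analysis shows this fails on the $\LL$-block whenever $m>p$ (the wrap $q-m-1\mapsto q-1$, $q-m\mapsto 0$ occurs inside the block) and fails on the $\R$-block whenever $m<p$ (same phenomenon), leaving $m=p$ and hence $k\equiv p^{-1}\pmod q$ as the only possibility. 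With $k$ fixed, your rule $\x_n=\LL\iff np\bmod q<q-p$ does determine $\x$ uniquely, and the argument closes. You correctly flagged this rigidity step as the main obstacle; it just needs the order-preservation reason rather than the counting one.
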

The previous result was also proven in~\cite{Ber82} for the
one-dimensional case using endomorphisms of the circle. Moreover, note
that,  also for the one-dimensional case we already have such result
by using the Farey tree of symbolic sequences and  Proposition
~\ref{pro:concatenatetion}. However, the previous Theorem was proven
using only symbolic properties.

\subsection{Quasi-contractions and piecewise-smooth maps in $\RR^n$}\label{sec:quasi-contractions}
In this section we review some results for piecewise-smooth maps in
$\RR^n$ regarding their symbolic properties.\\
We consider maps defined in some suitable open set $U\subset\RR^n$,
\begin{equation*}
f:U\longrightarrow U,
\end{equation*}
of the following form.  Let 
\begin{equation*}
h:\RR^n\longrightarrow \RR
\end{equation*}
some differentiable function. We then consider a switching manifold
$\Sigma\subset U$ as
\begin{equation*}
\Sigma=h^{-1}(0)\cap U.
\end{equation*}
which splits $U$ in two subsets
\begin{equation*}
E_\LL=\left\{ x\in U,\,|\,h(x)<0 \right\}\quad \text{and}\quad
E_\R=\left\{ x\in U,\,|\,h(x)>0 \right\}.
\end{equation*}
Then we write such maps as
\begin{equation*}
f:E_\LL\cup E_\R\longrightarrow U
\end{equation*}
defined as
\begin{equation}
f(x)=\left\{
\begin{aligned}
&f_\LL(x)&&\text{if }x\in E_\LL\\
&f_\R(x)&&\text{if }x\in E_\R,
\end{aligned}\right.
\label{eq:Rn_pwmap}
\end{equation}
where
\begin{equation*}
f_\LL:\RR^n\longrightarrow \RR^n\quad \text{and}\quad
f_\R:\RR^n\longrightarrow \RR^n
\end{equation*}
are two smooth maps.\\
Note that in Equation~\eqref{eq:Rn_pwmap} does not define $f$ in
$\Sigma$. Similarly as we did for the one-dimensional case, we will
consider that $f$ is bi-valued for $x\in \Sigma$:
\begin{equation*}
f(\Sigma)=f_\LL(\Sigma)\cup f_\R(\Sigma).
\end{equation*}
Given $x\in U$ one can also define its symbolic itinerary by $f$ as in
Equation~\eqref{eq:symbolic_sequence} by considering now the encoding
\begin{equation}
a(x)=
\left\{
\begin{aligned}
&\R&&\text{if }x \in E_\R\\
&\LL&&\text{if }x\in E_\LL.
\end{aligned}
\right.
\label{eq:encoding_LR_Rn}
\end{equation}
Note that, if $\x$ is the symbolic sequence associated with a periodic
orbit of a map of type~\eqref{eq:Rn_pwmap} then it still makes sense
to consider its $\eta$-number, $\eta(\x)$, as defined in
Equation~\eqref{eq:eta_number}. However, we will not provide a definition of
the rotation number for such maps, although the classical definition
through the lift for the one-dimensional case (see
Definition~\ref{def:rotation_number}) could be extended by considering
$f$ as a map onto an $n$-dimensional cylinder (see
Section~\ref{sec:conclusions_future} for a discussion).

Below we will assume now that $f$ is a contraction (or just $f$
contracts), which will mean that there will exist some $0<k<1$ such
that, for any $P,Q\in U$ we have
\begin{equation}\label{eq:contraction}
\lVert f(P)-f(Q)\rVert < k\lVert P-Q\rVert.
\end{equation}
We then recover the result presented by Gambaudo et al.
in~\cite{GamTre88} which states the possible number of periodic orbits
and their possible symbolic properties.
\begin{theorem}[\cite{GamTre88}]\label{theo:maximin_quasi-contraction}
Let $f$ be a piecewise-smooth map as defined above satisfying
\begin{enumerate}
\item $f^n(\Sigma)\cap \Sigma=\emptyset$ for all $n>0$,
\item $f$ contracts.
\end{enumerate}
Then,
\begin{enumerate}[i)]
\item $f$ admits $0$, $1$ or $2$ periodic orbits
\item any periodic orbit of $f$ has an itinerary which is maximin
\item if $f$ has two periodic orbits, then their itineraries belong to
$W_{p,q}$ and $W_{p',q'}$ and $p/q$ and $p'/q'$ are Farey neighbours.
\end{enumerate}
\end{theorem}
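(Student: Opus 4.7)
The approach is to translate the dynamical problem into a combinatorial one by associating, to each periodic symbolic word $\x=\x_1\cdots\x_q$, the composition $F_\x = f_{\x_q}\circ\cdots\circ f_{\x_1}$. Because $f$ is a contraction on each side, $F_\x$ is a contraction on $U$, and by the Banach fixed point theorem it has a unique fixed point $p_\x\in U$. This $p_\x$ is realized as an honest periodic point of $f$ with itinerary $\x^\infty$ if and only if $f^{i-1}(p_\x)\in E_{\x_i}$ for every $i$; the hypothesis $f^n(\Sigma)\cap\Sigma=\emptyset$ eliminates degenerate boundary cases. Thus the task reduces to determining which symbolic words $\x$ are \emph{admissible}, meaning that the fixed point of $F_\x$ has the prescribed itinerary.

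For part (ii), the plan is to prove that any admissible $\x$ is $p,q$-ordered (and hence maximin by Theorem~\ref{theo:pq-ordered_iff_maximin}). I would proceed by contradiction: introduce a natural ordering on the orbit points $\{p_\x,f(p_\x),\dots,f^{q-1}(p_\x)\}$ induced by the contraction — concretely, compare two points by the tails of their symbolic itineraries under $f$ (which differ because $f^n(\Sigma)\cap\Sigma=\emptyset$ and $f$ is injective on each branch). Show that this itinerary-order is preserved by $f$ restricted to the orbit, so $f$ acts as a cyclic permutation of some fixed offset $k$ on the ordered list; this is precisely the $p,q$-ordered condition. The maximin property then follows from Theorem~\ref{theo:pq-ordered_iff_maximin}.

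For parts (i) and (iii), assume two distinct periodic orbits coexist, with itineraries $\x^\infty\in W_{p,q}$ and $\y^\infty\in W_{p',q'}$; both are maximin by (ii). The coexistence forces a compatibility condition: since both orbits order consistently with respect to the contraction-induced order and to their iterates relative to $\Sigma$, their $\eta$-numbers cannot be arbitrary rationals but must satisfy $|pq'-p'q|=1$, which by Theorem~\ref{the:Farey_properties}(i) is exactly the Farey neighbour condition. Finally, I would rule out a third coexisting orbit by showing that its $\eta$-number would have to be Farey-compatible with both $p/q$ and $p'/q'$; the only candidate is the mediant $(p+p')/(q+q')$, whose maximin itinerary is the concatenation $\x\y$ (Proposition~\ref{pro:omega-ordered_minimax}), but realizability of this longer orbit together with the two originals is incompatible with the contractive structure (the third fixed point would have to split the ordering established by the first two).

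The main obstacle is step (ii): in one dimension the natural order is inherited from $\RR$, but in $\RR^n$ one needs a genuinely dynamical substitute. The trick is that the contraction makes itineraries injective on the orbit, so lexicographic order on itineraries provides an intrinsic total order; however, checking that this order is compatible with the cyclic action of $f$ (so that one really gets the $p,q$-ordered property and not some weaker twist condition) is delicate and is the heart of the Gambaudo–Tresser argument.
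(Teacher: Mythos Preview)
The paper does not contain a proof of this theorem: it is quoted from \cite{GamTre88} (with the easier $k\le 1/2$ case attributed to \cite{GamGleTre88}), and the surrounding discussion only explains how the quasi-contraction hypotheses (Definition~\ref{def:quasi-contraction}) specialize to the piecewise-smooth setting. The paper explicitly remarks that the proof for contraction constant $1/2\le k\le 1$ is ``significantly more difficult'' than the circle-map arguments of Section~\ref{sec:dyna_orient_preserv}, so there is no in-paper proof to compare against.

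That said, your outline has a genuine gap in part (ii). You propose to order the orbit points $f^i(p_\x)$ by the lexicographic order of their itinerary tails $\sigma^i(\x^\infty)$, and then argue that $f$ acts on this ordered list as a cyclic permutation of constant offset. But the action of $f$ on this list is, by construction, the action of $\sigma$ on the cyclic shifts $\{\sigma^i(\x^\infty)\}$ ordered lexicographically; the assertion that $\sigma$ acts with constant offset on this list is \emph{exactly} Definition~\ref{def:well-ordered_symbolic-sequence} of a $p,q$-ordered sequence. So the argument is circular: you have restated the conclusion without using the contraction in any essential way (contraction only gives injectivity of itineraries, which every periodic word already has). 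What is missing is precisely the mechanism by which the metric hypothesis forces the combinatorial one.

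In the Gambaudo--Tresser framework that mechanism is the pair of inequalities in Definition~\ref{def:quasi-contraction}: condition (ii) there compares distances of images to the base points $F_0,F_1$ (here, to $\Sigma$) across the two pieces, and it is this cross-piece estimate---not an intrinsic symbolic order---that substitutes for the total order of $\RR$ and forces the maximin structure. Your sketch never invokes that estimate. Similarly, in (i)--(iii) the phrases ``forces a compatibility condition'' and ``incompatible with the contractive structure'' are placeholders for the actual work: ruling out three coexisting orbits and pinning down $|pq'-p'q|=1$ require quantitative use of the contraction bounds, not just the Farey combinatorics of Proposition~\ref{pro:omega-ordered_minimax}.
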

Before discussing it, we remark that the previous Theorem was indeed
stated for a more general type of piecewise-defined maps called
\emph{quasi-contractions}. For completeness we provide such definition
in its original form:
\begin{definition}[Quasi-contraction]\label{def:quasi-contraction}
Let $(E_0,d_0)$ and $(E_1,d_1)$ be two metric spaces and $F_0\in E_0$
and $F_1\in E_1$ two points. Then, a map
\begin{equation}
f:\,E_0\cup E_1\longrightarrow E_0\cup E_1
\label{eq:quasi_contraction}
\end{equation}
is a quasi contraction if there exists $0\le k\le 1$ such that for any
$(i,j)\in \left\{ 0,1 \right\}^2$, $\forall P,Q\in f^{-1}(E_j)\cap
E_i$, $\forall R\in f^{-1}(E_{1-j})\cap E_i$ one has
\begin{enumerate}[i)]
\item $d_j\left( f(P),f(Q) \right)\le k d_i\left( P,Q \right)$
\item $d_j\left( f(P),F_j \right)+d_{1-j}\left( f(R),F_{1-j}
\right)\le kd_i(P,R)$
\end{enumerate}
\end{definition}
Despite the arbitrary dimension of the metric spaces $E_i$,
Theorem~\ref{theo:maximin_quasi-contraction} was stated keeping in mind the
one-dimensional case. This is why the two points $F_i$ were
considered instead of manifolds. When considering $E_0=(-\infty,0]$ and
$E_1=[0,\infty)$, then $F_i$ would be chosen to coincide with the boundary:
$F_0=F_1=0$ and we recover the type of one-dimensional maps  as in
Equation~\eqref{eq:normal_form}.\\
Theorem~\ref{theo:maximin_quasi-contraction} was proved
in~\cite{GamGleTre88} (Theorem $A$) for  $f$ a quasi-contraction  with
constant $k$ satisfying  $0\le k\le 1/2$. The version given
in~\cite{GamTre88} not only extends to the case of the
quasi-contractions with $0\le k\le 1$, but it also provides more
information; we give here a mutilated version which is enough for our
purposes. It can be easily seen that both proofs also hold when one
replaces the points $F_i$ by closed sets whose all iterates are kept
connected. This is why we required condition \emph{1.} in
Theorem~\ref{theo:maximin_quasi-contraction} instead of considering
general quasi-contractions. Finally, note that if $f$ is a contraction
in terms of \eqref{eq:contraction} then it is automatically a
quasi-contraction when choosing $E_0=E_\LL$, $E_1=E_\R$ and the
switching manifold $\Sigma$ instead of the points $F_i$, and using
\begin{equation*}
d(f(P),\Sigma)=\min_{x\in\Sigma}d\left(x,f(P)\right)
\end{equation*}
(similarly for $f(Q)$). Hence, conditions \emph{1.} and \emph{2.} in
Theorem~\ref{theo:maximin_quasi-contraction} replace the quasi-contracting
condition for the particular case of the type of piecewise-smooth maps
that we are considering here.\\
Note that, obviously, Theorem~\ref{theo:maximin_quasi-contraction}
also holds when considering $f$ a quasi-contraction for some
$F_0=F_1\in\Sigma$. However, this becomes much more resctrictive than
conditions \emph{1.} and \emph{2.}.\\

We now analyze Theorem~\ref{theo:maximin_quasi-contraction} keeping an
analogy with the results summarized in Section~\ref{sec:overview} for
the one-dimensional case.

We first note that the case in which only a periodic orbits exists is
the analogous to the orientation-preserving case for one-dimensional
maps. Although this is not stated in the result itself, this comes
from the proof provided in~\cite{GamGleTre88,GamTre88}. By
Proposition~\ref{pro:omega-ordered_minimax}, such periodic orbit must
have a symbolic sequence which belongs to the Farey tree of symbolic
sequences (see Figure~\ref{fig:farey_sequences}).\\
As it also occurs for the one-dimensional case, when the map is
non-orientable one finds the possibility of coexistence between two
periodic orbits. As before, this comes from the proof provided
in~\cite{GamGleTre88,GamTre88}. Let $\mathbf{\beta}$ and $\Delta$ be
the symbolic sequences of those periodic orbits.  From \emph{iii)}
$(\beta)^\infty \in W_{p,q}$ and $(\Delta)^\infty\in W_{p',q'}$ with
$p/q$ and $p'/q'$ Farey neighbours at some Farey sequence
$\mathcal{F}_n$ (see Definition~\ref{def:farey_sequence}). Assume that
$q'>q$, then they are neighbours at the Farey sequence of order $q'$
and $p/q$ is a Farey parent of $p'/q'$. Then, if $p/q>p'/q'$, by
Proposition~\ref{pro:concatenatetion} we know that
$\Delta=\alpha\beta$, where $\alpha$ is the Farey sequence of the
other Farey parent of $p'/q'$. Then, by going down through the Farey
tree of symbolic sequences we find two sequences, $\sigma$ and
$\gamma$ such that $\beta=\sigma^n\gamma$ and
$\Delta=\sigma^{n+1}\gamma$, for some $n\ge0$.\\
Then, by considering the iterates of the maps $f_\LL$ and $f_\R$ given
by the sequences $\sigma$ and $\gamma$ (see the example bellow), the
periodic orbits of the original map are collapsed to periodic orbits
of the form $\LL^{n+1}\R$ and $\LL^n\R$ (or $\LL\R^{n+1}$ and
$\LL\R^n$) for the composite map, which corresponds to the coexistence
of periodic orbits given at the period incrementing structure.
\begin{example}\label{exa:quasi-contract_coexis_inc}
Assume that we are in the situation described in iii) with two
periodic orbits with symbolic sequences in $W_{3,8}$ and $W_{2,5}$
coexisting. From Section~\ref{sec:dyna_orient_preserv} we know that,
with the notation from above, these symbolic sequences are
$\Delta=\left( \LL^2\R \right)^2\LL\R$ and $\beta=\LL^2\R\LL\R$. By
going down trough the tree, we get $\alpha=\sigma=\LL^2\R$ and
$\gamma=\LL\R$. Then, for two properly chosen sets $\tilde{E}_\LL$ and
$\tilde{E}_\R$ (see below for more details), the map
\begin{equation*}
\tilde{f}(x)=\left\{
\begin{aligned}
&\tilde{f}_\LL(x):=f_\LL^2\circ f_\R(x)&&\text{if }x\in \tilde{E}_\LL\\
&\tilde{f}_\R(x):=f_\LL\circ f_\R(x)&&\text{if }x\in \tilde{E}_\R,
\end{aligned}\right.
\end{equation*}
possesses two periodic orbits with symbolic sequences $\LL^2\R$ and
$\LL\R$, which corresponds to the predicted ones by the period
incrementing bifurcation structure.\\
Note that the sets $\tilde{E}_\LL$ and $\tilde{E}_\R$ need to be
properly found. This can be done by noting that the iterates by
$\tilde{f}_\LL$ or $\tilde{f}_\R$ provide the boundaries of the
domains of attraction between the points of the periodic orbits.
See~\cite{AvrSchGar10b} and \cite{AvrSchGar10} for explicit examples.
\end{example}

Finally, the third case for which $f$ does not have any periodic
orbit corresponds to quasi-periodic dynamics. In this case $f$
possesses an attracting Cantor set (see~\cite{GamTre88}).

\begin{remark}\label{rem:bif_stru_Rn}
Theorem~\ref{theo:maximin_quasi-contraction} does not provide
information about the bifurcation structures that may appear when
adding parameters to $f$. As it will be discussed in
Section~\ref{sec:conclusions_future}, similar structures to the period
adding may appear, although it is not guaranteed that all periodic
orbits in the tree may exist for some parameter.
\end{remark}

\subsection{Maximin approach for the one-dimensional case}\label{sec:maximin_1d}
In this section we present an alternative path to prove the period
adding structure for one-dimensional piecewise-smooth maps. More
precisely we show that one can use
Theorem~\ref{theo:maximin_quasi-contraction} after Gambaudo et al. to
identify sequences in the Farey tree of symbolic sequences with the
itineraries of one-dimensional orientation preserving circle maps. By
using the concept of maximin sequences instead of $p,q$-ordered
sequences (see Definition~\ref{def:well-ordered_symbolic-sequence}),
this approach provides stronger results than the one presented in
Section~\ref{sec:dyna_orient_preserv}, as they give more information
about the symbolic sequences. In particular, given the rotation
number, $p/q$, the maximin property permits to obtain the proper
symbolic itinerary of a periodic orbit without constructing the whole
Farey tree of symbolic sequences up to level $q$. Instead, one needs
to find in $W_{p,q}$ which is the symbolic sequence that verifies the
maximin/minimax condition (see Definition~\ref{def:maximin}).\\
However, one of the counterparts to these advantages consists of the
requirement of contraction, whereas, as remarked in
Section~\ref{sec:remarks_expansive}, many of the results
through circle maps (shown in
Sections~\ref{sec:properties_circle_maps}
and~\ref{sec:dyna_orient_preserv}) also hold for expansive maps.
Moreover, the proof of Theorem~\ref{theo:maximin_quasi-contraction},
specially for $1/2\le k\le 1$ (see~\cite{GamTre88}), is significantly
more difficult than the ones of in
Section~\ref{sec:dyna_orient_preserv}.

We now specify the alternative path of this alternative proof, which
is as follows. From Proposition~\ref{pro:po_rational_rho} we get that,
under conditions~\condsC, if the rotation number is rational, $p/q$,
then a periodic orbit exists. When adding the assumption of
contraction, Theorem~\ref{theo:maximin_quasi-contraction} holds and,
due to uniqueness of the rotation number, tells us that a unique
periodic orbit must exist; moreover, its symbolic itinerary is
maximin. Then, from Proposition~\ref{pro:omega-ordered_minimax}, we
get that its symbolic sequence belongs to the Farey tree of symbolic
sequences.  Once we get that the symbolic itineraries of orientation
preserving circle maps belong to the Farey tree of symbolic sequences,
one proceeds as in Section~\ref{sec:dyna_orient_preserv} to study the
bifurcation structure when the parameters $c$ or $\lambda$ are varied
in order to prove {\em i)} in
Theorem~\ref{theo:adding_incrementing}.

\section{Applications}\label{sec:examples}
In Section~\ref{sec:examples_1d} we present examples for which
Theorem~\ref{theo:adding_incrementing} provides a full description of
the dynamics that one can find.\\
In the first example (Section~\ref{sec:co-dimension_two}), we will
focus on the codimension-two bifurcation given by the simultaneous
collision of two periodic orbits of one-dimensional discontinuous map
with the boundary. By applying Theorem~\ref{theo:adding_incrementing}
we generalize in this example the results obtained
in~\cite{GamLanTre84,GamGleTre88} by providing a precise description
of the bifurcation scenario.\\
In the second example (Section~\ref{sec:relay}) we deal with a system
of control theory. We consider a first order system with a stable
equilibrium point. In order to stabilize this system at another value,
we perform a non-smooth control action based on relays, which is well
extended technique in engineering control, and known as {\em
sliding-mode} control.  Theorem~\ref{theo:adding_incrementing} holds
and describes the dynamics of this system.\\
In Section~\ref{sec:IF} we consider a generic {\em integrate-and-fire}
system submitted to a periodic forcing. Such a system consists of a
hybrid model widely used in neuroscience. We show how
Theorem~\ref{theo:adding_incrementing} can be applied to describe, not
only its dynamics, but relevant biological properties as the {\em
firing-rate} under parameter variation. There exist in the literature
other examples in neuron models from which one derives a
piecewise-smooth discontinuous map such that
Theorem~\ref{theo:adding_incrementing} {\em ii)} holds; see for
example~\cite{JimMihBroNieRub13,TouBre09}.

Section~\ref{sec:examples_highdim} is devoted to illustrate the
results revisited in Section~\ref{sec:maximin_approach} for
higher-dimensional piecewise-smooth maps.\\
We first consider a higher-order system controlled with relays
(Section~\ref{sec:high-order_relays}). This represents an extension of
the example shown in Section~\ref{sec:relay}.\\
In Section~\ref{sec:boost_zad} we consider a ZAD-controlled DC-DC
boost converter. We show how symbolic dynamics given by the maximin
periodic orbits describe how the saturation of a Pulse Width
Modulation process is distributed along periodic cycles.
\subsection{One-dimensional examples}\label{sec:examples_1d}
\subsubsection{Codimension-two border collision
bifurcations}\label{sec:co-dimension_two} Our first example consists
of a generic dynamical system given by a piecewise-smooth map for
which, under variation of two parameters, two periodic orbits
transversally collide with the boundary. In this two-dimensional
parameter space, this is given by the crossing of two border collision
bifurcation curves, and is hence a codimension-two bifurcation point.
Below we will show that the map~\eqref{eq:normal_form} is a normal
form for such a bifurcation and hence the dynamics are given by
Theorem~\ref{theo:adding_incrementing}.\\
This bifurcation
codimension-two bifurcation was reported as {\em gluing bifurcation}
(\cite{GamLanTre84,GamGleTre88}) when studying {\em figure of eight}
or {\em butterfly} homoclinic bifurcations for three-dimensional
flows. Assuming the existence of a strong stable direction it is
possible to reduce the dynamics near a homoclinic bifurcation to a
two-dimensional system. Then, one considers a first-return Poincar\'e
map, which turns out to be a one-dimensional discontinuous map of the
form given in Equation~\eqref{eq:normal_form}
(see~\cite{ArnCouTre81,Spa82} for more details). Depending on the
orientability of the stable and unstable manifolds, this map exhibits
different configurations regarding its monotonicity near the
discontinuity (see~\cite{GhrHol94,Hom96}), leading to the cases {\em
i)-iv)} of Theorem~\ref{the:Farey_properties} and
Remark~\ref{rem:conds_iii-iv}.  

Later on, in the context of simulations of piecewise-smooth dynamical
systems, this co-dimension two bifurcation was rediscovered and called
{\em big bang} bifurcation (\cite{AvrSch06}) as, depending on the
monotonicity properties of the map, it may involve the emergence of an
infinite number of bifurcation curves from the codimension-two
bifurcation point at the parameter space. Such points organize the
dynamics around them and have been highly reported in simulations of
piecewise-smooth
maps~\cite{AvrSch04,AvrSch05,AvrSchBan06,AvrSchBan07,FutAvrSch12,AvrSchSch10a,AvrSus13}.\\
Also from the piecewise-smooth perspective, the bifurcation scenarios
that appear around such a codimension-two bifurcation point have been
recently reconsidered in~\cite{GarAvrSus14}. There the authors
distinguish between the cases {\em i)} and {\em ii)} of
Theorem~\ref{theo:adding_incrementing}. For the period adding
structure (case {\em i)}) the authors use renormalization arguments to
demonstrate the existence of periodic orbits nested between regions in
the parameter space. However, the proof is not complete, as it is not
shown that this occurs only for regions containing periodic orbits
with Farey neighbours rotation numbers, which is the main result.
Regarding the period incrementing (case {\em ii)}), the authors
of~\cite{GarAvrSus14} repeat the proof provided in~\cite{AvrGraSch11}
and in Section~\ref{sec:incrementing_proof}.\\

We now start with the example. We will show that, under non-degeneracy
conditions, after collapsing the colliding periodic orbits to fixed
points and performing a reparametrization, such a bifurcation can be
reduced to the study of a map of the form of
Equation~\eqref{eq:normal_form} under variation of the parameters
$\mu_\LL$ and $\mu_\R$.

Assume that we have a dynamical system given by
\begin{equation}
x_{n+1}=f(x_n),
\label{eq:codim2_system}
\end{equation}
with
\begin{equation}
f(x)=\left\{
\begin{aligned}
f_\LL(x;a,b)&&\text{if }x<0\\
f_\R(x;a,b)&&\text{if }x>0,
\end{aligned}
\right.
\label{eq:codimtwo_map}
\end{equation}
where $a,b\in\RR$ are two parameters.\\
Assume that system~\eqref{eq:codim2_system} possesses two periodic
orbits $(x_0,\dots,x_{n-1})$ and\linebreak $(y_0,\dots,y_{m-1})$,
$x_i,y_i\in\RR$, satisfying
\begin{align*}
f(x_i)&=x_{i+1},\;0\le i<n-1\\
f(x_{n-1})&=x_0
\end{align*}
and
\begin{align*}
f(y_i)&=y_{i+1},\;0\le i<m-1\\
f(y_{m-1})&=y_0.
\end{align*}
Note that the periodic orbits are indexed by dynamical and not
spatial order, which implies that we do not necessarily have $x_i<x_{i+1}$
and $y_i<y_{i+1}$.\\
Assume that these periodic orbits undergo right and left border
collision bifurcations at two curves, $b=\xi_x(a)$ and $b=\xi_y(a)$ in
the parameter space $(a, b)$, respectively; i.e, there exist unique
integers $k_x\in\left\{ 0,\dots, n-1 \right\}$ and $k_y\in \left\{
0,\dots, m-1 \right\}$ such that, for any $a\in \RR$,
\begin{align*}
\lim_{b\to \xi_x(a)^+}x_{k_x}&=0^-\\
\lim_{b\to \xi_y(a)^-}y_{k_y}&=0^+.
\end{align*}
Assume that these two curves intersect transversally at some point
$(a^*,b^*)$.  In order to write the map~\eqref{eq:codimtwo_map} into
the form of Equation~\eqref{eq:normal_form} for parameter values near
$(a^*,b^*)$ we first need to consider a higher iterate of $f$ so that
these periodic orbits become fixed points.  To this end, let
$\x=(\x_1,\dots,\x_n)\in \left\{ \LL,\R \right\}^n$ and
$\y=(\y_1,\dots,\y_m)\in\left\{ \LL,\R \right\}^m$ be
symbolic sequences such that
\begin{align*}
I_f(x_{k_x})&=\x^\infty\\
I_f(y_{k_y})&=\y^\infty.
\end{align*}
Then, consider the iterates
\begin{align*}
\tf_\LL&=f_{\x_1}\circ f_{\x_2}\circ\dots\circ f_{\x_n}(x)\\
\tf_\R&=f_{\y_1}\circ f_{\y_2}\circ\dots\circ f_{\y_m}(x),
\end{align*}
and the map
\begin{equation*}
\tf(x;a,b) =\left\{
\begin{aligned}
\tf_\LL(x;a,b)&&\text{if }x<0\\
\tf_\R(x;a,b)&&\text{if }x>0.
\end{aligned}\right.
\end{equation*}
Note that one can always find proper shifts of the sequences $\x$ and
$\y$ such that the domains of $\tf_\LL$ and $\tf_\R$ become $x<0$ and
$x>0$, respectively. Indeed, this occurs when the sequences $\x$ and
$\y$ start with the points $x_{k_x}$ and $y_{k_y}$, respectively.
See~\cite{AvrSchGar10b} and \cite{AvrSchGar10} for explicit
examples.\\
These compositions collapse the periodic orbits of the maps $f_\LL$ and
$f_\R$ to fixed points of $\tf_\LL$ and $\tf_\R$, respectively. There
exists then a neighbourhood $(a^*,b^*)\in\U\subset \RR^2$ which is
split in four subsets for which $\tf$ has two fixed points, a unique
positive fixed point, a unique negative fixed point and no fixed
points. These fixed points undergo simultaneous border collision
bifurcations at $(a,b)=(a^*,b^*)$ in such a way that the map $\tf$
becomes continuous at $x=0$ for these parameter values.

We now perform a reparametrization along the curves $\xi_x$ and
$\xi_y$. The non-degeneracy conditions given by their transversal
intersection will give us that, in first order, the new parameters
will be equivalent to the offset parameters $\mu_\LL$ and $\mu_\R$ of
the general map given in Equation~\eqref{eq:normal_form}.\\
This reparametrization is given  by
\begin{equation*}
\begin{array}[]{cccc}
\phi:&\RR^2&\longrightarrow&\RR^2\\
&(\mu_\LL,\mu_\R)&\longmapsto&\left(
a^*+\mu_\LL+\mu_\R,\xi_x(a^*+\mu_\LL)+\xi_y(a^*+\mu_\R) \right),
\end{array}
\end{equation*}
which maps the axis $\mu_\LL=0$ and $\mu_\R=0$ to the bifurcation
curves $(a,\xi_x(a))$ and $(a,\xi_y(a))$, respectively. Expanding in
powers of $x$, $\mu_\LL$ and $\mu_\R$ around
$(x,\mu_\LL,\mu_\R)=(0,0,0)$ we get
\begin{align*}
\tf_\LL(x;\phi(\mu_\LL,\mu_\R))&=\tfL(x;\phi(0,0))\\
&+D_{\mu_\LL,\mu_\R}\tfL(x;\phi(\mu_\LL,\mu_\R))_{
\left|
\tiny
\begin{array}[]{l}
x=0\\\mu_\LL=0\\\mu_\R=0
\end{array}\right.
}
\cdot
\left(
\begin{array}[]{c}
\mu_\LL\\\mu_\R
\end{array}
 \right)\\
&+O(\mu_\LL x,\mu_\R x,,\mu_\LL\mu_\R,x^2,\mu_\LL^2,\mu_\R^2)
\end{align*}

\begin{align*}
D_{\mu_\LL,\mu_\R}\left(\tfL(x;\phi(\mu_\LL,\mu_\R))\right)
_{
\left|
\tiny
\begin{array}[]{l}
x=0\\\mu_\LL=0\\\mu_\R=0
\end{array}\right.
}
=&D_{\mu_\LL,\mu_\R}\left(
f(0;\phi(\mu_\LL,\mu_\R))
\right)_{\left|\tiny
\begin{array}[]{c}
\mu_\LL=0\\\mu_\R=0
\end{array}\right.
}\\
=&\Bigg( \frac{\partial}{\partial
\mu_\LL}\tfL(0;\phi(\mu_\LL,\mu_\R)),\\
&\frac{\partial}{\partial \mu_\R}\tfL(0;\phi(\mu_\LL,\mu_\R)) \Bigg)
_{\left|\tiny
\begin{array}[]{c}
\mu_\LL=0\\\mu_\R=0
\end{array}\right.
}
\end{align*}

\begin{align*}
\frac{\partial}{\partial\mu_\LL}\tf(0;\phi(\mu_\LL,\mu_\R))
_{\left|\tiny
\begin{array}[]{c}
\mu_\LL=0\\\mu_\R=0
\end{array}\right.
}
=&\frac{\partial}{\partial
\mu_\LL}\tfL(0;\phi(\mu_\LL,0))_{\mu_\LL=0}\cdot\mu_\LL\\
+&O(\mu_\LL^2,\mu_\R^2,\mu_\LL\mu_\R)
\end{align*}

\begin{align*}
\frac{\partial}{\partial\mu_\R}\tf(0;\phi(\mu_\LL,\mu_\R))
_{\left|\tiny
\begin{array}[]{c}
\mu_\LL=0\\\mu_\R=0
\end{array}\right.
}
&=\left(\frac{\partial}{\partial
\mu_\R}\underbrace{\tfL(0;\phi(0,\mu_\R))}_{=0}\right)_{|\mu_\R=0}\cdot \mu_\R\\
&+O(\mu_\LL^2,\mu_\R^2,\mu_\LL\mu_\R).
\end{align*}
Proceeding similarly for $\tfR$ we get that there exist some constants
$\delta_\LL,\delta_\R\in\RR$ such that the map $\tf$ can be written, close to
$(x,\mu_\LL,\mu_\R)=(0,0,0)$, as
\begin{equation}
\tf(x;\mu_\LL,\mu_\R)=\left\{
\begin{aligned}
\delta_\LL \mu_\LL +\tfL(x;a^*,b^*)+h.o.t.\\
\delta_\R \mu_\R +\tfR(x;a^*,b^*)+h.o.t.\\
\end{aligned}
\right.
\label{eq:new_map}
\end{equation}
where $h.o.t.=O(\mu_\LL x,\mu_\R x,\mu_\LL\mu_\R,\mu_\LL^2,\mu_\R^2)$.
Hence, taking $\delta_\LL\mu_\LL$ and $\delta_\R\mu_\R$ as new
parameters, the first order terms of the map $\tf$ given in
Equation~\eqref{eq:new_map} are of the form as in
map~\eqref{eq:normal_form} and satisfy condition {\em h.1}.

Finally, if the two periodic orbits $(x_0,\dots, x_{n-1})$ and
$(y_0,\dots,y_{m-1})$ are attracting, the maps $\tfL$ and $\tfR$ become
contracting near $x=0$. This implies that conditions~\condsH{} are
satisfied and Theorem~\ref{theo:adding_incrementing} can be applied to
system~\eqref{eq:new_map} for $\delta_\LL\mu_LL$ and $\delta_\R\mu_\R$
small. Hence, depending on the sign of $(\tf_\LL)'(0^-;a^*,b^*)$ and
$(\tf_\R)'(0^+;a^*,b^*)$ we will get situations {\em i)}-{\em iv)} of
Theorem~\ref{theo:adding_incrementing} and
Remark~\ref{rem:conds_iii-iv}.\\
The periodic orbits of original map $f$ given in
Equation~\eqref{eq:codimtwo_map} are obained from the ones of $\tf$
given by Theorem~\ref{theo:adding_incrementing} by replacing each
point $x_i$ by
\begin{align*}
x_i&\longrightarrow
(x_i,f_{\x_1}(x_i),f_{\x_2}\circ
f_{\x_1}(x_i),\dots,f_{\x_n}\circ\cdots\circ
f_{\x_1}(x_i))&&\text{if }x_i<0\\
x_i&\longrightarrow
(x_i,f_{\y_1}(x_i),f_{\y_2}\circ
f_{\y_1}(x_i),\dots,f_{\y_m}\circ\cdots\circ
f_{\y_1}(x_i))&&\text{if }x_i>0.
\end{align*}
The corresponding symbolic for each periodic orbit of the map
$f$ sequence will be obtained by replacing
\begin{align*}
\LL&\longrightarrow \x\\
\R&\longrightarrow\y.
\end{align*}
\subsubsection{First order sliding-mode controlled system with relays}\label{sec:relay}
In our next example we consider a first order one-dimensional
system\footnote{In this section we abuse notation and refer with
$f$ to a field rather than a map.},
\begin{equation}
\dot{y}=f(y),
\label{eq:open_loop}
\end{equation}
such that $f$ is monotonically decreasing
and has a simple zero at the origin, $f(0)=0$, which is a stable equilibrium point
of system~\eqref{eq:open_loop}.  We generalize the results presented
in~\cite{FosGra11} for a linear system and in~\cite{FosGra13} for
second order linear system.\\
We wish to stabilize system~\eqref{eq:open_loop} at new equilibrium
point $y^*$ by performing a {\em control} action $u(t,y)$:
\begin{equation}
\dot{y}=f(y)+u(t,y).
\label{eq:controlled_system}
\end{equation}
Obviously, as $f$ is monotonically decreasing, one can always choose
$u$ constant ({\em open loop} control) such that $f(y^*)+u=0$.
However, if $u$ is chosen to depend on the current value of the
system, $y$, ({\em closed loop} control) then such control action will
be robust to small perturbations and inaccuracies on the modeling. The
type of control that we will consider is based on {\em sliding-mode}.
This consists on performing some action when $y>y^*$ and another one
whenever otherwise so that solutions are ``pushed'' towards $y^*$. In
a hardware setting this is implemented by means of relays.  The term
{\em sliding} comes from to the generalization to higher dimensions,
$y\in\RR^n$. In this case, one defines  a surface $\sigma(y)=0$ and
the desired behaviour consists of sliding motion along this surface.
To achieve this,  one acts similarly depending on whether
$\sigma(y)>0$ or $\sigma(y)<0$.\\
In addition, we will also assume that $u$ depends on $t$, as we will
perform a periodic sampling in order to discretize the system and use
a digital control.

All together, we will show that the dynamics of such system are given
by a  piecewise-smooth map of the form~\eqref{eq:normal_form}
satisfying~\condsHp{}. Hence, Theorem~\ref{theo:adding_incrementing}
can be applied to accurately describe the dynamics of
system~\eqref{eq:controlled_system}.\\

We now describe how the control action $u$ is defined and how we
derive the piecewise-smooth map.\\
The control is schematically illustrated in
Figure~\ref{fig:system_with_relay}.
\begin{figure}
\begin{center}
\includegraphics[width=0.8\textwidth]{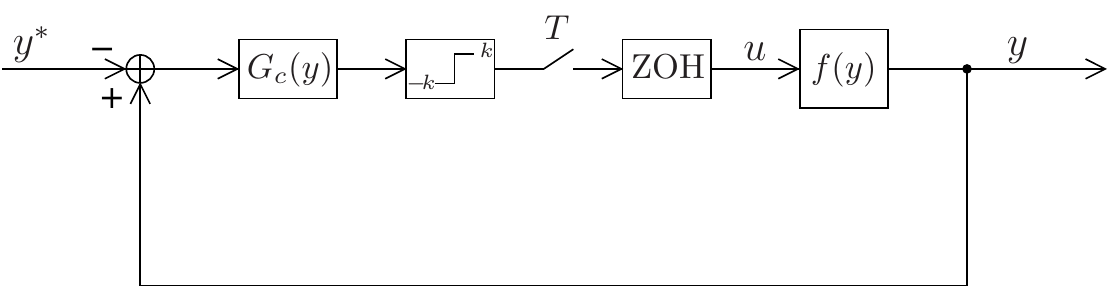}
\end{center}
\caption{Schematic representation of the control strategy based on
sliding and relays.}
\label{fig:system_with_relay}
\end{figure}
As one can see, this is a closed loop system, whose input is the
desired new equilibrium $y^*$. The error $y-y^*$ becomes the input of
a controller represented by the module $G_c$.  In this work we will
assume a proportional control only. Indeed, it will not be a loss of
generality if we set its gain equal to $1$ (it does not perform any
action), provided that the next module, the relay, will also have some
tunable gain.\\
The output of the controller is pushed to a relay of gain $k$. This
means that, depending on the sign of its input, the relay will set its
output to $k$ or $-k$ depending on whether $y-y^*>0$ or $y-y^*$,
respectively.\\
The parameter $T$ is the sampling period, which digitalizes the
system. At every time $T$ the switch closes and provides a new sample
to the $ZOH$ (Zero Order Holder) block. This block holds the received
value until it is changed at the next sampling. The input of the
module $f(y)$ becomes hence a piecewise-constant function:
\begin{equation}
u(t)=\left\{
\begin{aligned}
&-k&&\text{if }y(nT)-y^*<0\\
&k&&\text{if }y(nT)-y^*>0,
\end{aligned}
\right.
\label{eq:u}
\end{equation}
for $t\in[nT,(n+1)T)$.\\
In order to obtain sliding motion we require $f(y^*)+k<0$ and
$f(y^*)-k>0$; that is, we require the
field~\eqref{eq:controlled_system} to point towards $y^*$. This
condition can be generalized to higher dimensions in terms of the
so-called {\em equivalent control} and Lie derivatives
(see~\cite{Utk77} for details). In our case, this implies that
necessary $k<0$, although note that this is not a sufficient
condition.\\
The fact that the variable $y$ is periodically sampled suggests that
the dynamics of system~\eqref{eq:controlled_system} with $u$ given by
Equation~\eqref{eq:u} can be better represented by a map
\begin{equation*}
y_{n+1}=P(y_n),
\end{equation*}
where $y_k=y(nT)$ is the value of $y$ at the $n$th sample, and $P$ is
the stroboscopic (time-$T$ return map) of
system~\eqref{eq:controlled_system},\eqref{eq:u}. It becomes the
piecewise-smooth map
\begin{equation*}
P(y)=\left\{
\begin{aligned}
P_\LL(y)&:=\varphi(T;y;-k)&&\text{if }y<y^*\\
P_\R(y)&:=\varphi(T;y;k)&&\text{if }y>y^*,
\end{aligned}
\right.
\end{equation*}
where $\varphi(t;y;k)$, satisfying
$\varphi(0;y;k)=y$, is the flow associated with the
system $\dot{y}=f(y)+ k$. Note that $P(y)$ is a
smooth map as regular as the flow $\varphi$ if $y>y^*$ or $y<y^*$, and
it is discontinuous at $y=y^*$ if $k\ne 0$.\\
Due to the monotonicity of $f$ the maps $P_\LL$ and $P_\R$ are
contracting for $y\in\RR$.  Moreover, they are both increasing (they
preserve orientation), as they are given by integration of an
autonomous differential equation. Finally, if $k<0$ is such that the
sliding condition is satisfied, we get that $P_\LL(y^*)>y^*$ and
$P_\R(y^*)<y^*$, and $P$ undergoes a negative gap at $y=y^*$. Due to
the monotonicity of $f$, this will occur for any $k<0$ such that $|k|$
is large enough.
Let us now fix $k<0$ and we study the dynamics of the system under
variation of the parameter $y^*$, the desired output. If $|k|$ is
large enough such that the sliding condition is satisfied, then the
attracting fixed points $y_\LL$ and $y_\R$  of $P_\LL$ and $P_\R$,
respectively, are virtual:
\begin{equation*}
y_\R<y^*<y_\LL.
\end{equation*}
Therefore, when $y^*$ is varied, these fixed points may undergo border
collision bifurcations when
\begin{align*}
y_\R(k_\R)&=y^*\\
y_\LL(k_\LL)&=y^*.
\end{align*}
After applying the change of variables $z\longmapsto y-y^*$, the
one-parameter family of maps $\tilde{P}_{y^*}(z):=P(z+y^*)-y^*$ is in
the class of maps given by applying the
reparametrization~\eqref{eq:1d_scann_curve} to the piecewise-smooth
map~\eqref{eq:normal_form} satisfying {\em i)} of
Theorem~\ref{theo:adding_incrementing}. Hence, when varying $y^*$ from
$y_\R$ to $y_\LL$, one observes periodic orbits following  the
period adding structure.\\
As this occurs for any $k<0$, if $y^{*}$ is close enough to the
equilibrium point of system~\eqref{eq:open_loop}, $y=0$, the origin of
the parameter space $(y^*, k)$ represents a co-dimension two
bifurcation point.  This is shown in Figure~\ref{fig:bbb_yc-b} for the
linear system
\begin{equation*}
f(y)=ay
\end{equation*}
with $a<0$. The bifurcation curves shown in
Figure~\ref{fig:bbb_region} are straight lines due to the linearity of
the system.

\begin{figure}[h]\centering
\begin{picture}(1,0.5)
\put(0,0.4){
\subfigure[\label{fig:bbb_region}]{\includegraphics[width=0.35\textwidth,angle=-90]{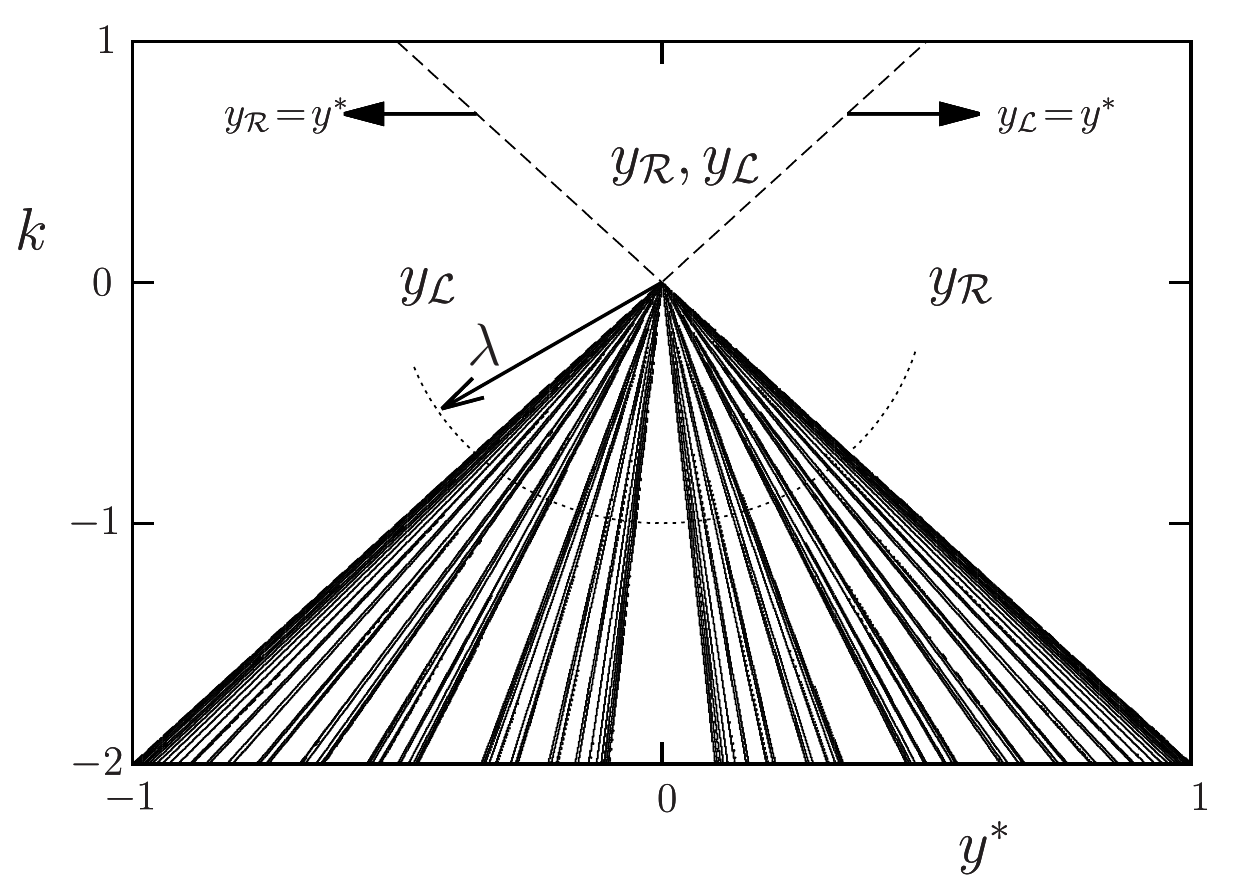}}
}
\put(0.5,0.4){
\subfigure[\label{fig:bbb_periods}]{\includegraphics[width=0.35\textwidth,angle=-90]{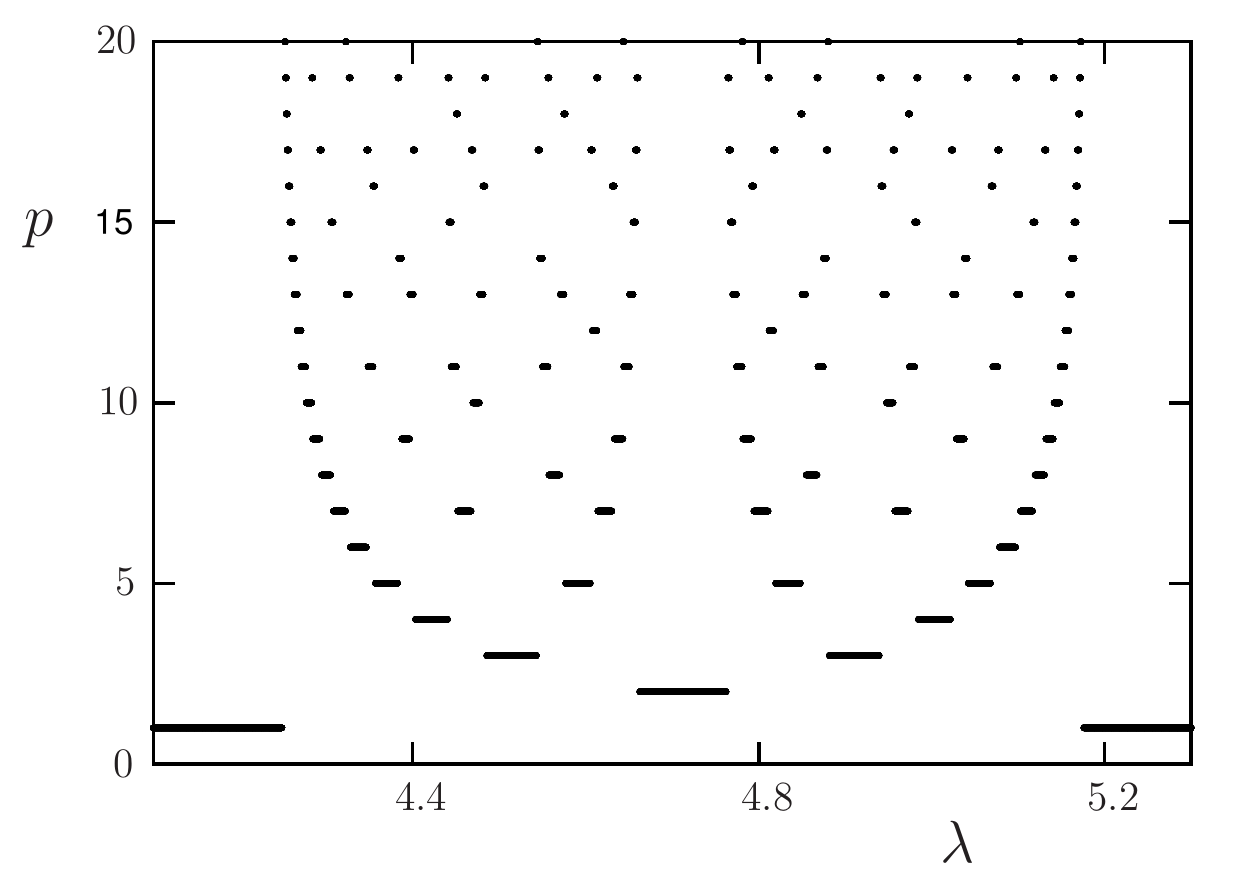}}
}
\end{picture}
\caption{(a) Co-dimension two bifurcation point of the adding type for
a the system $f(y)=-0.2y$ and $T=0.1$.  The fixed points $y_i$ are
labeled in the regions where they are feasible, and, as dashed lines,
the curves where they undergo border collision bifurcation.  In (b) we
show the periods $p$ of the periodic orbits found along the pointed
curve in (a), which is parametrized by the angle $\lambda$.}
\label{fig:bbb_yc-b}
\end{figure}%
In the parameter region where one finds the period adding
structure is the one for which the system performs the proper
tracking.  Its asymptotic dynamics consists of periodic orbits
bouncing around the desired equilibrium point and whose amplitude can
be made arbitrarily small by choosing $T$ small enough. For any chosen
$y^*$, there exists a maximal value of $k$ for which the systems
performs tracking. However, although for larger values of $k$ the
system oscillates between the desired valued $y^*$, not only the
amplitude of the periodic motion grows, but also its symbolic
dynamics, which can be modified by properly tunning $k$ such that a
certain behaviour is achieved (or avoided). For example, one may wish to minimize
the time for which the system's output exceeds $y^*$, and hence one may
choose symbolic sequences with low rotation number.

\subsubsection{Hybrid systems in biology}\label{sec:IF}
In this example we consider a generalization of an integrate-and-fire
system, widely used neuron model. It consists of a hybrid system given
by\footnote{As in Section~\ref{sec:relay}, in this section we
abuse notation and refer with $f$ to a field rather than a map.}
\begin{equation}
\dot{x}=f(x),
\label{eq:IF}
\end{equation}
$f\in C^\infty(\RR)$, submitted to the reset condition
\begin{equation}
x=\theta \longrightarrow x=0;
\label{eq:reset}
\end{equation}
that is, whenever the variable $x$ reaches a certain threshold
$\theta$, it is reset to a certain value, which we assume to be $x=0$.
This emulates  {\em spike} of a neuron (action potential).\\
Typically, the dynamics of system~\eqref{eq:IF}-\eqref{eq:reset}
are studied by means of the so-called {\em firing} map
(\cite{KeeHopRin81}), which is a Poincar\'e map onto the threshold
$x=\theta$. However, when system~\eqref{eq:IF} is periodically forced,
this map is not optimal for obtaining general results, as
one has to explicitly compute the time when spikes occur.\\
In this example we present the results reported in~\cite{GraKruCle14},
where it was shown that, when a periodic forcing is considered, it is
more convenient to study the system by means of the stroboscopic
map (time-$T$ return map, being $T$ the period of the forcing).\\
Following~\cite{GraKruCle14}, in this example we periodically force
system~\eqref{eq:IF} by means of a square wave function, which models a
pulsatile stimulus of a neuron.  That is, we consider the system
\begin{equation}
\dot{x}=f(x)+I(t),\,x\in\RR
\label{eq:general_system}
\end{equation}
with $f(x)\in C^\infty(\RR)$ and $I(t)$ the $T$-periodic function
\begin{equation}
I(t)=\left\{
\begin{aligned}
&A&&\text{if }t\in\left(nT,nT+dT\right]\\
&0&&\text{if }t\in(nT+dT,(n+1)T],\\
\end{aligned}\right.
\label{eq:forcing}
\end{equation}
where $A>0$ and $0\le d\le 1$ is the so-called {\em duty} cycle. We are
interested in the bifurcation structure in the parameter space given
by the amplitude of the pulse, $A$, and its duty cycle, $d$. 
We refer to~\cite{GraKru15} for the study of the bifurcation
structures in this parameter space under frequency variation of the
input, $1/T$.

Let us assume that system~\eqref{eq:IF} satisfies
\begin{enumerate}[{A}.1)]
\item it possesses an attracting equilibrium point
\begin{equation*}
0<\bx<\theta,
\end{equation*}
\item $f(x)$ is a monotonic decreasing function in $[0,\theta]$:
\begin{equation*}
f'(x)<0,\;x\in[0,\theta].
\end{equation*}
\end{enumerate}
Note that, conditions \condsA{} guarantee that spikes can only occur
when the pulse is active ($I=A$) with $A$ large enough; otherwise,
when the pulse is off, trajectories are attracted by the equilibrium
point $\bx$.

As mentioned above, by contrast to typical approaches by means of the
firing map, we use the stroboscopic map:
\begin{equation}
\begin{array}{cccc}
\s:&[0,\theta)&\longrightarrow &[0,\theta)\\
&x_0&\longmapsto&\phi(T;x_0),
\end{array}
\label{eq:strobo_map}
\end{equation}
where $\phi(t;x_0)$ is the solution of system~\SYSTEMWR{} with initial
condition $\phi(0;x_0)=x_0$. Note that the flow $\phi$ is well
defined, although it is discontinuous when spikes occur. The
stroboscopic map $\s$ is a piecewise-smooth map. To see this, we
define the sets
\begin{equation}
\begin{aligned}
S_n=\Big\{ x_0\in [0,\theta)\;\text{s.t. }&\phi(t;x_0)\text{ reaches
 the threshold }\left\{ x=\theta\right\}\\
&n\text{ times for }0\le t\le T \Big\},\, n\ge0.
\end{aligned}
\label{eq:sets}
\end{equation}
When restricted to $S_n$,  the map $\s$ becomes a concatenation of
maps consisting of integrating the system $\dot{x}=f(x)+A$ and
performing resets, for $t\in [0,dT]$, and then integrating the system
$\dot{x}=f(x)$ for $t\in (dT,T]$. Recall that, as mentioned above, no
resets can occur for $t\in(dT,T)$.  Hence, $\s$ is smooth in the
interior of $S_n$, as it is given by a certain composition of smooth
maps; however, $\s$ is discontinuous at the boundaries of $S_n$. This
comes from the following fact. Let us define $\Sigma_n\in S_n$ as the
initial condition that leads to a trajectory exhibiting its $n$th
spike precisely when the pulse is deactivated:
\begin{equation*}
\phi(dT;\Sigma_n)=\theta.
\end{equation*}
Note that, as the flow $\phi(t;x_0)$ of system~\SYSTEMWR{} is well
defined if it exists, such initial condition is unique and can only
exist for certain $n$.
Notice also that, if there exists $\Sigma_n\in(0,\theta)$, then points
above $\Sigma_n$ exhibit $n$ spikes, whereas points below exhibit
$n-1$ spikes (see Figure~\ref{fig:boundary}).\\
As a consequence, the map $\s$ undergoes at most one discontinuity, at
$x=\Sigma_n$, and points to its left and right exhibit $n-1$ and $n$
spikes, respectively, for $t\in[0,dT]$. As shown
in~\cite{GraKruCle14}, for any $n$ the discontinuity $\Sigma_n$
satisfies
\begin{equation}
\frac{d\Sigma_n}{dA}<0,
\label{eq:A_monotonically_dec}
\end{equation}
and hence it decreases montonically with $A$. By increasing this
parameter, the discontinuity $\Sigma_n$ collides with $x=0$,
disappears and a new discontinuity, $\Sigma_{n+1}$, appears at
$x=\theta$. At this moment, the system changes from exhibiting $n-1$
and $n$ spikes to exhibiting $n$ and $n+1$ (see~\cite{GraKruCle14} for
more details).

If we compute the lateral images of $\s$ at these values are given by
\begin{align*}
\s(\Sigma_n^-)&=\varphi(T-dT;\theta;0)\\
\s(\Sigma_n^+)&=\varphi(T-dT;0;0),
\end{align*}
where $\varphi(t;x_0;A)$ is the flow associated with the system
$\dot{x}=f(x)+A$, with initial condition $\varphi(0;x_0;A)=x_0$.
\begin{figure}
\begin{center}
\begin{picture}(1,0.5)
\put(0,0.35){
\subfigure[]
{\includegraphics[angle=-90,width=0.5\textwidth]{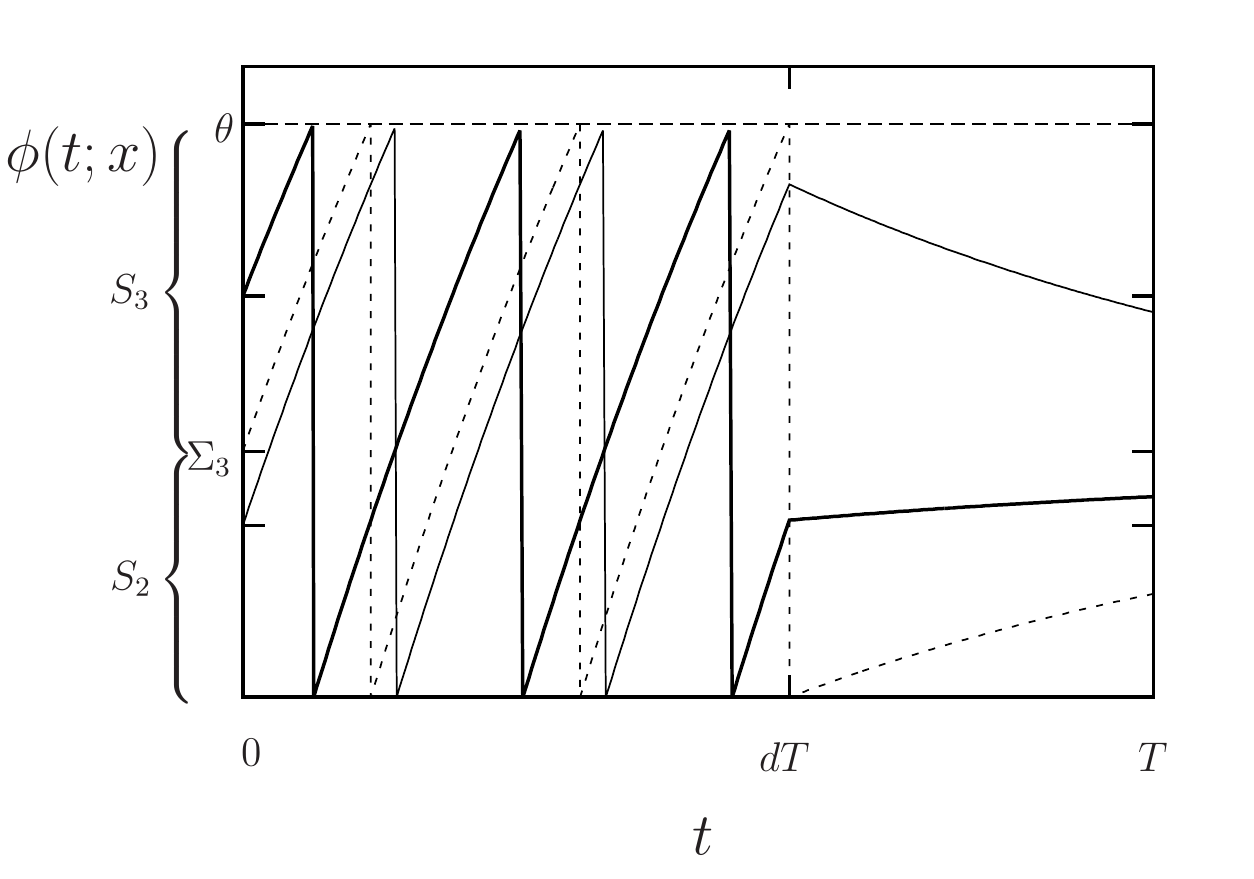}}
}
\put(0.5,0.35){
\subfigure[\label{fig:boundary_map}]
{\includegraphics[angle=-90,width=0.5\textwidth]{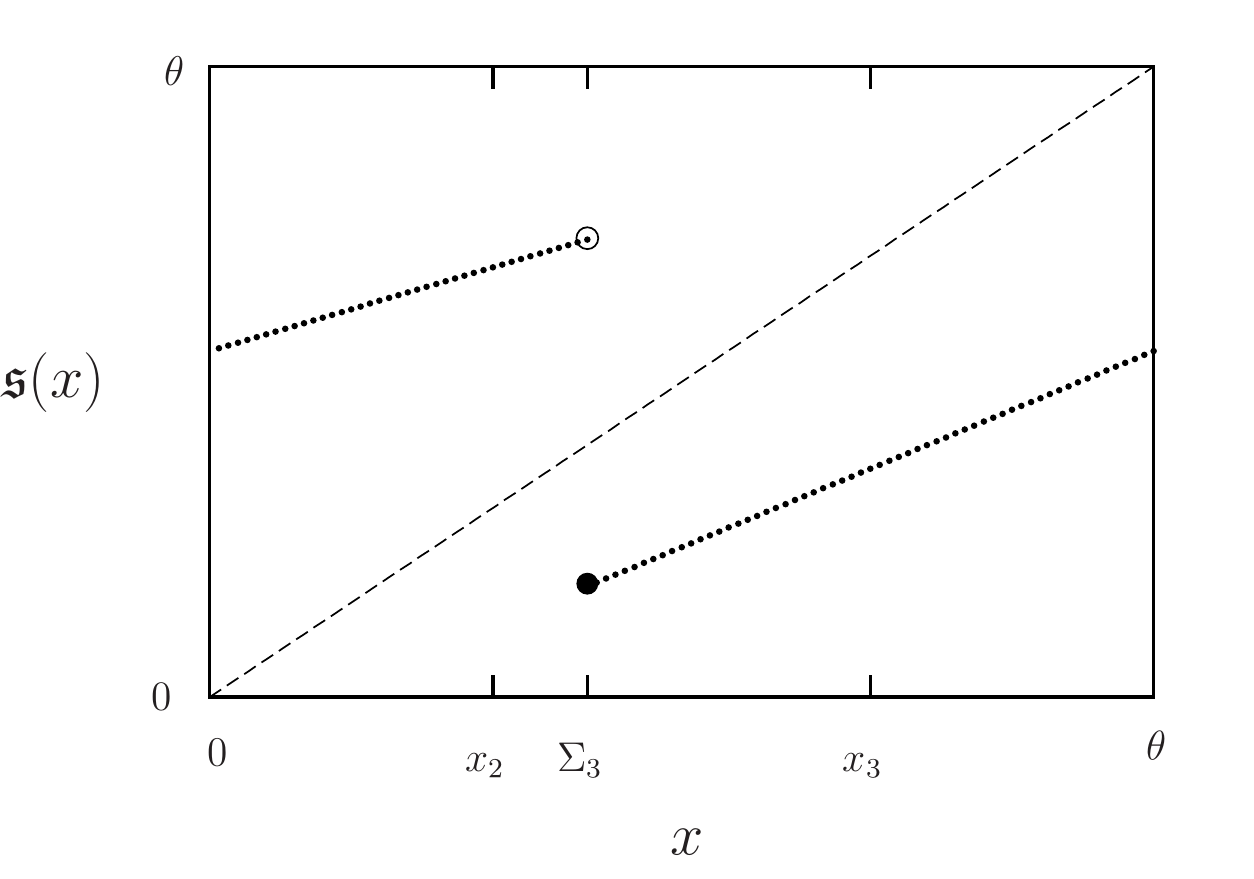}}
}
\end{picture}
\end{center}
\caption{In (a) the trajectories of systems~\SYSTEMWR{}. Dashed line: trajectory
with $\Sigma_3$ as initial condition. Thick line: trajectory with $x_3>\Sigma_3$
as initial condition, which spikes $3$ times. Normal line: trajectory with
$x_2<\Sigma_3$ as initial condition, which spikes $2$ times. In (b) the
stroboscopic map, with a discontinuity at $x=\Sigma_3$.}
\label{fig:boundary}
\end{figure}
Note that $\s(\Sigma_n^-)>\s(\Sigma_n^+)$ and hence $\s$ undergoes a
negative gap at $x=\Sigma_n$ (see Figure~\ref{fig:boundary}). Clearly,
as the reset action and the integration of the flow provide orientation
preserving maps, $\s$ is an increasing map.\\
The stroboscopic map $\s$ is always contracting in $[0,\Sigma_n)$.
However, it is contractive in $[\Sigma_n,\theta)$ only if $n$ is large
enough (\cite{GraKruCle14} Lemma~3.4). 

One can see (\cite{GraKruCle14}) that, for any $n\ge0$ and
$d\in(0,1)$, there exists a range of values of $A$,
$(A_n^\R,A_n^\LL)$, for which $\s$ possesses a unique fixed point,
$\bx_n\in S_n\subset(0,\theta)$.  These fixed points undergo border
collision bifurcations at $A=A_n^\R$ and $A=A_n^\LL$ when colliding
with the boundaries $\Sigma_n$ and $\Sigma_{n-1}$, respectively:
\begin{align*}
A&\longrightarrow A_n^\LL(d)\Longrightarrow\bx_n\longrightarrow\left( \Sigma_n
\right)^-,\;n\ge 0\\
A&\longrightarrow A_{n}^\R(d)\Longrightarrow\bx_n\longrightarrow \left( \Sigma_{n-1}
\right)^+,\;n\ge 1,
\end{align*}
(see Figure~\ref{fig:bif_T-per} for $n=2$). 
\begin{figure}
\begin{center}
\begin{picture}(1,0.8)
\put(0,0.8){
\subfigure[\label{fig:map_T_R-bif}]{\includegraphics[angle=-90,width=0.5\textwidth]
{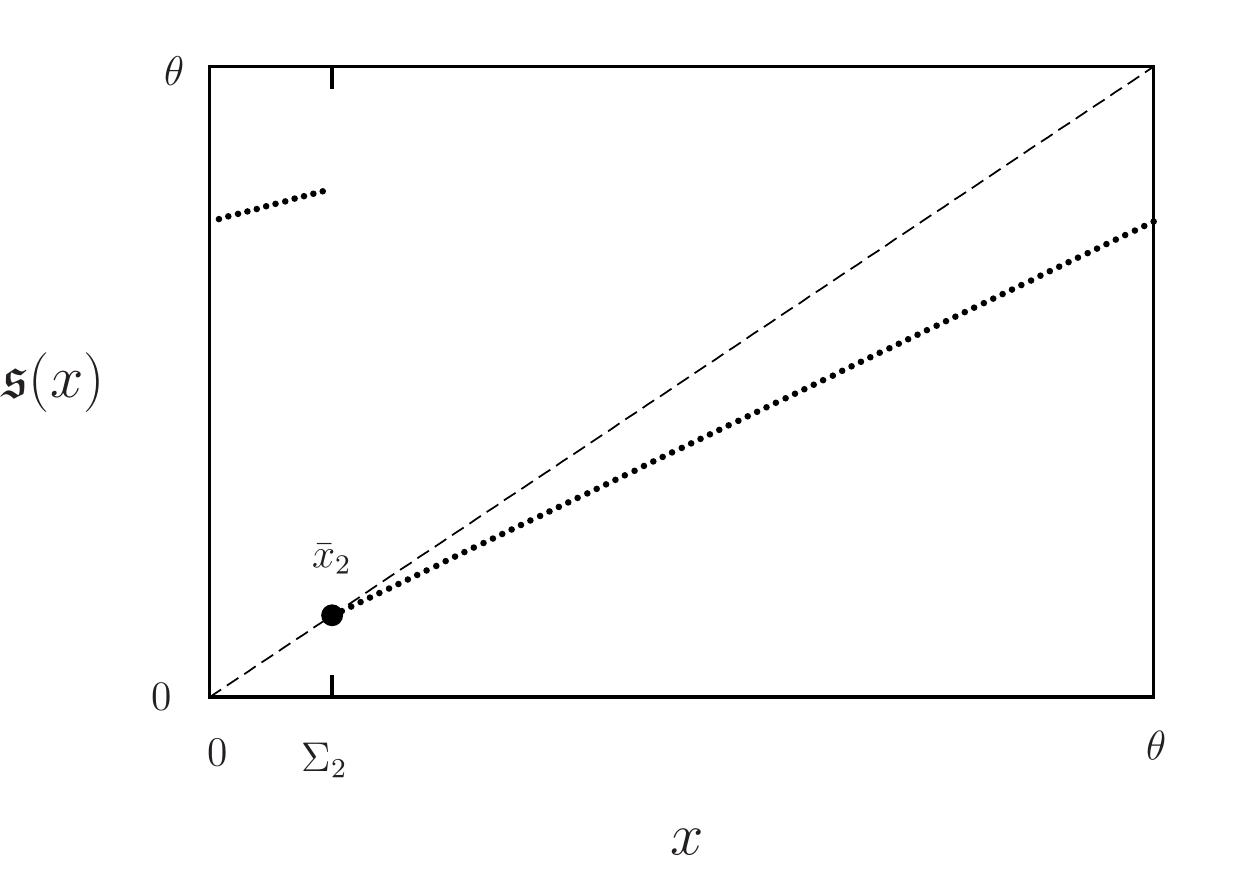}}
}
\put(0.5,0.8){
\subfigure[\label{fig:T-per_R-bif}]
{\includegraphics[angle=-90,width=0.5\textwidth]{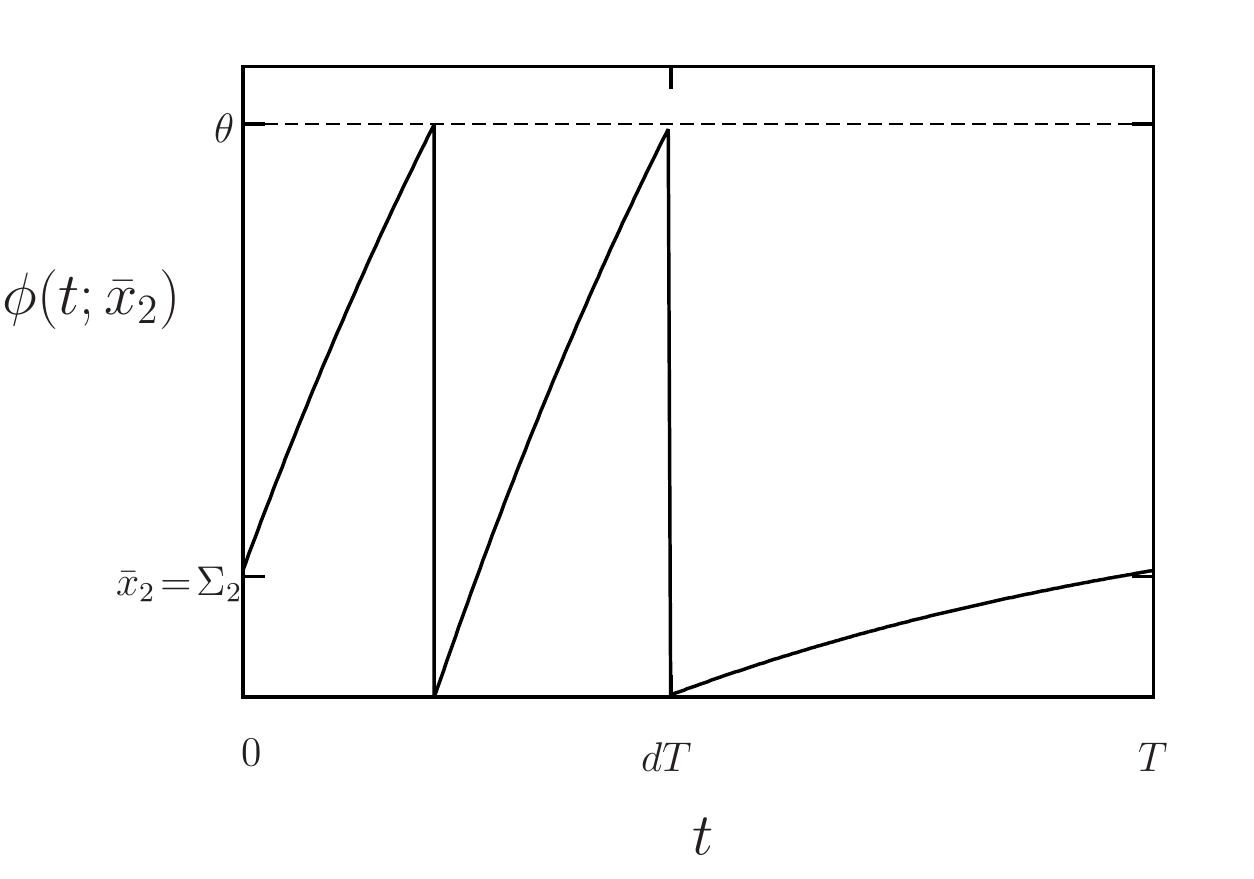}}
}
\put(0,0.4){
\subfigure[\label{fig:map_T_L-bif}]{\includegraphics[angle=-90,width=0.5\textwidth]
{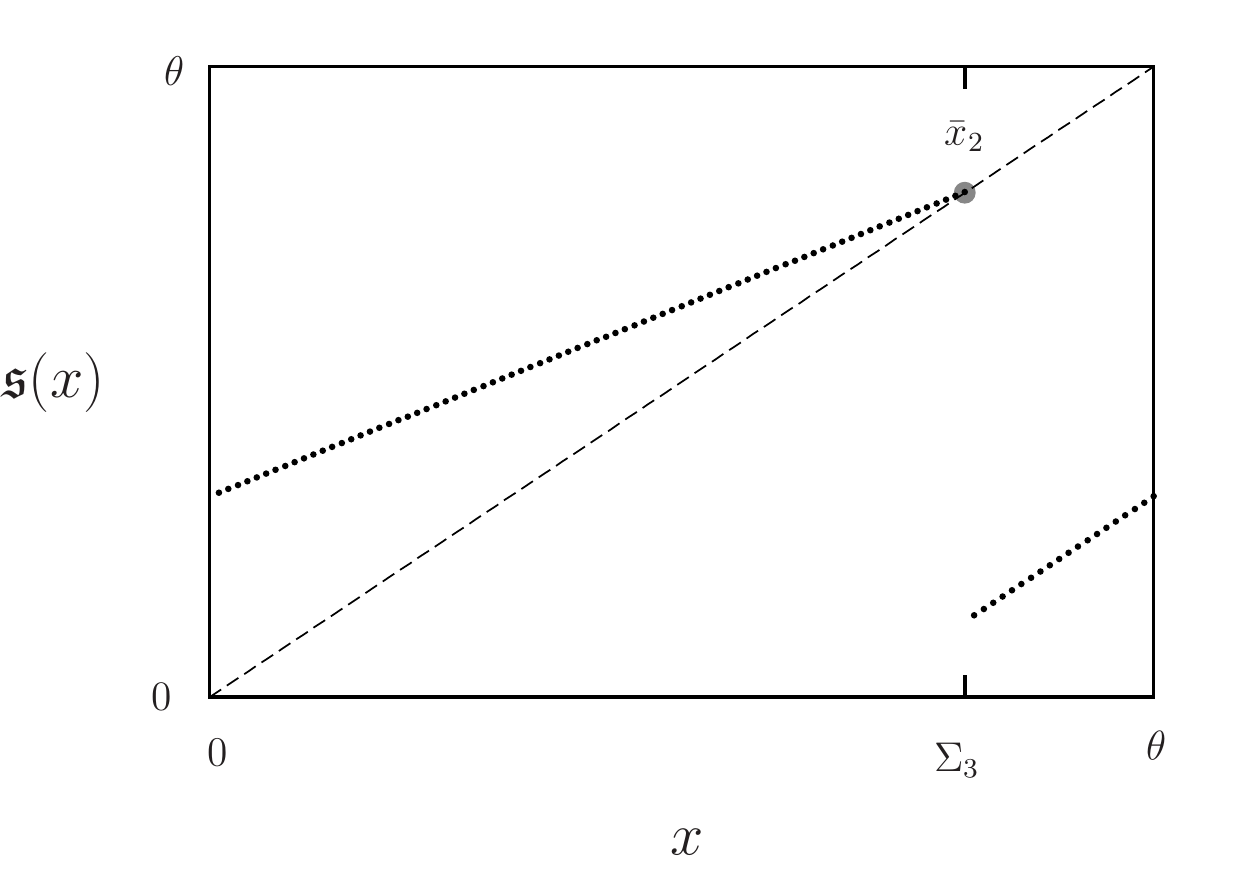}}
}
\put(0.5,0.4){
\subfigure[\label{fig:T-per_L-bif}]
{\includegraphics[angle=-90,width=0.5\textwidth]{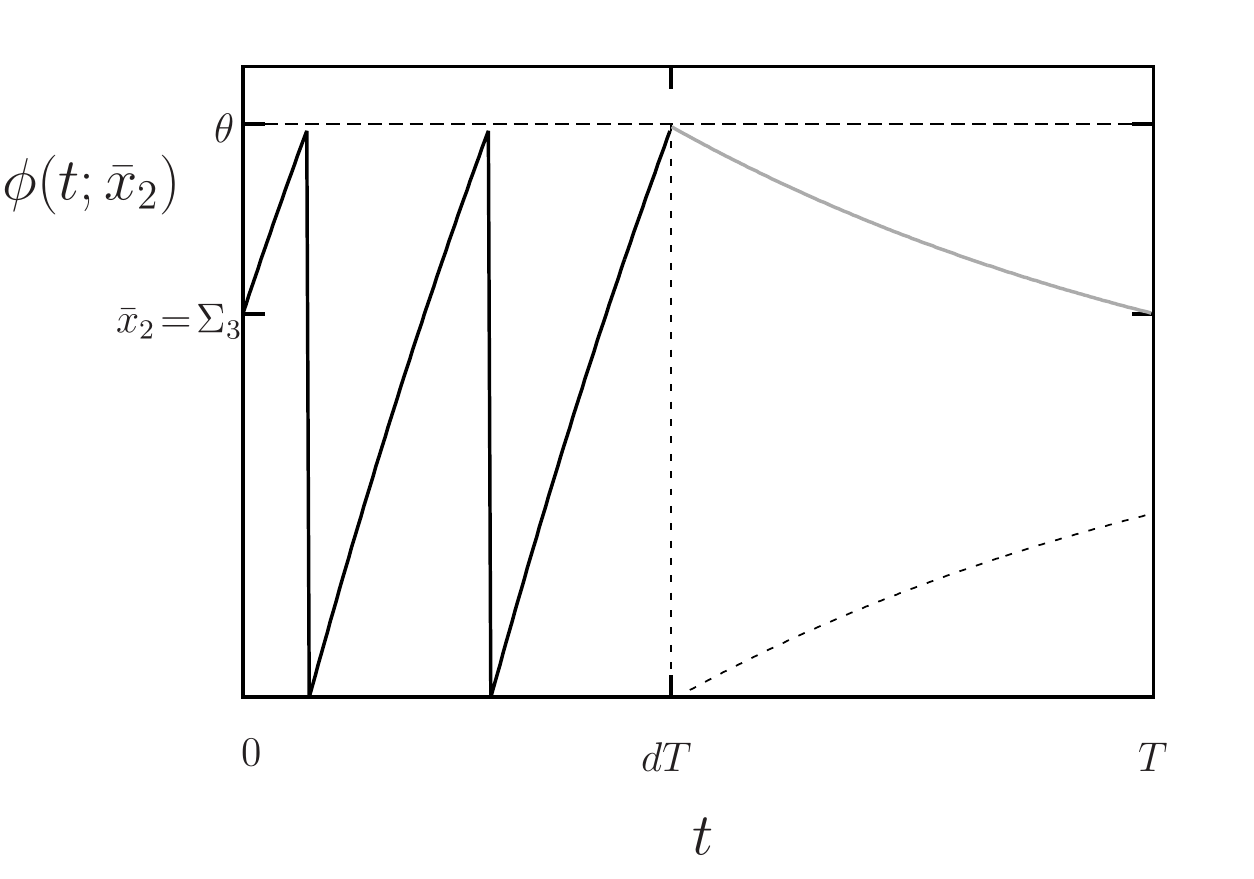}}
}
\end{picture}
\end{center}
\caption{(a) Fixed point $\bx_2$ colliding with the boundary
$\Sigma_2$ for $A=A_2^\R$ (b) $T$-periodic orbit for $A=A_2^\R$. (c)
fixed point $\bx_2$ colliding with the boundary $\Sigma_3$ for
$A=A_3^\LL$. (d) $T$-periodic orbit for $A=A_3^\LL$. Note that the
periodic orbit in (d) should exhibit a reset at $t=dT$; this is why it
is plot in  gray color.}
\label{fig:bif_T-per}
\end{figure}

As given in Equation~\eqref{eq:A_monotonically_dec}, $\Sigma_n$
monotonically decreases with $A$. Hence, for any $n\ge0$, one can
apply a reparametrization such that $\s$ can be written as in
Equation~\eqref{eq:normal_form} and $A$ becomes equivalent to
the parameter $\lambda$ in Equation~\eqref{eq:1d_scann_curve}.
Therefore, if $n$ is large enough such that $\s$ is contracting,
conditions \condsH{} and {\em i)} of
Theorem~\ref{theo:adding_incrementing} are satisfied, and $\s$
exhibits a period adding bifurcation structure under variation of
$A\in [A_n^\LL,A_{n+1}^\R]$ (see~\cite{GraKruCle14} Proposition 3.4).
As this occurs for any $d\in(0,1)$, the parameter space $(d,1/A)$
possesses an infinite number of period adding structures.
\begin{figure}
\begin{center}
\begin{picture}(1,0.5)
\put(0,0.35){
\subfigure[\label{fig:d-invA_T1d9}]
{\includegraphics[angle=-90,width=0.5\textwidth]{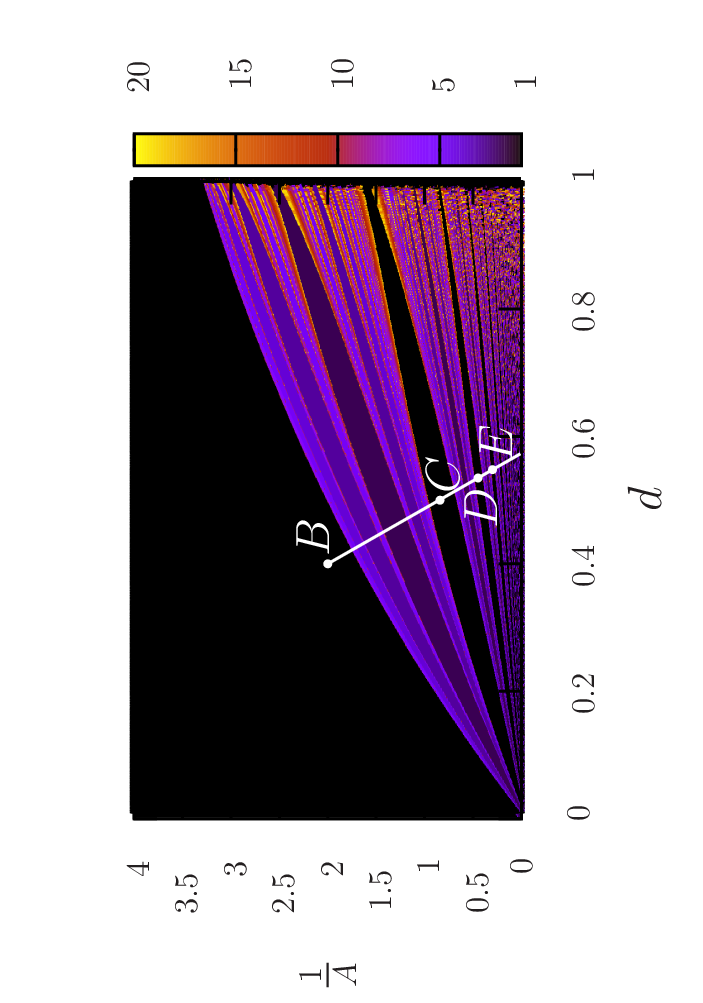}}
}
\put(0.5,0.35){
\subfigure[\label{fig:1dscann_T1d9}]
{\includegraphics[angle=-90,width=0.5\textwidth]{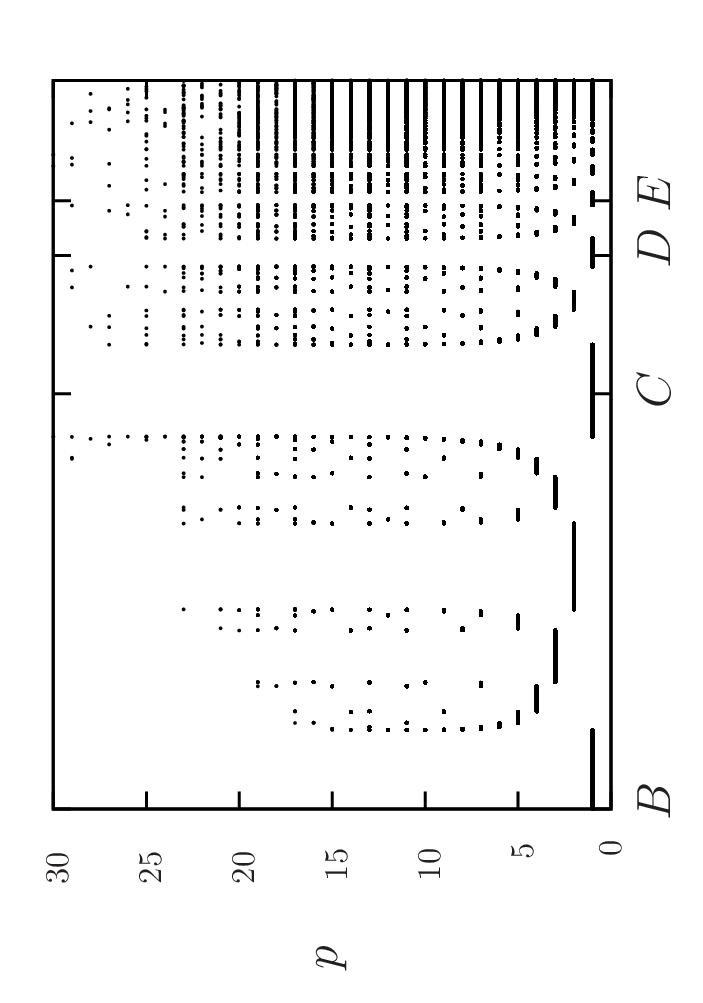}}
}
\end{picture}
\end{center}
\caption{(a) Bifurcation scenario in the $(d,1/A)$ parameter
space for the map $\s$ with $f(x)=-0.5x+0.2$,
$\theta=1$ and $T=1.9$.  The colors refer to the periods of the
periodic orbits found by simulating the system. (b) period of the
periodic orbits found along the the line shown in (a).}
\label{fig:T1d9}
\end{figure}
These are shown Figure~\ref{fig:T1d9} for a linear system for $f(x)$.
The black regions shown in Figure~\ref{fig:d-invA_T1d9} correspond to
fixed points; for example, for parameter values at points $B$, $C$,
$D$ and $E$ there exist the fixed points $\bx_0\in S_0$, $\bx_1\in
S_1$, $\bx_2\in S_2$ and $\bx_3\in S_3$, respectively. As one can see
from the periods of the periodic orbits shown in
Figure~\ref{fig:1dscann_T1d9}, there exist period adding structures
nested between the regions of existence of fixed points, as predicted
by Theorem~\ref{theo:adding_incrementing}. Note however that the
adding structure between $B$ and $C$ (involving the discontinuity
$\Sigma_1$) is not complete. This is due to the fact that $n=1$ is not
large enough to guarantee the contractiveness of $\s$ at the interval
$[\Sigma_1,\theta)$. As the map $\s$ (after proper reparametrization
and change of variables) satisfies conditions~\condsC{},
Proposition~\ref{pro:po_rational_rho} holds to provide the existence
of a periodic orbit. However, due to the expansiveness,
periodic orbits may be unstable and not unique, and
hence they are not easy to detect by direct simulation.\\

One of the most interesting properties that one can derive from the
period adding structures comes from the symbolic dynamics and the
rotation numbers associated with these periodic orbits.  Specifically,
consider parameter values such that $\Sigma_n\in(0,\theta)$ and
suppose that the stroboscopioc map has a periodic orbit. As explained
above, on each iteration of the stroboscopioc map the flow can perform
$n-1$ or $n$ spikes. We assign a symbolic sequence to this periodic
orbit by letting  its $k$th symbol to be $\LL$ if this number is $n-1$
and $\R$ if it is $n$.  Hence, when divided by the period of the
periodic orbit, this becomes the so-called {\em firing number}, which
is the average number of spikes per iteration of the stroboscopic map.
Therefore, after recalling Definition~\ref{def:eta-number} and
Corollary~\ref{cor:eta-number_rotation-number}, it is easy to see that
the rotation number ($\rho)$ and the firing number ($\eta$) satisfy
the relation
\begin{equation*}
\eta=n+\rho,
\end{equation*}
where $n$ is such that the periodic orbit steps around  the
discontinuity $\Sigma_{n+1}$, $n\ge0$. Therefore, by
Theorem~\ref{theo:adding_incrementing}, under parameter variation, the
firing number follows a devil's staircase growing from $0$ to infinity
(see Figure~\ref{fig:fr-fn}).
\begin{figure}
\begin{center}
\begin{picture}(1,0.5)
\put(0,0.35){
\subfigure[\label{fig:firing_number}]
{\includegraphics[angle=-90,width=0.5\textwidth]{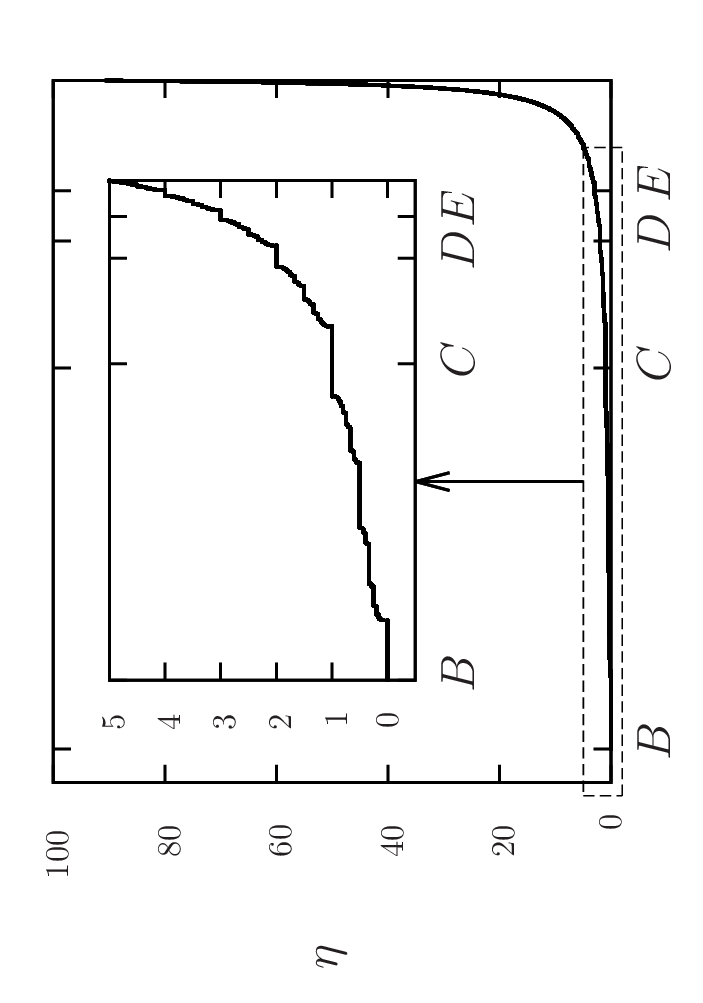}}
}
\put(0.5,0.35){
\subfigure[\label{fig:firing_rate}]
{\includegraphics[angle=-90,width=0.5\textwidth]{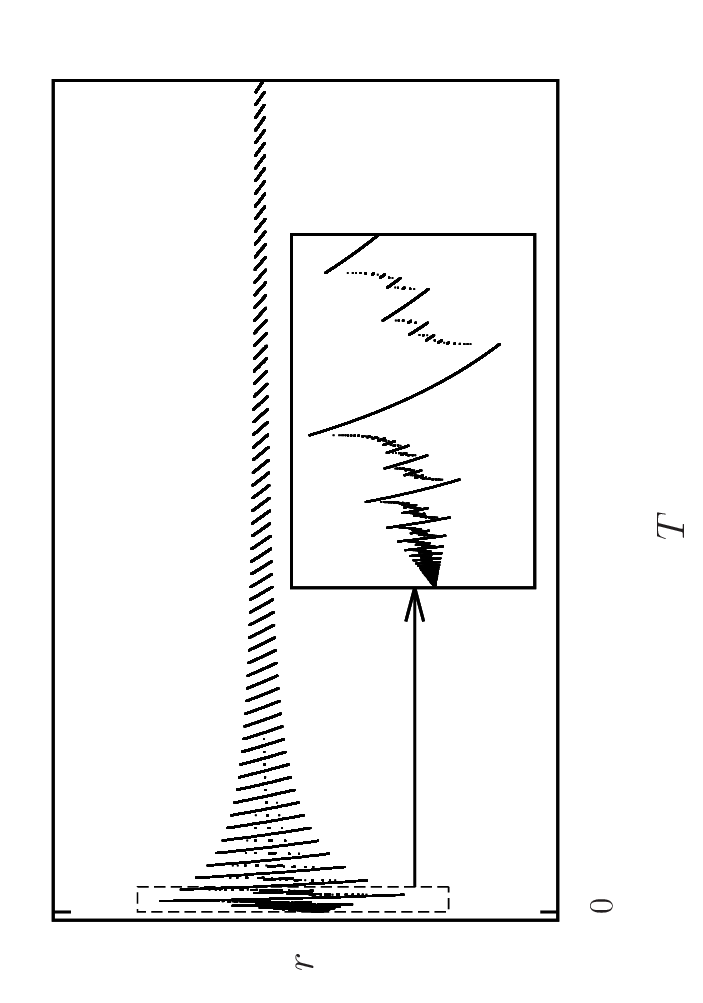}}
}
\end{picture}
\end{center}
\caption{(a) Firing number along the line shown in
Figure~\ref{fig:d-invA_T1d9}, for the same system. (b) Firing rate
under variation of $T$.}
\label{fig:fr-fn}
\end{figure}
When divided by $T$ (period of the periodic forcing $I(t)$), the
firing number becomes the so-called {\em firing rate}, which is the
asymptotic average number of spikes per unit time. Hence, it also
follows a devil's staircase when varying parameters $A$ and $d$ along
lines as the one shown in Figure~\ref{fig:d-invA_T1d9}. However, as
most of the bifurcation structures shown in
Figure~\ref{fig:d-invA_T1d9} do not qualitatively depend on $T$, when
this parameter is varied the firing rate follows a devil's staircase
with pieces of hyperbolas as steps. This is shown in
Figure~\ref{fig:firing_rate}, and more details on this frequency
analysis can be found in~\cite{GraKru15}.

\subsection{Examples in higher dimensions}\label{sec:examples_highdim}
\subsubsection{Higher order sliding-mode controlled system with relays}\label{sec:high-order_relays}
In this example we recover the results presented in~\cite{FosGra13},
which extend the example shown in Section~\ref{sec:relay} to higher
order linear systems.\\
Assume that system~\eqref{eq:open_loop} is a linear system of order
$n$ with an equilibrium point at the origin.  In this case, due to the
linearity, the block representing the open-loop system in
Figure~\ref{fig:system_with_relay} can be replaced by its Laplace
transform
\begin{equation*}
G_s(s)=\frac{b}{U(s)},
\end{equation*}
where $U(s)$ is a polynomial of the form
\begin{equation}
U(s)=s^n+a_{n-1}s^{n-1}+\cdots +a_0,
\label{eq:polynomial}
\end{equation}
which we assume to have real negative roots only.\\
The system can be written in terms of a differential equation as
\begin{equation}
y^{n)}+a_{n-1}y^{n-1)}+\cdots+a_0y=bu(t),
\label{eq:n-order_ode}
\end{equation}
with $\,y(t),a_i,b\in\RR$, $y^{i)}=d^iy/dt^i$ and $u(t)\in\RR$ is the input of the system.
We wish to design a control such that the output of the system,
$y(t)\in\RR$, is stabilized around a new equilibrium point $y^*$; that
is, $y(t)\simeq y^*$ and $y^{i)}\simeq 0$ for $1\le i\le n$. To
achieve this, we consider a two-step control. A first action is taken
by a classic controller given by the block $G_c(s)$, which we assume
to be of the form
\begin{equation*}
G_c(s)=1+c_1s+\ldots +c_{n-1}s^{n-1}.
\end{equation*}
The second part will be given by a relay of gain $k$. Then, the
desired motion becomes
\begin{equation*}
y-y^*+c_1y^{1)}+\ldots
+c_{n-1}y^{n-1)}=0.
\end{equation*}
Equivalently, system~\eqref{eq:n-order_ode} can be written as a first
order $n$-dimensional system as
\begin{equation}
\dot{\by}=A\by+Bu,
\label{eq:n-order_matricial}
\end{equation}
where
\begin{equation*}
\by=(y,y^{1)},\dots,y^{n-1)})\in \RR^n,
\end{equation*}
and $A$ and $B$ become
\begin{equation*}
A=\left( \begin{array}{cccccc}
0&1&0&\ldots&&0\\
0&0&1&&&0\\
\vdots&\vdots&&\ddots\\
0&0&\ldots&0&1&0\\
-a_0&-a_1&\ldots&&-a_{n-2}&-a_{n-1}
\end{array} \right),\,B=
\left( \begin{array}{c}
0\\ \vdots\\ 0\\b
\end{array} \right),
\end{equation*}
and we wish to stabilize the system around the point
\begin{equation*}
\by=
\left( \begin{array}{c}
y^*\\0\\ \vdots\\ 0
\end{array} \right)
\end{equation*}
After the $T$-periodic sampling, the input $u$ becomes constant
at the intervals
$t\in[iT,(i+1)T)$:
\begin{equation}
u=\left\{
\begin{aligned}
&-k &&\text{if }\sigma(\by)<0\\
& k &&\text{if }\sigma(\by)>0
\end{aligned}\right.
\label{eq:input_n-order}
\end{equation}
where
\begin{equation}
\sigma(\by)=y-y^*+c_1y^{1)}+\ldots
+c_{n-1}y^{n-1)}.
\label{eq:sliding_surf}
\end{equation}
We wish to stabilize the system close to the switching surface
\begin{equation*}
\sigma(\by)=0.
\end{equation*}
The fact that the polynomial $U(s)$ possesses real negative roots only
ensures us that, for $k=0$, system~\eqref{eq:n-order_matricial}
possesses an attracting node at the origin. In order to find proper
values of $k$ such that there exists dynamics satisfying $\sigma(\by(t))=0$, we
impose sliding motion on the surface given by $\sigma=0$, which occurs
when the vector fields $F^{\pm}=A\by\pm Bk$, obtained by replacing
$u=\pm k$, point both to the surface $\sigma$.  Since $F^{\pm}$ are
smooth everywhere,  this can be checked through
\begin{equation}
\left({L_{F^{+}}\sigma}\right)\left({L_{F^{-}}\sigma}\right)<0.
\label{eq:non_degeneracy_Lie}
\end{equation}
Let us define $u_{eq}=-\frac{(\nabla\sigma)A\by}{c_{n-1}b}$. Then, the
previous inequality meets on the subset of $\sigma$ defined by
\begin{equation}
-\vert k\vert <u_{eq}<\vert k\vert
\label{eq:non_degeneracy_linear}
\end{equation}
(see \cite{Utk77} for details). In turn, this result can be read as
\textit{for $k$ properly selected (both in sign and in absolute
value), there is sliding motion on $\sigma$}.

After the discretization performed by the $T$-periodic sampling, the
system is better understood by the time-$T$ return (stroboscopic) map
\begin{equation}
P(y)=\left\{
\begin{aligned}
&P_\LL(\by):=\rho \by+\mu_\LL&&\text{if }\sigma(\by)<0\\
&P_\R(\by):=\rho \by+\mu_\R&&\text{if }\sigma(\by)>0,
\end{aligned}\right.
\label{eq:norder_sliding_map}
\end{equation}
where $\rho$, $\mu_\LL$ and $\mu_\R$ are the matrices
\begin{equation*}
\rho=e^{AT},\quad \mu_\R=k(\rho-Id)(A^{-1}B),\quad
\mu_\LL=-k(\rho-Id)(A^{-1}B).
\end{equation*}
The maps $P_\LL$ and $P_\R$  are the time-$T$ return maps
(stroboscopic) maps associated with the fields $F^{+}$ and $F^{-}$,
respectively.\\
Using that system~\eqref{eq:n-order_matricial} is linear and that, for
$k=0$, it possesses an attracting node,  the matrix $\rho$ possesses
real positive eigenvalues with modulus less than $1$ only.\\
Each branch of the map~(\ref{eq:norder_sliding_map}), $P_r$ and
$P_\LL$, has a fixed point
\begin{equation}
\byr=-(\rho-Id)^{-1}\mu_\R,\quad\byl=-(\rho-Id)^{-1}\mu_\LL,
\label{eq:fixed_points_general_map}
\end{equation}
which may be \emph{feasible} or \emph{virtual} depending on whether
they belong to the domain of their respective map or not. When
feasible, these fixed points become attracting nodes, and they undergo
border collision bifurcations  when they collide with boundary
$\sigma(\by)=0$.

Regarding the possible dynamics, we distinguish between three
different situations.\\
If both fixed points are feasible ($\sigma(\byr)>0$ and
$\sigma(\byl)<0$) they also become attracting fixed points of the
map~(\ref{eq:norder_sliding_map}).  Their
domains of attraction are formed by the values of $\by\in\RR^n$ such
that $\sigma(\by)>0$ and $\sigma(\by)<0$, respectively.\\
If only one of both fixed points is feasible ($\sigma(\byr)<0$ and
$\sigma(\byl)<0$ or vice-versa), then it becomes the unique fixed
point of the map~(\ref{eq:norder_sliding_map}). As it is attracting,
all trajectories tend towards it, and now its domain of attraction
becomes $\RR^n$.\\
Note that, in these two previous cases, the control specification is
not fulfilled as $\sigma(\by)=0$ is not flow invariant by the vector
fields $F^\pm$.

The third situation occurs when both fixed points are virtual
($\sigma(\byr)<0$ and $\sigma(\byl)>0$). This occurs when the sliding
condition~\eqref{eq:non_degeneracy_Lie} is fulfilled, guaranteeing that
the original time-continuous system possesses sliding motion on
$\sigma(\by)=0$. Provided that the fixed points are attracting, the
map~\eqref{eq:norder_sliding_map} satisfies condition {\em 2.} of
Theorem~\ref{theo:maximin_quasi-contraction}. Letting
\begin{equation*}
\Sigma=\left\{ \by\in\RR^n,\,|\,\sigma(\by)=0 \right\}\cap U,
\end{equation*}
condition {\em 1.} is also satisfied by taking the set $U$ small
enough but containing both virtual fixed points. As a consequence and
noting that both $P_\LL$ and $P_\R$ preserve orientation as they are
obtained from the integration of a flow, the map $P$ admits $0$ or
$1$ periodic orbit. When it exists, such periodic orbit must have 
a symbolic itinerary contained in the Farey tree of symbolic
sequences shown in Figure~\ref{fig:farey_sequences}.

As noted in Remark~\ref{rem:bif_stru_Rn}, the results given in
Section~\ref{sec:maximin_approach} do not provide information about
bifurcation structures. In Figure~\ref{fig:bbb_2d} we show the
bifurcation structure obtained for the planar case when the parameters
$k$ and $y^*$ are varied. As one can see in
Figure~\ref{fig:bbb_periods_2d}, the obtained structure resembles the
period adding described in Section~\ref{sec:inc-inc_overview} for the
one-dimensional case, although it is not known whether for the planar
case the gluing of orbits is also fractal. A similar physical
interpretation of the bifurcation structure as in the first-order case
(Section~\ref{sec:relay}) holds also also in this case.
\begin{figure}
\begin{center}
\begin{picture}(1,0.5)
\put(0,0.4){
\subfigure[\label{fig:bbb_region_2d}]
{\includegraphics[width=0.35\textwidth,angle=-90]
{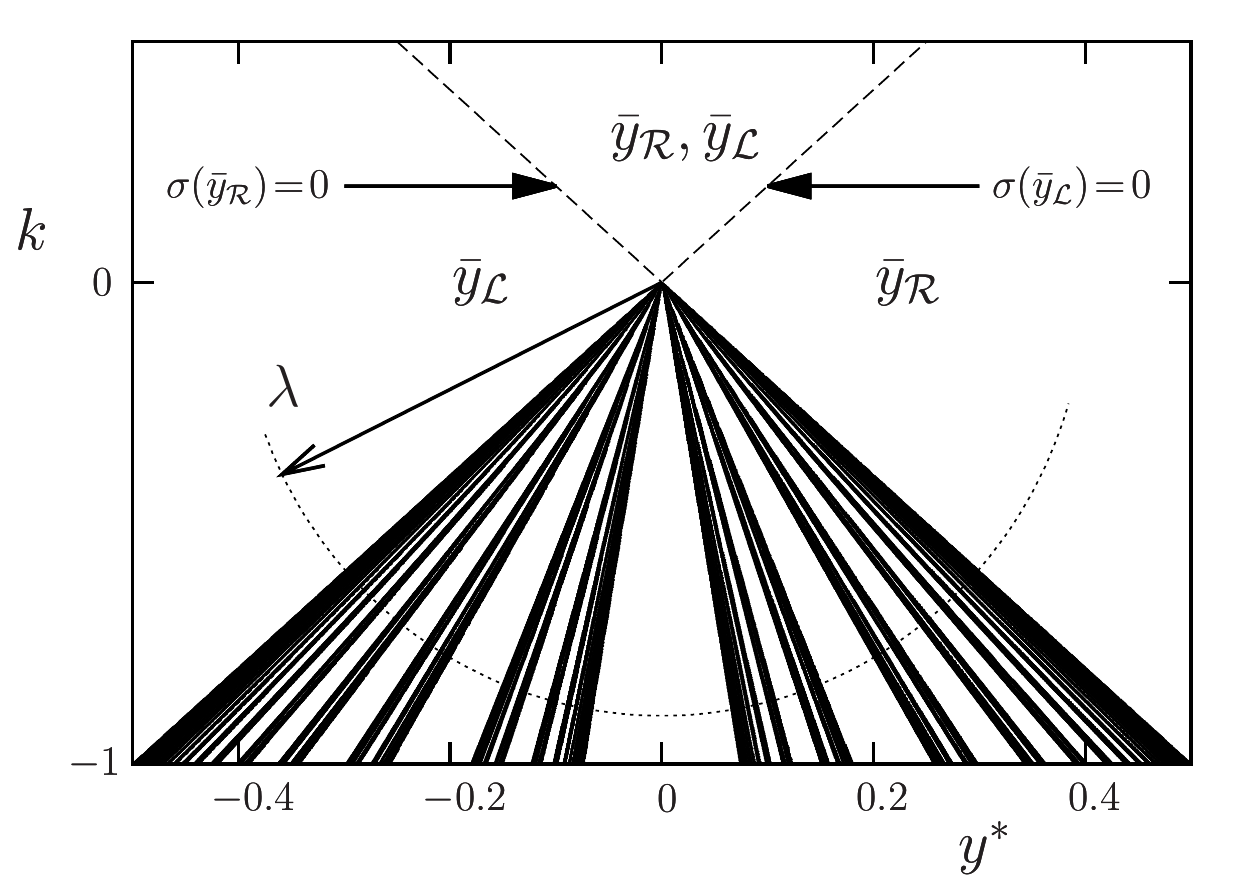}}
}
\put(0.5,0.4){
\subfigure[\label{fig:bbb_periods_2d}]
{\includegraphics[width=0.35\textwidth,angle=-90]
{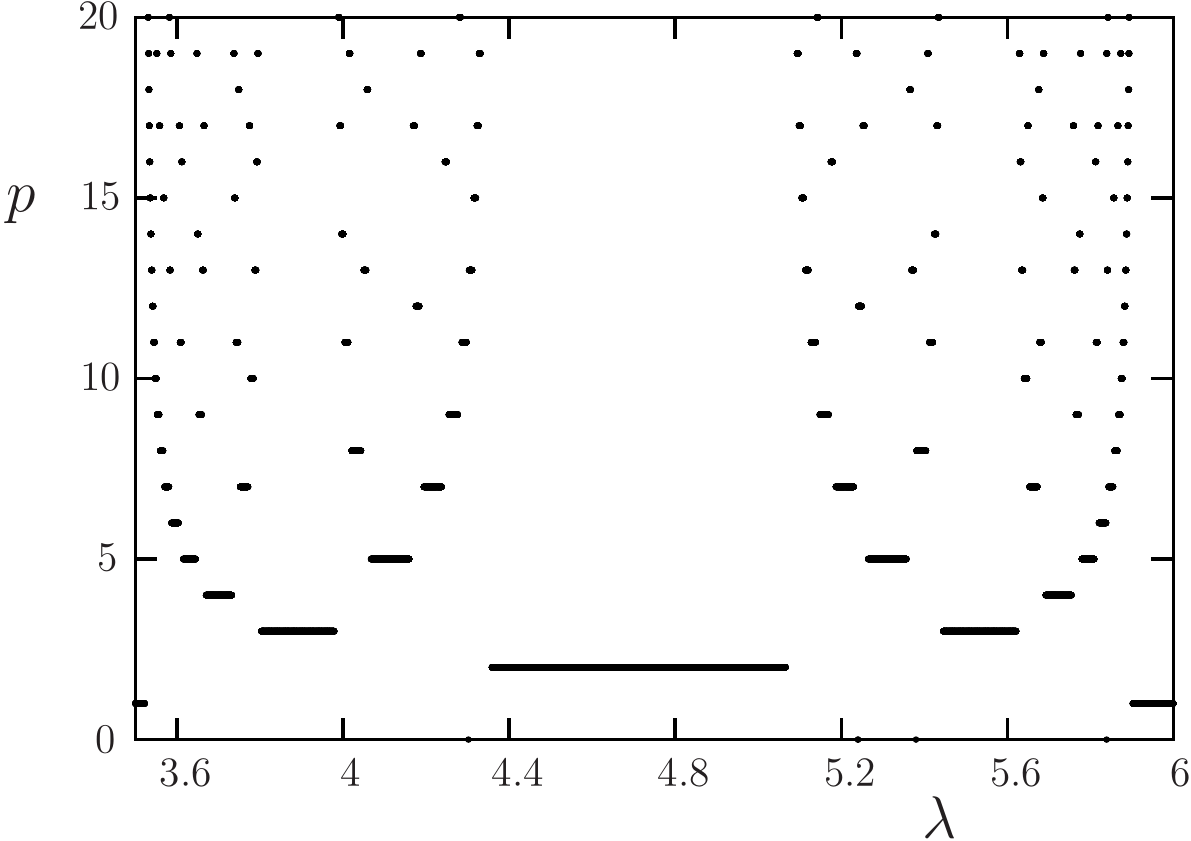}}
}
\end{picture}
\end{center}
\caption{Bifurcation scenario in the $(y^*,k)$ parameter space for the
planar case, $n=2$. The rest of parameters are fixed to $a_0=-2$,
$a_1=-5$, $b=1$, $c_1=1.5$ and $T=0.1$. In~(a) we show the border collision
bifurcation curves separating existence regions of periodic orbits. In~(b) the
periods of the periodic orbits found along the pointed curve in~(a) parametrized
by $\lambda$.}
\label{fig:bbb_2d}
\end{figure}

\subsubsection{ZAD-controlled DC-DC boost converter}\label{sec:boost_zad}
In this applied example we recover the results shown
in~\cite{AmaCasGraOliHur14} and state them in terms of the
quasi-contractions of Section~\ref{sec:maximin_approach}.  We consider
a DC-DC boost converter, which is aimed to convert a given constant DC
voltage, $v_i$, into a desired lower one, $v_o$, also DC. Its circuit
is schematically shown in Figure~\ref{fig:boost}.
\begin{figure}
\begin{center}
\includegraphics[width=0.6\textwidth]{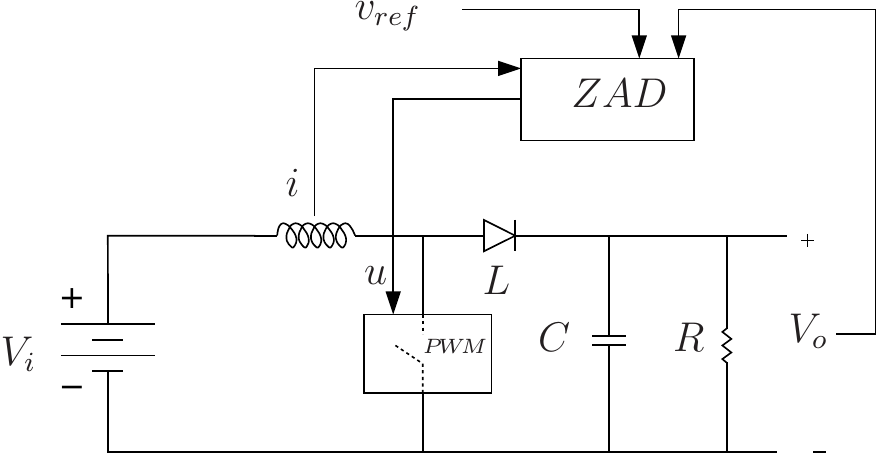}
\end{center}
\caption{Schematic representation of a $ZAD$-controlled boost
converter.}
\label{fig:boost}
\end{figure}
To achieve this conversion in a robust way (to guarantee a
certain stability of $v_o$ under possible
fluctuations of $v_i$), the transistor shown in Figure~\ref{fig:boost} 
needs to be properly controlled. There exist different approaches to design
such a control; in this example we consider the so-called Zero
Average Dynamics (ZAD) strategy, which is well extended in the
community~(\cite{FosGriBie00,AngFosOli05,AngOliBer08,AngOliTab08,FosHogSea09,AvrFosGraSch11,AmaCasGraOliHur14}).\\
Letting $v$ be the voltage at the capacitor, $i$ the current through the
solenoid, calling
\begin{equation*}
x=\frac{v}{v_i}\quad y=\sqrt{\frac{L}{C}}\frac{i}{v_i}
\end{equation*}
and rescaling time by a factor of $\sqrt{LC}$, the non-dimensional
equations that model the system become
\begin{equation}
\begin{aligned}
\dot{x}&=-\gamma x+y(1-u)\\
\dot{y}&=-x(1-u)+1,
\end{aligned}
\label{eq:boost_nondim_equations}
\end{equation}
with $\gamma=\sqrt{\frac{L}{R^2C}}$. The function $u$ takes the values
$0$ or $1$ depending on whether the transistor is opened or closed,
respectively. It is given by the output of 
a Pulse Width Modulation (PWM) process, and can be written as
\begin{equation*}
u(t)=\left\{
\begin{aligned}
&1&&\text{if }kT\le t< kT+1/2Td_k\\
&0 &&\text{if }kT+1/2Td_k\le t< (k+1)T-1/2Td_k\\
&1&& \text{if }(k+1)T-1/2Td_k\le t<(k+1)T
\end{aligned}
\right.,
\end{equation*}
where $d_k\in[0,1]$ is the so-called duty cycle. It is computed at
each sampling moment, $t_k=kT$, and is kept constant until the next
sampling period, $t_{k+1}$. Note that, if $d_k=1$ or $d_k=0$, then $u$
becomes constant in the time interval $[kT,(k+1)T)$, equal to $1$ or
$0$, respectively.\\
The ZAD strategy consists of computing the value of the duty cycle at
each $T$-time interval, $d_k$, by imposing
\begin{equation}
\int_{kT}^{(k+1)T}s(t)dt=0,\forall k\in \mathbb{Z},
\label{eq:zad}
\end{equation}
where $s(t)$ is the error surface
\begin{equation*}
s(x(t),y(t))=k_1\left( x(t)-x_{ref} \right)+k_2\left(
y(t)-y_{ref} \right),
\end{equation*}
with
\begin{equation*}
x_{ref}=\frac{v_{ref}}{v_i},\quad
y_{ref}=\left( x_{ref}
\right)^2\gamma.
\end{equation*}
and $k_1$ and $k_2$ two control constants.\\
Equation~\eqref{eq:zad} becomes transcendental in $d_k$. However, if
the solution of system~\eqref{eq:boost_nondim_equations}  is
approximated by piecewise-linear functions, the duty cycle can be
approximated by the closed expression
\begin{equation}
d_k=\frac{2s_0+T\dot{s}_2}{\dot{s}_2-\dot{s}_1},
\label{eq:duty_cycle_approx}
\end{equation}
where
\begin{align*}
\dot{s}_1&=-\gamma k_1 x(kT)+k_2\\
\dot{s}_2&=k_1\left( -\gamma x(kT)+y(kT) \right)+k_2\left( 1-x(kT)
\right)\\
s_0&=k_1\left( x(kT)-x_{ref} \right)+k_2\left( y(kT)-y_{ref} \right),
\end{align*}
(see~\cite{AmaCasGraOliHur14} for more details). Provided that the
duty cycle must be in the interval $[0,1]$, it is set to
$1$ or $0$ depending on whether the result of
expression~\eqref{eq:duty_cycle_approx} is greater than $1$ or less
than $0$, respectively. That is, one imposes the saturation condition
\begin{equation}
d_k=\left\{
\begin{aligned}
&1&&\text{if }\frac{2s_0+T\dot{s}_2}{\dot{s}_2-\dot{s}_2}\ge1\\
&0&&\text{if }\frac{2s_0+T\dot{s}_2}{\dot{s}_2-\dot{s}_2}\le 0.
\end{aligned}
\right.
\label{eq:saturation_condition}
\end{equation}

Let us now consider the time-$T$ return (stroboscopic) map  of
system~\eqref{eq:boost_nondim_equations}. It becomes the composition
of three stroboscopic maps consisting of flowing
system~\eqref{eq:boost_nondim_equations} for the time intervals
$[kT,kT+1/2Td_k)$, $[kT+1/2Td_k,(k+1)T-1/2Td_k)$ and
$[(k+1)T-1/2Td_k,(k+1)T)$ setting $u=1$, $u=0$ and $u=1$, respectively
(see~\cite{AmaCasGraOliHur14} for more details). As these three maps
are smooth maps, their composition is also a smooth map, as long as
$d_k$ given by Equation~\eqref{eq:duty_cycle_approx} lies in the
interval $(0,1)$. However, due to the saturation
condition~\eqref{eq:saturation_condition}, the stroboscopic map
becomes indeed a piecewise-defined map, with two switching manifolds,
\begin{align*}
\Sigma_1&=\left\{ (x,y),\,|\,d_k(x,y)=1 \right\}\\
\Sigma_0&=\left\{ (x,y),\,|\,d_k(x,y)=0 \right\}.
\end{align*}
Hence, the stroboscopic map is indeed defined in three different
partitions.\\
In Figures~\ref{fig:bbb_boost_k1k2} and~\ref{fig:boost_1d_scanns} we
show the numerical results obtained by direct simulation of the
stroboscopic map, by fixing the initial conditions to
$(x_0,y_0)=(2.5,(2.5)^2\gamma)$ and varying the parameters $k_1$ and
$k_2$.
\begin{figure}
\begin{center}
\includegraphics[angle=-90,width=0.7\textwidth]{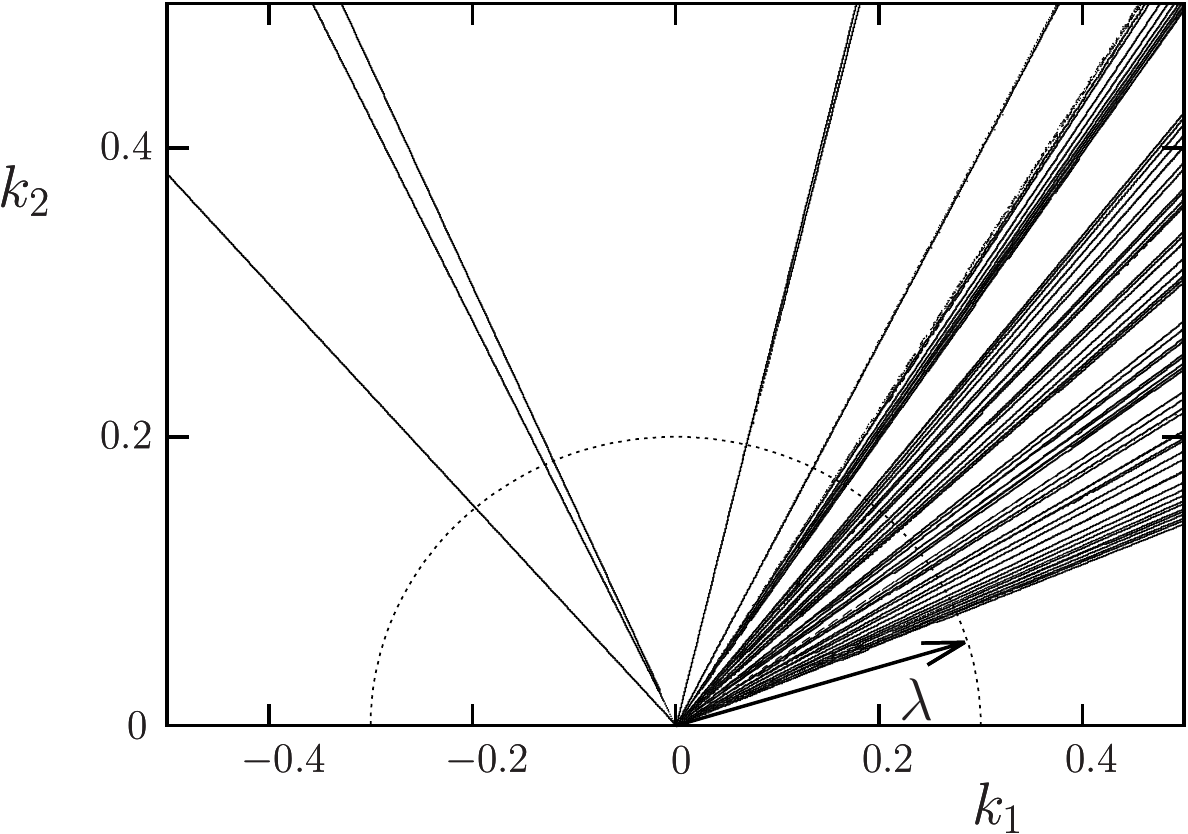}
\end{center}
\caption{Bifurcation curves in the $k_1\times k_2$
parameter space.}
\label{fig:bbb_boost_k1k2}
\end{figure}
\begin{figure}
\begin{center}
\begin{picture}(1,0.5)
\put(0,0.4){
\subfigure[\label{fig:boost_periods}]
{\includegraphics[width=0.35\textwidth,angle=-90]
{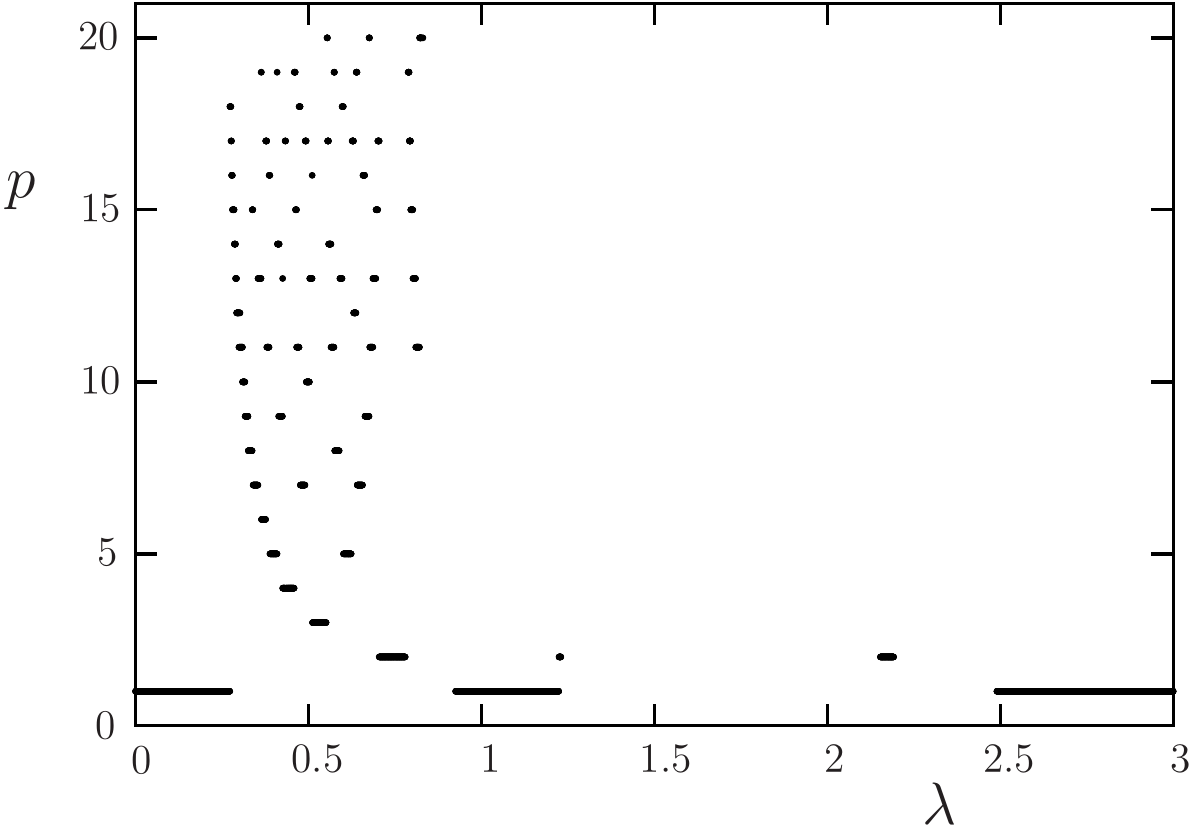}}
}
\put(0.5,0.4){
\subfigure[\label{fig:boost_bif_diagram}]
{\includegraphics[width=0.35\textwidth,angle=-90]
{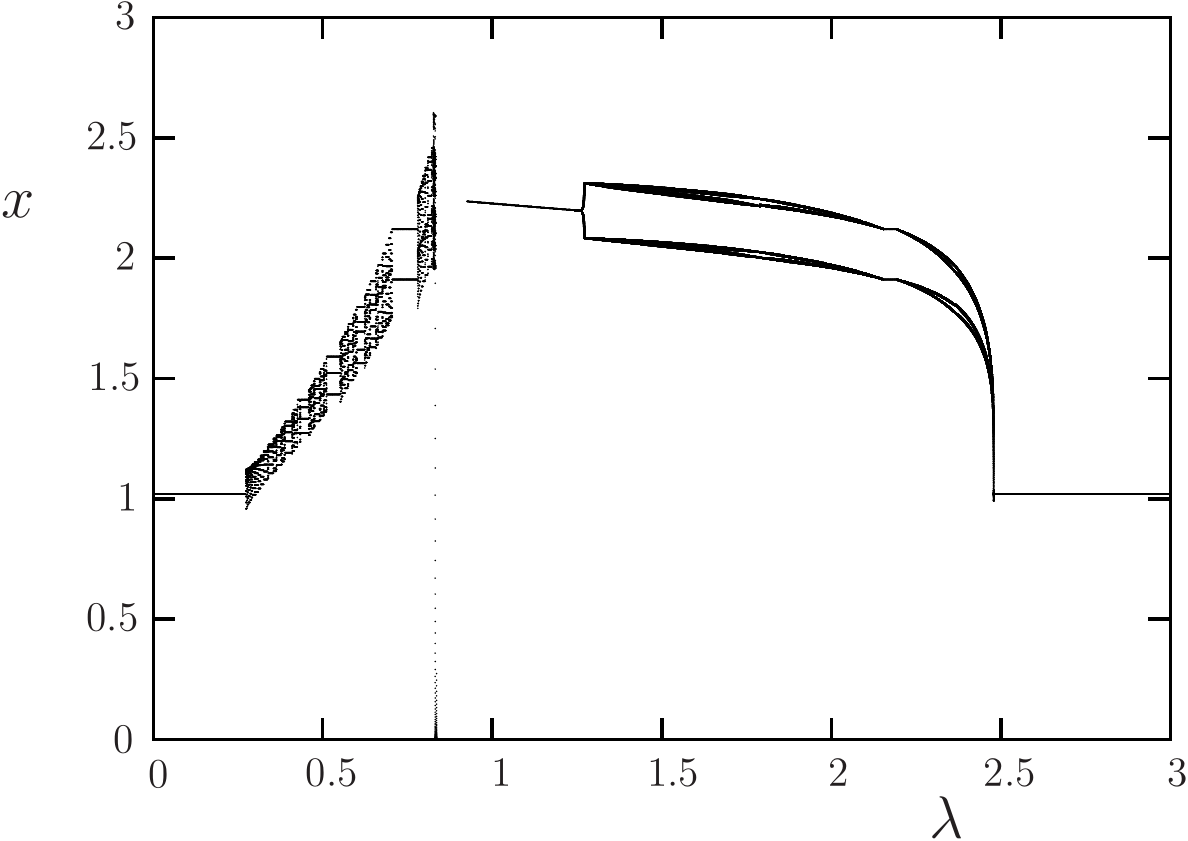}}
}
\end{picture}
\end{center}
\caption{Bifurcation scenario found around the curved labeled in
Figure~\ref{fig:bbb_boost_k1k2} and parametrized by $\lambda$, for parameter values
$\gamma=0.35$, $x_{ref}=2.5$, $T=0.18$. In~(a) we show the periods of
the periodic orbits found when varying $\lambda$. In~(b) we show the
bifurcation diagram corresponding to $x$.}
\label{fig:boost_1d_scanns}
\end{figure}%
In Figure~\ref{fig:bbb_boost_k1k2} we show the bifurcation curves in
the parameter space $(k_1,k_2)$. Due to the linearity  of the system,
these becomes straight lines. In Figure~\ref{fig:boost_1d_scanns} we
show the results obtained along the curve labeled in
Figure~\ref{fig:bbb_boost_k1k2} and parametrized by the angle
$\lambda$. As shown in Figure~\ref{fig:boost_periods}, for
$\lambda\in(0,0.27)$, $\lambda\in (0.95,1.2)$ and $\lambda\in (2.5,3)$
(approximately), the stroboscopic map possesses a fixed point. For the
first and third intervals, the duty cycle associated with the fixed
points is saturated to $0$, whereas for the second one the duty cycle
lies in the interval $(0,1)$. As one can see in
Figure~\ref{fig:boost_periods}, there exist a period adding-like
structure between the first two intervals. The periodic orbits that
one finds there possess points in the partitions where the duty cycles
are saturated to $1$ and to $0$. As a consequence, associated to each
periodic orbit, there exists a sequence of $0$'s and $1$'s, which are
distributed such that such sequences are maximin (see
Definition~\ref{def:maximin}), or, equivalently, they belong to the
Farey tree of symbolic sequences shown in
Figure~\ref{fig:farey_sequences}. For example, for the two found
$5$-periodic orbits one gets the sequences of duty cycles
$(0,0,0,0,1)$ and $(0,0,1,0,1)$, which are maximin. Note that, when
$d_k$ is saturated to $1$, the transistor remains closed for a whole
sampling period and more energy is drained from the source. Therefore,
if periodic orbits with high rotation number are not desired, as they
possess.\\
Although a more accurate study of the properties of the stroboscopic
map is needed, provided that the map contracts, there is evidence that
the stroboscopic map is a quasi-contraction, at least for those values
of $\lambda$ corresponding to the period adding-like structure.

Finally, note that for values of $\lambda$ in $(1.2,2.5)$,
trajectories with the used initial condition converge towards a
chaotic attractor or are aperiodic.

\section{Conclusions and future directions}\label{sec:conclusions_future}
In the recent years, piecewise-smooth maps, both in $\RR$ and in
$\RR^n$, have been widely investigated and very interesting
bifurcation phenomena (border collisions, big bangs, period adding,
period incrementing,...) have been reported by many authors. This
included both works by a more applied community, like in power
electronics~(\cite{AmaCasGraOliHur14,AngFosOli05,AngOliBer08,AngOliTab08,AvrMoseZhuGar15,BanKarYuaYor00,BerBudCha98,HamDeaJef92,KapBanPat10,Saitoetal07}), control
theory~(\cite{BerGarIanVas02,FosGra11,FosGra13,MatSai08,Zhuetal01,ZhuMos03,ZhuMosDeBan08,ZhuMosAndMik13}),
biology~(\cite{FreGal11,MonDanSelTsiHas11}),
neuroscience~(\cite{GraKru15,GraKruCle14,JimMihBroNieRub13,Kee80,KeeHopRin81,MenHugRin12,TieFellSej02,Ton14,TouBre08,TouBre09})
or economy (\cite{TraGarWes11}) but also researchers from a more
theoretical and/or computational perspective in non-smooth systems
(\cite{AvrEckSch08a,AvrEckSch08b,AvrEckSch08c,AvrEckSchSch12,FutAvrSch12,AvrGraSch11,AvrSch04,AvrSch05,AvrSch06,AvrSchBan06,AvrSchBan07,AvrSchGar10,AvrSchGar10b,AvrSchSch10a,AvrSus13,BanGre99,Ber82,DutRouBanAla07,GarAvrSus14,GarTra10,HogHigGri06,Kowalczyk05,MakLam12,NorKow06,RakAprBan10,TraGarWes11,TraWesGar10}).
It is well known that some of these bifurcations resemble phenomena
observed and highly studied in other contexts, specially in theory for
circle maps in the 80's and early 90's
(\cite{AlsFal03,AlsLli89,AlsLliMis00,AlsLliMisTre89,AlsMan90,AlsMan96,ArnCouTre81,Ber82,Boy85,CouGamTre84,Gam87,GamGleTre88,GamLanTre84,GamTre88,GhrHol94,Kaneko82,LyuPikZak89,Mira87,RhoTho86,Swi89,Vee89,Zak93})
and homoclinic bifurcations
(\cite{GamProThoTre86,Hom96,Homburg00,HomKra00,ProThoTre87,Spa82,TurShi87}.

In this survey article we have considered a general setting for
piecewise-smooth contracting maps with a discontinuity at the origin
and exhibiting a co-dimension two border collision bifurcation. It is
well known that, depending on the sign of the slopes of the map near
the discontinuity, one finds mainly two different bifurcation
scenarios: the period adding (for the increasing-increasing case) and
the period incrementing (when the slopes have different sign). In this
survey article we have revisited the latter and shown a path through
the literature providing a rigorous proof for the former. Moreover, we
have discussed up to which extend these results can be applied to
piecewise-smooth expanding maps and to higher dimensions. Finally, we
have shown how they can be used to provide a rigorous basis in applied
examples.

\subsection{Period adding for contracting one-dimensional piecewise-smooth maps}
The main contribution of this review article consists of revisiting
the literature to provide sufficient conditions for the occurrence of
this bifurcation scenario (described in
Section~\ref{sec:inc-inc_overview}). It occurs when the map preserves
orientation  near the discontinuity at the origin (is of the
increasing-increasing type), and the proof in summarized in
Section~\ref{sec:summary}.  The key step is to link such type of maps
with maps onto the circle.  Although the resulting circle map is also
discontinuous, the fact that the map is contracting guarantees that
its lift is a strictly increasing map, which is a sufficient condition
for the main properties of the rotation number of a map to hold:
existence, uniqueness and continuity and monotonicity with respect to
parameters.  By using the concept of ``well ordered'' cycles, periodic
orbits are linked with the symbolic sequences given in the Farey tree
of symbolic sequences. When parameters are varied, theory for
expanding circle maps is applied to its inverse to show that the
rotation number follows a devil's staircase and, when the rotation
number is irrational (which occurs in a Cantor set of zero measure),
the omega limit of the map becomes rather the whole circle or a Cantor
set. The latter occurs when the obtained circle map is discontinuous,
the former may occur otherwise.

\subsection{Period incrementing for one-dimensional piecewise-smooth maps}
The period incrementing scenario occurs when the piecewise-smooth map
has different monotonicity at both sides of the discontinuity at the
origin. We recover results in the literature that provide sufficient
conditions for the contracting case. The bifurcation scenario leads to
the existence of stable periodic orbits of the type $\LL^n\R$ or
$\LL\R^n$, depending on whether the map is
increasing-decreasing or decreasing-increasing, respectively.
Moreover, such a periodic orbit 
may coexist with another periodic orbit of the type
$\LL^{n+1}\R$ (or $\LL\R^{n+1}$). In this case, both existing periodic orbits are stable and their
domains of attraction are separated by the origin (the discontinuity).

\subsection{Period adding and incrementing bifurcations in piecewise-smooth expanding maps}
Most of the results regarding the orientable case
(increasing-increasing) shown in this review rely on the fact that the
lift of the corresponding circle map is a monotonically increasing
map, and indeed do not require the map to be contracting. A necessary
condition becomes then the reversibility of the map, which is
guaranteed when it contracts near the discontinuity.
However, such reversibility condition also holds when the map is not
too expanding. In particular, if this holds, the map possesses the same
type of periodic orbits as in the contracting case, although these may be not
unique nor stable. As opposite to the contracting case, the set
of parameters for which one finds dynamics associated with irrational
rotation numbers becomes of non-zero measure.\\
When the map becomes enough expanding so that the reversibility
condition no longer holds, one loses the uniqueness of the rotation
number and needs to deal with rotation intervals instead. Although
there is large theory and literature on this
(\cite{AlsFal03,AlsLli89,AlsLliMisTre89,AlsMan90,Gle90,GleSpa93,Zak93}),
more work is needed in order to provide general sufficient conditions
leading to precise and concrete description of the possible 
bifurcation scenarios.

The results for the non-orientable case (period incrementing
bifurcation) are based on topological arguments for maps on the
interval. The type of periodic orbits perform a number of steps
(re-injection number) on the domain of the increasing branch and only
one in the decreasing one. The smaller the gap at the origin of the
increasing branch is compared to the gap of the decreasing one, the
larger the re-injection number is. Therefore, when the decreasing
branch is expanding, some of the techniques can also be used when this
re-injection number is large enough to compensate the expansiveness of
the decreasing branch.  This results on the existence of the same type
of bifurcations for periodic orbits of large enough length
(re-injection number). Although the same type of period orbits with
shorter period may also exist even when the decreasing branch is
expanding, they may be unstable and coexist with more than one
periodic orbit of the type $\LL^n\R$.\\
In the case where both branches of the map are expanding, the map may
posses positive topological entropy and may hence be chaotic, even
when the lift is continuous (\cite{MisSzle80,Kop05,Kop00}).

\subsection{Maps in higher dimensions}
The jump from dimension one to dimension two represents a really
challenging problem when it comes to provide general conditions for the
occurrence of concrete type of bifurcations. This is magnified when
they are piecewise-smooth discontinuous maps. However, as we have shown in this
review article (see Section~\ref{sec:maximin_approach}) some results
already existent in the literature might significantly help in order
to achieve such a goal. Of special interest is the concept of
quasi-contractions stated in the 80's by Gambaudo et
al.~\cite{Gam87,GamGleTre88,GamTre88}. Although such results provide
very precise description of the type of symbolic sequences associated
with periodic orbits for such maps, those results heavily rely on 
contraction conditions. Numerical evidences (see references in
Section~\ref{sec:maximin_intro}) show that one may find similar
symbolic properties under more relaxed conditions, as it occurs for
the one-dimensional case.\\
Unfortunately, the rotation theory shown in
Sections~\ref{sec:properties_circle_maps}
and~\ref{sec:dyna_orient_preserv} cannot be exported straightforwardly
for maps in higher dimensions. However, similarly as for the
one-dimensional case, one may convert such maps into discontinuous
maps onto higher-dimensional cylinders by identifying the images of
the switching manifolds from both sides in order to introduce a
rotating behaviour. Under more restrictive conditions as for the
one-dimensional case, proceeding similarly as in~\cite{BroSimTat98}
for a particular example, this would allow one to define the lift of a
map, its rotation number and obtain some theoretical results based on
rotation theory.


\begin{thebibliography}{100}

\bibitem{AlsFal03}
Ll. Alsed\`a and A.~Falc\'o.
\newblock On the topological dynamics and phase-locking renormalization of
  lorenz-like maps.
\newblock {\em Ann. Inst. Fourier}, 53:859--883, 2003.

\bibitem{AlsLli89}
Ll. Alsed\`a and J.~Llibre.
\newblock Kneading theory of lorenz maps.
\newblock {\em Dynamical systems and ergodic theory}, 23:83--89, 1989.

\bibitem{AlsLliMis00}
Ll. Alsed\`a, J.~Llibre, and M.~Misiurevicz.
\newblock {\em Combinatorial Dynamics and Entropy in Dimension One}.
\newblock World Scientific, 2000.

\bibitem{AlsLliMisTre89}
Ll. Alsed\`a, J.~Llibre, M.~Misiurewicz, and C.~Tresser.
\newblock Periods and entropy for lorenz-like maps.
\newblock {\em Ann. Inst. Fourier (Grenoble)}, 39:929--952, 1989.

\bibitem{AlsMan90}
Ll. Alsed\`a and F.~Ma{\~n}osas.
\newblock Kneading theory and rotation intervals for a class of circle maps of
  degree one.
\newblock {\em Nonlinearity}, 3:413--452, 1990.

\bibitem{AlsMan96}
Ll. Alsed\`a and F.~Ma{\~n}osas.
\newblock Kneading theory for a family of circle maps with one discontinuity.
\newblock {\em Acta Math. Univ. Comenianae}, 65:11--22, 1996.

\bibitem{AmaCasGraOliHur14}
A.~Amador, S.~Casanova, H.A. Granada, G.~Olivar, and J.~Hurtado.
\newblock Codimension-two big-bang bifurcation in a zad-controlled boost dc-dc
  converter.
\newblock {\em IJBC}, 24, 2014.

\bibitem{AngFosOli05}
F.~Angulo, E.~Fossas, and G.~Olivar.
\newblock Transition from periodicity to chaos in a pwm-controlled buck
  converter with zad strategy.
\newblock {\em Int. J. Bifurcation Chaos}, 15(10):3245--3264, 2005.

\bibitem{AngOliBer08}
F.~Angulo, G.~Olivar, and M.~di~Bernardo.
\newblock Two-{P}arameter {D}iscontinuity-{I}nduced {B}ifurcation {C}urves in a
  {Z}{A}{D}-{S}trategy-{C}ontrolled {D}{C}-{D}{C} {B}uck {C}onverter.
\newblock {\em IEEE Trans. Circ. \& Sys. I}, 55:2392--2401, 2008.

\bibitem{AngOliTab08}
F.~Angulo, G.~Olivar, and A.~Taborda.
\newblock Continuation of periodic orbits in a {ZAD}-strategy controlled buck
  converter.
\newblock {\em Chaos, Solitons and Fractals}, 38:348--363, 2008.

\bibitem{ArnCouTre81}
A.~Arneodo, P.~Coullet, and C.~Tresser.
\newblock A possible new mechanism for the onset of turbulence.
\newblock {\em Physics Letters}, 81(4), 1981.

\bibitem{AvrEckSch08a}
V.~Avrutin, B.~Eckstein, and M.~Schanz.
\newblock {The bandcount increment scenario. I: basic structures}.
\newblock {\em Proc. R. Soc. A}, 464(2095):1867--1883, 2008.

\bibitem{AvrEckSch08b}
V.~Avrutin, B.~Eckstein, and M.~Schanz.
\newblock {The bandcount increment scenario. II: interior structures}.
\newblock {\em Proc. R. Soc. A}, 464(2097):2247--2263, 2008.

\bibitem{AvrEckSch08c}
V.~Avrutin, B.~Eckstein, and M.~Schanz.
\newblock {The bandcount increment scenario. III: deformed structures}.
\newblock {\em Proc. R. Soc. A}, 465(2101):41--57, 2009.

\bibitem{AvrEckSchSch12}
V.~Avrutin, B.~Eckstein, M.~Schanz, and B.~Schenke.
\newblock Bandcount incrementing scenario revisited and floating regions within
  robust chaos.
\newblock {\em Mathematics and Computers in Simulation (Special Issue
  ``Discontinuous Dynamical Systems: Theory and Numerical Methods'')}, 2012.

\bibitem{AvrFosGraSch11}
V.~Avrutin, E.~Fossas, A.~Granados, and M.~Schanz.
\newblock Virtual orbits and two-parameter bifurcation analysis in
  {ZAD}-controled buck-converter.
\newblock {\em Nonlinear Dynamics}, 63:19--33, 2011.

\bibitem{FutAvrSch12}
V.~Avrutin, B.~Futter, and M.~Schanz.
\newblock The discontinuous flat top tent map and the nested period
  incrementing bifurcation structure.
\newblock {\em Chaos, Solitons \& Fractals}, 45:465--482, 2012.

\bibitem{AvrGraSch11}
V.~Avrutin, A.~Granados, and M.~Schanz.
\newblock Sufficient conditions for a period increment big bang bifurcation in
  one-dimensional maps.
\newblock {\em Nonlinearity}, 24(9):2575--2598, 2011.

\bibitem{AvrMoseZhuGar15}
V.~Avrutin, E.~Mosekilde, Z.T. Zhusubaliyev, and L.~Gardini.
\newblock Onset of chaos in a single-phase power electronic inverter.
\newblock {\em Chaos}, 25(4):043114, 2015.

\bibitem{AvrSch04}
V.~Avrutin and M.~Schanz.
\newblock Border-collision period-doubling scenario.
\newblock {\em Phys. Rev.~E}, 70:026222/1--11, 2004.

\bibitem{AvrSch05}
V.~Avrutin and M.~Schanz.
\newblock Period doubling scenario without flip bifurcations in a
  one-dimensional map.
\newblock {\em Int. J. Bifurcat. Chaos}, 15:1267--1284, 2005.

\bibitem{AvrSch06}
V.~Avrutin and M.~Schanz.
\newblock On multi-parametric bifurcations in a scalar piecewise-linear map.
\newblock {\em Nonlinearity}, 19:531--552, 2006.

\bibitem{AvrSchBan06}
V.~Avrutin, M.~Schanz, and S.~Banerjee.
\newblock Multi-parametric bifurcations in a piecewise-linear discontinuous
  map.
\newblock {\em Nonlinearity}, 19:1875--1906, 2006.

\bibitem{AvrSchBan07}
V.~Avrutin, M.~Schanz, and S.~Banerjee.
\newblock Codimension-3 bifurcations: {E}xplanation of the complex 1-, 2- and
  3{D} bifurcation structures in nonsmooth maps.
\newblock {\em Phys. Rev.~E}, 75:066205/1--7, 2007.

\bibitem{AvrSchGar10}
V.~Avrutin, M.~Schanz, and L.~Gardini.
\newblock Calculation of bifurcation curves by map replacement.
\newblock {\em Int. J. Bifurcat. Chaos}, 20(10):3105--3135, 2010.

\bibitem{AvrSchGar10b}
V.~Avrutin, M.~Schanz, and L.~Gardini.
\newblock Self-similarity of the bandcount adding structures: calculation by
  map replacement.
\newblock {\em Regular and Chaotic Dynamics}, 15(6):685--703, 2010.

\bibitem{AvrSchSch10a}
V.~Avrutin, M.~Schanz, and B.~Schenke.
\newblock On a bifurcation structure mimicking period adding.
\newblock {\em Proc. R. Soc. A}, 467(2129):1503--1518, 2011.

\bibitem{AvrSus13}
V.~Avrutin and I.~Sushko.
\newblock A gallery of bifurcation scenarios in piecewise smooth 1d maps.
\newblock In G.-I. Bischi, C.~Chiarella, and I.~Sushko, editors, {\em Global
  Analysis of Dynamic Models in Economics, Finance and the Social Sciences},
  pages 269--395. Springer, 2013.

\bibitem{BanGre99}
S.~Banerjee and C.~Grebogi.
\newblock Border collision bifurcation in two-dimensional piecewise smooth
  maps.
\newblock {\em Phys. Rev.~E}, 59:4052--4061, 1999.

\bibitem{BanKarYuaYor00}
S.~Banerjee, M.~S. Karthik, G.~Yuan, and J.A. Yorke.
\newblock {B}ifurcations in one-dimensional piecewise smooth maps --- theory
  and applications in switching circuits.
\newblock {\em IEEE Trans. Circ. \& Sys. I}, 47:389--394, 2000.

\bibitem{BanVer01}
S.~Banerjee and G.~C. Verghese.
\newblock {\em Nonlinear Phenomena in Power Electronics -- Attractors,
  Bifurcations, Chaos, and Nonlinear Control}.
\newblock IEEE Press, 2001.

\bibitem{Ber82}
C.~Bernhardt.
\newblock Rotation intervals of a class of endomorphisms of the circle.
\newblock {\em Proc. London Math. Soc.}, s3-45:258--280, 1982.

\bibitem{BizStoGar06}
F.~Bizzarri, M.~Storace, and L.~Gardini.
\newblock Bifurcation analysis of a circuit-related generalization of the
  shipmap.
\newblock {\em Int. J. Bifurcation Chaos}, 16:2435--52, 2006.

\bibitem{Boy85}
C.~Boyd.
\newblock On the structure of family of cherry fields on the torus.
\newblock {\em Ergod. Th. and Dynam. Sys.}, 5:27--46, 1985.

\bibitem{Bre04}
R.~Brette.
\newblock Dynamics of one-dimensional spiking neuron model.
\newblock {\em Journ. Math. Biol.}, 48:38--56, 2004.

\bibitem{BroSimTat98}
H.~Broer, C.~Sim\'o, and J.C. Tatjer.
\newblock Towards global models near homoclinic tangencies of dissipative
  diffeomorphisms.
\newblock {\em Nonlinearity}, 11:667--770, 1998.

\bibitem{Bro99}
B.~Brogliato.
\newblock {\em Nonsmooth Mechanics}.
\newblock Springer London, 1999.

\bibitem{CarFerFerGarTer14}
V.~Carmona, S.~Fern\'andez-Garc\'ia, F.~Fern\'andez-S\'anchez,
  E.~Garc\'ia-Medina, and A.~Teruel.
\newblock Noose bifurcation and crossing tangency in reversible piecewise
  linear systems.
\newblock {\em Nonlinearity}, 27(3):585, 2014.

\bibitem{CarFerFerGarTer12}
V.~Carmona, S.~Fernd\'andez-Garc\'ia, F.~Fern\'andez-S\'anchez,
  E.~Garc\'ia-Medina, and A.E. Teruel.
\newblock Reversible periodic orbits in a class of 3{D} continuous piecewise
  linear systems of differential equations.
\newblock {\em Nonlinear Analysis}, 75:5866--5883, 2012.

\bibitem{CouGamTre84}
P.C. Coullet, J.-M. Gambaudo, and C.~Tresser.
\newblock Une nouvelle bifurcation de codimension 2: le colage de cycles.
\newblock {\em C. R. Acad. Sc. Paris, s\'erie I}, 299:253--256, 1984.

\bibitem{DesGucKraKueOsiWec12}
M.~Desroches, J.~Guckenhiemer, B.~Krauskopf, C.~Kuehn, H.~Osinga, and
  M.~Wechselberger.
\newblock Mixed-mode oscillations with multiple time scales.
\newblock {\em Siam Review}, 54(2):211--288, 2012.

\bibitem{Devaney84}
R.~L. Devaney.
\newblock A piecewise linear model for the zones of instability of an area
  preserving map.
\newblock {\em Physica~D}, 10:387--393, 1984.

\bibitem{BerBudCha98}
M.~di~Bernardo, C.~J. Budd, and A.~R. Champneys.
\newblock Grazing, skipping and sliding: analysis of the nonsmooth dynamics of
  the {D}{C}/{D}{C} buck converter.
\newblock {\em Nonlinearity}, 11(4):858--890, 1998.

\bibitem{BerBudChaKow08}
M.~di~Bernardo, C.~J. Budd, A.~R. Champneys, and P.~Kowalczyk.
\newblock {\em Piecewise-smooth {D}ynamical {S}ystems: {T}heory and
  {A}pplications}, volume 163 of {\em Applied Mathematical Sciences}.
\newblock Springer, 2008.

\bibitem{BerGarGliVas98}
M.~di~Bernardo, F.~Garofalo, L.~Glielmo, and F.~Vasca.
\newblock {S}witchings, {B}ifurcations and {C}haos in {D}{C}/{D}{C}
  {C}onverters.
\newblock {\em Fundamental Theory and Applications}, 45:133--141, 1998.

\bibitem{BerGarIanVas02}
M.~di~Bernardo, F.~Garofalo, L.~Iannelli, and F.~Vasca.
\newblock Bifurcations in {P}iecewise-{S}mooth {F}eedback {S}ystems.
\newblock {\em Int. J. of Control}, 75:1243--1259, 2002.

\bibitem{BerKowNor03}
M.~di~Bernardo, P.~Kowalczyk, and A.~Nordmark.
\newblock Sliding bifurcations: a novel mechanism for the sudden onset of chaos
  in dry-friction oscillators.
\newblock Preprint 2003.16 of Bristol Centre for Applied Nonlinear Mathematics,
  2003.

\bibitem{DutRouBanAla07}
P.~Dutta, B.~Routroy, S.~Banerjee, and S.~Alam.
\newblock On the existence of low-period orbits in $n$-dimensional piecewise
  linear discontinuous maps.
\newblock {\em Nonlinear Dynamics}, 2007.

\bibitem{Fil88}
A.F. Filippov.
\newblock {\em Differential equations with discontinuous righthand sides},
  volume~18.
\newblock Kluwer Academic Publishers Group, 1988.

\bibitem{FosGra11}
E.~Fossas and A.~Granados.
\newblock Big bang bifurcations in a first order systems with a relay.
\newblock In P.~Olejnik J.~Mrozowski. J.~Awrejcewicz, M.~Ka\'zmierczak, editor,
  {\em Proc. of Dynamical {S}ystems {T}heory and {A}pplications}, pages
  147--152, 2011.

\bibitem{FosGra13}
E.~Fossas and A.~Granados.
\newblock Occurrence of big bang bifurcations in discretized sliding-mode
  control systems.
\newblock {\em Diff. Eqs. Dyn. Syst. {DOI} 10.1007/s12591-012-0121-y},
  21:35--43, 2013.

\bibitem{FosGriBie00}
E.~Fossas, R.~Gri{\~n}\'o, and D.~Biel.
\newblock Quasi-sliding control based on pulse width modulation, zero average
  dynamics and the $l_2$ norm.
\newblock In {\em Proc. {I}{E}{E}{E} {VSS}}, 2000.

\bibitem{FosHogSea09}
E.~Fossas, S.~J. Hogan, and T.~M. Seara.
\newblock Two-parameter bifurcation curves in power electronic converters.
\newblock {\em Int. J. Bifurcation Chaos}, 19:341--357, 2009.

\bibitem{FouChaGar10}
D.~Fournier-Prunaret, P.~Charg\'{e}, and L.~Gardini.
\newblock Border collision bifurcations and chaotic sets in a two-dimensional
  piecewise linear map.
\newblock {\em Communications in Nonlinear Science and Numerical Simulation},
  16(2):916 -- 927, 2010.

\bibitem{FoxMei13}
A.M. Fox and J.D. Meiss.
\newblock Greene's residue criterion for the breakup of invariant tori of
  volume-preserving maps.
\newblock {\em Physica D}, 243:45--63, 2013.

\bibitem{FrePonRos05}
E.~Freire, E.~Ponce, and J.~Ros.
\newblock The focus-center-limit cycle bifurcation in symmetric 3{D} piecewise
  linear systems.
\newblock {\em SIAM J. Appl. Math}, 65(6):1933--1951, 2005.

\bibitem{FreGal11}
J.G. Freire and J.A.C. Gallas.
\newblock Stern-brocot trees in the periodicity of mixed-mode oscillations.
\newblock {\em Phys. Chem. Chem. Phys.}, 13:12191--12198, 2011.

\bibitem{GalYu11}
Z.~Galias and X.~Yu.
\newblock Study of periodic solutions of discretized two-dimensional sliding
  mode control systems.
\newblock {\em \em {IEEE} Transactions on circuits and systems--II: express
  briefs}, 58(6):381--385, 2011.

\bibitem{Gam87}
J.-M. Gambaudo.
\newblock {\em Ordre, d\'esordre, et fronti\'ere des syst\`emes Morse-Smale}.
\newblock PhD thesis, Universit\'e de Nice, 1987.

\bibitem{GamGleTre88}
J.-M. Gambaudo, P.~Glendinning, and T.~Tresser.
\newblock The gluing bifurcation: symbolic dynamics of the closed curves.
\newblock {\em Nonlinearity}, 1:203--14, 1988.

\bibitem{GamLanTre84}
J.-M. Gambaudo, O.E. {Lanford III}, and C.~Tresser.
\newblock Dynamique symbolique des rotations.
\newblock {\em C. R. Acad. Sc. Paris, s\'erie I}, 299:823--826, 1984.

\bibitem{GamProThoTre86}
J.-M. Gambaudo, I.~Procaccia, S.~Thomae, and C.~Tresser.
\newblock {N}ew {U}niversal {S}cenarios for the {O}nset of {C}haos in
  {L}orenz-{T}ype {F}lows.
\newblock {\em Phys. Rev. Lett}, 57:925--928, 1986.

\bibitem{GamTre88}
J.-M. Gambaudo and C.~Tresser.
\newblock On the dynamics of quasi-contractions.
\newblock {\em BOL. SOC. BRAS. MAT.}, 19:61--114, 1988.

\bibitem{GarAvrSus14}
L.~Gardini, V.~Avrutin, and I.~Sushko.
\newblock Codimension-2 border collision, bifurcations in one-dimensional,
  discontinuous piecewise smooth maps.
\newblock {\em Int. Journ. Bif. Chaos}, 24(2), 2014.

\bibitem{GarTra10}
{L}. Gardini and {F}. Tramontana.
\newblock {B}order {C}ollision {B}ifurcations in 1{D} {PWL} map with one
  discontinuity and negative jump. {U}se of the first return map.
\newblock {\em Int. J. Bifurcation Chaos}, 20(11):3529--3547, 2010.

\bibitem{GhrHol94}
R.~Ghrist and P.~J. Holmes.
\newblock Knotting within the gluing bifurcation.
\newblock In J.~Thompson and S.~Bishop, editors, {\em IUTAM Symposium on
  Nonlinearilty and Chaos in the Enginnering Dynamics}, pages 299--315. John
  Wiley Press, 1994.

\bibitem{GiaBanImr12}
D.~Giaouris, S.~Banerjee, O.~Imrayed, K.~Mandal, B.~Zahawi, and V.~Pickert.
\newblock Complex interaction between tori and onset of three-frequency
  quasi-periodicity in a current mode controlled boost converter.
\newblock {\em {IEEE} Trans. Circ. Syst. I}, 59(1):207--214, 2012.

\bibitem{Gle90}
P.~Glendinning.
\newblock Topological conjugation of lorenz maps to $\beta$-transformations.
\newblock {\em Math. Proc. Camb. Phil. Soc.}, 107:401--413, 1990.

\bibitem{Glen01}
P.~Glendinning.
\newblock Milnor attractors and topological attractors of a piecewise linear
  map.
\newblock {\em Nonlinearity}, 14:239--257, 2001.

\bibitem{GleSpa93}
P.~Glendinning and C.~Sparrow.
\newblock Prime and renormalisable kneading invariants and the dynamics of
  expanding lorenz maps.
\newblock In {\em Proceedings of a NATO advanced research workshop held at the
  Centre for Nonlinear Phenomena and Complex Systems on Homoclinic Chaos},
  pages 22--50, Amsterdam, The Netherlands, The Netherlands, 1993. Elsevier
  North-Holland, Inc.

\bibitem{GraHogSea12}
A.~Granados, S.J. Hogan, and T.M. Seara.
\newblock The {M}elnikov method and subharmonic orbits in a piecewise smooth
  system.
\newblock {\em {SIAM} J. Appl. Dyn. Syst.}, 11:801--830, 2012.

\bibitem{GraHogSea14}
A.~Granados, S.J. Hogan, and T.M. Seara.
\newblock The scattering map in two coupled piecewise-smooth systems, with
  numerical application to rocking blocks.
\newblock {\em Physica D}, 269:1--20, 2014.

\bibitem{GraKru15}
A.~Granados and M.~Krupa.
\newblock Firing-rate, symbolic dynamics and frequency dependence in
  periodically driven spiking models: a piecewise-smooth approach.
\newblock {\em Nonlinearity (accepted for publication)}, 2015.

\bibitem{GraKruCle14}
A.~Granados, M.~Krupa, and F.~Cl\'ement.
\newblock Border collision bifurcations of stroboscopic maps in periodically
  driven spiking models.
\newblock {\em {SIAM} J. Appl. Dyn. Syst.}, 13(4):1387--1416, 2014.

\bibitem{GucWil79}
J.~Guckenheimer and R.~F. Williams.
\newblock Structural {S}tability of {L}orenz {A}ttractors.
\newblock {\em Publ. Math. IHES}, 50:307--320, 1979.

\bibitem{HamDeaJef92}
D.~C. Hamill, J.~H.~B. Deane, and D.~J. Jefferies.
\newblock Modeling of chaotic {D}{C}--{D}{C} converters by iterated nonlinear
  mappings.
\newblock {\em IEEE Trans. Power Electron.}, 7:25--36, 1992.

\bibitem{HarWri60}
G.H. Hardy and E.M. Wright.
\newblock {\em An Introduction to the Theory of Numbers}.
\newblock Oxford University Press, London, fourth edition, 1960.

\bibitem{Hog89}
S.~J. Hogan.
\newblock On the dynamics of rigid block motion under harmonic forcing.
\newblock {\em Proc. Roy. Soc. Lond. A}, 425:441--476, 1989.

\bibitem{HogHigGri06}
S.J. Hogan, L.~Higham, and T.C.L. Griffin.
\newblock Dynamics of a piecewise linear map with a gap.
\newblock {\em Proc. Royal Society A}, 2006.

\bibitem{Hom96}
A.J. Homburg.
\newblock Global aspects of homoclinic bifurcations of vector fields.
\newblock {\em Memories of the American Math. Soc.}, 578, 1996.

\bibitem{Homburg00}
A.J. Homburg.
\newblock Piecewise smooth interval maps with non-vanishing derivative.
\newblock {\em Ergod. Th. \& Dynam. Sys.}, 20:749--773, 2000.

\bibitem{HomKra00}
A.J. Homburg and B.~Krauskopf.
\newblock Resonant homoclinic flip bifurcations.
\newblock {\em J. Dynam. Differential Equations}, 12:807--50, 2000.

\bibitem{Izh07}
E.~M. Izhikevich.
\newblock {\em Dynamical Systems in Neuroscience: The Geometry of Excitability
  and Bursting}.
\newblock 2007.

\bibitem{JimMihBroNieRub13}
N.D Jimenez, S.~Mihalas, R.~Brown, E.~Niebur, and J.~Rubin.
\newblock Locally contractive dynamics in generalized integrate-and-fire
  neurons.
\newblock {\em SIAM J. Appl. Dyn. Syst. (SIADS)}, 12:1474--1514, 2013.

\bibitem{Kabeetal07}
T.~Kabe, S.~Parui, H.~Torikai, S.~Banerjee, and T.~Saito.
\newblock Analysis of {P}iecewise {C}onstant {M}odels of {C}urrent {M}ode
  {C}ontrolled {D}{C}--{D}{C} {C}onverters.
\newblock {\em IEICI Trans. Fundamentals}, E90-A(2):448, 2007.

\bibitem{Kaneko82}
K.~Kaneko.
\newblock On the period-adding phenomena at the frequency locking in a
  one-dimensional mapping.
\newblock {\em Prog. Theor. Phys.}, 68:669, 1982.

\bibitem{KapBanPat10}
S.~Kapat, S.~Banerjee, and A.~Patra.
\newblock Discontinuous map analysis of a dc-dc converter governed by pulse
  skipping modulation.
\newblock {\em IEEE Transactions on Circuits and Systems I: Regular Papers},
  57(7):1793 --1801, 2010.

\bibitem{Kee80}
J.P. Keener.
\newblock Chaotic behavior on piecewise continuous difference equations.
\newblock {\em Trans. Am. Math. Soc.}, 261(2), 1980.

\bibitem{KeeHopRin81}
J.P. Keener, F.C Hoppensteadt, and J.~Rinzel.
\newblock Integrate-and-fire models of nerve membrane response to oscillatory
  input.
\newblock {\em SIAM J. Appl. Dyn. Syst. (SIADS)}, 41:503--517, 1981.

\bibitem{KenChu86}
M.~P. Kennedy and L.~O. Chua.
\newblock Van der {P}ol and {C}haos.
\newblock {\em IEEE Trans. Circ. \& Sys.}, CAS-33, 1986.

\bibitem{Kop00}
C.~Kopf.
\newblock Symbol sequences and entropy for piecewise monotone transformations
  with discontinuities.
\newblock {\em Discr. Cont. Dyn. Systs.}, 6:299--304, 2000.

\bibitem{Kop05}
C.~Kopf.
\newblock Coding and entropy for piecewise continuous piecewise monotone
  transformations.
\newblock {\em Nonlinear Analysis}, 61:169--275, 2005.

\bibitem{Kowalczyk05}
P.~Kowalczyk.
\newblock Robust chaos and border-collision bifurcations in non-invertible
  piecewise-linear maps.
\newblock {\em Nonlinearity}, 18(2):485, 2005.

\bibitem{KBCHHKNP06}
P.~Kowalczyk, M.~di~Bernardo, A.~R. Champneys, S.~J. Hogan, M.~Homer, Yu.~A.
  Kuznetsov, A.~Nordmark, and P.~Piiroinen.
\newblock Two-parameter nonsmooth bifurcations of limit cycles: classification
  and open problems.
\newblock {\em Int. J. Bifurcation Chaos}, 16, 2006.

\bibitem{Leonov59}
N.N. Leonov.
\newblock On a pointwise mapping of a line into itself.
\newblock {\em Radiofisika}, 2(6):942--956, 1959.
\newblock (in Russian).

\bibitem{Leonov60a}
N.N. Leonov.
\newblock On a discontinuous piecewise-linear pointwise mapping of a line into
  itself.
\newblock {\em Radiofisika}, 3(3):496--510, 1960.
\newblock (in Russian).

\bibitem{Leonov60b}
N.N. Leonov.
\newblock On the theory of a discontinuous mapping of a line into itself.
\newblock {\em Radiofisika}, 3(5):872--886, 1960.
\newblock (in Russian).

\bibitem{Leonov62}
N.N. Leonov.
\newblock On a discontinuous pointwise mapping of a line into itself.
\newblock {\em Dok. Akad. Nauk SSSR}, 143(5):1038--1041, 1962.

\bibitem{Lev90}
M.~Levi.
\newblock A period-adding phenomenon.
\newblock {\em SIAM J. Appl. Math.}, 50(4):943--955, 1990.

\bibitem{LyuPikZak89}
D.~V. Lyubimov, A.~S. Pikovsky, and M.~A. Zaks.
\newblock {\em Universal Scenarios of Transitions to Chaos via Homoclinic
  Bifurcations}, volume~8 of {\em Math. Phys. Rev.}
\newblock Harwood Academic, London, 1989.
\newblock Russian version 1986 as a Preprint (192) of Russian Academy of
  Science, Institute of mechanics of solid matter, Sverdlovsk.

\bibitem{MakLam12}
O.~Makarenkov and J.S.W. Lamb.
\newblock Dynamics and bifurcations of nonsmooth systems: {A} survey.
\newblock {\em Physica {D}}, 241:1826--1844, 2012.

\bibitem{MatSai08}
Y.~Matsuoka and T.~Saito.
\newblock {R}otation {M}ap with a {C}ontrolling {S}egment and {I}ts
  {A}pplication to {A}/{D} {C}onverters.
\newblock {\em IEICI Trans. Fundamentals}, E91-A(7):1725--1732, 2008.

\bibitem{MenHugRin12}
X.~Meng, G.~Huguet, and J.~Rinzel.
\newblock Type {III} excitability, slope sensitivity and coincidence detection.
\newblock {\em Disc. Cont. Dyn. Syst.}, 32:2720--2757, 2012.

\bibitem{Mira87}
C.~Mira.
\newblock {\em Chaotic {D}ynamics: {F}rom {T}he {O}ne-{D}imensional
  {E}ndomorphism {T}o {T}he {T}wo-{D}imensional {D}iffeomorphism}.
\newblock World Scientific, 1987.

\bibitem{MisSzle80}
M.~Misiurevicz and W.~Szlenk.
\newblock Entropy of piecewise monotone mappings.
\newblock {\em Studia Mathematica}, 67:45--63, 1980.

\bibitem{MonDanSelTsiHas11}
O.~Mondrag\'on-Palomino, T.~Danino, J.~Selimkhanov, L.~Tsimring, and J.~Hasty.
\newblock Entrainment of a population of synthetic genetic oscillators.
\newblock {\em Science}, 333:1315--1319, 2011.

\bibitem{Nit71}
Zbigniew Nitecki.
\newblock {\em Differentiable {D}ynamics: {I}ntroduction to the {O}rbit
  {S}tructure of {D}iffeormorphisms}.
\newblock {MIT} {P}ress, 1971.

\bibitem{NorKow06}
A.~Nordmark and P.~Kowalczyk.
\newblock A codimension-two scenario of sliding solutions in grazing-sliding
  bifurcations.
\newblock {\em Nonlinearity}, 19:1--26, 2006.

\bibitem{Nordmark01}
A.~B. Nordmark.
\newblock Existence of periodic orbits in grazing bifurcations of impacting
  mechanical oscillators.
\newblock {\em Nonlinearity}, 14(6):1517, 2001.

\bibitem{Poincare1881}
H.~Poincar{\'e}.
\newblock M{\'e}moire sur les courbes d{\'e}fini{\'e}s par une {\'e}quation
  differentielle.
\newblock {\em J. de Math.}, 7:375--422, 1981.

\bibitem{PonRosVel13}
E.~Ponce, J.~Ros, and E.~Vela.
\newblock Unfolding the fold-hopf bifurcation in piecewise linear continuous
  differential systems with symmetry.
\newblock {\em Physica D}, 250:34--46, 2013.

\bibitem{ProThoTre87}
I.~Procaccia, S.~Thomae, and C.~Tresser.
\newblock {F}irst-return maps as a unified renormalization scheme for dynamical
  systems.
\newblock {\em Phys. Rev.~A}, 35:1884--1900, 1987.

\bibitem{RakAprBan10}
B.~Rakshit, M.~Apratim, and S.~Banerjee.
\newblock Bifurcation phenomena in two-dimensional piecewise smooth
  discontinuous maps.
\newblock {\em Chaos}, 3, 2010.

\bibitem{RhoTho86}
F.~Rhodes and C.L. Thompson.
\newblock Rotation numbers for monotone functions on the circle.
\newblock {\em J. London Math. Soc.}, 34:360--368, 1986.

\bibitem{Saitoetal07}
T.~Saito, T.~Kabe, Y.~Ishikawa, Y.~Matsuoka, and H.~Torikai.
\newblock Piecewise constant switched dynamical systems in power electronics.
\newblock {\em Int. J. Bifurcation Chaos}, 17(10):3373--3386, 2007.

\bibitem{SigTouVid15}
J.~Signereska-Rynkowska, J.~Touboul, and A.~Vidal.
\newblock A geometric mechanism for mixed-mode bursting oscillations in a
  hybrid neuron model.
\newblock Preprint availavle at \url{http://http://arxiv.org/abs/1509.08282},
  2015.

\bibitem{Sim09}
D.~Simpson.
\newblock Simultaneous border-collision and period-doubling bifurcations.
\newblock {\em Chaos}, 19(3), 2009.

\bibitem{Sim15}
D.J.W. Simpson.
\newblock Border-collision bifurcations in $\mathbb{R}^n$.
\newblock To appear in SIAM Review. Preprint available at
  \url{http://arxiv.org/abs/1407.1895}, 2015.

\bibitem{SimMei08}
D.J.W. Simpson and J.D. Meiss.
\newblock Neimark--{S}acker {B}ifurcations in {P}lanar, {P}iecewise-{S}mooth,
  {C}ontinuous {M}aps.
\newblock {\em SIAM J. Applied Dynamical Systems}, 7(3):795--824, 2008.

\bibitem{Spa82}
{C}. Sparrow.
\newblock {\em The {L}orenz {E}quations: {B}ifurcations, {C}haos, and {S}trange
  {A}ttractors}.
\newblock Springer-Verlag, 1982.

\bibitem{Swi89}
G.~Swiatek.
\newblock Endpoints of rotation intervals for maps of the circle.
\newblock {\em Erg. Theo. Dyn. Syst.}, 9(1):173--190, 1989.

\bibitem{TieFellSej02}
P.H.E. Tiesinga, J.-M. Fellous, and T.J. Sejnowski.
\newblock Spike-time reliability of periodically driven integrate-and-fire
  neurons.
\newblock {\em Neurocomputing}, 44:195--200, 2002.

\bibitem{Ton14}
A.~Tonnelier.
\newblock Threshold curve for the excitability of bidimensional spiking
  neurons.
\newblock {\em Phys. Rev. E}, 90, 2014.

\bibitem{TouBre08}
J.~Touboul and R.~Brette.
\newblock Dynamics and bifurcations of the adaptive exponential
  integrate-and-fire model.
\newblock {\em Biol. Cybernet}, 99:319--334, 2008.

\bibitem{TouBre09}
J.~Touboul and R.~Brette.
\newblock Spiking dynamics of bidimensional integrate-and-fire neurons.
\newblock {\em SIAM J. Appl. Dyn. Syst. (SIADS)}, 4:1462--1506, 2009.

\bibitem{TraGarWes11}
F.~Tramontana, L.~Gardini, and F.~Westerhoff.
\newblock Heterogeneous speculators and asset price dynamics: further results
  from a one-dimensional discontinuous piecewise-linear model.
\newblock {\em Computational Economics}, 38:329--347, 2011.

\bibitem{TraWesGar10}
F.~Tramontana, F.~Westerhoff, and L.~Gardini.
\newblock On the complicated price dynamics of a simple one-dimensional
  discontinuous financial market model with heterogeneous interacting traders.
\newblock {\em Journ. Econ. Behav. Organ.}, 74(3):187--205, 2010.

\bibitem{TurShi87}
D.~V. Turaev and L.~P. Shil'nikov.
\newblock On bifurcations of a homoclinic ``{F}igure of {E}ight'' for a saddle
  with a negative saddle value.
\newblock {\em Soviet Math. Dokl.}, 34:397--401, 1987 (Russian version 1986).

\bibitem{Utk77}
V.I. Utkin.
\newblock Variable structure systems with sliding modes.
\newblock {\em IEEE Trans. on Autom. Cont.}, 22(2):212--222, 1977.

\bibitem{Utk93}
V.I. Utkin.
\newblock Sliding mode control design principles and applications to electric
  drives.
\newblock {\em IEEE Trans. on Indust. Electr.}, 40(1):23--36, 1993.

\bibitem{Vee89}
P.~Veerman.
\newblock Irrational rotation numbers.
\newblock {\em Nonlinearity}, 2:419--428, 1989.

\bibitem{Zak93}
M.~A. Zaks.
\newblock Scaling properties and renormalization invariants for the
  ``homoclinic quasiperiodicity".
\newblock {\em Physica~D}, 62:300--316, 1993.

\bibitem{ZhuMos03}
{\zh}.~T. Zhusubaliyev and E.~Mosekilde.
\newblock {\em Bifurcations and Chaos in piecewise-smooth dynamical systems},
  volume~44 of {\em Nonlinear Science A}.
\newblock World Scientific, 2003.

\bibitem{ZhuMosDeBan08}
{\zh}.~T. Zhusubaliyev, E.~Mosekilde, S.~De, and S.~Banerjee.
\newblock Transitions from phase-locked dynamics to chaos in a piecewise-linear
  map.
\newblock {\em Phys. Rev.~E}, 77(026206):1--11, 2008.

\bibitem{Zhuetal01}
{\zh}.~T. Zhusubaliyev, E.~A. Soukhoterin, V.~N. Rudakov, Y.~V. Kolokolov, and
  E.~Mosekilde.
\newblock Bifurcations and chaotic oscillations in an automatic control relay
  system with hysteresis.
\newblock {\em Int. J. Bifurcation Chaos}, 11:1193--1231, 2001.

\bibitem{ZhuMosAndMik13}
Z.T. Zhusubaliyev, E.~Mosekilde, A.I. Andriyanov, and G.Y. Mikhal'chenko.
\newblock High-feedback {O}peration of {P}ower {E}lectronic {C}onverters.
\newblock {\em Electronics}, 2:113--167, 2013.

\end{thebibliography}
\def\zh{Zh}\def\yu{Yu}\def\ya{Ya}

\end{document}